\theoremstyle{plain}
\newtheorem{thm}{Theorem}[section]
\newtheorem{cor}[thm]{Corollary}
\newtheorem{prop}[thm]{Proposition}
\newtheorem{lemma}[thm]{Lemma}
\theoremstyle{definition}
\newtheorem{definition}[thm]{Definition}
\newtheorem{remark}[thm]{Remark}
\newtheorem{question}[thm]{Question}
\newtheorem*{rep@theorem}{\rep@title}
\newcommand{\newreptheorem}[2]{%
\newenvironment{rep#1}[1]{%
 \def\rep@title{#2 \ref{##1}}%
 \begin{rep@theorem}}%
 {\end{rep@theorem}}}
 \newcommand{\F}{\ensuremath{{\mathcal{F}}}}
\newcommand{\bdry}{\ensuremath{\partial}}
\DeclareMathOperator{\spin}{spin}
\DeclareMathOperator{\flip}{flip}
\newcommand{\A}{\ensuremath{\mathbb{A}}}
\newcommand{\Q}{\ensuremath{\mathbb{Q}}}
\newcommand{\R}{\ensuremath{\mathbb{R}}}
\newcommand{\Z}{\ensuremath{\mathbb{Z}}}
\newcommand{\C}{\ensuremath{\mathbb{C}}}
\renewcommand{\H}{\ensuremath{\mathbb{H}}}
\newcommand{\SL}{\text{SL}}
\newcommand{\PSL}{\text{PSL}}
\newcommand{\tr}{\text{tr}}
\renewcommand{\Re}{\text{Re}}
\newcommand{\Zeta}{\mathcal{Z}}
\newcommand{\Rb}{ \mathcal{R}}
\newcounter{nootje}
\newcommand{\mat}[4]{\left(\begin{array}{cc}#1 & #2 \\ #3 & #4 \end{array}\right)}
\title[Character varieties of once-punctured torus bundles]{Character varieties of once-punctured torus bundles with tunnel number one}
\author{Kenneth L.\ Baker}
\address{Department of Mathematics\\University of Miami\\\newline Coral Gables, FL 33146 \\ USA}
\email{k.baker@math.miami.edu}
\author{Kathleen L.\ Petersen}
\address{Department of Mathematics\\Florida State University\\\newline Tallahassee, FL 32306\\USA}
\email{petersen@math.fsu.edu}
\subjclass[2000]{Primary 57M27; Secondary 57M50,57N10,20C15}
\begin{document}

\begin{abstract}
We determine the $\PSL_2(\C)$ and $\SL_2(\C)$ character varieties of the once-punctured torus bundles with tunnel number one, i.e.\ the  once-punctured torus bundles that arise from filling one boundary component of the Whitehead link exterior.  In particular, we  determine `natural' models for these algebraic sets, identify them up to birational equivalence with smooth models, and compute the genera of the canonical components.  This enables us to compare dilatations of the monodromies of these bundles with these genera.  We also determine the minimal polynomials for the trace fields of these manifolds.  Additionally we study the action of the symmetries of these manifolds upon their character varieties, identify the characters of their lens space fillings, and compute the twisted Alexander polynomials for their representations to $\SL_2(\C)$.
\end{abstract}

\maketitle


\section{Introduction}

The $\SL_2(\C)$ representation variety of a finitely presented group $\Gamma$ is the set of all representations from $\Gamma$ to $\SL_2(\C)$ and naturally carries the structure of a complex algebraic set.  The set of all characters of these representations form a complex algebraic set as well, the $\SL_2(\C)$ character variety.  Restricting attention to irreducible representations, this character variety effectively records the set of (irreducible) $\SL_2(\C)$ representations modulo conjugation.

For hyperbolic $3$--manifolds, the $\SL_2(\C)$ character varieties of their fundamental groups have  been shown to carry much topological data about the underlying manifold; in particular, see \cite{cs1983}, Chapter 1 of \cite{MR881270}, and the survey \cite{shalen-handbook}.  However, even the simplest invariants of these algebraic sets have proven difficult to compute in general.  On an individual basis, one can often compute defining equations for the character variety of a specific manifold.  In some cases, families of manifolds have been studied. Representations of two-bridge knot groups were studied in \cite{MR745421}, \cite{MR1070927} and \cite{MR1321291}, and a recursively defined formula for character varieties of twist knots was obtained in \cite{MR2124555}.  However, smooth models are required to compute many invariants.    Smooth models for the character varieties of double twist knot exteriors were determined in  \cite{MR2827003}; a smooth minimal model for the character variety the Whitehead link exterior was determined in \cite{MR2831837} and for all four of the arithmetic two-bridge links in \cite{Harada}. 

 Any component of the $\SL_2(\C)$ character variety which contains a character of a discrete faithful representation is called a canonical component.  Work of Thurston indicates that if $M=\H^3/\Gamma$ is a finite volume hyperbolic $3$--manifold, then the complex dimension of a canonical component equals the number of cusps of $M$. Therefore, if $M$ has a single cusp, the genus of this real surface is a natural invariant. As genus is a measure of complexity of a (real) surface, a natural expectation is that the genus of these canonical components is related to many measures of complexity of the underlying manifold.

In this article we determine the $\SL_2(\C)$ character varieties of the fundamental groups of the once-punctured torus bundles with tunnel number one. 
Up to homeomorphism, this is an infinite family of $3$--manifolds, $\{M_n\}_{n\in\Z}$, which are finite volume hyperbolic $3$--manifolds with exactly one cusp when $|n|>2$. Moreover their fundamental groups admit presentations $\pi_1(M_n) = \langle \alpha, \beta : \beta^n =\omega \rangle$ where $\omega$ is a simple word in the generators independent of $n$.  This pleasant structure enables us to explicitly compute `natural' models for these algebraic sets, identify them up to birational equivalence with smooth models, and determine the genera of the canonical components.   To avoid cumbersome statements, we will call a curve {\em hyperelliptic} if is is birational to a plane curve given by an equation of the form $y^2=f(x)$ where $f$ has no repeated roots.  Thus we include in this set the genus zero conics for which $\deg(f) = 1,2$ and elliptic curves for which $\deg(f) = 3,4$ in addition to usual hyperelliptic curves of genus at least two for which $\deg(f) \geq 5$.  

We show the following.
\begin{reptheorem}{thm:mainsummary1}\em
 If $|n|>2$  then there is a  unique canonical component of the $\SL_2(\C)$ character variety of $M_n$, and it is birational to the hyperelliptic curve given by $w^2=-\hat{h}_n(y)\hat{\ell}_n(y)$.  
 The genus of the canonical component is  $\lfloor \tfrac12 |n-1|-1 \rfloor $ if $n\not \equiv 2 \pmod 4$ and is  $\ \lfloor \tfrac12 |n-1|-2 \rfloor$ if $n\equiv 2 \pmod 4$. 
    If $n\not \equiv 2 \pmod 4$ this is the only component of the $\SL_2(\C)$ character variety containing the character of an irreducible representation.  If $n\equiv 2 \pmod 4$ there is an additional  component which is isomorphic to $\C$. 
\end{reptheorem}
Here the polynomials $\hat{h}_n$ and $\hat{\ell}_n$ are specific factors of Fibonacci polynomials.  See Definitions~\ref{definition:fibonacci}, \ref{definition:fibs}, and \ref{definition:hats}.
As an immediate corollary we have:
\begin{cor}
For every $m\in \Z$ there is a once punctured  torus bundle with tunnel number one such that the canonical component of the $\SL_2(\C)$ character variety has genus greater than $m$.
\end{cor}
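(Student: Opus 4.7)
The plan is essentially to quote Theorem~\ref{thm:mainsummary1} and observe that the stated genus formula grows without bound as $|n| \to \infty$. Given any $m \in \Z$, I would exhibit an explicit $n$ for which the genus of the canonical component of the $\SL_2(\C)$ character variety of $M_n$ exceeds $m$, by picking $|n|$ large enough to force both of the piecewise expressions $\lfloor \tfrac12|n-1|-1\rfloor$ and $\lfloor \tfrac12|n-1|-2\rfloor$ to exceed $m$.

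Concretely, I would take any integer $n$ with $|n| \geq 2m+6$ (which in particular satisfies $|n|>2$, so Theorem~\ref{thm:mainsummary1} applies). In either congruence class mod $4$, the genus is at least $\lfloor \tfrac12|n-1|-2 \rfloor$, and for our choice of $n$ we have $\tfrac12|n-1|-2 \geq m+\tfrac{3}{2}$, so the floor is at least $m+1 > m$. The tunnel-number-one once-punctured torus bundle $M_n$ then has the desired property.

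There is no substantive obstacle: the corollary is essentially a restatement of the fact that the explicit genus formulas provided by Theorem~\ref{thm:mainsummary1} are unbounded in $|n|$. The only points worth mentioning are the need to check that the smaller of the two cases (the $n \equiv 2 \pmod 4$ branch) already exceeds $m$, so that one does not need to worry about which congruence class the chosen $n$ falls into, and to ensure the chosen $n$ lies in the hyperbolic range $|n|>2$ where the theorem is stated.
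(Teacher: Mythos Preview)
Your approach matches the paper's, which simply records this as an immediate corollary of the genus formula in Theorem~\ref{thm:mainsummary1}. One small arithmetic slip: for positive $n=2m+6$ with $m$ even you have $n\equiv 2\pmod 4$ and $|n-1|=2m+5$, so $\lfloor\tfrac12|n-1|-2\rfloor=m$, not $>m$; taking $|n|\geq 2m+8$ (or choosing $n$ negative) fixes the bound.
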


In fact, we make this more precise by observing the genus of the canonical component grows with the dilatation of the monodromy of the fibration. Here we give a condensed statement of Theorem~\ref{thm:dilatationgenus}.
\begin{thm}
For $|n|>2$ the dilatation of the monodromy of the fibration of $M_n$ is approximately twice the genus of the canonical component of the $\SL_2(\C)$ character variety of $M_n$.
\end{thm}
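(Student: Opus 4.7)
The plan is to compute the dilatation $\lambda_n$ of the monodromy of $M_n$ explicitly and compare it against the genus formula of Theorem~\ref{thm:mainsummary1}. Both quantities will turn out to be linear in $|n|$ to leading order, with slopes $1$ and $\tfrac12$ respectively, yielding the asymptotic relation $\lambda_n\approx 2g_n$.

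First I would identify the action of the monodromy on $H_1$ of the fiber torus as an explicit element $A_n\in\SL_2(\Z)$. Because each $M_n$ is obtained by Dehn filling a fixed cusp of the Whitehead link exterior and carries a presentation $\pi_1(M_n)=\langle\alpha,\beta:\beta^n=\omega\rangle$ with $\omega$ independent of $n$, the monodromy should be a fixed word in the standard generators $R=\mat{1}{1}{0}{1}$ and $L=\mat{1}{0}{1}{1}$ of $\SL_2(\Z)$ whose exponents depend linearly on $n$, conjugate up to sign to something like $RL^n$. Pinning down this word unambiguously (matching the orientation and sign conventions used elsewhere in the paper, and handling the lift ambiguity between $\GL_2(\Z)$ and $\SL_2(\Z)$) is the main obstacle; once it is done the remainder of the argument is routine.

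Second, a direct matrix multiplication gives $|\tr(A_n)|=|n|+c$ for a small integer constant $c$. Since $M_n$ is hyperbolic for $|n|>2$, the monodromy is pseudo-Anosov and $\lambda_n$ is the spectral radius of $A_n$, so
\[
\lambda_n \;=\; \frac{|\tr(A_n)|+\sqrt{\tr(A_n)^2-4}}{2}\;=\;|n|+O(1).
\]

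Third, I combine with Theorem~\ref{thm:mainsummary1}: the genus satisfies $g_n=\tfrac12|n|+O(1)$ in both of the cases $n\not\equiv 2\pmod 4$ and $n\equiv 2\pmod 4$. Consequently $\lambda_n/g_n\to 2$ as $|n|\to\infty$, and in fact $|\lambda_n-2g_n|$ is uniformly bounded, which is the precise content of the ``approximately twice'' statement. The sharper form asserted in Theorem~\ref{thm:dilatationgenus} would then follow by reading off the explicit constants produced in the first two steps and treating the parity case $n\equiv 2\pmod 4$ separately, where the genus formula shifts by $1$.
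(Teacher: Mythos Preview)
Your approach is correct and essentially identical to the paper's: compute the trace of the monodromy matrix, read off the dilatation as its spectral radius, and compare to the genus formula of Theorem~\ref{thm:mainsummary1}. The ``main obstacle'' you flag---pinning down the monodromy matrix---is already resolved in Lemma~\ref{lem:geometry}, where $[\phi_n]=\mat{-(n+1)}{1}{-(n+2)}{1}$ is written down explicitly with $|\tr[\phi_n]|=|n|$ exactly (so your constant $c$ is $0$), giving $\lambda_n=\tfrac12(|n|+\sqrt{n^2-4})$ and $\lfloor\lambda_n\rfloor=|n|-1$.
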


\begin{question}
What is the relationship between the dilatation of a fibered hyperbolic manifold with one cusp and the genera of the  canonical components of its $\SL_2(\C)$ character variety?
\end{question}

We also show that if $n\neq -2$  all of the components consisting of characters of reducible representations are isomorphic to affine conics (including lines) and consist of characters of abelian representations.  The structure of these components is summarized in    Proposition~\ref{prop:redreps}.
We realize the intersection of the canonical component with these lines as characters of  lens space fillings.

Let $p_n(y)=f_{n+1}(y)-f_{n-1}(y)-y^2+6$. If $n$ is even let $\hat{p}_n(y)=p_n(y)$, and if $n$ is odd, let $\hat{p}_n(y)$ be the polynomial obtained by factoring out the $y+2$ factor from $p_n(y)$.  In Lemma~\ref{lemma:tracepolyirred} we show that $\hat{p}_n$ is irreducible.  This proof is an extension of  Farshid Hajir's proof of the irreducibility of $\hat{p}_n$  for $n$ even.  From the explicit equations we obtain for the natural model (Proposition~\ref{prop:irreps}),   we determine the trace field explicitly.  
\begin{reptheorem}{thm:tracefield}
When $|n|>2$ the polynomial $\hat{p}_n(y)$ is irreducible over $\Q$ and is the minimal polynomial for the trace field of $M_n$. The degree of the trace field is $|n|-e$ where $e=0$ if $n$ is even and $e=1$ if $n$ is odd.
\end{reptheorem}

There is an obstruction to lifting representations from $\PSL_2(\C)$ to $\SL_2(\C)$.  When $n$ is odd, this obstruction vanishes and  the full $\PSL_2(\C)$ character variety is an algebro-geometric quotient of the $\SL_2(\C)$ character variety.  When $n$ is even this quotient is not the complete $\PSL_2(\C)$ character variety. We compute this quotient, and when $n$ is even we determine the remaining $\PSL_2(\C)$ characters that do not lift.  For $n$ odd, we show that there is one component of this set containing  characters of irreducible representations, and it is birationally equivalent to $\A^1$.  We also show that this corresponds to the line as the quotient of the above  hyperelliptic curve by the hyperelliptic involution.  That is, we show the following theorem, where by using the terms $\SL_2(\C)$ and $\PSL_2(\C)$ character variety we mean the Zariski closure of the characters corresponding to irreducible representations.  (The text of Theorem~\ref{thm:pslasinvolution} in the body of the manuscript varies from the text below as it uses terminology that we have not yet defined.)

\begin{reptheorem}{thm:pslasinvolution}\em
When $n$ is odd, the
identification of the $\PSL_2(\C)$ character variety as the quotient of the $\SL_2(\C)$ character variety  by the action of $\mu_2\cong \Z/2\Z$ corresponds to the identification of the hyperelliptic curve given by 
\[ w^2 = -\hat{h}_n(y)\hat{\ell}_n(y)\] by the hyperelliptic   involution $(y,w) \mapsto (y,-w)$.  The $\PSL_2(\C)$ character variety is  birational to an affine line, $\A^1$. 
\end{reptheorem}

When $n$ is even the situation is quite different.

\begin{reptheorem}{thm:psleven}\em
When $n$ is even, the quotient map from the $\SL_2(\C)$ character variety to the $\PSL_2(\C)$ character variety is determined by the hyperelliptic involution $(y,w) \mapsto (y,-w)$, and the involution $(y,w)\mapsto (-y,w)$.  The canonical component is birational to an affine line. When $n\equiv 2 \pmod 4$ there is an additional line of characters in the image of this map. 

 In addition, for any even $n$ there is a parametric curve of characters of representations which do not lift to $\SL_2(\C)$, along with a finite set of points which do not lift.  This curve is birational to an affine line.  If $n\equiv 0 \pmod 4$ there is an additional component of characters of representations which do not lift; this component is birational to an affine line. 

\end{reptheorem}

We compute the twisted Alexander polynomials of these manifolds. 
\begin{reptheorem}{thm:twistedalex}\em
The twisted Alexander polynomial of $M_n$, twisted by a representation corresponding to the point $(x,y,z)$ on the character variety is
\[\mathcal{T}_{M_n}^\rho(T)= T^{-1}+\frac{2(z-x)}{y-2}  + T.\]
\end{reptheorem}

Further, we investigate the symmetries of these manifolds and study the effect of these symmetries on the character varieties. 
A symmetry of the  manifold $M$ induces an action on $\pi_1(M)$ and on the character variety of $M$. 
For two-bridge knots, some of these symmetries act trivially on the character variety, while others effective factor it \cite{MR2827003}.     The action of these symmetries and such factorization is not well understood in general.
In this light, we study the symmetries of the family of once-punctured torus bundles of tunnel number one and their effect on the character variety.
For $|n|>2$ the symmetry group of $M_n$ is generated by two involutions, `spin' and `flip'.  We investigate the  induced action of these isometries on the character variety.  We conclude that each irreducible representation $\rho:\pi_1(M_n) \rightarrow \SL_2(\C)$ factors through a representation from $\pi_1(O_n) \rightarrow \SL_2(\C)$, where $O_n$ is the orbifold obtained as a quotient of $M_n$ by the group of isometries.

The manifolds $M_n$ are precisely the once-punctured torus bundles that are obtained by filling one component of the Whitehead link exterior.  Each of the manifolds $M_n$ has at least one lens space filling, and for $n=-1,0,1,2,3$ and 5 there are additional fillings (two additional for $n=3$).  We investigate the characters associated to these fillings, which correspond to surjections of $\pi_1(M_n)$ onto cyclic groups and their quotients. 

We also study the global structure of the character variety by investigating the points of intersection of the various components.

\subsection{Organization}

In Section~\ref{section:preliminaries} we introduce the family $\{ M_n\}_{n\in \Z}$ of  once-punctured torus bundles with tunnel number one and their fundamental groups, $\{ \Gamma_n\}_{n\in \Z}$, and establish their basic algebraic and topological properties.   Section~\ref{section:charvarpreliminaries} introduces the construction of the $\SL_2(\C)$ and $\PSL_2(\C)$ character varieties.   Section~\ref{section:fibonacci} contains many useful facts about a family of polynomials, the Fibonacci polynomials, that are used extensively throughout the manuscript.  

In Section~\ref{section:charactervarietycalculations}  we  explicitly  compute  the sub-set of the $\SL_2(\C)$ character variety consisting of characters of reducible representations, a `natural model' for the irreducible characters, and a `smooth model'  for this set.   We show that if $n\not \equiv 2 \pmod 4$ the smooth model  is a hyper-elliptic curve. If $n\equiv 2 \pmod 4$ there is an additional $\A^1$ component as well. We identify the canonical component, which is the hyper-elliptic curve  in Section~\ref{section:charactervarietycalculations}.  Section~\ref{section:PSLcalc} contains the determination of the  $\PSL_2(\C)$ character varieties as quotients of the $\SL_2(\C)$ character varieties, for odd $n$; these are all birational to affine lines.

We determine polynomials which define the trace fields in Section~\ref{section:tracefield}.  Each once punctured torus bundle with tunnel number one  has at least one lens space filling, and is a filling of the Whitehead link.  In Section~\ref{section:whitehead} we investigate these fillings.  Each such filling corresponds to a surjection of the fundamental group onto a cyclic group, and  we realize these characters as intersections with the canonical component and the reducible represenations. In Section~\ref{section:intersections} we explicitly determine the global structure of the $\SL_2(\C)$ character variety, by determining the intersections of all the components.  We investigate the action of the symmetries of the underlying manifold on the character varieties in Section~\ref{section:symmetries}.  The twisted Alexander polynomials are computed in Section~\ref{section:alexanderpoly}, and the dilatation of the pseudo-Anosov map corresponding to the manifold $M_n$ is computed in Section~\ref{section:dilatation}.

%
%
\subsection{Acknowledgements}\label{section:acknowledgements}
%
%
The authors would like to thank Eriko Hironaka and Ronald van Luijk for helpful conversations.   
The authors are also indebted to Farshid Hajir for the idea behind the proof of Lemma~\ref{lemma:tracepolyirred}. This work was partially supported by  Simons Foundation grant  \#209184  to  Kenneth Baker  and grant  \#209226 to Kathleen Petersen.

 %
\section{Once-punctured Torus Bundles with Tunnel Number One}\label{section:preliminaries}
%

As mentioned in the introduction, the once-punctured torus bundles with tunnel number one, up to mirroring, form a one parameter family $\{M_n\}_{n \in \Z}$.  The monodromy $\phi_n$ of $M_n = T \times [0,1] / (x,0) \sim (\phi_n(x),1)$ may be presented as $\phi_n = \tau_c \tau_b^{n+2}$ where $c$ and $b$ are curves on the once-punctured torus fiber $T$ transversally intersecting once and $\tau_a$ is a right-handed Dehn twist along the curve $a$.  This monodromy is given as its conjugate  $\tau_c \tau_b \tau_c \tau_b^n$ in \cite{BJK}.  The manifold $M_n$ is hyperbolic if and only if $|n|>2$, contains an essential torus if and only if $|n|=2$, and is a Seifert fiber space if and only if $|n|<1$ as we observe in Lemma~\ref{lem:geometry}.  

Each manifold $M_n$ may be obtained by $-(n+2)$--Dehn filling one boundary component of the Whitehead link exterior and is the exterior of a certain genus one fibered knot in the lens space $L(n+2, 1)$, \cite[Theorems 1.2, 1.3]{BJK}.  In fact, these are exactly the fibered manifolds obtained by filling one boundary component of the Whitehead link exterior.  Also, certain members of this family may be viewed as exteriors of other knots in lens spaces as well, \cite{hmw:sotwlygsm}.  
We discuss these features in Section~\ref{section:whitehead}.

\begin{remark}\label{remark:someoftheMs}
For small $n$ we have some familiar manifolds.
$M_{-1}$ is the positive trefoil knot exterior, $M_{-3}$ is the figure eight knot exterior, and $M_3$ is the figure eight sister manifold.  
\end{remark}

\begin{remark}\label{remark:M-2}
The manifold $M_{-2}$ exhibits some markedly different behavior than the other manifolds in the family.  Throughout this article it will appear as an exceptional case.  One may expect such exceptional behavior as the first Betti number of $M_{-2}$ is two, whereas it is one for all the other $M_n$.  Indeed, as the monodromy of $M_{-2}$ is a single Dehn twist along the essential simple closed curve $c$, this curve sweeps out an essential non-separating torus. 
\end{remark}

\subsection{Algebra}\label{sec:fundgroup}

Let $\gamma$ and $\beta$ be oriented loops on $T$ based at a point $* \in \bdry T$  so that $\gamma, \beta$ are freely homotopic to $c,b$ respectively and, conflating based curves with their homotopy classes, $\pi_1(T,*) = \langle \gamma, \beta \rangle$. We use \: $\bar{}$\:  to denote inverses.   Then, on the level of $\pi_1$, 
\[ (\tau_c)_* \colon \begin{cases} \gamma \mapsto \gamma \\ \beta \mapsto \beta\gamma \end{cases} 
\quad \mbox{ and } \quad
(\tau_b)_* \colon \begin{cases} \gamma \mapsto \gamma \bar{\beta}  \\ \beta \mapsto \beta. \end{cases}\]
So this implies
\[ \phi_* = (\tau_c)_* \circ (\tau_b^{n+2})_* \colon 
\begin{cases} 
\gamma  \mapsto \gamma\bar{\beta} ^{n+2}  \mapsto \gamma ( \bar{\gamma} \bar{\beta})^{n+2}  \\ 
\beta \mapsto  \beta \mapsto \beta \gamma. \end{cases}\]
Therefore  
\begin{align*}
\pi_1(M) &= \langle \gamma, \beta,\mu \colon \phi_*(\gamma) = \mu \gamma \bar{\mu} , \phi_*(\beta) = \mu \beta \bar{\mu} \rangle \\
&= \langle \gamma, \beta,\mu \colon
 \gamma ( \bar{\gamma} \bar{\beta})^{n+2}  = \mu \gamma \bar{\mu},  
  \beta \gamma =\mu \beta \bar{\mu} \rangle\\
&= \langle \gamma, \beta,\mu \colon
  \gamma(\mu \bar{\beta} \bar{\mu})^{n+2}  = \mu \gamma \bar{\mu},  
  \beta \gamma =\mu \beta \bar{\mu} \rangle\\
&= \langle \gamma, \beta,\mu \colon
   \bar{\beta}^{n+2} =  \bar{\mu} \bar{\gamma} \mu\gamma,  
   \gamma = \bar{\beta} \mu \beta \bar{\mu} \rangle\\
&= \langle \beta,\mu \colon
   \bar{\beta}^{n+2} =   \bar{\mu} (\mu \bar{\beta} \bar{\mu} \beta) \mu (\bar{\beta} \mu \beta \bar{\mu})  \rangle\\
&= \langle \beta,\mu \colon
   \bar{\beta}^{n} =   \bar{\mu} \beta \mu \bar{\beta} \mu \beta \bar{\mu}\beta  \rangle\\
&= \langle \beta,\mu \colon
   \bar{\beta}^{n} =   (\bar{\mu} \beta) \beta (\bar{\beta} \mu) (\bar{\beta} \mu) \beta (\bar{\mu}\beta)  \rangle\\   
&= \langle  \alpha, \beta \colon
\bar{\beta}^n = \bar{\alpha} \beta \alpha \alpha \beta \bar{\alpha} \rangle \mbox{ where } \mu = \beta \alpha.
\end{align*}

\begin{definition}
Notice, $\mu$ represents primitive element of $\pi_1 (\bdry M_n)$ and may be realized as an embedded curve in $\bdry M_n$ transversally intersecting each fiber once. We call this curve {\em the meridian} of $M_n$. Killing $\mu$ gives the cyclic group $\langle \beta \colon \bar{\beta}^{n+2} = 1 \rangle$ which corresponds to a lens space Dehn filling $M_n$ along the slope of $\mu$, see Section~\ref{section:whitehead}.  In particular, the meridian of $M_n$ is the meridian of a knot in this lens space whose exterior is $M_n$.
\end{definition}

\begin{definition}
Define $\Gamma_n = \langle \alpha, \beta : \bar{\beta}^n=\omega \rangle$ where $\omega= \bar{\alpha}\beta \alpha^2\beta\bar{\alpha}$.
\end{definition}

\begin{lemma}\label{lemma:boundarypi1}
The fundamental group of a once-punctured torus bundle $M_n$ with tunnel number one is
$\pi_1(M_n) \cong \Gamma_n$, the boundary of a fiber corresponds to $(\bdry T)_* = \lambda = (\alpha \beta\bar{\alpha})\beta(\alpha \bar{\beta}\bar{\alpha})\bar{\beta}$, and the meridian corresponds to $\mu = \beta \alpha$.  
\end{lemma}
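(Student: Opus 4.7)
The plan is to observe that the isomorphism $\pi_1(M_n)\cong\Gamma_n$ and the identification $\mu=\beta\alpha$ are essentially already established by the chain of Tietze transformations displayed immediately before the lemma. What remains is to confirm that chain is valid and to pin down the image of $(\partial T)_*$ in the new generators. So the proof proposal has three parts.

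First, I would justify the presentation. Since $M_n$ is a surface bundle $T\times[0,1]/(x,0)\sim(\phi_n(x),1)$, its fundamental group is the HNN extension $\langle \pi_1(T),\mu\mid \mu g \bar\mu = \phi_*(g),\ g\in\pi_1(T)\rangle$, so with $\pi_1(T)=\langle\gamma,\beta\rangle$ it suffices to enforce the two relations on the free generators. Substituting the formulas for $(\tau_c)_*$ and $(\tau_b)_*$ given above and computing $\phi_* = (\tau_c)_*\circ(\tau_b^{n+2})_*$ yields the first line of the displayed calculation. From there the reduction is purely algebraic: use the second relation to solve $\gamma = \bar\beta\mu\beta\bar\mu$ and eliminate $\gamma$; set $\alpha=\bar\mu\beta$ (equivalently $\mu=\beta\alpha$); and rewrite to obtain the single relation $\bar\beta^n = \bar\alpha\beta\alpha^2\beta\bar\alpha$, i.e.\ $\Gamma_n$.

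Second, the meridian assertion is immediate from the setup: $\mu$ is by construction the stable letter of the HNN extension, which is represented by an embedded primitive curve in $\partial M_n$ transverse to each fiber. The above substitution then reads $\mu=\beta\alpha$ in the new generators of $\Gamma_n$.

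Third, for the longitude, the boundary of the once-punctured torus fiber is represented in $\pi_1(T)=\langle\gamma,\beta\rangle$ by the commutator $[\gamma,\beta]=\gamma\beta\bar\gamma\bar\beta$. Using the substitution above,
\[ \gamma = \bar\beta\mu\beta\bar\mu = \bar\beta(\beta\alpha)\beta(\bar\alpha\bar\beta) = \alpha\beta\bar\alpha\bar\beta, \]
so $\bar\gamma=\beta\alpha\bar\beta\bar\alpha$, and a direct expansion gives
\[ [\gamma,\beta] = (\alpha\beta\bar\alpha\bar\beta)\,\beta\,(\beta\alpha\bar\beta\bar\alpha)\,\bar\beta = (\alpha\beta\bar\alpha)\,\beta\,(\alpha\bar\beta\bar\alpha)\,\bar\beta = \lambda, \]
as claimed. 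The only thing that could go wrong is an orientation or inversion error; the main obstacle, such as it is, is bookkeeping the bar conventions consistently between the mapping class formulas $(\tau_c)_*,(\tau_b)_*$ and the HNN relation $\phi_*(g)=\mu g\bar\mu$. Once those are fixed (as in the displayed computation), everything is a routine substitution.
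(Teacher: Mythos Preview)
Your proposal is correct and follows essentially the same route as the paper: the isomorphism and $\mu=\beta\alpha$ come from the displayed Tietze transformations, and $\lambda$ is obtained by writing $(\partial T)_*=[\gamma,\beta]=\gamma\beta\bar\gamma\bar\beta$ and substituting $\gamma=\bar\beta\mu\beta\bar\mu=\alpha\beta\bar\alpha\bar\beta$. One small slip: you write ``set $\alpha=\bar\mu\beta$ (equivalently $\mu=\beta\alpha$)'', but $\mu=\beta\alpha$ gives $\alpha=\bar\beta\mu$, not $\bar\mu\beta$; fortunately your subsequent substitution uses the correct $\mu=\beta\alpha$, so the computation is unaffected.
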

\begin{proof}
That $\pi_1(M_n) \cong \Gamma_n$ and $\mu = \beta \alpha$ follows from  above.  Giving $\bdry T$ the boundary orientation, $\lambda = (\bdry T)_* = \gamma \beta \bar{\gamma} \bar{\beta}$.  Now use that $\gamma = \bar{\beta} \mu \beta \bar{\mu} = \alpha \beta \bar{\alpha} \bar{\beta}$.
\end{proof}

\begin{lemma}\label{lem:H2Gamma}
$H^2(\Gamma_n;\Z/2\Z) = 0$ if $n$ is odd and $\Z/2\Z$ if $n$ is even.
\end{lemma}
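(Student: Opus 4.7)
The plan is to realize $H^2(\Gamma_n;\Z/2\Z)$ as the mod-$2$ cellular cohomology of the presentation $2$-complex $X_n$ of $\Gamma_n$, and to read off the answer from the exponent sums of the defining relator. Recall that $X_n$ has a single $0$-cell, two $1$-cells $e_\alpha, e_\beta$, and a single $2$-cell attached along $r_n:=\omega\beta^n \in F=\langle\alpha,\beta\rangle$. First I would certify that $X_n$ is a $K(\Gamma_n,1)$: since $M_n$ is a mapping torus of a homeomorphism of the once-punctured torus it is aspherical, so by Lemma~\ref{lemma:boundarypi1} the one-relator group $\Gamma_n\cong \pi_1(M_n)$ is torsion-free. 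By the Magnus-Karrass-Solitar torsion theorem, a one-relator group is torsion-free if and only if its defining relator is not a proper power in the ambient free group; in particular $r_n$ is not a proper power in $F$. Lyndon's Identity Theorem then ensures that $X_n$ is aspherical, so $H^*(\Gamma_n;\Z/2\Z)\cong H^*(X_n;\Z/2\Z)$.

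Next I would compute directly using the cellular cochain complex
\[
0\longrightarrow \Z/2\Z \xrightarrow{\delta^0} (\Z/2\Z)^2 \xrightarrow{\delta^1} \Z/2\Z \longrightarrow 0.
\]
There is a single vertex, so $\delta^0=0$; the coboundary $\delta^1$ is the mod-$2$ reduction of the abelianized attaching map of the $2$-cell, and is therefore determined by the mod-$2$ exponent sums of $\alpha$ and $\beta$ in $r_n$. The word $\omega=\bar\alpha\beta\alpha^2\beta\bar\alpha$ has $\alpha$-exponent sum $-1+2-1=0$ and $\beta$-exponent sum $2$, so $r_n=\omega\beta^n$ has $\alpha$-exponent sum $0$ and $\beta$-exponent sum $n+2 \equiv n \pmod 2$. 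Hence $\delta^1$ is surjective when $n$ is odd and identically zero when $n$ is even, giving $H^2(\Gamma_n;\Z/2\Z)=0$ for $n$ odd and $\Z/2\Z$ for $n$ even, as claimed.

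The main obstacle is establishing asphericity of $X_n$, and I expect the topological route through $M_n$ combined with the Magnus-Karrass-Solitar/Lyndon package to be cleanest; a purely combinatorial verification that $r_n$ is not a proper power (via its cyclically reduced syllable structure) could also be done but is less transparent. As a sanity check, one can bypass the explicit $\delta^1$ computation entirely: once $X_n$ is known to be a $K(\Gamma_n,1)$, the Euler characteristic $\chi(X_n)=1-2+1=0$ together with $\Gamma_n^{\mathrm{ab}}\cong \Z \oplus \Z/(n+2)\Z$ forces $\dim_{\Z/2\Z}H^2(\Gamma_n;\Z/2\Z) = \dim_{\Z/2\Z}H^1(\Gamma_n;\Z/2\Z) - 1$, which yields $0$ for odd $n$ and $1$ for even $n$, in agreement with the direct calculation.
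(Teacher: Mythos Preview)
Your proof is correct, but it takes a different route from the paper's. The paper uses $M_n$ itself as the $K(\Gamma_n,1)$: since $M_n$ is aspherical, $H^2(\Gamma_n;\Z/2\Z)\cong H^2(M_n;\Z/2\Z)$, and the latter is computed via the Universal Coefficient Theorem from $H_1(M_n;\Z)\cong \Z\oplus\Z/(n+2)\Z$ and $H_2(M_n;\Z)=0$, yielding $\mathrm{Ext}(\Z/(n+2)\Z,\Z/2\Z)=\Z/\gcd(n+2,2)\Z$. You instead pass to the presentation $2$-complex $X_n$, justify its asphericity via the Magnus--Karrass--Solitar/Lyndon package, and read off $H^2$ from the mod-$2$ exponent sums of the relator. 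Your argument is a bit circuitous in that you still invoke asphericity of $M_n$ (to get torsion-freeness, hence that $r_n$ is not a proper power), so you could have stopped there and used $M_n$ directly as the paper does; what your approach buys is that the final computation is completely transparent---just the parity of one exponent sum---without needing to know $H_2(M_n;\Z)$ from the bundle structure. Your Euler-characteristic cross-check is a nice touch and gives an independent confirmation.
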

\begin{proof}
Since our once-puctured torus bundle $M_n$ is aspherical and $\pi_1(M_n) = \Gamma_n$, it is a $K(\Gamma_n,1)$.   Thus $H^2(\Gamma_n; \Z/2\Z) \cong H^2(M_n; \Z/2\Z)$.  Since $H_1(M_n;\Z) = \Z/(n+2)\Z$ and $H_2(M_n;\Z)=0$, the Universal Coefficient Theorem for cohomology implies that $H^2(M_n;\Z/2\Z) = {\rm Ext}(\Z/(n+2)\Z, \Z/2\Z) = \Z/{\rm gcd}(n+2,2)\Z$.
\end{proof}

\begin{lemma}\label{lem:HomGamma}
${\rm Hom}(\Gamma_n; \Z/2\Z) = \Z/2\Z$ if $n$ is odd and $ \Z/2\Z \times \Z/2\Z$ if $n$ is even.
\end{lemma}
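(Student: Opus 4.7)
The plan is to bypass any detour through homology and count homomorphisms directly from the presentation. Since $\Z/2\Z$ is abelian, every homomorphism $\rho\colon \Gamma_n\to\Z/2\Z$ factors through $\Gamma_n^{\mathrm{ab}}$, so $\rho$ is completely determined by the images $a=\rho(\alpha)$ and $b=\rho(\beta)$, and the only constraint on the pair $(a,b)\in(\Z/2\Z)^2$ is that the single defining relator $\bar\beta^n\omega^{-1}$ be killed under the homomorphism.

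First I would abelianize the relator $\omega = \bar\alpha\beta\alpha^2\beta\bar\alpha$ modulo $2$. Reading off exponent sums, the $\alpha$--exponent is $-1+2-1=0$ and the $\beta$--exponent is $1+1=2\equiv0\pmod 2$, so $\omega$ contributes nothing to the relation in $(\Z/2\Z)^2$, independent of $n$. The abelianized image of $\bar\beta^n$ is $-nb\equiv nb\pmod 2$. Consequently, the relation $\bar\beta^n=\omega$ imposes exactly one condition on $(a,b)$, namely
\[ nb \equiv 0 \pmod{2}. \]

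Finally I split on the parity of $n$. If $n$ is odd, this condition forces $b=0$ while $a$ remains free, yielding exactly two homomorphisms and the abelian group $\mathrm{Hom}(\Gamma_n,\Z/2\Z)\cong\Z/2\Z$ under pointwise addition. If $n$ is even, the condition is vacuous, so both $a$ and $b$ range independently over $\Z/2\Z$, giving four homomorphisms and $\mathrm{Hom}(\Gamma_n,\Z/2\Z)\cong\Z/2\Z\oplus\Z/2\Z$.

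There is no serious obstacle: the only place that needs care is the exponent-sum bookkeeping in $\omega$ modulo $2$ together with the sign in $\bar\beta^n$. Once one sees that $\omega$ is trivial in $(\Z/2\Z)^2$ and only the parity of $n$ in front of $b$ survives, the two cases of the lemma are immediate.
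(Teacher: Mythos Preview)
Your proof is correct and follows essentially the same approach as the paper: both determine a homomorphism by the images of $\alpha$ and $\beta$, observe that the relator $\omega$ has even exponent sum in each generator so imposes no constraint in $\Z/2\Z$, and conclude that the only surviving condition is $n\cdot\rho(\beta)=0$, which forces $\rho(\beta)=0$ exactly when $n$ is odd. The paper's write-up is more terse and uses multiplicative notation, but the argument is the same.
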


\begin{proof}
An element $\varphi \in \text{Hom}(\Gamma_n; \Z/2\Z)$ is given by a mapping of the generators $\alpha$ and $\beta$ into $\Z/2\Z$ such that the relation $\varphi(\bar{\beta}^n) = \varphi(\bar{\alpha} \beta \alpha^2 \beta \bar{\alpha})$ holds.  Since this relation reduces to $\varphi(\beta)^n = 1$, $\varphi(\alpha)$ has no constraints while we must have $\varphi(\beta) = 1$ if $n$ is odd.  
\end{proof}

\subsection{Geometry}

\begin{lemma}\label{lem:geometry}
$M_n$ is hyperbolic if and only if $|n|>2$, contains an essential torus if and only if $|n|=2$, and is a Seifert fiber space if and only if $|n|<2$.
\end{lemma}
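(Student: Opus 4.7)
The plan is to classify the monodromy $\phi_n = \tau_c \tau_b^{n+2}$ by looking at its induced action on $H_1(T;\Z)$ and then read off the geometry via the Nielsen--Thurston trichotomy, together with Thurston's geometrization of surface bundles.

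First, I would fix the ordered basis $([c],[b])$ of $H_1(T;\Z)$, so that the right-handed Dehn twists $\tau_c$ and $\tau_b$ are represented in $\SL_2(\Z)$ by
\[
\tau_c \leftrightarrow \begin{pmatrix} 1 & 1 \\ 0 & 1 \end{pmatrix}, \qquad \tau_b \leftrightarrow \begin{pmatrix} 1 & 0 \\ -1 & 1 \end{pmatrix}.
\]
A direct multiplication gives
\[
\phi_n \leftrightarrow \begin{pmatrix} 1 & 1 \\ 0 & 1 \end{pmatrix}\begin{pmatrix} 1 & 0 \\ -(n+2) & 1 \end{pmatrix} = \begin{pmatrix} -(n+1) & 1 \\ -(n+2) & 1 \end{pmatrix},
\]
whose trace is $-n$, so $|\tr(\phi_n)| = |n|$.

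Next I would apply the standard classification of $\SL_2(\Z)$ elements (equivalently, of mapping classes of $T$): $\phi_n$ is elliptic of finite order when $|n|<2$, parabolic/reducible when $|n|=2$, and hyperbolic/pseudo-Anosov when $|n|>2$. These three cases translate directly into the three geometric conclusions. For $|n|>2$ the monodromy is pseudo-Anosov, so by Thurston's hyperbolization theorem for surface bundles \cite{otal} the interior of $M_n$ admits a complete finite-volume hyperbolic structure; in particular $M_n$ is atoroidal and not Seifert fibered. For $|n|=2$, a primitive eigenvector of the parabolic matrix represents an essential simple closed curve $\delta \subset T$ preserved by $\phi_n$ up to isotopy, and its mapping torus is an embedded torus in $M_n$; it is incompressible because it is carried by the essential subsurface $\delta \times S^1$ of the fibration (alternatively, the $n=-2$ case is already noted in Remark~\ref{remark:M-2} as the mapping torus of $\tau_c$, which is reducible in an obvious way, and $n=2$ is handled identically after recognizing that $\phi_2$ fixes an essential curve on $T$). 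For $|n|<2$ the monodromy has finite order, so suspending the periodic action yields a Seifert fibration on $M_n$, whose exceptional fibers sit over the orbifold points of the quotient of $T$ by the cyclic group generated by $\phi_n$.

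Finally I would note the three conclusions are mutually exclusive: hyperbolic $3$--manifolds are atoroidal and not Seifert fibered, and a Seifert fibered manifold of this form (with base orbifold a disk-with-cone-points) contains no essential torus. This completes all three equivalences.

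The main obstacle I anticipate is the $|n|=2$ case: one has to check that the torus produced by suspending the $\phi_n$--invariant curve is not only embedded but essential (neither compressible nor boundary-parallel) in $M_n$. The cleanest way is to verify that $\delta$ is a nonperipheral essential simple closed curve in $T$ and that $\phi_n$ restricts to a power of a Dehn twist along $\delta$, so that the complementary pieces of the mapping torus are themselves Seifert fibered JSJ components, ruling out compressibility. All other steps are routine applications of the Nielsen--Thurston dictionary and Thurston's theorem.
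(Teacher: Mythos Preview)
Your approach is essentially the same as the paper's: compute $[\phi_n]$ on $H_1(T;\Z)$, read off $|\tr[\phi_n]|=|n|$, and invoke the Nielsen--Thurston trichotomy together with Thurston's hyperbolization.

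There is one small inaccuracy in your $|n|=2$ discussion. For $n=2$ the trace is $-2$, so the eigenvalues are both $-1$: the invariant curve $\delta$ is preserved by $\phi_2$ only up to orientation reversal. Consequently the suspension of $\delta$ inside $M_2$ is a Klein bottle, not a torus. The paper handles this by observing that $\phi_{-2}$ fixes $c$ (yielding a non-separating essential torus) while $\phi_2$ reverses $c$ (yielding a one-sided Klein bottle whose regular neighborhood has an essential separating torus as boundary). Your sketch of essentiality via JSJ pieces still works once you make this adjustment, but as written the claim that ``its mapping torus is an embedded torus'' is false for $n=2$. Also note that the paper cites \texttt{cassonbleiler} rather than \texttt{otal}; the latter is not in the bibliography.
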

\begin{proof}
On the level of homology, using that $H_1(T;\Z) = \Z^2$ is generated by 
$[\gamma] = (1,0)^{T}$ and $[\beta] = (0,1)^{T}$, we have 
\[ [\tau_c] = \mat{1}{1}{0}{1} \quad \mbox{ and } \quad [\tau_b] = \mat{1}{0}{-1}{1} 
\quad \mbox{so that} \quad [\phi_n] = \mat{-(n+1)}{1}{-(n+2)}{1}.\]
Hence $|\tr [\phi_n]| = |n|$.
Therefore if $|n|>2$ then $\phi_n$ is pseudo-Anosov and $M$ is hyperbolic, see e.g.\  \cite{cassonbleiler}.  

For $|n| = 2$, we may observe that $\phi_{-2}$ fixes the curve $c$ while $\phi_{2}$ reverses it. Thus $M_{-2}$ contains an essential non-separating torus and $M_2$ contains an essential separating torus bounding a neighborhood of a Klein bottle to one side.  For $|n|<2$, we may observe that $\phi_n$ is periodic thus giving $M_n$ the structure of a Seifert fiber space over the disk with two exceptional fibers.
\end{proof}

Each element  $\gamma \in \Gamma \cong \pi_1(M)$ is either {\em parabolic} if $\tr(\overline{\rho}_0(\gamma)) = \pm2$ or {\em hyperbolic} (including loxodromic) otherwise, where $\overline{\rho}_0$ is a discrete faithful representation of $\gamma$ to $\PSL_2(\C)$.  Note that since $\Gamma$ is torsion-free there are no elliptic elements, those elements with real trace of magnitude less than $2$.

\begin{lemma}\label{lemma:nonperipheralgenerators}
If $M_n$ is a hyperbolic manifold then the elements $\alpha$ and $\beta$ of $\pi_1(M_n)$ are hyperbolic.
\end{lemma}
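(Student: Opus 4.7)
My approach is by contradiction: since $M_n$ is hyperbolic with one cusp and $\Gamma_n$ is torsion-free, any nontrivial element is either hyperbolic or parabolic, and the parabolic elements are exactly those conjugate into the peripheral subgroup $P = \langle\mu,\lambda\rangle \cong \Z^2$.  I will rule out $\alpha$ and $\beta$ lying in conjugates of $P$ by exploiting the short exact sequence
\[ 1 \to \pi_1(T) \to \Gamma_n \xrightarrow{\pi} \Z \to 1 \]
coming from the fibration, with $\pi_1(T) = \langle\gamma,\beta\rangle \cong F_2$, $\pi(\mu) = 1 = \pi(\alpha)$, and $\pi(\beta) = 0$.  Two inputs are key: $P \cap \pi_1(T) = \langle\lambda\rangle$ with $\lambda = [\gamma,\beta]$, and the monodromy $\phi_{n*}$ fixes $\lambda$ and abelianizes to the matrix $A_n$ from the proof of Lemma~\ref{lem:geometry}.

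For $\beta$, I would argue that every $\Gamma_n$-conjugate remains in the normal subgroup $\pi_1(T)$, and the $\Z$-projection forces any peripheral representative to take the form $\lambda^b$.  Thus $\beta$ would be conjugate in $\Gamma_n$ to an element of $[F_2,F_2]$.  Since $[F_2,F_2]$ is characteristic in $F_2$ and $\Gamma_n$-conjugation restricts to automorphisms of $F_2$, such a conjugate stays in $[F_2,F_2]$, contradicting that $\beta$ has nontrivial image in $F_2^{\mathrm{ab}}$.

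For $\alpha$, the hypothetical relation $\alpha = g\mu\lambda^b g^{-1}$ (forced by $\pi(\alpha) = 1$), with $g = u\mu^k$ and $u \in F_2$, simplifies via $[\mu,\lambda]=1$ and $\phi_{n*}(\lambda) = \lambda$ to an equation $\bar\beta = u\lambda^b\phi_{n*}(u^{-1})$ in $F_2$.  I would then abelianize to obtain a linear system with determinant $n+2$, so integer solvability forces $(n+2) \mid 1$.  As $|n| > 2$, only the case $n = -3$ survives this homological argument.

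The main obstacle will be this remaining case $n = -3$, where $M_{-3}$ is the figure-eight knot exterior and homology alone does not obstruct the $F_2$-equation.  To close it I would either perform a word-length and cancellation analysis---using that $\phi_{-3}$ strictly expands lengths, so that $u\lambda^b\phi_{-3}(u^{-1})$ cannot reduce to the single letter $\bar\beta$---or identify $\Gamma_{-3}$ with the standard figure-eight knot group and verify directly that $\tr\rho(\alpha) \neq \pm 2$ for the discrete faithful representation.
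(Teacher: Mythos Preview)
Your approach is correct and genuinely different from the paper's.  The paper does not use the fibration exact sequence at all: instead it observes that $\{\mu,\alpha\}$ and $\{\mu,\beta\}$ are each generating sets with $\mu$ peripheral, and invokes a result of Boileau--Weidmann to the effect that if a one-cusped hyperbolic $3$--manifold group is generated by two parabolics then the manifold is a two-bridge knot exterior.  This immediately forces $n\in\{-1,-3\}$, hence $n=-3$ in the hyperbolic range, and the paper then finishes exactly as in your second alternative --- by writing down Riley's discrete faithful representation of the figure-eight knot group and checking that $\tr\rho_0(\alpha)$ and $\tr\rho_0(\beta)$ are non-real.

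Your route trades that external $3$--manifold input for elementary algebra.  The $\beta$ argument (commutator subgroup is characteristic, $\lambda\in[F_2,F_2]$, $\beta\notin[F_2,F_2]$) is clean and self-contained.  For $\alpha$, your reduction $\bar\beta = u\lambda^b\phi_{n*}(u^{-1})$ and the abelianization with $\det(I-[\phi_n])=n+2$ correctly isolate $n=-3$; this is a nice replacement for the Boileau--Weidmann step.  One small caution: your word-length heuristic for closing $n=-3$ is shakier than you suggest.  Already for $u=\gamma\bar\beta$ one computes $\phi_{-3*}(u^{-1})=\bar\gamma$, so $u\phi_{-3*}(u^{-1})=\gamma\bar\beta\bar\gamma$ has length $3$, and cancellation in $u\lambda^b\phi_{-3*}(u^{-1})$ is not governed by a simple expansion bound.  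A careful twisted-conjugacy argument in $F_2$ is possible but not as immediate as ``$\phi_{-3}$ expands lengths.''  Your fallback of computing traces at the discrete faithful character is the safe and quick finish, and coincides with what the paper does.
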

\begin{proof}
Since $\mu$ is a peripheral element and $\mu=\beta\alpha$, we may take either $\{\mu, \alpha\}$ or $\{\mu, \beta\}$ instead of $\{\alpha, \beta\}$ as a generating set for $\pi_1(M_n)$.
If either $\alpha$ or $\beta$ is a peripheral element, then \cite[Corollary 5]{BoileauWeidmann} implies that $M_n$ is homeomorphic to the exterior of a two-bridge knot in $S^3$ (because $M_n$ is a compact, orientable, irreducible, and $\bdry$-irreducible $3$-manifold).  In particular, since $M_n$ is a once-punctured torus bundle, it is necessarily homeomorphic to the exterior of the figure eight knot or a trefoil,  i.e.\ $n=-3$ or $n=-1$ respectively.  Assuming $M_n$ is hyperbolic, we must have $n=-3$.

With $n=-3$, we may write 
\begin{align*}
 \pi_1(M_{-3}) &= \langle \beta,\mu \colon   \beta^3 =  \bar{\mu} \beta \mu \bar{\beta} \mu \beta \bar{\mu}\beta  \rangle\\
 &= \langle \beta,\mu \colon   1 =  \bar{\beta} \bar{\mu} \beta \mu \bar{\beta} \mu \beta \bar{\mu}\bar{\beta}  \rangle\\
 &=  \langle \mu, \nu \colon  1 = \bar{\nu} \bar{\mu} \nu \mu \bar{\nu} \mu \nu \bar{\mu} \bar{\nu} \mu \rangle \mbox{ where } \nu = \mu \beta\\
 &= \langle \mu, \nu \colon  \bar{\mu} \nu \mu \bar{\nu} \mu = \nu \bar{\mu} \nu \mu \bar{\nu} \rangle
 \end{align*}
 which is a familiar presentation generated by peripheral elements derived from viewing the figure eight knot as a two-bridge link.  Following \cite{riley},  this group has a discrete faithful representation given by
 \[  \mu \mapsto \mat{1}{1}{0}{1} \quad \mbox{ and } \quad \nu \mapsto \mat{1}{0}{e^{\pi i/3}}{1}. \]
 Thus we obtain
 \[ \beta \mapsto \mat{1-e^{\pi i/3}}{-1}{e^{\pi i/3}}{1} \quad \mbox{ and } \quad \alpha \mapsto \mat{1}{2}{-e^{\pi i/3}}{1-2e^{\pi i/3}} \]
 which have complex traces $2-e^{\pi i/3}$ and $2-2e^{\pi i/3}$ respectively.  Hence both $\alpha$ and $\beta$ are hyperbolic elements.
\end{proof}

%
%
 \section{Character Varieties Preliminaries}\label{section:charvarpreliminaries}
 %
 %
 
 Let $\Gamma$ be a finitely generated group with generating set $\{ \gamma_1, \dots, \gamma_N\}$.   The {\em $\SL_2(\C)$ representation variety}, $R(\Gamma)=\text{Hom}(\Gamma, \SL_2(\C))$ is an affine algebraic set defined over $\Q$ as seen by using the four entries of the images of the $\gamma_n$ under $\rho\in R(\Gamma)$ as coordinates for $\rho$. The isomorphism class of this algebraic set is independent of the choice of generators.  In general, $R(\Gamma)$ is not irreducible as an affine algebraic set.  
 
  A representation $\rho\in R(\Gamma)$ is {\em reducible} if all $\rho(\gamma)$ with $\gamma \in \Gamma$ share a common one-dimensional eigenspace. Otherwise, $\rho$ is called irreducible.  A representation $\rho\in R(\Gamma)$ is {\em abelian} if its image is an abelian subgroup of $\text{SL}_2(\C)$.  
 
\subsection{$\SL_2(\C)$ Character Varieties}\label{sec:charvar}

The {\em character} of  a representation $\rho\colon \Gamma \rightarrow \text{SL}_2(\C)$ is the function $\chi_{\rho}\colon \Gamma \rightarrow \C$ defined by $\chi_{\rho}(\gamma)=\tr(\rho(\gamma))$.  The set of characters of representations of $\Gamma$ into $\SL_2(\C)$, is the set 
\[ \tilde{X}(\Gamma)=\{\chi_{\rho}:\rho\in R(\Gamma)\}. \]  This is often denoted $X(\Gamma)$, but we will reserve this notation for a specific subset of $\tilde{X}(\Gamma)$ (namely the Zariski closure of the subset of characters of irreducible representations as described below).   For all $\gamma \in \Gamma$ define the function $t_{\gamma}\colon R(\Gamma) \rightarrow \C$ by $t_{\gamma}(\rho) = \chi_{\rho}(\gamma)$. 

Let $T$ be the subring of all functions from $R(\Gamma)$ to $\C$ generated by $1$ and the functions $t_{\gamma}$ for all $\gamma \in \Gamma$.  The ring $T$ is generated by the elements 
\[ t_{\gamma_{i_1}, \dots, \gamma_{i_r}}, 1\leq i_1 <\dots < i_r \leq N. \]
This implies that a character $\chi_{\rho}$ is determined by its values on finitely many elements of $\Gamma$. 
In the case when $\Gamma$ has a presentation with only two generators, $\gamma_1$ and $\gamma_2$, $T$ is generated by $t_{\gamma_1}$, $t_{\gamma_2}$, and $t_{\gamma_1\gamma_2}$.  
If $h_1(\rho), \dots h_m(\rho)$ are generators of $T$, then the map $R(\Gamma)\rightarrow \C^m$ given by $\rho \rightarrow (h_1(\rho), \dots, h_m(\rho))$ induces an injection $\tilde{X}(\Gamma)\rightarrow \C^m$. This gives $\tilde{X}(\Gamma)$ the structure of a closed algebraic subset of $\C^m$ (see \cite{cs1983}). It follows that $\tilde{X}(\Gamma)$ has coordinate ring $T_{\C}=T\otimes \C$.  The isomorphism class (over $\Z$) of this algebraic set is independent of the choice of generators for $\Gamma$.   
 The set $\tilde{X}(\Gamma)$ is called the {\em $\text{SL}_2(\C)$ character variety} of $\Gamma$.

 The group $\text{SL}_2(\C)$ acts on $R(\Gamma)$ by conjugation.  Let $\hat{R}(\Gamma)$ denote the set of orbits.  Two representations $\rho, \rho' \in R(\Gamma)$ are conjugate if they lie in the same orbit. Two conjugate representations give the same character, so that the trace map $R(\Gamma) \rightarrow \tilde{X}(\Gamma)$ induces a well-defined map $\hat{R}(\Gamma) \rightarrow \tilde{X}(\Gamma)$.  This map need not be injective, but it is injective when restricted to irreducible representations.  Specifically,  if $\rho, \rho' \in R(\Gamma)$ have equal characters $\chi_{\rho}=\chi_{\rho'}$ and $\rho$ is irreducible, then $\rho$ and $\rho'$ are conjugate, and therefore both $\rho$ and $\rho'$ are irreducible (see \cite{cs1983} Proposition 1.5.2). We will say that a character $\chi_{\rho}$ is  {\em reducible}, {\em irreducible} or  {\em abelian}  if $\rho$ is. 
  
Let $\tilde{X}_{a}(\Gamma)$, $\tilde{X}_{red}(\Gamma)$ and $\tilde{X}_{irr}(\Gamma)$ denote the set of characters of abelian, reducible, and irreducible representations $\rho\in \R(\Gamma)$, respectively. The set $\tilde{X}_{red}(\Gamma)$ is a Zariski closed algebraic set (see \cite{MR1321291} Proposition 1.3(ii)) and as a set $\tilde{X}_{irr}(\Gamma)=\tilde{X}(\Gamma)-\tilde{X}_{red}(\Gamma)$.

We will explicitly compute both $\tilde{X}_{red}(\Gamma_n)$ and $\tilde{X}_{irr}(\Gamma_n)$, and will show (Proposition  \ref{prop:redreps}) that  $\tilde{X}_a(\Gamma_n)=\tilde{X}_{red}(\Gamma_n)$ and that this set is a collection of affine conics and lines (unless $n=-2$).  Therefore we will focus primarily on the Zariski closure of $\tilde{X}_{irr}(\Gamma)$, which we will denote $X(\Gamma_n)$.  We will often refer to this set as the {\em$\text{SL}_2(\C)$ character variety of $\Gamma_n$}.

 There is a non-singular projective model in every birational equivalence class of algebraic curves. This model is unique up to isomorphism. \cite[\S4.5 p.121]{Shafarevich}.   Therefore we will study the birational equivalence class of $X(\Gamma_n)$ and determine such a smooth model after projectivization.  We use the notation $\A$ for the affine line, $\C$.

\subsubsection{Hyperbolic $3$-manifolds and the canonical component.}
  
If $M$ is a finite volume oriented hyperbolic $3$-manifold, then $M$ is isomorphic to a quotient of $\mathbb{H}^3$ by a torsion-free discrete group, $\Gamma$ and $\pi_1(M) \cong \Gamma$.  By Mostow-Prasad rigidity there is a discrete faithful representation  $\overline{\rho}_0:\Gamma \rightarrow \text{Isom}^{+}(\mathbb{H}^3) \cong \text{PSL}_2(\C)$  that is unique up to conjugation.  (This conjugation is why we use the character variety instead of the representation variety.)  This defines an action of $\Gamma$ on $\mathbb{H}^3$ whose quotient $\mathbb{H}^3/\Gamma$ is isometric  to $M$.  The representation $\overline{\rho}_0$ can be lifted 
 to a discrete faithful representation $\rho_0:\Gamma\rightarrow \text{SL}_2(\C)$.  (There may be more than one such lift.) The character $\chi_{\rho_0}$ of such a representation is contained in a single component of $X(\Gamma)$ (rather than in the intersection of multiple components).  The complex dimension of this component equals the number of cusps of $\Gamma$ (see \cite{thurston}).  Such a component is called a {\em canonical component}, and any canonical component is denoted $X_0(\Gamma)$.

 \subsection{$\PSL_2(\C)$ Character Varieties}\label{section:PSL}

 There are various constructions for the $\PSL_2(\C)$ representation and character varieties.   We follow the treatment in \cite{MR2199350}. 
 We let $\tilde{Y}(\Gamma_n)$ denote the full $\PSL_2(\C)$ character variety, and let $\tilde{Y}_{red}(\Gamma_n)$ denote the characters of reducible representations to $\PSL_2(\C)$.  Let $Y(\Gamma_n)$ be the Zariski closure of the set of irreducible characters, the closure of ${\tilde{Y}(\Gamma_n)-\tilde{Y}_{red}(\Gamma_n)}$.  
If $M_n$ is hyperbolic, then we denote a component of $Y (\Gamma_n)$ that contains the character of the
discrete faithful representation of $\Gamma_n$ by $Y_0(\Gamma_n)$.  
 
  There is a natural action of $H^1(\Gamma_n; \Z/2\Z)$  on $\tilde{X}(\Gamma_n)$.    The quotient of this action is contained in $\tilde{Y}(\Gamma_n)$.   By Lemma~\ref{lem:HomGamma} this covering  is of order two if $n$ is odd and order four if $n$ is even.  If $\rho$ is a representation of $\Gamma_n$ to $\PSL_2(\C)$, the  second Stiefel-Whitney class  $\omega_2(\rho) \in H^2(\Gamma_n; \Z/2\Z)$ is precisely the obstruction for $\rho$ to lift to a representation into $\SL_2(\C)$.  By Lemma~\ref{lem:H2Gamma}, $H^2(\Gamma_n; \Z/2\Z)$ is trivial if $n$ is odd.  Therefore all representations of $\Gamma_n$ to $\PSL_2(\C)$ lift to $\SL_2(\C)$  and the $\SL_2(\C)$ character variety is a two-fold cover of the $\PSL_2(\C)$ character variety.  When $n$ is even, $H^2(\Gamma_n; \Z/2\Z)$  has order two.  Therefore some representations to $\PSL_2(\C)$  do not lift to representations of $\SL_2(\C)$.  The $\SL_2(\C)$ character variety is a four-fold cover of the subvariety of $\tilde{Y}(\Gamma_n)$ consisting of those representations that lift. 
   
    Those representations to $\PSL_2(\C)$ which do not lift to $\SL_2(\C)$ can be determined by considering a representation $\rho$ to $\SL_2(\C)$ such that $\rho$ sends the defining relation to negative the identity matrix, instead of to the identity matrix.

 We will now describe the action of  $H^1(\Gamma_n; \Z/2\Z)$  on $\tilde{X}(\Gamma_n)$ in more detail.
 Let $\mu_2\cong\{\pm1\}$ be the kernel of the homomorphism from $\SL_2(\C)$ to $\PSL_2(\C)$.  An element $\sigma \in \text{Hom}(\Gamma_n,\mu_2) $ acts on $\chi \in \tilde{X}(\Gamma_n)$ by $(\sigma \chi)(\gamma)=\sigma(\gamma)\chi(\gamma)$ for all $\gamma\in \Gamma_n$.  
   By Lemma~\ref{lem:HomGamma} if $\sigma\in \text{Hom}(\Gamma_n; \mu_2)$ then $\sigma(\beta)$ is trivial if $n$ is odd, but this is not the case for $n$ even.
    
 First consider the case when $n$ is odd. 
  Define $\Gamma_n^e \subset \Gamma_n$ as the set consisting of all words when written in terms of $\alpha$ and $\beta$ have even exponent sum in $\alpha$.  This is a subgroup of $\Gamma_n$ of index two, as the cosets are $\Gamma_n^e$ and $\alpha \Gamma_n^e$.  
 The action of $\mu_2$ on $R(\Gamma_n)$ is given by $(-\rho)(\gamma)=-\rho(\gamma)$ for $\gamma \not \in \Gamma_n^e$ and $(-\rho)(\gamma)=\rho(\gamma)$ for $\gamma \in \Gamma_n^e$.  This induces an action of $\tilde{X}(\Gamma_n)$ given by  $-\chi_{\rho}=\chi_{-\rho}$.  The corresponding action on $T$ is negation of all $t_{\gamma}$ for $\gamma \not \in \Gamma_n^e$.  This can be fully expressed by its action on the triple $(\chi_{\rho}(\alpha), \chi_{\rho}(\beta), \chi_{\rho}(\alpha \beta)) \mapsto (-\chi_{\rho}(\alpha), \chi_{\rho}(\beta), -\chi_{\rho}(\alpha \beta)).$
  Therefore the $\PSL_2(\C)$ character variety $\tilde{Y}(\Gamma_n)$ is isomorphic to $\tilde{X}(\Gamma_n)/\mu_2$ and its coordinate ring is $T_e\otimes \C$ where $T_e=T^{\mu_2}$ is the subring of $T$  consisting of all elements invariant under the action of $\mu_2$.

 If $n$ is even, there is an action as above determined by $(\chi_{\rho}(\alpha), \chi_{\rho}(\beta), \chi_{\rho}(\alpha \beta)) \mapsto (-\chi_{\rho}(\alpha), \chi_{\rho}(\beta), -\chi_{\rho}(\alpha \beta)).$  In addition, as $\sigma(\beta)$ need not be trivial, we can form an analogous action with respect to words with even exponent sum in $\beta$.  This corresponds to an action defined by $(\chi_{\rho}(\alpha), \chi_{\rho}(\beta), \chi_{\rho}(\alpha \beta)) \mapsto (\chi_{\rho}(\alpha), -\chi_{\rho}(\beta), -\chi_{\rho}(\alpha \beta)).$  Together, they generate a $\Z/2\Z \times \Z/2\Z$ action on $\tilde{X}(\Gamma_n)$. 
The $\PSL_2(\C)$ character variety $\tilde{Y}(\Gamma_n)$ is isomorphic to $\tilde{X}(\Gamma_n)/\mu_2$ and its coordinate ring is the subring of $T$ consisting of all elements invariant under the action of $\mu_2$.

%
%
\section{The Fibonacci Polynomials}\label{section:fibonacci}
%
%

In this section we collect various facts about a recursively defined family of polynomials which will be used extensively throughout the manuscript.

\begin{definition}\label{definition:fibonacci} 
For any integer $n$ define the $n^{th}$ Fibonacci polynomial $f_n(u)$ by $f_0(u)=0$, $f_1(u)=1$ and for all other integers 
\[ f_{n-1}(u)+f_{n+1}(u) = uf_n(u). \]
If $u=s+s^{-1}$ and $u \neq \pm2$ then $f_n(u)$ can be explicitly written as $\displaystyle f_n(u)=\frac{s^n-s^{-n}}{s-s^{-1}}$.
\end{definition}

\begin{remark}
Our Fibonacci polynomials are a reparametrization of the Chebyshev polynomials of the second kind
and are not the standard Fibonacci polynomials.  The standard Fibonacci polynomials use the recurrence relation $-f_{n-1}(u)+f_{n+1}(u) = uf_n(u)$.
\end{remark}

Simple induction arguments using the recursion relation give the following information about the polynomials $f_n$.
\begin{lemma}\label{lemma:fkvals}
$ f_{2m}(0) = 0, \ \ f_{2m+1}(0) = (-1)^m, \ \ f_n(2) = n, \ \ f_n(-2) = (-1)^{n+1} n$. \qed
\end{lemma}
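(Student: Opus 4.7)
The plan is straightforward induction on $n$ using the recurrence $f_{n+1}(u) = u f_n(u) - f_{n-1}(u)$, applied separately at $u = 0$, $u = 2$, and $u = -2$. Since the Fibonacci polynomials are indexed by all integers, I would first record the symmetry $f_{-n}(u) = -f_n(u)$, which follows by a short induction in the negative direction from the same recurrence and the base cases $f_0 = 0$, $f_{-1} = -1$. All four identities in the lemma are compatible with this symmetry, so it suffices to handle $n \geq 0$.

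For $u = 0$ the recurrence collapses to $f_{n+1}(0) = -f_{n-1}(0)$, a simple two-step recursion. Combined with $f_0(0) = 0$ and $f_1(0) = 1$, this immediately gives $f_{2m}(0) = 0$ and $f_{2m+1}(0) = (-1)^m$ by induction on $m$.

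For $u = 2$, the recurrence specializes to $f_{n+1}(2) = 2 f_n(2) - f_{n-1}(2)$, and the inductive hypotheses $f_{n-1}(2) = n-1$, $f_n(2) = n$ yield $f_{n+1}(2) = 2n - (n-1) = n+1$ in one line, starting from $f_0(2) = 0$, $f_1(2) = 1$. The case $u = -2$ is entirely analogous: the recurrence becomes $f_{n+1}(-2) = -2 f_n(-2) - f_{n-1}(-2)$, and substituting $f_n(-2) = (-1)^{n+1} n$ and $f_{n-1}(-2) = (-1)^n (n-1)$ produces $(-1)^n(2n - (n-1)) = (-1)^{n+2}(n+1)$, closing the induction from the base cases $f_0(-2)=0$, $f_1(-2)=1$.

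There is no real obstacle; the only delicacy is ensuring the statement holds for all integers $n$, which is why I would record the $f_{-n} = -f_n$ symmetry at the outset rather than running separate downward inductions for each specialization.
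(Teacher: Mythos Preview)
Your proposal is correct and is exactly the approach the paper indicates: the lemma is stated with a \qed{} and the preceding sentence says ``Simple induction arguments using the recursion relation give the following information about the polynomials $f_n$.'' Your extra observation that $f_{-n}=-f_n$ cleanly handles the negative indices in one stroke, which is a nice touch the paper leaves implicit.
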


\begin{lemma}\label{lemma:fibonaccievenodd}\

\begin{enumerate}
\item  If $n\neq 0$ is even then $f_n(u)$ is odd, and if $n$ is odd then $f_n(u)$ is even. 
\item If $n\neq 0$ the degree of $f_n(u)$ is $|n|-1$. \qed
\end{enumerate}

\end{lemma}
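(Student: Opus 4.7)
The plan is to prove both parts by induction on $n$ using the defining three-term recurrence, first for positive $n$ and then extending to negative $n$ via the symmetry $f_{-n}(u)=-f_n(u)$, which itself follows from the recurrence together with $f_0=0$, $f_1=1$, $f_{-1}=-1$.

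For part (1), I would show by simultaneous induction on $n\geq 1$ the statement $f_n(-u)=(-1)^{n+1}f_n(u)$. The base cases $f_1(u)=1$ and $f_2(u)=u$ are immediate. For the inductive step, the recurrence gives $f_{n+1}(u)=uf_n(u)-f_{n-1}(u)$, so
\[
f_{n+1}(-u) = -u\,f_n(-u)-f_{n-1}(-u) = -u(-1)^{n+1}f_n(u)-(-1)^{n}f_{n-1}(u),
\]
and factoring $(-1)^{n+2}$ yields $f_{n+1}(-u)=(-1)^{(n+1)+1}f_{n+1}(u)$, completing the induction. Thus for $n\geq 1$ the polynomial $f_n$ is even in $u$ exactly when $n$ is odd, and odd in $u$ exactly when $n$ is even. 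For negative indices, $f_{-n}=-f_n$ preserves the parity, so the claim holds for all $n\neq 0$.

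For part (2), I would again induct on $n\geq 1$, with the base cases $\deg f_1=0$ and $\deg f_2=1$ clear. If $\deg f_n=n-1$ and $\deg f_{n-1}=n-2$, then $uf_n$ has degree $n$ while $f_{n-1}$ has degree $n-2$, so these cannot cancel leading terms and $\deg f_{n+1}=\deg(uf_n-f_{n-1})=n=(n+1)-1$. For $n<0$, the identity $f_{-n}=-f_n$ immediately gives $\deg f_n=|n|-1$.

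Neither part presents a real obstacle; the only small subtlety is ensuring that $uf_n$ genuinely dominates $f_{n-1}$ in degree at every stage, which is immediate from the inductive hypothesis, and verifying the reflection formula $f_{-n}=-f_n$, which is a one-line induction off the recurrence run backwards.
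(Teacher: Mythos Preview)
Your proof is correct and follows exactly the approach the paper indicates: the lemma is stated with a \qed\ symbol and no written proof, with the preceding sentence noting only that ``simple induction arguments using the recursion relation'' give these facts. You have supplied precisely those induction arguments (including the reflection $f_{-n}=-f_n$ needed to pass to negative indices), so there is nothing to add.
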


\begin{lemma}\label{lemma:ydividesf} 
The polynomial $f_n(u)$ is divisible by $u$ if and only if $n$ is even.  If $n \neq 0$, $u^2$ does not divide $f_n(u)$. 
\end{lemma}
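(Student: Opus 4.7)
The plan is to reduce both parts to evaluations at $u=0$ and then compute using the defining recurrence. For the first part, $u$ divides $f_n(u)$ if and only if $f_n(0)=0$, and Lemma~\ref{lemma:fkvals} gives $f_{2m}(0)=0$ and $f_{2m+1}(0)=(-1)^m$, so the condition $u \mid f_n(u)$ is equivalent to $n$ being even.

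For the second part, assume $n \neq 0$. If $n$ is odd, then $f_n(0) \neq 0$, so $u \nmid f_n(u)$ and hence certainly $u^2 \nmid f_n(u)$. If $n$ is even, Lemma~\ref{lemma:fibonaccievenodd} tells us that $f_n$ consists only of odd-degree terms, so $f_n(u) = c_1 u + c_3 u^3 + \cdots$, and $u^2 \mid f_n(u)$ is equivalent to the vanishing of the linear coefficient $c_1 = f_n'(0)$. I would compute $f_n'(0)$ by differentiating the recurrence $f_{n-1}(u)+f_{n+1}(u) = u f_n(u)$ and evaluating at $u=0$, which yields $f_{n-1}'(0)+f_{n+1}'(0) = f_n(0)$. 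Setting $a_m := f_{2m}'(0)$ and applying this identity with $n = 2m-1$ (so that $f_n(0) = (-1)^{m-1}$) gives the two-term recursion $a_{m-1}+a_m = (-1)^{m-1}$ with initial value $a_0 = f_0'(0) = 0$. A routine induction then shows $a_m = (-1)^{m+1} m$, which is nonzero whenever $m \neq 0$, so $u^2 \nmid f_{2m}(u)$ for every $m \neq 0$.

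The argument is almost entirely bookkeeping, and the main potential pitfall is simply keeping track of signs when invoking the values $f_{2m \pm 1}(0)$ from Lemma~\ref{lemma:fkvals}. An alternative would be to use the explicit substitution $u = s+s^{-1}$ from Definition~\ref{definition:fibonacci}, expand $f_n(u)$ and read off the low-order coefficients, but this requires separately treating the excluded values $u = \pm 2$, so the direct recurrence-based approach appears cleanest.
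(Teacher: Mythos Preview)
Your proof is correct and follows essentially the same approach as the paper: both use the values $f_{2m}(0)=0$, $f_{2m+1}(0)=(-1)^m$ for the first assertion, and both extract the linear coefficient of $f_{2m}(u)$ via the recurrence for the second. Your version is in fact more explicit---differentiating the recurrence and solving $a_{m-1}+a_m=(-1)^{m-1}$ to get $f_{2m}'(0)=(-1)^{m+1}m$---whereas the paper simply tracks ``the lowest order term'' through the recursion and asserts it is $\pm u$ without writing out the induction.
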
 

\begin{proof}  By the recursion (Lemma~\ref{lemma:fkvals}) we see that $f_{2m}(0)=0$ and $f_{2m+1}(0)=(-1)^m$ from which the first assertion follows.  This also implies that the constant term of $f_{2m+1}(u)$ is $(-1)^m.$  Together with the recursion, this implies that  the lowest order term of $f_{2m+2}(u)$ is $\pm u$ minus the lowest order term of $f_{2m}(u)$.  As $f_{2m}(u)$ is an even function, we conclude that the lowest order term of $f_{2m+2}(u)$ is $\pm u$ and therefore it cannot be divisible by $u^2$.
\end{proof}

\begin{definition}\label{definition:rootsofunity}
Let $\Zeta_n$ be the set of all $|n|^{th}$ roots of unity, and let $\Zeta_{n}^{fib}$ be the set of the $n^{th}$ roots of unity other than $1$, and $-1$ (if $n$ is even). 
That is, with $\zeta_{n}=e^{2 \pi i/n}$, 
\[\Zeta_{n} = \{ \zeta_n^k :  k =0, \dots, n-1\}, \ \ \ \Zeta_n^{fib} = \Zeta_n -\{\pm1\}.\]
We also define $\Rb_n$ and $\Rb_n^{fib}$ to be the set of all numbers of the form $\zeta_n^k+\zeta_n^{-k}$ for $\zeta_n$ in $\Zeta_n$ or $\Zeta_n^{fib}$ respectively.  That is,
\[ \Rb_n = \{ 2 \Re(\zeta_n) : \zeta_n\in \Zeta_n \}, \ \ \  \Rb_n^{fib} = \{ 2 \Re(\zeta_n) : \zeta_n\in \Zeta_n^{fib} \} \]
Note that $\Rb_n^{fib}=\Rb_n-\{\pm 2\}$ though $-2 \not \in \Rb_n$ when $n$ is odd.
\end{definition}

The reason for this distinction is that both sets will appear often in our computations; the sets $\Zeta_{2n}^{fib}$ and $\Rb_{2n}^{fib}$ are intimately related to the Fibonacci polynomials.

\begin{lemma}\label{lemma:fibroots}
For $n\neq 0$,  $f_n(u)=0$ if and only if $u \in \Rb_{2n}^{fib}$.
\end{lemma}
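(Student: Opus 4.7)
The plan is to exploit the closed-form expression $f_n(u) = (s^n - s^{-n})/(s - s^{-1})$ valid when $u = s + s^{-1}$ and $u \neq \pm 2$. The key point is that every $u \in \C$ can be written in the form $u = s + s^{-1}$ for some $s \in \C^*$ (by solving the quadratic $s^2 - us + 1 = 0$), and $u = \pm 2$ precisely when $s = \pm 1$.

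First I would dispense with the exceptional values $u = \pm 2$ by invoking Lemma~\ref{lemma:fkvals}, which gives $f_n(2) = n$ and $f_n(-2) = (-1)^{n+1}n$; since $n \neq 0$ these are nonzero, so $\pm 2$ is not a root of $f_n$ and also not in $\Rb_{2n}^{fib}$ by definition. Hence both the root set of $f_n$ and the set $\Rb_{2n}^{fib}$ avoid $\pm 2$, and we may work entirely in the regime $u \neq \pm 2$, where the closed form applies and $s \neq \pm 1$.

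In this regime, $f_n(u) = 0$ if and only if $s^n = s^{-n}$, i.e.\ $s^{2n} = 1$. Combined with $s \neq \pm 1$, this is exactly the statement $s \in \Zeta_{2n}^{fib}$. The corresponding $u$ values are $u = s + s^{-1} = s + \bar s = 2\Re(s) \in \Rb_{2n}^{fib}$, giving both directions of the equivalence for each $u \neq \pm 2$.

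Finally, I would check that this accounts for all roots (so that no root is missed): by Lemma~\ref{lemma:fibonaccievenodd}, $\deg f_n = |n| - 1$. On the other hand, $|\Zeta_{2n}^{fib}| = 2|n| - 2$, and the map $s \mapsto s + s^{-1}$ identifies $s$ with $s^{-1}$; since $s \neq \pm 1$ these pairs are disjoint, so $|\Rb_{2n}^{fib}| = |n| - 1 = \deg f_n$. Thus the $|n|-1$ elements of $\Rb_{2n}^{fib}$ supplied above exhaust the roots of $f_n$. The only mild obstacle is bookkeeping the $s \leftrightarrow s^{-1}$ identification cleanly, but once the degree count matches there is nothing further to verify.
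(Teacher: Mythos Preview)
Your proposal is correct and follows essentially the same approach as the paper: both write $u = s + s^{-1}$, use the closed form for $f_n$ to characterize the roots as those $u$ with $s^{2n}=1$, $s\neq\pm1$, and then match the cardinality $|\Rb_{2n}^{fib}| = |n|-1$ against $\deg f_n$ via Lemma~\ref{lemma:fibonaccievenodd}. The only cosmetic difference is that you dispose of $u=\pm2$ explicitly via Lemma~\ref{lemma:fkvals}, whereas the paper lets the degree count handle it implicitly.
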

\begin{proof}
Any $u\in \C$ may be expressed as $u = s + s^{-1}$ for some $s$.
If $u \neq \pm2$ then 
\[f_n(u)=0 \Leftrightarrow \frac{s^n-s^{-n}}{s-s^{-1}}=0 \Leftrightarrow s^{2n}=1, s \neq \pm1.\]
Thus $s \in \Zeta_{2n}^{fib}$ and so $u \in \Rb_{2n}^{fib}$.  Since $|\Rb_{2n}^{fib}|=|n|-1$ is the degree of $f_n(u)$ by Lemma~\ref{lemma:fibonaccievenodd}, $\Rb_{2n}^{fib}$ is the set of roots of $f_n(u)$.
\end{proof}

We define auxiliary polynomials.  These will be used extensively, as the factorization of certain Fibonacci polynomials will be key to the reducibility of certain algebraic sets. 
\begin{definition}\label{definition:fibs}
Define
\begin{align*}
h_n(u)  & = { \left\{  \begin{aligned}  f_{m-1}(u) & \text{ if } n=2m \\ f_{m}(u)+f_{m-1}(u) & \text{ if } n=2m+1 \end{aligned}  \right. }\\
 j_n(u)  &  = { \left\{  \begin{aligned} f_m(u) & \text{ if } n=2m \\ f_{m+1}(u)+f_{m}(u) & \text{ if } n=2m+1 \end{aligned}  \right.}\\
k_n(u)  & = {\left\{  \begin{aligned} f_{m+2}(u)-f_m(u) & \text{ if } n=2m \\ f_{m+2}(u)-f_{m+1}(u) & \text{ if } n=2m+1 \end{aligned}  \right. } \\
\ell_n(u)  & = { \left\{  \begin{aligned} f_{m+1}(u)-f_{m-1}(u) & \text{ if } n=2m \\ f_{m+1}(u)-f_{m}(u) & \text{ if } n=2m+1. \end{aligned}  \right.} 
\end{align*}

\end{definition}
In Section~\ref{section:newmodel} we will need two further auxiliary polynomials $\hat{h}_n$ and $\hat{\ell}_n$ that remove the factor of $u$ from the polynomials $\ell_n(u)$ and $h_n(u)$ when such a factor exists.  These will be presented in Definition~\ref{definition:hats}.

The next few lemmas can be seen by direct calculation.
\begin{lemma}\label{lemma:fibonacciidentities} 
For all $n$, 
\begin{align*}
f_n(u) & = j_n(u) \ell_n(u)\\
f_{n+1}(u)-1 & = j_n(u) k_n(u)\\
f_{n-1}(u)-1 & = h_n(u) \ell_n(u)\\
f_n(u)-u & = h_n(u)k_n(u),
\end{align*}

whence  $(f_{n+1}(u)-1)(f_{n-1}(u)-1)  =f_n(u)(f_n(u)-u)$.\qed
\end{lemma}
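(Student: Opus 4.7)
The plan is to prove each of the four individual identities by a direct computation in the parametrization $u = s + s^{-1}$, and then deduce the final displayed identity as a formal consequence of the four.

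Since every identity to be established is a polynomial equation in $u$, it suffices to verify it on the Zariski-dense set where $u = s + s^{-1}$ for some $s \in \C^\times$ with $s \neq \pm 1$. On this set Definition~\ref{definition:fibonacci} gives the closed form $f_n(u) = (s^n - s^{-n})/(s - s^{-1})$, and the auxiliary polynomials $h_n, j_n, k_n, \ell_n$ inherit analogous closed forms via Definition~\ref{definition:fibs}. I would then split into the cases $n = 2m$ and $n = 2m+1$ and, in each, substitute the closed forms into the right-hand side of each of the four identities. For example, in the odd case the first identity reduces to
\[
(f_{m+1} + f_m)(f_{m+1} - f_m) \;=\; f_{m+1}^2 - f_m^2,
\]
which after clearing the common denominator $(s - s^{-1})^2$ becomes
\[
\frac{(s^{2m+2} - 2 + s^{-2m-2}) - (s^{2m} - 2 + s^{-2m})}{(s - s^{-1})^2}
= \frac{(s^2 - 1)(s^{2m} - s^{-2m-2})}{(s - s^{-1})^2},
\]
and since $s^2 - 1 = s(s - s^{-1})$ this simplifies directly to $(s^{2m+1} - s^{-2m-1})/(s - s^{-1}) = f_{2m+1}(u)$. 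The even case and the other three identities are handled by the same template: put everything over a common denominator of $(s - s^{-1})$ or $(s - s^{-1})^2$, factor out the common $(s^2 - 1) = s(s - s^{-1})$ appearing in each cross-term, and compare exponents.

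The final line is then a one-step formal consequence. Multiplying the middle two identities gives
\[
(f_{n+1}(u) - 1)(f_{n-1}(u) - 1) \;=\; j_n(u)\, k_n(u) \, h_n(u)\, \ell_n(u) \;=\; \bigl(j_n(u)\, \ell_n(u)\bigr)\bigl(h_n(u)\, k_n(u)\bigr),
\]
and the first and fourth identities identify the last expression with $f_n(u)\bigl(f_n(u) - u\bigr)$.

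The main obstacle is purely bookkeeping: keeping indices and signs straight across the even/odd split and the four identities, since the auxiliary polynomials involve both sums $f_{m+1} + f_m$ and differences $f_{m+2} - f_m$, $f_{m+1} - f_{m-1}$, etc., which must be paired correctly. Once one standardizes on the common denominator and the factorization $s^a(s^2 - 1) \pm s^{-b}(s^2 - 1) = (s^2 - 1)(s^a \mp s^{-b})$, each of the eight verifications reduces to matching a pair of monomials in $s$ and $s^{-1}$.
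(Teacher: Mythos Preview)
Your proposal is correct and follows the paper's approach: the paper simply asserts that this lemma (along with the surrounding ones) ``can be seen by direct calculation'' and ends the statement with a \qed, giving no further detail. Your use of the closed form $f_n(u) = (s^n - s^{-n})/(s - s^{-1})$ together with the even/odd split is exactly the kind of direct calculation intended, and your deduction of the final product identity from the four factorizations is the natural one.
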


\begin{lemma}\label{lem:constantpolys}
The polynomials $f_n$, $h_n$, $j_n$, $k_n$, $\ell_n$ are all non-constant except for the following:
\begin{align*}
f_{-1} &=-1   & h_0=h_1&=-1   & j_{-2}=j_{-1}&=-1  &k_{-3} &=1  & \ell_{-1} &=1\\
f_0 &=0        & h_2 &=0            &  j_0 &=0              &k_{-2} &=2  & \ell_{0} &=2\\
f_1 &=1        & h_3=h_4 &=1     & j_1 = j_2 &=1     &k_{-1} &=1  & \ell_{1} &=1
\end{align*}
In particular, the degree of $j_n$ is zero only if $|n|<2$. \qed
\end{lemma}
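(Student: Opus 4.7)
The plan is to reduce the assertion to the degree formula $\deg f_n = |n|-1$ from Lemma~\ref{lemma:fibonaccievenodd} and then carry out a short, parity-split case analysis for each of $h_n,j_n,k_n,\ell_n$. First I would establish by induction on $|n|$ from the recursion that $f_n$ has leading coefficient $+1$ for $n>0$ and $-1$ for $n<0$; this is equivalent to the symmetry $f_{-n} = -f_n$. The key consequence is a non-cancellation property: for any integers $a,b$ with $|a|\neq|b|$, no cancellation of leading terms can occur in $f_a \pm f_b$, so
\[
\deg(f_a \pm f_b) \;=\; \max(|a|,|b|)-1,
\]
with the usual convention when one summand is the zero polynomial $f_0$.

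Next I would observe that each of the auxiliary polynomials in Definition~\ref{definition:fibs} is either a single Fibonacci polynomial $f_k$ or a combination $f_a \pm f_b$ whose two indices differ by $1$ or $2$. The only way $|a| = |b|$ can occur in such a combination is $\{a,b\}=\{-1,1\}$, which happens precisely for $k_{-2}$ and $\ell_0$; both are computed directly and contribute the two entries equal to $2$ in the table. Outside of this exceptional pair, the non-cancellation property above forces the auxiliary polynomial to be non-constant whenever at least one of its indices has absolute value $\geq 2$. Equivalently, it is constant exactly when every index appearing in its defining formula lies in $\{-1,0,1\}$.

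To conclude, I would enumerate the finitely many values of $n$ satisfying this index constraint, separately in the cases $n=2m$ and $n=2m+1$ of each of $h_n,j_n,k_n,\ell_n$, and compute the constant by substituting $f_{-1}=-1$, $f_0=0$, $f_1=1$. This produces precisely the table in the statement; the final remark about the degree of $j_n$ is then read off from the $j_n$ column of that enumeration.

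The main obstacle is purely bookkeeping: tracking the parity split and the sign convention $f_{-n} = -f_n$ consistently. Once the leading-coefficient observation is established, no further ideas are required.
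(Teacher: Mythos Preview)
Your proposal is correct; the paper offers no proof at all (the lemma is stated with a terminal \qed, treating it as a direct verification from Definitions~\ref{definition:fibonacci} and~\ref{definition:fibs}), and your systematic reduction to the degree formula $\deg f_n=|n|-1$ together with the parity-split enumeration is a clean way to carry out that verification. One minor remark: the leading-coefficient observation is not actually needed for the non-cancellation claim when $|a|\neq|b|$, since then the degrees already differ; it only matters in the $\{a,b\}=\{-1,1\}$ case you single out for $k_{-2}$ and $\ell_0$.
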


\begin{lemma}\label{lem:kiszero}
If $k_n(u) = 0$ then $|n+2| >1$ and $u \in \Rb_{n+2}^{fib}$. \qed
\end{lemma}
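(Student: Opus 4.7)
The plan is first to dispose of the boundary cases $|n+2| \le 1$ and then to parametrize $u = s + s^{-1}$ and compute $k_n(u)$ in closed form so that the vanishing condition becomes transparent.

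For the reduction step, Lemma~\ref{lem:constantpolys} enumerates which members of the family are constant: $k_n$ is a nonzero constant exactly when $n \in \{-3,-2,-1\}$, equivalently $|n+2| \le 1$. Hence the existence of a root of $k_n$ immediately forces $|n+2| > 1$.

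For the main claim, assume $u \neq \pm 2$ and write $u = s + s^{-1}$, so that $f_k(u) = (s^k - s^{-k})/(s - s^{-1})$ from Definition~\ref{definition:fibonacci}. For even $n = 2m$, the numerator of $f_{m+2}(u) - f_m(u)$ telescopes as $(s^2 - 1)(s^m + s^{-m-2})$, and after cancelling $s - s^{-1} = s^{-1}(s^2-1)$ one obtains
\[
k_{2m}(u) = s^{m+1} + s^{-m-1}.
\]
For odd $n = 2m+1$ the analogous factoring pulls out $s - 1$ and produces
\[
k_{2m+1}(u) = \frac{s^{m+2} + s^{-m-1}}{s+1}.
\]
In both cases the vanishing condition reduces to $s^{n+2} = -1$, so $s$ is a $2(n+2)$-th root of unity with $s^{n+2} \neq 1$; in particular $s \neq \pm 1$, placing $u = s + s^{-1}$ in the claimed Fibonacci trace set $\Rb_{2(n+2)}^{fib}$. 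The boundary cases $u = \pm 2$ are handled directly using $f_n(2) = n$ and $f_n(-2) = (-1)^{n+1}n$ from Lemma~\ref{lemma:fkvals}, from which a short check shows $k_n(\pm 2) \neq 0$ whenever $|n+2| > 1$.

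The main obstacle will be executing the telescoping factorizations cleanly; particular care is needed in the odd case so that dividing out $s + 1$ does not silently introduce or discard the root at $u = -2$, and the separate $u = \pm 2$ check is inserted precisely to seal this gap.
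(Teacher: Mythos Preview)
Your computation is correct, and you reach the conclusion $u \in \Rb_{2(n+2)}^{fib}$, which you call ``the claimed Fibonacci trace set.''  Note, however, that this differs from the set $\Rb_{n+2}^{fib}$ appearing in the statement.  In fact the lemma as printed contains a typo: it fails already for $n=1$, where $k_1(u)=u-1$ has its unique root at $u=1$ while $\Rb_3^{fib}=\{-1\}$.  Your set $\Rb_{2(n+2)}^{fib}$ is the right one; indeed, since $k_n=\ell_{n+2}$ directly from Definition~\ref{definition:fibs}, Lemma~\ref{lem:hellzeros} even gives the sharper description of the zero set as $\Rb^{fib}_{2(n+2)}-\Rb^{fib}_{n+2}$, and the paper's own argument inside the proof of Lemma~\ref{lemma:nocommonfactors} lands in $\Rb_{2(n+2)}^{fib}$ as well.

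Your route---computing the closed form of $k_n$ in the parameter $s$---differs from what the authors do.  The paper leaves this lemma unproved, but in the proof of Lemma~\ref{lemma:nocommonfactors} they record the one-line argument they had in mind: from Lemma~\ref{lemma:fibonacciidentities}, $k_n(u)=0$ forces $f_{n+1}(u)=1$ and $f_n(u)=u$, whence the recursion gives $f_{n+2}(u)=0$, and Lemma~\ref{lemma:fibroots} then places $u$ in $\Rb_{2(n+2)}^{fib}$.  Your direct computation is self-contained and yields the explicit closed forms for $k_n$; the paper's route is shorter but leans on the web of identities already in place.
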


\begin{lemma}\label{lem:hellzeros}
The zeros of $\ell_n(y)$ form the set $\Rb^{fib}_{2n}-\Rb^{fib}_n$. 
The zeros of $h_n(y)$ form the set $\Rb^{fib}_{n-2}$.

The zeros of $\hat{\ell}_n(y)$ are the zeros of $\ell_n(y)$ and the zeros of $\hat{h}_n(y)$ are the zeros of $h_n(y)$  except we must discard $0$ when $n \equiv 2 \pmod 4$ in each case. 
\qed
\end{lemma}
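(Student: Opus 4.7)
The plan is to work with the substitution $u = s + s^{-1}$ and the closed form $f_n(u) = (s^n - s^{-n})/(s - s^{-1})$ valid for $s \neq \pm 1$, derive analogous closed forms for $\ell_n$ and $h_n$, and then read off their zeros. First, splitting into parities and telescoping:
\[
\ell_{2m}(u) = s^m + s^{-m}, \qquad (s+1)\,\ell_{2m+1}(u) = s^{m+1} + s^{-m},
\]
\[
h_{2m}(u) = f_{m-1}(u), \qquad (s-1)\,h_{2m+1}(u) = s^m - s^{1-m},
\]
each of which is a straightforward manipulation using $s - s^{-1} = s^{-1}(s-1)(s+1)$ and $s^2 - 1 = s(s - s^{-1})$.

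Next I would read off the zeros in each case. In the even case, $\ell_{2m}(u) = 0$ iff $s^{2m} = -1$, equivalently $s \in \Zeta_{4m} \setminus \Zeta_{2m}$; in the odd case, $\ell_{2m+1}(u) = 0$ with $s \neq \pm 1$ iff $s^{2m+1} = -1$, equivalently $s \in \Zeta_{4m+2} \setminus \Zeta_{2m+1}$. Similarly $h_{2m+1}(u) = 0$ with $s \neq \pm 1$ iff $s^{2m-1} = 1$ and $s \neq 1$, and the zeros of $h_{2m} = f_{m-1}$ are given directly by Lemma~\ref{lemma:fibroots}. Translating back via $u = s+s^{-1}$ and noting that $s, s^{-1}$ always give the same value of $u$, these sets are, respectively, $\Rb^{fib}_{2n} - \Rb^{fib}_n$ (recalling that $\pm 2$ get removed on both sides) and $\Rb^{fib}_{n-2}$. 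A degree count using Lemma~\ref{lemma:fibonaccievenodd} together with the identification $s \leftrightarrow s^{-1}$ confirms we have accounted for all roots; in particular one must check that $u = -2$ (i.e.\ $s = -1$) is not a root of $\ell_{2m+1}$, which follows directly from $\ell_{2m+1}(-2) = (-1)^m(2m+1)$ using Lemma~\ref{lemma:fkvals}.

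For the statement about $\hat{h}_n$ and $\hat{\ell}_n$, by Definition~\ref{definition:hats} these differ from $h_n$ and $\ell_n$ only by removing a factor of $u$ when one is present. Evaluating at $u = 0$ using Lemma~\ref{lemma:fkvals} gives
\[
h_{2m}(0) = f_{m-1}(0), \qquad \ell_{2m}(0) = f_{m+1}(0) - f_{m-1}(0),
\]
and analogous expressions in the odd case; a short case analysis shows each of these vanishes precisely when $n \equiv 2 \pmod 4$. Lemma~\ref{lemma:ydividesf} then ensures that $u$ occurs to at most the first power as a factor, so passing from $h_n, \ell_n$ to $\hat{h}_n, \hat{\ell}_n$ removes exactly the root $u = 0$ in this congruence class and changes nothing otherwise.

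The main obstacle is the bookkeeping across the parity cases and being careful at the degenerate values $s = \pm 1$, where the closed form $(s^n - s^{-n})/(s - s^{-1})$ is $0/0$; the degree counts from Lemma~\ref{lemma:fibonaccievenodd} are what rule out spurious roots coming from this degeneracy (for instance showing $s = -1$ must be excluded from the candidate zeros of $\ell_{2m+1}$, which has degree only $m$).
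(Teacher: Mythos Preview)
Your proof is correct and follows essentially the same line as the paper's (commented-out) argument: both derive closed-form expressions for $\ell_n$ and $h_n$ via the substitution $u=s+s^{-1}$, split into parities, read off the zero sets from the resulting factorizations, and then handle the $\hat{\ }$ versions by checking when $0$ is a root. One small imprecision: Lemma~\ref{lemma:ydividesf} alone does not show $u^2\nmid \ell_{2m}=f_{m+1}-f_{m-1}$ (it only bounds the $u$-multiplicity of each $f_k$ separately, not of their difference); for that step you should instead cite the separability of $f_{m+1}-f_{m-1}$ from Lemma~\ref{lemma:fisseparable}, or the sentence preceding Definition~\ref{definition:hats}.
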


\begin{lemma}\label{lemma:nocommonfactors} 
Let $(a,b)$ denote the ideal generated by polynomials $a(u),b(u)\in\A[u]$.  Then we have the following:
\begin{enumerate} 
\item \label{lemma:jnhn}   For all $n$, $(h_n,j_n) = \A[u]$.
\item \label{lemma:knln}  For all $n$, $(k_n,\ell_n)= \A[u]$. 
\item \label{lemma:hnkn} If $n\neq 2$, then  $(h_n,k_n)=\A[u]$.  Otherwise, $(h_2, k_2) = (u^2-2)$.
\item \label{lemma:jnkn}  If $n\equiv 0 \pmod 4$, then  $(j_n,k_n)=(u)$. Otherwise, $(k_n, j_n)= \A[u]$. 
\item \label{lemma:hnln} If $n\equiv 2 \pmod 4$, then $(h_n, \ell_n)= (u)$.  Otherwise, $(h_n,\ell_n)= \A[u]$.
\end{enumerate}
\end{lemma}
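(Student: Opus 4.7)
The plan is to reduce each statement to a question about common roots in $\C$. Since $\A[u]=\C[u]$ is a principal ideal domain, $(a,b)=\A[u]$ is equivalent to $a$ and $b$ having no common zero, while otherwise $(a,b)$ is generated by their gcd; so identifying the common roots with multiplicities pins down the ideal. To carry this out I first assemble the zero sets of $h_n,j_n,k_n,\ell_n$.  Lemma~\ref{lem:hellzeros} gives
\[ \{u:h_n(u)=0\}=\Rb^{fib}_{n-2}, \qquad \{u:\ell_n(u)=0\}=\Rb^{fib}_{2n}-\Rb^{fib}_n. \]
For $j_n$, the factorization $f_n=j_n\ell_n$ from Lemma~\ref{lemma:fibonacciidentities} combined with the simplicity of the zeros of $f_n$ (Lemma~\ref{lemma:fibroots} together with the degree count from Lemma~\ref{lemma:fibonaccievenodd}) forces $\{u:j_n(u)=0\}=\Rb^{fib}_n$.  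For $k_n$, substituting $u=s+s^{-1}$ and telescoping gives $f_{m+2}(u)-f_m(u)=s^{m+1}+s^{-m-1}$ when $n=2m$, with an analogous closed form for $n$ odd, identifying the zero set of $k_n$ as those $u=s+s^{-1}$ satisfying $s^{n+2}=-1$ (and $s\neq -1$ when $n$ is odd).

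With these root sets in hand, each item becomes a short calculation in simultaneously satisfying two conditions on a root of unity $s\neq\pm 1$.  For (1), $s^{n-2}=1=s^n$ forces $s^2=1$, a contradiction, so the polynomials are coprime.  For (2), $s^{n+2}=-1=s^n$ analogously forces $s^2=1$.  For (4), $s^n=1$ and $s^{n+2}=-1$ give $s^2=-1$, so $s=\pm i$ and $u=0$; this is a genuine common zero precisely when $4\mid n$, and Lemma~\ref{lemma:ydividesf} then shows $u$ (but not $u^2$) divides each of $j_n,k_n$, yielding the ideal $(u)$.  Case (5) is analogous: $s^{n-2}=1$ and $s^n=-1$ give $u=0$ precisely when $n\equiv 2\pmod{4}$, again with multiplicity one.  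For (3), $s^{n-2}=1$ and $s^{n+2}=-1$ force $s^4=-1$; the exceptional case $n=2$ is read off directly from $h_2=f_0\equiv 0$ and $k_2=f_3-f_1=u^2-2$.

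The main technical obstacle is bookkeeping the degenerate values $s=\pm 1$ (equivalently $u=\pm 2$), where the formula $f_n(u)=(s^n-s^{-n})/(s-s^{-1})$ is undefined and must be replaced by the explicit values of Lemma~\ref{lemma:fkvals}; and certifying the precise multiplicity of any common root at $u=0$ via Lemma~\ref{lemma:ydividesf}, so as to rule out the possibility that the ideal in (4) or (5) could be $(u^2)$ rather than $(u)$.
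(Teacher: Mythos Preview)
Your argument for parts (1), (2), (4), and (5) is correct and is essentially the same approach as the paper's: both reduce the question to identifying common roots.  The paper phrases this via the identities of Lemma~\ref{lemma:fibonacciidentities} (e.g.\ $h_n(u)=0$ forces $f_{n-1}(u)=1$ and $f_n(u)=u$, etc.)\ while you use the explicit root descriptions through the substitution $u=s+s^{-1}$; these are equivalent viewpoints.  One minor imprecision: in (4) and (5) Lemma~\ref{lemma:ydividesf} directly handles $j_n=f_m$ and $h_n=f_{m-1}$, but not $k_n=f_{m+2}-f_m$ or $\ell_n=f_{m+1}-f_{m-1}$.  That $u=0$ is only a simple zero of those follows instead from the separability in Lemma~\ref{lemma:fisseparable} (or from your own root count, which shows all zeros of $k_n$ and $\ell_n$ are simple).

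Your treatment of (3), however, is genuinely incomplete: you deduce $s^4=-1$ from $s^{n-2}=1$ and $s^{n+2}=-1$, but you do not then show that no such $s$ exists for $n\neq 2$.  In fact that step cannot be completed, because the statement of (3) is false as written.  For $n=10$ one has $h_{10}=f_4=u(u^2-2)$ and $k_{10}=f_7-f_5=u^6-6u^4+9u^2-2$, and $u=\pm\sqrt{2}$ are common roots, so $(h_{10},k_{10})=(u^2-2)\neq\A[u]$; the same happens for $n=-6$.  Your own analysis pinpoints the correct statement: a common zero exists exactly when a primitive $8$th root of unity $s$ satisfies $s^{n-2}=1$, i.e.\ exactly when $n\equiv 2\pmod 8$, and then the common zeros are $\pm\sqrt{2}$ with $(h_n,k_n)=(u^2-2)$.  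The paper only writes out cases (2) and (5) and asserts the others are similar, so this slip in (3) went unnoticed there as well; fortunately part (3) is not invoked anywhere else in the paper.
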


\begin{proof}
Since $(a,b)$ is governed by the common roots of $a(u)$ and $b(u)$, we first determine the roots of $h_n, j_n, k_n, \ell_n$.  
By Lemma~\ref{lemma:fibonacciidentities}, $h_n(u)=0$ implies that $f_{n-1}(u)=1$ and $f_n(u)=u$.  Using the recursion, $f_{n-2}(u)=0$ and therefore $u\in{\Rb}_{2(n-2)}^{fib}$ by Lemma~\ref{lemma:fibroots}.
That is:
\[
\begin{array}{lll}
  h_n(u)=0 & \implies f_{n-1}(u)=1, f_n(u)=u \implies f_{n-2}(u)=0 & \implies u\in{\Rb}_{2(n-2)}^{fib}
  \end{array}
  \]
In a similar manner we have:
\[
\begin{array}{lll}
j_n(u) = 0 &\implies f_n(u)=0,  f_{n+1}(u)=1 & \implies u\in {\Rb}_{2n}^{fib} \\
k_n(u) = 0 & \implies f_{n+1}(u)=1,   f_n(u)=u \implies f_{n+2}(u)=0 & \implies u\in {\Rb}_{2(n+2)}^{fib}\\
\ell_n(u)=0 & \implies f_n(u)=0,  f_{n-1}(u)=1 & \implies u\in {\Rb}_{2n}^{fib}
\end{array}
\]
We will prove cases (\ref{lemma:knln}) and (\ref{lemma:hnln}) as the proofs of the other cases are similar.

Case (\ref{lemma:knln}):  By the above, we conclude that if both $k_n(u)=0$ and $\ell_n(u)=0$ then $f_n(u)=0=u$ and $f_{n-1}(u)=1$. Yet by the recursion for the $k_n(u)$ data, $f_{n-1}(u)=uf_n(u)-f_{n+1}(u)= -1$.  Therefore there can be no common roots. 

Case (\ref{lemma:hnln}):  As $h_n(u)=0$ implies $f_n(u)=u$ while $\ell_n(u)=0$ implies $f_n(u)=0$,  $u=0$ is the only possible common factor of $h_n(u)$ and $\ell_n(u)$.  Both $h_n(u)=0$ and $\ell_n(u)=0$ also then imply $f_{n-1}(0)=1$.  As $f_{2m}(0)=0$ and $f_{2m+1}(0)=(-1)^m$ for all integers $m$, we obtain a contradiction (and hence no common factor) unless $n=4i+2$ for some integer $i$.
 In this case $\ell_n(u)=f_{i+1}(u)-f_{i-1}(u)$ and $h_n(u)= f_{i-1}(u)$.  By Lemma~\ref{lemma:ydividesf}  it follows that the multiplicity of the common factor is one. 
\end{proof}

Recall that a polynomial is {\em separable} if it has distinct roots.  

\begin{lemma}\label{lemma:fisseparable} For all integers $n$, the polynomials $f_n(u)$, $f_{n+2}(u)-f_n(u)$, $f_{n+1}(u)-f_n(u)$ and $f_{n+1}(u)+f_n(u)$ are all separable, with the exception of $f_0(u)$, which is identically zero.  \end{lemma}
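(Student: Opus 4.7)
The plan is to use the uniformizing substitution $u = s + s^{-1}$ introduced in Definition~\ref{definition:fibonacci}, under which $f_n(u) = (s^n - s^{-n})/(s - s^{-1})$, together with the fact that the map $s \mapsto s + s^{-1}$ is two-to-one (ramifying exactly at $s = \pm 1$), to read off the zeros of each polynomial and confirm that they are all simple. The case of $f_n(u)$ is already essentially Lemma~\ref{lemma:fibroots}: the zero set $\Rb_{2n}^{fib}$ has cardinality $|n|-1$, matching $\deg f_n$ from Lemma~\ref{lemma:fibonaccievenodd}, so $f_n$ has $|n|-1$ simple roots for $n \neq 0$.

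For the three remaining families, elementary manipulation of the explicit formula for $f_n(u)$ yields the identities
\begin{align*}
f_{n+2}(u) - f_n(u) &= s^{n+1} + s^{-(n+1)}, \\
f_{n+1}(u) + f_n(u) &= \frac{s^{n+1} - s^{-n}}{s-1}, \\
f_{n+1}(u) - f_n(u) &= \frac{s^{n+1} + s^{-n}}{s+1},
\end{align*}
which, upon setting $s = e^{i\theta}$, are merely the standard sum-to-product identities for cosines. From these expressions, the zeros in $s$ are precisely the roots of unity satisfying $s^{2(n+1)} = -1$ for the first; $s^{2n+1} = 1$ with $s \neq 1$ for the second; and $s^{2n+1} = -1$ with $s \neq -1$ for the third. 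In the latter two cases the excluded value of $s$ is a common zero of numerator and denominator, so no multiple root is hidden in the cancellation.

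Next, I would observe that in each case $s = \pm 1$ is excluded from the remaining zeros, since $(\pm 1)^{2(n+1)} = 1 \neq -1$, $(-1)^{2n+1} = -1 \neq 1$, and $1^{2n+1} = 1 \neq -1$. Hence $s \neq s^{-1}$ for every remaining zero, so the zeros pair as $\{s, s^{-1}\}$ and each pair contributes a distinct value of $u$ under $s \mapsto s + s^{-1}$. A direct count then yields $|n+1|$, $|n|$, and $|n|$ distinct $u$-zeros respectively, which agrees with the degrees of the three polynomials (computed from $\deg f_n = |n|-1$ together with inspection of leading coefficients). Equality of root count and degree forces simple roots and hence separability.

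The main delicate point will be bookkeeping the signs as $n$ varies over all of $\Z$, which is handled cleanly by the identity $f_{-n}(u) = -f_n(u)$, together with the small-$|n|$ cases in which one of the four polynomials degenerates to a nonzero constant. All such cases are enumerated in Lemma~\ref{lem:constantpolys}, and a nonzero constant has no roots and is therefore separable vacuously; this leaves only the unique exception $f_0 \equiv 0$ recorded in the statement.
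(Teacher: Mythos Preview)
Your approach is correct and essentially the same as the paper's: both exploit the substitution $u=s+s^{-1}$ to reduce the question to the simple roots-of-unity polynomials $s^{2n}-1$, $s^{2(n+1)}+1$, and $s^{2n+1}\pm 1$. The paper clears denominators and argues separability of the resulting Laurent polynomial in $s$ via the derivative identity $s\,\tfrac{d}{ds}(s^r-1)-r(s^r-1)=r$, then pulls separability back to $u$; you instead enumerate the $s$-roots directly, pair them as $\{s,s^{-1}\}$, and match the count of $u$-roots against the degree. These are two phrasings of the same computation.

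One bookkeeping point: your asserted counts of ``$|n|$'' distinct $u$-zeros for $f_{n+1}(u)\pm f_n(u)$ are literally correct only for $n\ge 0$; for $n<0$ the degree (and root count) is $|n+1|=|n|-1$, not $|n|$. Your appeal to $f_{-n}=-f_n$ does repair this, since $f_{n+1}+f_n = -\bigl(f_{(-n-1)+1}+f_{-n-1}\bigr)$ and likewise for the difference, so the involution $n\mapsto -n-1$ reduces every case to a nonnegative index. It would be cleaner to state this explicitly rather than leave the reader to reconcile the mismatched formula.
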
 

\begin{proof} 
Let $u=s+s^{-1}$.  For an integer $r\neq 0$, $R=s^r-1$ is separable as $s \tfrac{dR}{ds}-rR =r$ is a non-zero constant for $r\neq 0$.   Since $R$ and $\tfrac{dR}{ds}$ are linearly independent, we conclude that they cannot share a common factor and $R$ is separable.

Set $p=(s^{n+1}-s^{n-1})f_n(u) \in \mathbb{Z}[u][s]/(s^2-us+1)\cong \mathbb{Z}[s,s^{-1}]$.  Then $p=s^{2n}-1$ is separable and therefore $f_n(u)$ has no multiple factors either. 

Set $p=(s^{k+l+1}-s^{k+l-1})(f_{k+l}(u)-f_k(u)).$ Then \[p=s^{2k+2l}-s^{2k+l}+s^l-1=(s^l-1)(s^{2k+l}+1).\]  The equation $(s^{2k+l}+1)$ is separable as $(s^{2k+l}+1)(s^{2k+l}-1)=s^{4k+2l}-1$ is separable from above, as is $s^l-1$.  It suffices to see that they do not share common roots for $l=1$ and 2.  

Finally, we set $p=(s^{n+1}-s^k)(f_{n+1}(u)+f_n(u))$ so that \[p=s^{2n+2}+s^{2n+1}-s-1=(s+1)(s^{2n+1}-1)\]
The separability of $p$ and hence of $f_{n+1}(u)+f_n(u)$ follows from the separability above as $s+1$ and $s^{2n+1}-1$ do not share any factors. 
\end{proof}

We will also make use of the following identity when computing $\PSL_2(\C)$ character varieties.  It can be easily verified using the closed form for $f_n(u)$ writing $u=s+s^{-1}$.  Compare with the similar identity given in Lemma~\ref{lemma:fibonacciidentities}.
\begin{lemma}\label{lemma:pslfib}
For all $n$, $(f_{n-1}(u)+1)(f_{n+1}(u)+1)=f_n(u)(u+f_n(u))$. \qed
\end{lemma}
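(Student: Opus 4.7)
My plan is to prove the identity directly using the closed-form expression $f_n(u) = \frac{s^n - s^{-n}}{s - s^{-1}}$ obtained from the substitution $u = s + s^{-1}$. Since both sides of the claimed identity are polynomials in $u$, it suffices to verify it on the Zariski-dense set $\{u = s + s^{-1} : s \in \C^\times, s \neq \pm 1\}$, where the closed form applies.

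First, I would massage each factor on the left-hand side into a clean product. Multiplying the numerator and denominator of $f_{n-1}(u) + 1 = \frac{s^{n-1} - s^{1-n}}{s - s^{-1}} + 1$ by $s$ and using the factorization $s^n - 1 + s^2 - s^{2-n} = (s^n-1)(1 + s^{2-n})$, I obtain
\[
f_{n-1}(u)+1 = \frac{(s^n - 1)(s^{2-n}+1)}{s^2 - 1}.
\]
An analogous manipulation, using $s^{n+2} - 1 + s^2 - s^{-n} = (s^{n+2}-1)(1 + s^{-n})$, gives
\[
f_{n+1}(u)+1 = \frac{(s^{n+2}-1)(s^{-n}+1)}{s^2-1}.
\]
Their product collapses nicely because $(s^n - 1)(s^{-n} + 1) = s^n - s^{-n}$, so the LHS becomes
\[
\frac{(s^n - s^{-n})\bigl(s^{n+2} - s^{2-n} + s^4 - 1\bigr)}{(s^2-1)^2}.
\]

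Next, for the right-hand side I compute
\[
u + f_n(u) = \frac{(s+s^{-1})(s-s^{-1}) + s^n - s^{-n}}{s - s^{-1}} = \frac{s^2 - s^{-2} + s^n - s^{-n}}{s - s^{-1}},
\]
which yields
\[
f_n(u)(u + f_n(u)) = \frac{(s^n - s^{-n})(s^2 - s^{-2} + s^n - s^{-n})}{(s - s^{-1})^2}.
\]
Using the identity $(s^2 - 1)^2 = s^2(s - s^{-1})^2$, the comparison of LHS and RHS reduces to checking
\[
s^{n+2} - s^{2-n} + s^4 - 1 = s^2\bigl(s^2 - s^{-2} + s^n - s^{-n}\bigr),
\]
which is immediate by distributing. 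There is no real obstacle here; the only subtlety is the cosmetic observation that the boundary cases $u = \pm 2$ are covered automatically because the desired equality is a polynomial identity in $u$ and we have verified it on a Zariski-dense subset of $\C$. (Alternatively, one could run an induction on $n$ using the recurrence $f_{n+1} + f_{n-1} = u f_n$, but the closed-form computation is shorter and mirrors the proof style of Lemma~\ref{lemma:fibonacciidentities}.)
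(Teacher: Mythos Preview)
Your proof is correct and takes essentially the same approach as the paper, which simply states that the identity ``can be easily verified using the closed form for $f_n(u)$ writing $u=s+s^{-1}$'' and leaves it at that. You have merely supplied the details of this verification.
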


%
%
\section{Character Variety Calculations}\label{section:charactervarietycalculations}
%
%

We now calculate the character variety $\tilde{X}(\Gamma_n)$ of
\[ \Gamma_n =\langle \alpha, \beta : \bar{\beta}^n=\omega \rangle\]
where $\omega= \bar{\alpha}\beta \alpha^2\beta\bar{\alpha}$.

The full $\SL_2(\C)$ character variety, $\tilde{X}(\Gamma_n)$ is the union of the reducible characters, $\tilde{X}_{red}(\Gamma_n)$, and the irreducible characters, $X(\Gamma_n)$.  (The set $\tilde{X}_{red}(\Gamma_n)$ is Zariski closed.  By definition  $X(\Gamma_n)$ is the Zariski closure of its complement, $\tilde{X}(\Gamma_n)-\tilde{X}_{red}(\Gamma_n).$) 
In this section we explicitly compute these two sets; later, in \S\ref{section:intersections}  we investigate their intersection.  
 First, in \S\ref{section:exceptionalrepresentations} and \S\ref{section:genericreducibles} we study the reducible representations of $\Gamma_n$.  We will show that $\tilde{X}_{red}(\Gamma_n)$ is exactly the set of characters of abelian representations of $\Gamma_n$ and as well as the set of characters of diagonal representations.  Thereafter we focus on the irreduble characters.   In \S\ref{section:irreps} we determine a natural model for $X(\Gamma_n)$.  We compute the character varieties of the non-hyperbolic $M_n$ (that is, for $|n|\leq 2$) in \S\ref{section:nonhyperbolic}.  In \S\ref{section:multiplicity}  we examine the points of multiplicity two in $X(\Gamma_n)$.   In \S\ref{section:newmodel} we determine a new model for $X(\Gamma_n)$, birational to the natural model.   When $M_n$ is hyperbolic, we will show that there is a unique canonical component $X_0(\Gamma_n) \subset X(\Gamma_n)$.  When $n\not \equiv 2\pmod 4$ this is the whole of $X(\Gamma_n)$; that is, $X_0(\Gamma_n)=X(\Gamma_n)$.  When $n\equiv 2 \pmod 4$, we show that $X(\Gamma_n)=X_0(\Gamma_n) \cup L$, where $L$ is birational to $\A^1$.  We determine smooth models for the canonical components $X_0(\Gamma_n)$.  Though we refer to them generically as hyperelliptic curves for $|n|>2$, these are actually rational surfaces when $n=3,4,6$ and elliptic curves when $n=-6,-4,-3,5$.  This is summarized in the following theorem.

\begin{thm}\label{thm:mainsummary1}
 If $|n|>2$  then $X_0(\Gamma_n)$, the unique canonical component, is birational to the hyperelliptic curve given by  $w^2=-\hat{h}_n(y)\hat{\ell}_n(y)$.  If $n\not \equiv 2 \pmod 4$ this is the only component of $X(\Gamma_n)$ and has genus $\lfloor \tfrac12 \mid n-1\mid - \: 1 \rfloor$.  If $n\equiv 2 \pmod 4$ there is an additional $\A^1$ component and the genus of $X_0(\Gamma_n)$ is $\lfloor \tfrac12  \mid n-1 \mid -\: 2 \rfloor$.
\end{thm}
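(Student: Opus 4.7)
The plan is to reduce from the natural model produced in Proposition~\ref{prop:irreps} by a sequence of birational moves that expose the hyperelliptic structure. The natural model lives in $\C^3$ with coordinates $(x,y,z)=(\chi_\rho(\alpha),\chi_\rho(\beta),\chi_\rho(\alpha\beta))$, and its defining polynomial arises by applying standard Fricke--Klein trace identities to the relation $\bar\beta^n=\bar\alpha\beta\alpha^2\beta\bar\alpha$. One expects the relation to translate into a single polynomial whose $y$-dependence is controlled by the Fibonacci polynomials $f_k(y)$, and which is quadratic in one of the remaining coordinates (most naturally $x$ or $z$, since $\omega$ is degree two in $\alpha$).

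Writing that equation as $A(y)\xi^2+B(y,\xi')\,\xi+C(y,\xi')=0$ in the quadratic variable $\xi$, I would complete the square to obtain a new equation of the form $w^2=D(y,\xi')$. The identity of Lemma~\ref{lemma:fibonacciidentities}, namely $(f_{n+1}(y)-1)(f_{n-1}(y)-1)=f_n(y)(f_n(y)-y)=h_n(y)\ell_n(y)\,j_n(y)k_n(y)$, should allow the factors $j_n(y)k_n(y)$ to be absorbed or cancelled, leaving a discriminant proportional to $-h_n(y)\ell_n(y)$. The coprimality statements of Lemma~\ref{lemma:nocommonfactors}, in particular the assertion that $(h_n,\ell_n)=(u)$ precisely when $n\equiv 2\pmod 4$, control whether a further common factor of $y$ must be stripped. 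When that factor appears it corresponds to an extra component of $X(\Gamma_n)$ cut out along $y=0$; parametrizing this locus in terms of the remaining coordinate identifies the ``extra'' piece as an affine line $\A^1$. Excising the shared $y$ factor converts $h_n\ell_n$ into $\hat h_n\hat\ell_n$ (Definition~\ref{definition:hats}) and produces the stated hyperelliptic model $w^2=-\hat h_n(y)\hat\ell_n(y)$.

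To confirm that this is the unique canonical component, I would appeal to Lemma~\ref{lemma:nonperipheralgenerators}: for $|n|>2$ the discrete faithful character satisfies that both $\alpha$ and $\beta$ are hyperbolic, so $y=\chi_{\rho_0}(\beta)\neq \pm 2$; more importantly, a straightforward inspection of the extra component (which is forced to lie over $y=0$ in the $n\equiv 2\pmod 4$ case) rules out that a character with both $\alpha$ and $\beta$ hyperbolic can lie on it. Irreducibility of the hyperelliptic curve itself follows from separability of $-\hat h_n\hat\ell_n$, combining Lemma~\ref{lemma:fisseparable} (separability of each Fibonacci-type factor) with Lemma~\ref{lemma:nocommonfactors} (coprimality of $\hat h_n$ and $\hat\ell_n$). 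Thus $X_0(\Gamma_n)$ is uniquely pinned down as the birational class of $w^2=-\hat h_n(y)\hat\ell_n(y)$.

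For the genus I would invoke the standard formula: a smooth projective model of $w^2=f(y)$ with $f$ separable of degree $d$ has genus $\lfloor(d-1)/2\rfloor$. A degree count using Lemma~\ref{lemma:fibonaccievenodd} and Lemma~\ref{lem:hellzeros} gives $\deg(\hat h_n\hat\ell_n)=|n-1|-1$ when $n\not\equiv 2\pmod 4$ and $\deg(\hat h_n\hat\ell_n)=|n-1|-3$ when $n\equiv 2\pmod 4$, since in the latter case two zeros (one from each factor) are removed. These degrees plug into the hyperelliptic formula to yield $\lfloor\tfrac12|n-1|-1\rfloor$ and $\lfloor\tfrac12|n-1|-2\rfloor$ respectively, matching the theorem. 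The main obstacle I anticipate is the algebraic bookkeeping in the birational reduction from the three-variable natural model: correctly identifying which factors of $A(y)$ and of the discriminant get absorbed into the change of variables producing $w$, ensuring that the extracted $\A^1$ component in the $n\equiv 2\pmod 4$ case is genuinely separate rather than an artifact of the parametrization, and verifying that no further spurious factors inflate the discriminant's degree.
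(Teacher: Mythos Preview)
Your overall architecture matches the paper's: start from the natural model of Proposition~\ref{prop:irreps}, birationally reduce to a plane curve $w^2=-\hat h_n(y)\hat\ell_n(y)$, verify irreducibility via separability (Lemma~\ref{lemma:fisseparable}) and coprimality (Lemma~\ref{lemma:nocommonfactors}) of the factors, compute the genus from the degree of $\hat h_n\hat\ell_n$, and locate the discrete faithful character away from any extra component. Your handling of irreducibility, the genus count, and the identification of the canonical component are essentially what the paper does.

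Where your proposal diverges is in the birational reduction itself, and here you are anticipating far more difficulty than there is. You envision a single defining polynomial, quadratic in one variable, whose discriminant must be massaged via the Fibonacci identity until only $-h_n\ell_n$ survives. But the natural model of Proposition~\ref{prop:irreps} is cut out by \emph{three} polynomials $\varphi_1,\varphi_2,\varphi_3$ in $(x,y,z)$, and the first of these is already
\[
\varphi_1 \;=\; x^2-1+f_{n-1}(y)\;=\;x^2+h_n(y)\ell_n(y).
\]
No completing the square and no discriminant computation is needed: the equation $\varphi_1=0$ \emph{is} the hyperelliptic relation $x^2=-h_n(y)\ell_n(y)$ outright. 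The paper's reduction is then simply a projection. From $\varphi_2=0$ one solves $z=(y-f_n(y))/x$ rationally for $x\neq 0$, and one checks via Lemma~\ref{lemma:fibonacciidentities} that $\varphi_3$ vanishes automatically once $\varphi_1=\varphi_2=0$. Hence $(x,y,z)\mapsto(x,y)$ is birational from the curve onto $E\colon x^2=1-f_{n-1}(y)$. When $n\equiv 2\pmod 4$, the common factor $y^2$ in $1-f_{n-1}(y)=-h_n(y)\ell_n(y)$ is removed by the substitution $w=x/y$, yielding $F\colon w^2=-\hat h_n(y)\hat\ell_n(y)$; the locus $x=y=0$ excised by this substitution is precisely the extra line $L=\{(0,0,z)\}\cong\A^1$.

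So the ``main obstacle'' you flag --- the algebraic bookkeeping in the birational reduction --- dissolves once you look at the explicit form of $\varphi_1$ in Definition~\ref{definition:phis}. The factors $j_n,k_n$ never enter any discriminant because there is no discriminant to compute; they appear only in $\varphi_2,\varphi_3$, which serve to eliminate $z$ rather than to constrain $(x,y)$.
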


\begin{remark}
When $n\not \equiv 2 \pmod 4$ this hyperelliptic curve is $w^2=1-f_{n-1}(y)$.  For $n\equiv 2 \pmod 4$, Proposition~\ref{prop:extraline} describes the characters on the $\A^1$ component.  This line component is the line $L$ discussed  in Section~\ref{section:newmodel} and in particular in   Proposition~\ref{prop:extraline3}.
\end{remark}

Let us first set up some basic terminology and facts about representations of two generator groups.

\subsection{$\SL_2(\C)$ representations of two generator groups.}

Recall from Section~\ref{sec:charvar} that for each element $\gamma$ in a finitely generated group $\Gamma$, $t_{\gamma}$ is the function $t_{\gamma}\colon R(\Gamma) \rightarrow \A$ defined by $t_{\gamma}(\rho)=\tr(\rho(\gamma))$. The ring  $T$ is the subring of the ring of all regular functions from $R(\Gamma)$ to $\A$ generated by $1$ and the $t_{\gamma}$.  If $\Gamma$ is generated by $\alpha$ and $\beta$, then this ring $T$ is generated by $t_{\alpha}$, $t_{\beta}$, and $t_{\alpha \beta}$.  Therefore the $\SL_2(\C)$ character variety $\tilde{X}(\Gamma)$ can be identified with the image of $R(\Gamma)$ under the map \[ (t_{\alpha}, t_{\beta}, t_{\alpha \beta}):R(\Gamma_n) \rightarrow \mathbb{A}^3.\]

\begin{definition}\label{definition:xyz}
Assume $\Gamma$ is generated by $\alpha$ and $\beta$.  
Then for a representation $\rho:\Gamma \rightarrow \SL_2(\C)$ let   
\[ x=t_{\alpha}=\text{tr}(\rho(\alpha)),  \ \ y=t_{\beta}=\text{tr}(\rho(\beta)), \ \ \text{ and } \  \ z=t_{\alpha \beta}=\text{tr}(\rho(\alpha \beta)).\]    
\end{definition}

\begin{lemma}\label{lemma:2generatorconjugation} 
Let $\Gamma$ be a  two generator group, generated by $\gamma_1$ and $\gamma_2$ with representation $\rho:\Gamma \rightarrow \SL_2(\C)$. Then up to conjugation in $\SL_2(\C)$, 
\[  \rho(\gamma_1) = \left( \begin{array}{cc} g_1 & u \\ t & g_1^{-1} \end{array}\right) 
\quad\quad\quad
\rho( \gamma_2) = \left( \begin{array}{cc} g_2 & s \\ 0 & g_2^{-1} \end{array}\right)  \]
 where $tu=0$.  
 
 Furthermore: 
 \begin{itemize}
 \item[(i)] If $st=0$ the representation is reducible. (If $s=0$ the representation is conjugate to both a representation of this form with $t=0$ and one of this form with $u=0$.)
 \item[(ii)] If $u=0$ then up to conjugation either $s=t$ or $\{s,t\}=\{0,1\}$. 
\end{itemize}
\end{lemma}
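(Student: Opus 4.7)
The plan is a standard exploitation of the conjugation freedom in $\SL_2(\C)$. First I would pick any eigenvector of $\rho(\gamma_2)$ (one always exists over $\C$) and conjugate globally so that this eigenvector is $(1,0)^T$; this fixes $\rho(\gamma_2)$ as upper triangular with diagonal entries $g_2, g_2^{-1}$ and upper right entry $s$, and the residual conjugation freedom is exactly the Borel subgroup of upper triangular matrices in $\SL_2(\C)$.

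Writing $\rho(\gamma_1) = \mat{a}{b}{c}{d}$, I split on whether $c = 0$. If $c = 0$ already, both matrices are upper triangular and I have $t = 0$. Otherwise I conjugate by the unipotent $\mat{1}{q}{0}{1}$; a direct computation shows that the $(2,1)$ entry remains $c \neq 0$ while the $(1,2)$ entry becomes $-cq^2 + (a-d)q + b$. Since $\C$ is algebraically closed I choose $q$ to kill this entry, obtaining $u = 0$. In either case $tu = 0$, giving the claimed normal form. Part (i) is then immediate: if $t = 0$ both matrices share the eigenvector $(1,0)^T$, while if $s = 0$ then $\rho(\gamma_2)$ is diagonal, so combined with $tu = 0$ the two matrices share either $(1,0)^T$ or $(0,1)^T$. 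For the parenthetical claim, when $s = 0$ conjugation by the Weyl element $W = \mat{0}{1}{-1}{0}$ preserves the diagonal form of $\rho(\gamma_2)$ (swapping $g_2 \leftrightarrow g_2^{-1}$) while interchanging the $(1,2)$ and $(2,1)$ entries of $\rho(\gamma_1)$ up to sign, so a form with $u = 0$ is conjugate to a form with $t = 0$ and vice versa.

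For part (ii), with $u = 0$ the only further conjugations preserving the forms of both $\rho(\gamma_1)$ and $\rho(\gamma_2)$ are the diagonal ones $P = \mathrm{diag}(p, p^{-1})$, and these send $t \mapsto p^2 t$ and $s \mapsto p^{-2} s$, preserving the product $st$. If $st \neq 0$ I solve $p^4 = s/t$ in $\C$ to equalize the rescaled $s$ and $t$. If exactly one of $s,t$ vanishes I rescale the nonzero one to $1$, giving $\{s,t\} = \{0,1\}$. If $s = t = 0$ there is nothing to do. The only non-bookkeeping step in the whole argument is the quadratic solve used to clear the $(1,2)$ entry, which is where algebraic closure of $\C$ is genuinely needed; I do not foresee any serious obstacle beyond tracking how much conjugation freedom remains after each normalization.
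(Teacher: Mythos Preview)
Your proof is correct and follows essentially the same approach as the paper: put $\rho(\gamma_2)$ in upper triangular form, use a unipotent upper-triangular conjugation to solve a quadratic killing the $(1,2)$ entry of $\rho(\gamma_1)$ when $c \neq 0$, then handle (i) via the Weyl element and (ii) via diagonal conjugation scaling $(t,s) \mapsto (p^2 t, p^{-2} s)$. The only cosmetic difference is that the paper phrases the first step as ``conjugate $G_2$ into the above form'' rather than your explicit ``choose an eigenvector,'' and it leaves the final case analysis in (ii) implicit with ``choosing a suitable $\alpha$.''
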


\begin{proof}
Let $G_i=\rho(\gamma_i)$ for $i=1,2$ and let $X=  \left( \begin{array}{cc} \alpha & \beta \\ 0 & \alpha^{-1} \end{array}\right)$.
We begin by conjugating so that $G_2$ has the above form and 
\[ G_1 =  \left( \begin{array}{cc} a & b \\ c & d \end{array}\right)\]
for $a,b,c,d \in \C$ and $ad-bc=1$.
If $c=0$, then $\rho$ has the above form with $t=0$. 
If $c \neq 0$, then conjugate by $X$ with $\alpha=1$ to obtain
\begin{align*} X^{-1}G_1 X & =  \left( \begin{array}{cc} a-\beta  c & \beta (a-d)-\beta^2  c+b \\ c & \beta  c+d \end{array}\right)\\
 X^{-1}G_2X & =  \left( \begin{array}{cc} g_2 & \beta(g_2-g_2^{-1})+s \\ 0 & g_2^{-1} \end{array}\right). \end{align*}
 Since $c\neq 0$, we can solve the equation $ -\beta^2  c+\beta (a-d)+b=0$ with the appropriate choice of $\beta$ so that $\rho$ has the above form with $u=0$.  This is the initial statement of the lemma.

Now consider $\rho(\gamma_i)$ as given in the lemma.
If $t=0$, then the representation is upper triangular and hence reducible.   If $t\neq0$ then by conjugation we may assume $u=0$ so that if $s=0$ then the representation is lower triangular and hence reducible.  
If $s=0$, then conjugating by $\left( \begin{array}{cc} 0 & -1 \\ 1 & 0 \end{array}\right)$ keeps the matrices in the above form but switches the role of $u$ and $t$  since $tu=0$. Thus we have (i).

If $u=0$, then conjugating by $X$ with $\beta =0$ gives 
\[ X^{-1}G_1 X  =   \left( \begin{array}{cc} g_1 & 0 \\ \alpha^2t & g_1^{-1} \end{array}\right) 
 \quad \text{and} \quad 
X^{-1}G_2 X  = \left( \begin{array}{cc} g_2 & s / \alpha^2 \\ 0 & g_2^{-1} \end{array}\right). \]
from which (ii) follows upon choosing a suitable $\alpha$. 
\end{proof}

\begin{definition}\label{defn:genericexceptional}
For $a,b \in \C^*$ and $s,t \in \C$, define
\[ A(a,t) = \mat{a}{0}{t}{a^{-1}} 
\quad \text{ and } \quad 
B(b,s) = \mat{b}{s}{0}{b^{-1}}.\]

Let $\Gamma$ be a group generated by $\alpha$ and $\beta$.
A representation $\rho \colon \Gamma \rightarrow \text{SL}_2(\C)$  that is conjugate to a representation of the form 
\[ \alpha \mapsto A(a,t) \quad \text{ and } \quad 
\beta \mapsto B(b,s) \]
 is  {\em generic}.  We refer to  a representation given in the form above as being in {\em standard form}.
 Any representation conjugate to 
\[ \alpha \mapsto \pm \left( \begin{array}{cc} 1 & 1 \\ 0 & 1 \end{array} \right) \quad \mbox{ and } \quad 
\quad \beta \mapsto \pm \left( \begin{array}{cc} 1 & s \\ 0 & 1 \end{array} \right), s \neq 0\]  
is called  {\em exceptional}.   
Note that an exceptional representation is both reducible and abelian.

\end{definition}

\begin{remark}\label{rem:genericxyz}
For a generic representation, Definition~\ref{definition:xyz} gives $x=a+a^{-1}$, $y=b+b^{-1}$, and $z=ab+a^{-1}b^{-1}+st$.  
\end{remark}

\begin{lemma}\label{lemma:genericreps} 
Let $\Gamma$ be generated by two elements. 
A representation $\rho \colon \Gamma \rightarrow \SL_2(\C)$ is either generic or exceptional.  
Any generic reducible representation is conjugate to a representation in standard form with $st=0$.
Any irreducible representation is conjugate to a representation in standard form with $s=t\neq0$. 
\end{lemma}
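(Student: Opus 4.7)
The plan is to apply Lemma~\ref{lemma:2generatorconjugation} to put $\rho$ into a preliminary normal form and then carry out a short case analysis according to whether the off-diagonal entry $u$ can be cleared and whether the diagonal entries $g_1, g_2$ are $\pm 1$. After invoking that lemma I may assume
\[ \rho(\alpha) = \mat{g_1}{u}{t}{g_1^{-1}}, \quad \rho(\beta) = \mat{g_2}{s}{0}{g_2^{-1}}, \quad tu = 0. \]
For the irreducible statement, I would first observe that $u = 0$: otherwise $t = 0$, both matrices are upper triangular, they share the eigenvector $\mathbf{e}_1$, and $\rho$ is reducible. With $u = 0$ the representation is already in standard form, and part (ii) of Lemma~\ref{lemma:2generatorconjugation} gives $s = t$ or $\{s,t\} = \{0,1\}$; irreducibility rules out $st = 0$ by part (i), leaving $s = t \neq 0$ as required.

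For reducible representations I would split on whether $u$ vanishes. If $u = 0$ we are already in standard form, and a direct eigenvector calculation shows that $A(g_1,t)$ and $B(g_2,s)$ share a common eigenvector precisely when $st = 0$, so the generic reducible case satisfies $st = 0$. If instead $u \neq 0$ then $t = 0$ and both matrices are upper triangular, and I would treat the two ways of diagonalizing $\rho(\alpha)$ or $\rho(\beta)$: when $g_1 \neq \pm 1$ a conjugation by $\mat{1}{c}{0}{1}$ with $c = u/(g_1 - g_1^{-1})$ diagonalizes $\rho(\alpha)$ while keeping $\rho(\beta)$ upper triangular, yielding standard form; when $g_1 = \pm 1$ but $g_2 \neq \pm 1$, the symmetric conjugation chosen to kill the $(1,2)$-entry of $\rho(\beta)$ leaves $\rho(\alpha)$ unchanged because $g_1 - g_1^{-1} = 0$, and a further conjugation by $\mat{0}{1}{-1}{0}$ swaps upper and lower triangular roles to produce standard form.

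The remaining subcase is $g_1, g_2 \in \{\pm 1\}$ with $u \neq 0$. If $s = 0$ then $\rho(\beta) = \pm I$ and the swap $\mat{0}{1}{-1}{0}$ once more produces standard form. If $s \neq 0$, both matrices have the shape $\pm \mat{1}{*}{0}{1}$ with nonzero $(1,2)$-entry, and a diagonal conjugation by $\mat{a}{0}{0}{a^{-1}}$ with $a^2 = 1/u$ rescales the $(1,2)$-entry of $\rho(\alpha)$ to $1$ while keeping $\rho(\beta)$ of the same shape, so by Definition~\ref{defn:genericexceptional} $\rho$ is exceptional. The main obstacle I foresee is the careful bookkeeping of which subgroups of conjugations preserve which structural features, since the exceptional representations arise precisely when no further conjugation can diagonalize either generator while respecting the triangular shape of the other.
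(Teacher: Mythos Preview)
Your treatment of the irreducible case and of the dichotomy ``generic or exceptional'' is fine, and your case analysis when $u\neq 0$ (i.e.\ both images upper triangular) matches the paper's argument.  The gap is in the subcase $u=0$ for reducible $\rho$.  You assert that $A(g_1,t)$ and $B(g_2,s)$ share a common eigenvector \emph{precisely} when $st=0$, and conclude that a reducible representation already in standard form must have $st=0$.  This ``precisely'' is false: with $g_1=2$, $t=-4$, $g_2=3$, $s=1$ the vector $(3,-8)^T$ is a common eigenvector of $A(g_1,t)$ and $B(g_2,s)$, so $\rho$ is reducible, yet $st=-4\neq 0$.  More generally, whenever $g_1\neq\pm1$ and $g_2\neq\pm1$ one can choose $s,t$ with $st=(g_1-g_1^{-1})(g_2^{-1}-g_2)\neq 0$ so that the second eigenvector of $B$ is also an eigenvector of $A$.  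Lemma~\ref{lemma:2generatorconjugation}(i) only gives one implication ($st=0\Rightarrow$ reducible), not the converse.

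The repair is exactly the move the paper makes (and that you already carry out in your $u\neq 0$ branch): since $\rho$ is reducible, first conjugate so that \emph{both} $\rho(\alpha)$ and $\rho(\beta)$ are upper triangular.  Now you are in the situation $t=0$, $u$ arbitrary, and your own case split on whether $g_1=\pm1$, $g_2=\pm1$ finishes the job, producing either standard form with $st=0$ or the exceptional form.  In other words, for the reducible statement there is no benefit to invoking Lemma~\ref{lemma:2generatorconjugation} first; start from upper triangular and run your existing $u\neq0$ analysis.
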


\begin{proof} Let $\alpha$ and $\beta$ be generators of $\Gamma$.
First, we will assume that $\rho$ is reducible. 
Since $\rho$ is reducible,  $\rho$ is conjugate to an upper triangular representation $\rho'$ of the form
\[ \rho'(\alpha) = \left( \begin{array}{cc} a &v \\ 0 & a^{-1} \end{array} \right) \quad \text{and} \quad 
\rho'(\beta) = \left( \begin{array}{cc} b & s \\ 0 & b^{-1} \end{array} \right). \]
Define the matrices $X$ and $F$ by 
\[ X =\mat{\sigma}{ \gamma}{0}{ \sigma^{-1} }, \quad F=\mat{0}{-1}{1}{0}.\]
Conjugating $\rho'$ by $X$ we obtain the representation $\rho$ defined by  
\begin{align*}
\alpha \mapsto \left( \begin{array}{cc} a &   \sigma^{-1}(  \gamma [a-a^{-1}]+v \sigma^{-1})  \\ 0 & a^{-1} \end{array} \right), \\ 
  \beta \mapsto  \left( \begin{array}{cc} b &  \sigma^{-1}(\gamma [b-b^{-1}] +s \sigma^{-1} ) \\ 0 & b^{-1} \end{array} \right).  \end{align*}

Unless $a=a^{-1}$, choosing $X$ with $\gamma=-v[a-a^{-1}]^{-1} \sigma^{-1}$   makes  $\rho(\alpha)$  a diagonal matrix. 
This representation is in standard form. Similarly, if $b\neq b^{-1}$ choosing $\gamma=-s[b-b^{-1}]^{-1} \sigma^{-1}$ makes $\rho(\beta)$ a diagonal matrix.

Therefore we will assume that $a=\pm1$ and $b=\pm1$.  Such a  representation $\rho$ is conjugate to 
\[ \alpha \mapsto \mat{a}{v\sigma^{-2}}{0}{a^{-1}}, \quad \beta \mapsto \mat{b}{s\sigma^{-2}}{0}{b^{-1}}.\]
If $s$ or $v$ is zero then either $\rho(\alpha)$ or $\rho(\beta)$ is diagonal and $\rho$ is in standard form or can be conjugated into standard form by $F$.  Otherwise, we take $\sigma$ so that $\sigma^2=\pm v$ and $\rho$ is defined by  (since $a,b \in \{\pm 1\}$)
\[ \alpha \mapsto \pm \mat{1}{1}{0}{1}, \quad \beta \mapsto \pm \mat{1}{\pm sv}{0}{1}.\]
Such a representation is exceptional.  We have shown that a reducible representation is either exceptional or is conjugate to  a representation in standard form with $st=0$.

Now we consider irreducible representations.  By Lemma~\ref{lemma:2generatorconjugation}  we  may take $\alpha=\gamma_1$ and $\beta=\gamma_2$, with $u=0$ and $st\neq 0$ since the representation is irreducible.  Therefore, we can take $s=t$.
\end{proof}

\bigskip

From here on we focus on the group $\Gamma_n$ and consider its representations $\rho \colon \Gamma_n \to \SL_2(\C)$.  We will use the variables $(x,y,z)$ of Definition~\ref{definition:xyz} (and Remark~\ref{rem:genericxyz}) as the variables of definition for $\tilde{X}(\Gamma_n)$.

The following  lemma  will be useful in identifying canonical components.

\begin{lemma}\label{lemma:disc1}
If $M_n$ is hyperbolic and  $\rho_0$ is discrete and faithful then 
$x,y \not \in  \Rb_k$ for any $k\neq0$.
\end{lemma}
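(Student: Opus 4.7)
The plan is to exploit the fact that $\Rb_k \subset [-2,2]$ for every $k \neq 0$, together with the standard trichotomy for elements of $\SL_2(\C)$: if $\tr(A) = \zeta + \zeta^{-1}$ with $\zeta$ a root of unity, then $A$ is either $\pm I$, parabolic, or elliptic of finite order. The lemma then follows by ruling out each of these possibilities for $\rho_0(\alpha)$ and $\rho_0(\beta)$ using facts already established in the paper.

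First I would observe that, by Definition~\ref{definition:rootsofunity}, any element of $\Rb_k$ has the form $\zeta + \zeta^{-1}$ for some $k$-th root of unity $\zeta$; in particular such a trace is real, lies in $[-2,2]$, and forces the eigenvalues of the corresponding $\SL_2(\C)$ matrix to be roots of unity. Suppose for contradiction that $x = \tr(\rho_0(\alpha)) \in \Rb_k$ for some $k \neq 0$. I would split into two cases according to whether $|x| = 2$ or $|x| < 2$.

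In the case $x = \pm 2$: since $\bar{\rho}_0$ is faithful and $\alpha \neq e$, the image $\bar{\rho}_0(\alpha) \neq I$, so $\rho_0(\alpha) \neq \pm I$, and hence $\rho_0(\alpha)$ is parabolic. But $M_n$ is hyperbolic (as $|n|>2$), so Lemma~\ref{lemma:nonperipheralgenerators} asserts that $\alpha$ is hyperbolic in $\pi_1(M_n)$, i.e.\ $\tr(\bar\rho_0(\alpha)) \neq \pm 2$. Contradiction. In the case $|x| < 2$: the eigenvalues of $\rho_0(\alpha)$ are a nontrivial complex-conjugate pair of roots of unity, so $\rho_0(\alpha)$ is conjugate to $\mathrm{diag}(\zeta, \zeta^{-1})$ with $\zeta^{2k}=1$ and $\zeta \neq \pm 1$, which makes $\rho_0(\alpha)$ an element of finite order. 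However $\Gamma_n \cong \pi_1(M_n)$ is torsion-free (since $M_n$ is aspherical, as already used in Lemma~\ref{lem:H2Gamma}) and $\rho_0$ is faithful, so $\rho_0(\alpha)$ must have infinite order. Contradiction again.

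The identical argument, with $\beta$ in place of $\alpha$, handles $y$. I do not anticipate any real obstacle: the content is entirely in packaging the fact that $\Rb_k$ consists exactly of traces of torsion-or-parabolic elements of $\SL_2(\C)$ and then invoking the two existing inputs (Lemma~\ref{lemma:nonperipheralgenerators} to exclude the parabolic subcase, and torsion-freeness of $\Gamma_n$ to exclude the elliptic subcase).
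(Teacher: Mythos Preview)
Your proof is correct and follows essentially the same approach as the paper: both handle $x = \pm 2$ via Lemma~\ref{lemma:nonperipheralgenerators} and the remaining case $|x| < 2$ by showing $\rho_0(\alpha)$ has finite order. The only difference is cosmetic --- where you observe directly that distinct root-of-unity eigenvalues force diagonalizability and hence finite order (then invoke torsion-freeness), the paper instead puts $\rho_0$ in standard form and computes the matrix power explicitly using the Fibonacci polynomial $f_k$ before invoking hyperbolicity of $\alpha$ again.
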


\begin{proof}\label{lemma:tracelambda}
By Lemma~\ref{lemma:nonperipheralgenerators}, the elements $\alpha$ and $\beta$ are hyperbolic.  Hence neither $x= \tr(\rho_0(\alpha))$ nor $y=\tr(\rho_0(\beta))$ is $\pm2$.
If  $x=2\Re(\zeta)$ for some root of unity $\zeta (\neq \pm1)$ then $x=\zeta+\zeta^{-1}$.
Since a discrete faithful representation is generic, we may take
\[ \rho(\alpha)^{\pm1} = \left( \begin{array}{cc} \zeta & 0 \\  \tau  &  \zeta^{-1} \end{array} \right).\] By the Cayley-Hamilton theorem, if $\zeta^k=1$ then 
\[ \rho_0(\alpha)^{\pm k} = \left( \begin{array}{cc} \zeta^k & 0 \\  \tau f_k(x) &  \zeta^{-k} \end{array} \right)=I\] 
implying that $\alpha$ is not a hyperbolic element, a contradiction.   A similar argument applies for $y$.
\end{proof}

\subsection{Exceptional Representations}\label{section:exceptionalrepresentations}

\begin{lemma}\label{lemma:exceptional}
A representation $\rho:\Gamma_n \rightarrow \text{SL}_2(\C)$ is exceptional only if $n=-2$. 
Furthermore for such a representation $\rho$, 
\[(x,y,z) \in \{ (2,2,2), (2,-2,-2), (-2,2,-2), (-2,-2,2)\}.\]
\end{lemma}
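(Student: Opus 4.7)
The plan is a direct matrix computation using the defining relation $\bar{\beta}^n = \bar{\alpha}\beta\alpha^2\beta\bar{\alpha}$. By Definition~\ref{defn:genericexceptional} we may write, up to conjugation,
\[ \rho(\alpha) = \epsilon_\alpha\mat{1}{1}{0}{1}, \qquad \rho(\beta) = \epsilon_\beta\mat{1}{s}{0}{1}, \qquad s\neq 0, \]
for some signs $\epsilon_\alpha,\epsilon_\beta\in\{\pm 1\}$. Because the signs square to $1$, products and inverses of these matrices behave like translations: $\rho(\alpha)^k = \epsilon_\alpha^k\bigl(\begin{smallmatrix}1 & k \\ 0 & 1\end{smallmatrix}\bigr)$ and $\rho(\beta)^k = \epsilon_\beta^k\bigl(\begin{smallmatrix}1 & ks \\ 0 & 1\end{smallmatrix}\bigr)$.

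The key computation is to evaluate both sides of the relation. First I would compute
\[ \rho(\bar{\beta}^n) = \epsilon_\beta^{\,n}\mat{1}{-ns}{0}{1}, \]
and then, using that the conjugations by $\rho(\alpha)^{\pm 1}$ contribute cleanly because $\epsilon_\alpha^{\,2}=1$,
\[ \rho(\bar{\alpha}\beta\alpha^2\beta\bar{\alpha}) = \mat{1}{-1}{0}{1}\mat{1}{2+2s}{0}{1}\mat{1}{-1}{0}{1} = \mat{1}{2s}{0}{1}. \]
Matching these two matrices forces $\epsilon_\beta^{\,n}=1$ and $-ns = 2s$. Since $s\neq 0$, the second equation gives $n=-2$, and with $n=-2$ the first equation is automatic for either sign of $\epsilon_\beta$. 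Thus an exceptional representation exists only when $n=-2$, and then all four sign choices $(\epsilon_\alpha,\epsilon_\beta)\in\{\pm 1\}^2$ yield representations.

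Finally, reading off traces: $x=\tr\rho(\alpha)=2\epsilon_\alpha$, $y=\tr\rho(\beta)=2\epsilon_\beta$, and since $\rho(\alpha\beta) = \epsilon_\alpha\epsilon_\beta\bigl(\begin{smallmatrix}1 & s+1 \\ 0 & 1\end{smallmatrix}\bigr)$ we get $z = 2\epsilon_\alpha\epsilon_\beta$. Running through the four sign combinations produces exactly the four triples $(2,2,2)$, $(2,-2,-2)$, $(-2,2,-2)$, $(-2,-2,2)$ listed in the statement. There is no real obstacle here; the only thing to be careful about is tracking the signs $\epsilon_\alpha,\epsilon_\beta$ correctly through the word $\bar{\alpha}\beta\alpha^2\beta\bar{\alpha}$, but the total exponent sums of $\alpha$ and $\beta$ in this word are $0$ and $2$ respectively, so the sign of the right-hand side is $\epsilon_\beta^{\,2}=1$, which is why the condition reduces cleanly to $\epsilon_\beta^{\,n}=1$ and $n=-2$.
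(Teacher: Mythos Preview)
Your proof is correct and follows essentially the same approach as the paper: both plug the exceptional form into the group relation, obtain $\epsilon_\beta^{\,n}\bigl(\begin{smallmatrix}1 & -ns \\ 0 & 1\end{smallmatrix}\bigr)=\bigl(\begin{smallmatrix}1 & 2s \\ 0 & 1\end{smallmatrix}\bigr)$, deduce $n=-2$ from $s\neq 0$, and then read off the traces. Your explicit tracking of the signs via exponent sums is a nice touch, but the argument is otherwise identical to the paper's.
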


\begin{proof}
Since $\rho$ is exceptional it is conjugate to a representation $\tau$ such that 
\[ \tau(\alpha) = \epsilon_\alpha \left( \begin{array}{cc} 1 & 1 \\ 0 & 1 \end{array} \right), 
\quad \mbox{and} \quad \tau(\beta) = \epsilon_\beta \left( \begin{array}{cc} 1 & s \\ 0 & 1 \end{array} \right)\]  
where $\epsilon_\alpha = \pm1$, $\epsilon_\beta = \pm1$, and $s \neq 0$.  The group relation in $\Gamma_n$ under $\tau$ implies that 
\[ (\epsilon_\beta)^n \left( \begin{array}{cc} 1 & -ns \\ 0 & 1 \end{array} \right) = \left( \begin{array}{cc} 1 & 2s \\ 0 & 1 \end{array} \right). \]
Since $s \neq 0$ for an exceptional representation we must have $n=-2$  and hence there are no restrictions on $\epsilon_\alpha$ or $\epsilon_\beta$.  A computation shows that $\tr(\tau(\alpha \beta)) = 2\epsilon_\alpha \epsilon_\beta$.
\end{proof}

\begin{remark} If $\rho$ is an exceptional representation, then $\rho$ is abelian and $\rho(\alpha)$ has infinite order.   If $s\not \in \Q$ then $\rho(\Gamma_{-2}) \cong \Z \times \Z$. Otherwise if $s\in \Q$ then  up to conjugation  \[ \rho(\alpha)^c\rho(\beta)^{-d}=\pm \mat{1}{c-sd}{0}{1}\] and there are integers $c$ and $d$ so that $\rho(\alpha)^c= \rho(\beta)^d$. We see that $\rho(\Gamma_{-2}) \cong \Z \times \Z / \ell \Z$ for some non-negative integer $\ell$.   The manifold $M_{-2}$ is toroidal and hence not hyperbolic, as discussed in Remark~\ref{remark:M-2}. 
\end{remark}

\subsection{Generic Reducible and Abelian Representations}\label{section:genericreducibles}

We will now explicitly study the structure of the reducible representations.  We show that all characters of reducible representations are characters of abelian representations, and these are all characters of diagonal representations.  (The results of this section are summarized in Proposition~\ref{prop:redreps}.) We also explicitly compute these sets.  These explicit calculations will also be used in Section~\ref{section:intersections} to study the geometry of the intersection of the reducible representations with the canonical component.  
We use notation set in Definition~\ref{definition:rootsofunity}.

\begin{lemma}\label{lemma:redchars}
The characters of any reducible representation  of $\Gamma_n$ satisfy  
\[ x^2+y^2+z^2-xyz=4. \]
These are all characters of generic representations. 
Furthermore if $n\neq -2$ then $y\in \Rb_{n+2}$, where either $y\neq \pm 2$ and these characters are irreducible conics or $y=\pm 2$ and they are the lines given by $x=\pm z$. 
\end{lemma}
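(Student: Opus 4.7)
The plan is to split the argument into three parts: verifying the Fricke identity, using the group relation to restrict $y$, and slicing the resulting locus at each fixed value of $y$. The key observation throughout is that whenever a reducible representation is simultaneously upper-triangularized with diagonal entries $(a,a^{-1})$ and $(b,b^{-1})$, its character is
\[
(x,y,z) = (a+a^{-1},\; b+b^{-1},\; ab + a^{-1}b^{-1}),
\]
independent of the off-diagonal entries. Thus the character-level analysis reduces to the study of diagonal representations.

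First I would verify the Fricke identity $x^2+y^2+z^2-xyz=4$ by direct substitution of the diagonal expressions above; this is a one-line polynomial identity in $a^{\pm 1}, b^{\pm 1}$. Next, to obtain the constraint $y\in\Rb_{n+2}$ when $n\neq -2$, I would invoke Lemma~\ref{lemma:exceptional} to eliminate exceptional representations and Lemma~\ref{lemma:genericreps} to conjugate any generic reducible representation into standard form with $st=0$, hence into upper- or lower-triangular form. The word $\omega = \bar{\alpha}\beta\alpha^2\beta\bar{\alpha}$ has exponent sum $0$ in $\alpha$ and $2$ in $\beta$, so on triangular matrices the diagonal entries of $\rho(\omega)$ are $(b^2,b^{-2})$, while those of $\rho(\bar{\beta}^n)$ are $(b^{-n},b^n)$; matching the two forces $b^{n+2}=1$, i.e.\ $y\in\Rb_{n+2}$. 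For the converse, the same exponent-sum computation shows that for any $b\in\Zeta_{n+2}$ and any $a\in\C^*$ the diagonal assignment $\rho(\alpha)=\mathrm{diag}(a,a^{-1})$, $\rho(\beta)=\mathrm{diag}(b,b^{-1})$ is a bona fide representation of $\Gamma_n$ whose character sweeps out the full slice at the given $y$.

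Finally, for fixed $y$ I would analyze the Fricke conic $x^2 - (yz)x + z^2 + y^2 - 4 = 0$ in the $(x,z)$-plane. Viewed as a quadratic in $x$, its discriminant is $(y^2-4)(z^2-4)$, which is a square in $\C[z]$ precisely when $y^2=4$; so for $y\neq\pm 2$ the conic is irreducible, while for $y=2$ it collapses to $(x-z)^2=0$ and for $y=-2$ (possible exactly when $n$ is even, since $-1\in\Zeta_{n+2}$ iff $n$ is even) to $(x+z)^2=0$. The hard part will be the bookkeeping in the second step—ensuring that an upper-triangular representation can always be replaced by a diagonal one without changing its character, and cleanly isolating the anomalous $n=-2$ case handled separately by Lemma~\ref{lemma:exceptional}; once that is in place, the geometric conclusions follow from the discriminant computation above.
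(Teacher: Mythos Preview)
Your proposal is correct and follows essentially the same approach as the paper: reduce to triangular (equivalently diagonal, at the level of characters) representations, verify the Fricke identity, extract $b^{n+2}=1$ from the relation, and then analyze the conic at each fixed $y$. The only notable difference is cosmetic: where the paper explicitly computes the $(1,1)$ entry of $A^{-1}BA^2BA^{-1}$ and then sets $st=0$, you observe directly that the diagonal part of a triangular representation is a homomorphism to the diagonal torus and is therefore governed by exponent sums---a cleaner way to reach the same equation $b^{-n}=b^2$. Your discriminant computation for the conic is likewise just a concrete version of what the paper dismisses as ``elementary to verify.''
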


\begin{proof}
Let $\rho$ be a non-exceptional reducible representation. 
Lemma~\ref{lemma:genericreps} implies $\rho$ is generic. Thus by conjugation we may take $\rho$ to be in standard form with $st=0$.   As $st=0$ we see that $z=\tr(\rho( \alpha \beta))$ depends on $x$ and $y$.  Specifically, $x=a+a^{-1}$, $y=b+b^{-1}$ and $z=ab+a^{-1}b^{-1}$ as in Remark~\ref{rem:genericxyz}, and so
\[ 2z = (a+a^{-1})(b+b^{-1}) + (a-a^{-1})(b-b^{-1}) = xy + (a-a^{-1})(b-b^{-1})\]
Since $(a-a^{-1})^2=x^2-4$ and $(b-b^{-1})^2=y^2-4$ we conclude that 
\[ (2z-xy)^2=(x^2-4)(y^2-4)\] 
for all non-exceptional reducible representations.  This reduces to 
\[ x^2+y^2+z^2-xyz=4.\]

The single relation in $\Gamma_n$ implies that $B^{-n} = A^{-1}BA^2BA^{-1}$. The $(1,1)$ entry of $B^{-n}$ is $b^{-n}$, and its elementary to see that the $(1,1)$ entry of $A^{-1}BA^2BA^{-1}$ is
\[  b^2-sta^{-3}b^{-1}-(st)^2(1+a^{-2})+st(-a+a^{-1}+a^{-3})b. \] 
Since $st=0$ we conclude that $b^{-n}=b^2$ and therefore $b\in \Zeta_{n+2}$ for $n\neq -2$. We conclude that $y=b+b^{-1} \in \Rb_{n+2}.$ 

It is elementary to verify that for $y \in \Rb_{n+2}^{fib}$ that $x^2+y^2+z^2-xyz=4$ is irreducible, and if $y=\pm2$ the equation reduces to $(x\mp z)^2=0$ and determines a line. 

Now let $\rho$ be an exceptional representation.  
By Lemma~\ref{lemma:exceptional} $(x,y,z)$ is one of 
\[  (2,2,2), (2,-2,-2), (-2,2,-2), \  \text{or}   \ (-2,-2,2).\]  Each of these points satisfies the equation $x^2+y^2+z^2-xyz=4$.  Therefore, every character of a reducible representation is the character of a generic representation. 
\end{proof}

\begin{remark}  Since representations are either generic or exceptional, and exceptional representations are reducible,  Lemma~\ref{lemma:redchars} implies that all characters are characters of generic representations.\end{remark}

\begin{prop}\label{prop:redreps}  
The abelianization of $\Gamma_n$ is 
\[ \Gamma_n^{ab}= \langle \alpha, \beta : \beta^{n+2} \rangle \cong 
\begin{cases} 
\Z \times \Z/(n+2)\Z &\mbox{ if } n \neq -2 \\
\Z \times \Z &\mbox{ if } n = -2.
\end{cases}
 \]  
The character variety $\tilde{X}(\Gamma_n^{ab})$ for $n\neq -2$ is determined by the vanishing set  of the $\lfloor \frac12|n+2|+1\rfloor$ genus zero conics and lines
\[x^2+y^2+z^2-xyz= 4 \mbox{ such that } y\in \Rb_{n+2}. \] 
The character variety $\tilde{X}(\Gamma_{-2}^{ab})$ is determined by the vanishing set of
 \[x^2+y^2+z^2-xyz= 4 \]
 (without restriction on $y$).

For all $n$, 
\[ \tilde{X}_{red}(\Gamma_n)=\tilde{X}(\Gamma_n^{ab})=\tilde{X}_a(\Gamma_n)=\tilde{X}_d(\Gamma_n)\]
where $\tilde{X}_a(\Gamma_n)$ are the abelian representations of $\Gamma_n$ and $\tilde{X}_d(\Gamma_n)$ are the diagonal representations of $\Gamma_n$. 

\end{prop}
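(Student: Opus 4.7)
I would attack the four claims roughly in the order they are stated, exploiting the chain of obvious inclusions $\tilde{X}_d(\Gamma_n) \subseteq \tilde{X}_a(\Gamma_n) \subseteq \tilde{X}_{red}(\Gamma_n)$ and pinching the chain closed using Lemma~\ref{lemma:redchars}. First, apply the abelianization map to the defining relation. The word $\omega = \bar\alpha\beta\alpha^2\beta\bar\alpha$ has zero net exponent in $\alpha$ and net exponent $2$ in $\beta$, so $\bar\beta^n = \omega$ abelianizes to $\beta^{-n} = \beta^2$, i.e.\ $\beta^{n+2} = 1$. This immediately gives $\Gamma_n^{ab} \cong \Z \oplus \Z/(n+2)\Z$ for $n \neq -2$ and $\Gamma_n^{ab} \cong \Z \oplus \Z$ for $n=-2$.

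Next I would describe $\tilde{X}(\Gamma_n^{ab})$ directly. Since $\Gamma_n^{ab}$ is abelian, a representation factoring through it has commuting images, which can be simultaneously conjugated into upper-triangular form; modulo the semisimplification its character is that of a diagonal representation $\rho(\alpha) = A(a,0)$, $\rho(\beta) = B(b,0)$ in the notation of Definition~\ref{defn:genericexceptional}. Setting $x=a+a^{-1}$, $y=b+b^{-1}$, $z=ab+a^{-1}b^{-1}$ and computing $(2z-xy)^2 = (x^2-4)(y^2-4)$ (as in the proof of Lemma~\ref{lemma:redchars}) gives the Fricke equation $x^2+y^2+z^2-xyz = 4$. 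The relation $\beta^{n+2}=1$ in $\Gamma_n^{ab}$ becomes $b^{n+2}=1$, so $y \in \Rb_{n+2}$. Counting distinct values of $\zeta+\zeta^{-1}$ as $\zeta$ runs over $\Zeta_{n+2}$ yields $\lfloor \tfrac12|n+2| + 1 \rfloor$ components. For $n = -2$ the constraint on $b$ disappears and one obtains the full Fricke surface. Exceptional abelian representations occur only for $n=-2$ by Lemma~\ref{lemma:exceptional}, and then their characters $(\pm2,\pm2,\pm2)$ already lie on this surface, so they add nothing new; this also gives $\tilde{X}_a = \tilde{X}_d$ in every case.

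For the equality chain, $\tilde{X}_d \subseteq \tilde{X}_a \subseteq \tilde{X}_{red}$ is immediate, and a representation of $\Gamma_n$ is abelian if and only if it factors through $\Gamma_n^{ab}$, so $\tilde{X}_a(\Gamma_n) = \tilde{X}(\Gamma_n^{ab})$. The content is the reverse inclusion $\tilde{X}_{red}(\Gamma_n) \subseteq \tilde{X}_d(\Gamma_n)$. Given a reducible character with coordinates $(x_0,y_0,z_0)$, Lemma~\ref{lemma:redchars} tells us it satisfies the Fricke equation and (when $n\neq -2$) $y_0 \in \Rb_{n+2}$. Choose $b$ with $b+b^{-1}=y_0$ and $b^{n+2}=1$, and $a$ with $a+a^{-1}=x_0$; for $\epsilon \in \{\pm 1\}$, the diagonal representations $\rho_\epsilon(\alpha)=A(a^{\epsilon},0)$, $\rho_\epsilon(\beta)=B(b,0)$ automatically satisfy the group relation (since diagonal matrices commute and $B^{n+2}=I$), and produce $\alpha\beta$-traces $a^{\epsilon}b + a^{-\epsilon}b^{-1}$. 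These are the two roots of the quadratic $z^2 - x_0 y_0 z + (x_0^2+y_0^2-4) = 0$ coming from the Fricke equation with $x_0, y_0$ fixed, so one of them equals $z_0$ and realizes the given character.

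The only delicate point is this last step: Lemma~\ref{lemma:redchars} tells us reducible characters \emph{lie on} the described vanishing set, but the proposition claims equality with the characters of diagonal representations, so we must check that every point of each conic or line component is actually hit by a diagonal representation. The $\epsilon = \pm 1$ trick above resolves this by using the two sheets $a \leftrightarrow a^{-1}$ to cover both branches of the quadratic in $z$; the two degenerate cases $x_0 = \pm 2$ or $y_0 = \pm 2$ require checking separately that the single available choice of $a$ or $b$ still produces the correct (necessarily unique) value of $z_0$, and this reduces to the identity $(y_0 \mp z_0)^2 = 0$ or $(x_0 \mp z_0)^2 = 0$ on the line components.
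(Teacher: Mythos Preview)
Your proposal is correct and is precisely the natural argument; the paper itself omits the proof entirely, stating only that it is ``straightforward and similar to previous proofs,'' so your write-up is in fact more complete than the paper's. The key ingredients you use---abelianizing the relation to $\beta^{n+2}=1$, invoking Lemma~\ref{lemma:redchars} for the inclusion $\tilde X_{red}\subseteq\{x^2+y^2+z^2-xyz=4,\ y\in\Rb_{n+2}\}$, and then closing the chain by constructing an explicit diagonal representation via the two branches $a\leftrightarrow a^{-1}$ to hit both roots of the Fricke quadratic in $z$---are exactly what the omitted proof would contain.
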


We omit the proof as it is straightforward and similar to previous proofs. 
%
%
%
%
%
%

\subsection{Irreducible Representations}\label{section:irreps}

We begin with a few matrix calculations that will be essential in our  calculation of $X(\Gamma_n)$.

\begin{definition} For a matrix $M$, let $M_{ij}$ denote the $(i,j)^{th}$ entry of $M$, and let $t_M$ denote $\tr M$. 

Let $A=A(a,t)$ and $B=B(b,s) \in \text{SL}_2(\C)$ be as in Definition~\ref{defn:genericexceptional}.  Define
\[ W(a, b, s, t) = A^{-1}BA^2BA^{-1} \]
 and 
\[F(a, b, s, t)= B^{-n}-W(a, b, s, t).\]
\end{definition}

\begin{definition}\label{defn:charvareqns}
In the ring $\Q[a^{\pm 1},b^{\pm 1}, s]$, let $H$ denote the ideal generated by the four entries of the matrix $F = F(a,b,s,s)$.   That is, $H$ is generated by $F_{11}$, $F_{12}$, $F_{21}$, $F_{22}$.
\end{definition}

\begin{prop}\label{prop:charvareqns}
The ideal $H$ is  generated by $D=F_{11}+F_{22}$, $F_{12}$, $F_{21}$ and $S=F_{11}-F_{22}$.  
\end{prop}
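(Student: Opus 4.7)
The statement of Proposition~\ref{prop:charvareqns} is a linear change of generators, so the proof is essentially a one-line verification. My plan is to observe that
\[ F_{11} = \tfrac{1}{2}(D+S) \quad \text{and} \quad F_{22} = \tfrac{1}{2}(D-S), \]
and since the ambient ring $\Q[a^{\pm 1},b^{\pm 1},s]$ contains $\tfrac{1}{2}$, this exhibits $F_{11}, F_{22}$ as $\Q$-linear combinations of $D$ and $S$. Conversely, $D = F_{11}+F_{22}$ and $S = F_{11}-F_{22}$ lie in the ideal $(F_{11}, F_{12}, F_{21}, F_{22})$ by definition. Consequently the generating sets $\{F_{11}, F_{12}, F_{21}, F_{22}\}$ and $\{D, F_{12}, F_{21}, S\}$ determine the same ideal $H$. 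There is no obstacle to overcome here; the content is really the choice of generators, not a nontrivial algebraic identity.

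The reason for recording this reformulation as a proposition, rather than an offhand remark, is strategic: the quantities $D$ and $S$ are the organizing data for the subsequent Fricke--Klein style analysis. Indeed, $D = \tr(B^{-n}) - \tr(W)$ is a trace, so by the standard trace identities in $\SL_2$ it will be expressible as a polynomial in the coordinates $x, y, z$ from Definition~\ref{definition:xyz}; this turns $D$ into an equation cutting out the character variety in $\mathbb{A}^3$. The off-diagonal entries $F_{12}$ and $F_{21}$ together with the antidiagonal combination $S$ will be used to track what lies outside the trace coordinates and to detect reducibility (for example by extracting $F_{21}$, which measures failure of simultaneous triangularizability). Thus the plan I would follow, after this one-line verification, is to pass to the concrete expressions for $D, F_{12}, F_{21}, S$ and reinterpret each in terms of Fibonacci polynomials and the coordinates $x, y, z$, which is the program carried out in the subsequent propositions.
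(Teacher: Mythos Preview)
Your proof is correct and takes essentially the same approach as the paper: both observe that $D+S = 2F_{11}$ and $D-S = 2F_{22}$, so that over a ring containing $\tfrac{1}{2}$ the two generating sets determine the same ideal. Your additional commentary on the strategic role of $D$ and $S$ is apt but not part of the proof proper.
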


\begin{proof}
The ideal $H$ is generated by $F_{11}$, $F_{12}$, $F_{21}$, $F_{22}$ by definition.  Since $D+S=2F_{11}$ and $D-S=2F_{22}$ it follows that $H$ is also generated by $D$, $S$, $F_{12}$ and $F_{21}$ as well.
\end{proof}

\begin{lemma}\label{lemma:firstequations}  The entries of $F(a,b,s,t)$  are given by 
\begin{align*}
F_{11}& = b^{-n} -\Big( b^2-sta^{-3}b^{-1}-(st)^2(1+a^{-2})+st(-a+a^{-1}+a^{-3})b\Big) \\
F_{12} & = -sf_n(t_B)  - s(t_{AB}t_A-t_B)\\
F_{21}& =  -t(t_At_{AB}^2-t_A^2t_Bt_{AB}+t_A^3+t_At_B^2-t_Bt_{AB}-2t_A)\\
F_{22} & = b^{n} -\Big(b^{-2} -stba^3-(st)^2(1+a^2)-st(-a+a^{-1}-a^3)b^{-1}\Big).
\end{align*}

\end{lemma}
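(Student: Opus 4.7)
The plan is to compute $B^{-n}$ and $W(a,b,s,t) = A^{-1} B A^2 B A^{-1}$ directly as $2\times 2$ matrices with entries in $\Q[a^{\pm 1}, b^{\pm 1}, s, t]$, and then take their difference entry by entry.

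First, since $B$ is upper triangular with diagonal entries $b$ and $b^{-1}$, a straightforward induction (or Cayley--Hamilton applied to $B$, which has trace $t_B = b + b^{-1}$) yields
\[
B^k \;=\; \begin{pmatrix} b^k & s\, f_k(t_B) \\ 0 & b^{-k} \end{pmatrix}
\]
for every integer $k$, where $f_k$ is the Fibonacci polynomial of Definition~\ref{definition:fibonacci}. Setting $k = -n$ and using $f_{-n} = -f_n$ (immediate from the recursion) gives $B^{-n}$ explicitly. The $(1,1)$ and $(2,2)$ entries of $B^{-n}$ are $b^{-n}$ and $b^n$; these appear as the leading terms of $F_{11}$ and $F_{22}$ in the statement.

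Next I would expand $W = A^{-1}BA^2BA^{-1}$ by successive matrix multiplication, using
\[
A = \begin{pmatrix} a & 0 \\ t & a^{-1}\end{pmatrix}, \quad
A^{-1} = \begin{pmatrix} a^{-1} & 0 \\ -t & a\end{pmatrix}, \quad
A^2 = \begin{pmatrix} a^2 & 0 \\ t(a+a^{-1}) & a^{-2}\end{pmatrix}.
\]
Multiplying in order (say, first $A^{-1}B$, then $(A^{-1}B)A^2$, then $\cdot B$, then $\cdot A^{-1}$) and collecting by powers of $s$ and $t$ yields the expressions in the parenthesized brackets of $F_{11}$ and $F_{22}$ exactly. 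Subtracting from $B^{-n}$ gives the stated formulas for $F_{11}$ and $F_{22}$.

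For the off-diagonal entries, after the direct computation one obtains $W_{12}$ and $W_{21}$ as polynomials in $a, a^{-1}, b, b^{-1}, s, t$. The claim rewrites them in terms of the traces $t_A = a+a^{-1}$, $t_B = b+b^{-1}$, $t_{AB} = ab + a^{-1}b^{-1} + st$. To verify this, I would substitute these expressions for $t_A, t_B, t_{AB}$ into the right-hand sides of the claimed $F_{12}$ and $F_{21}$ formulas and expand; the monomial expansion must match what was produced by direct multiplication. For the $F_{12}$ entry the only $b$-dependence coming from $W$ is linear (since $W$ is a product of two copies of $B$ around powers of $A$ sandwiched by $A^{\pm 1}$, and the upper-triangular structure of $B$ keeps things manageable), and the term $-s\, f_n(t_B)$ comes from the $B^{-n}$ contribution. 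For $F_{21}$ the entry is a polynomial in $t_A, t_B, t_{AB}$ multiplied by $t$, which one recognizes after grouping.

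The main obstacle is purely bookkeeping: the product $A^{-1}BA^2BA^{-1}$ has five factors, so its entries are sums of many monomials in $a^{\pm 1}, b^{\pm 1}, s, t$, and one must collect them carefully. Once the raw matrix entries are in hand, the re-expression of the off-diagonal entries in terms of $t_A, t_B, t_{AB}$ is a routine verification via the Fricke-type identity $t_{AB}t_A - t_B = t_{A^2B}$ together with the Fibonacci recursion, and the diagonal entries already appear in the desired form.
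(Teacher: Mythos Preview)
Your proposal is correct and follows essentially the same approach as the paper: compute $B^{-n}$ via Cayley--Hamilton (yielding the $-sf_n(t_B)$ off-diagonal term), expand $W=A^{-1}BA^2BA^{-1}$ by direct matrix multiplication, and subtract. The paper's own proof is in fact even terser than yours, simply asserting that $W$ and hence $F$ are ``obtained by direct calculation'' after recording $B^{-n}$; your additional remarks on grouping the off-diagonal entries in terms of $t_A,t_B,t_{AB}$ are helpful elaboration but not a different method.
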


\begin{proof}
The Cayley-Hamilton theorem then implies that 
\[ B^{-n} = \left( \begin{array}{cc} b^{-n} & -sf_n(t_B) \\ 0 & b^n \end{array} \right)\]
using $t_B=b+b^{-1}$ with $f_n(t_B)$, the $n^{th}$ Fibonacci polynomial. 
 The matrix $W=A^{-1}BA^2BA^{-1}$ and therefore $F$ is then obtained by direct calculation.
\end{proof}

\begin{lemma}\label{lemma:secondequations}  Let
\[ F_{12}' = f_n(t_{B})+t_{AB}t_{A}-t_{B}\]
and
\[ F_{21}' = t_{A}t_{AB}^2-t_{A}^2t_{B}t_{AB}+t_{A}^3+t_{A}t_{B}^2-t_{AB}t_{B}-2t_{A}. \]
The ideal $H$ is generated by  $F_{12}  =  -sF_{12}'$, $F_{21} =  sF_{21}'$
\[D  = f_{n+1}(t_{B})-f_{n-1}(t_{B}) + \Big(t_{A}^2t_{AB}^2-t_{A}^3t_{B}t_{AB}+t_{A}^4+t_{A}^2t_{B}^2-4t_{A}^2-t_{B}^2+2   \Big)  \]
and
\[ S  = -(b-b^{-1})F_{12}' -(a-a^{-1}) F_{21}'.\]
\end{lemma}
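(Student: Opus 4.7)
The plan is to apply Proposition~\ref{prop:charvareqns}, which already reduces the generators of $H$ to $D = F_{11} + F_{22}$, $S = F_{11} - F_{22}$, $F_{12}$, and $F_{21}$; the task is then to rewrite each of these four entries, evaluated at $t = s$, in the form claimed. The expressions for $F_{12}$ and $F_{21}$ follow immediately from Lemma~\ref{lemma:firstequations} upon factoring the overall $s$ out of each off-diagonal entry, leaving the polynomials $F_{12}'$ and $F_{21}'$ in the invariants $t_A, t_B, t_{AB}$.

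For $D$, I would first apply the Fibonacci identity $b^n + b^{-n} = f_{n+1}(t_B) - f_{n-1}(t_B)$, which is immediate from the closed form $f_k(u) = (s^k - s^{-k})/(s - s^{-1})$ with $u = s + s^{-1}$, and accounts for the Fibonacci piece of $D$. The remaining summand is a Laurent polynomial in $a^{\pm 1}, b^{\pm 1}$ with coefficients polynomial in $st$. Using the trace identity $t_{AB} = ab + a^{-1}b^{-1} + st$ to eliminate $st$, together with $a^2 + a^{-2} = t_A^2 - 2$ and $b^2 + b^{-2} = t_B^2 - 2$, one rewrites the remainder as a polynomial in $t_A, t_B, t_{AB}$ that matches the one given after collection.

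For $S$, the key observation is the involution $(a, b) \leftrightarrow (a^{-1}, b^{-1})$, which swaps $F_{11}$ with $F_{22}$ while fixing $t_A$, $t_B$, $t_{AB}$, and $st$. Hence $S$ is antisymmetric under this involution and therefore lies in the ideal generated by $a - a^{-1}$ and $b - b^{-1}$. The Fibonacci identity $b^{-n} - b^n = -(b - b^{-1}) f_n(t_B)$ then produces exactly the $-(b - b^{-1}) f_n(t_B)$ piece of $-(b - b^{-1}) F_{12}'$; absorbing the $-(b - b^{-1})(t_{AB} t_A - t_B)$ remainder via the same substitutions leaves a polynomial divisible by $(a - a^{-1})$, whose $(a - a^{-1})$-cofactor one verifies equals $-F_{21}'$ by direct coefficient comparison in $t_A, t_B, t_{AB}$.

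The main obstacle is the algebraic bookkeeping in the expansions for $D$ and $S$, but the $(a,b) \leftrightarrow (a^{-1},b^{-1})$ symmetry reduces the work considerably: $D$ is constrained to lie in the invariant subring $\Q[t_A, t_B, t_{AB}]$ (together with the Fibonacci terms in $t_B$), and $S$ splits naturally along the two antisymmetric factors $(a - a^{-1})$ and $(b - b^{-1})$, leaving only a finite mechanical comparison of coefficients once the Fibonacci identities are peeled off.
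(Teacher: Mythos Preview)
Your proposal is correct and follows the same approach as the paper, which invokes Proposition~\ref{prop:charvareqns} and Lemma~\ref{lemma:firstequations} and then simply asserts that ``an elementary calculation using these explicit equations shows that they can be written as stated.'' Your outline supplies the structure that the paper omits: the Fibonacci identities $b^n+b^{-n}=f_{n+1}(t_B)-f_{n-1}(t_B)$ and $b^{-n}-b^n=-(b-b^{-1})f_n(t_B)$ account for the non-polynomial-in-traces pieces of $D$ and $S$, and the involution $(a,b)\leftrightarrow(a^{-1},b^{-1})$ cleanly explains why $D$ is a trace polynomial while $S$ decomposes along $(a-a^{-1})$ and $(b-b^{-1})$; this is a helpful organizing device, though in the end the verification that the cofactors are exactly $F_{12}'$ and $F_{21}'$ still comes down to the same direct computation the paper invokes.
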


\begin{proof}
Proposition~\ref{prop:charvareqns} implies that $H$ is generated by $D,S,F_{12}$ and $F_{21}$.  Since $D=F_{11}+F_{22}$ and $S=F_{11}-F_{22}$,  Lemma~\ref{lemma:firstequations} gives exact expressions for these four polynomials in terms of the variables $a, b$, and $s$.  An elementary calculation using these explicit equations shows that they can be written as stated.
\end{proof}

Now we connect the matrix calculations above with the coordinate ring of $X(\Gamma_n)$ which determines the character variety.  First, we make a brief remark about our notation.

\begin{remark}
In Definition~\ref{definition:xyz} we established that 
\[ x=t_{\alpha} = \tr(\rho(\alpha)), \ \ y=t_{\beta}=\tr(\rho(\beta)), \ \ z=t_{\alpha \beta}=\tr(\rho(\alpha \beta)). \]  
By Lemma~\ref{lemma:redchars} all characters are characters of generic representations, so we may take $\rho$ to be in standard form with $\rho(\alpha)=A(a,s)$ and $\rho(\beta)=B(b,t)$.  Therefore,  if $\rho$ is generic
\[ x=t_A= a+a^{-1}, \ \ y=t_B = b+b^{-1}, \ \ z=t_{AB}= ab+a^{-1}b^{-1}+st.\] 
If $\rho$ is irreducible,  we may assume that $s=t$. 
\end{remark}

The following proposition summarizes the concrete  relationship between the matrix equations and the character variety as the set determined by the trace maps $x=t_{\alpha}$, $y=t_{\beta}$, and $z=t_{\alpha \beta}$.

\begin{prop}\label{prop:generalreps}
The map \[(t_\alpha, t_\beta, t_{\alpha\beta})(\rho) = (a+a^{-1}, b+b^{-1}, ab+a^{-1}b^{-1}+s^2)\]  identifies $\tilde{X}_{irr}(\Gamma_n)=\tilde{X}(\Gamma_n)-\tilde{X}_{red}(\Gamma_n)$ with  the subset of $\mathbb{A}^3(x,y,z)$ where $x=a+a^{-1}$, $y=b+b^{-1}$ and $z=ab+a^{-1}b^{-1}+s^2$ and $s\neq 0$. Under this identification, a point $(x,y,z) \in \mathbb{A}^3$ is contained in  $X(\Gamma_n)=\overline{ \tilde{X}_{irr}(\Gamma_n)}$  if and only if there are are 
$a, b \in \C^*$ and $s\in \C$, such that 
$(x,y,z)=(a+a^{-1}, b+b^{-1}, ab+a^{-1}b^{-1}+s^2)$ and
the assignments 
\[ \alpha \mapsto A(a,s) \quad \text{ and } \quad \beta \mapsto B(b,s)\]
can be extended to a representation $\rho \in R(\Gamma_n)$. 
\end{prop}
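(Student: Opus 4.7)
The proof plan is to assemble the pieces from the preceding sections. First, invoke Lemma~\ref{lemma:genericreps}: every irreducible representation of $\Gamma_n$ is conjugate to one in standard form $\rho(\alpha)=A(a,s)$, $\rho(\beta)=B(b,s)$ with $a,b\in\C^*$ and $s\neq 0$. A direct $2\times 2$ matrix multiplication then yields $\tr\rho(\alpha)=a+a^{-1}$, $\tr\rho(\beta)=b+b^{-1}$, and $\tr\rho(\alpha\beta)=ab+a^{-1}b^{-1}+s^2$, establishing the coordinate formula. Since the only relation defining $\Gamma_n$ is $\bar{\beta}^n=\omega$, the generator assignment $\alpha\mapsto A(a,s)$, $\beta\mapsto B(b,s)$ extends to a bona fide representation precisely when the matrix identity $B^{-n}=A^{-1}BA^2BA^{-1}$ holds. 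This is equivalent to $F(a,b,s,s)=0$, i.e.\ to the vanishing of the ideal $H$ of Definition~\ref{defn:charvareqns}. This identifies $\tilde{X}_{irr}(\Gamma_n)$ with the image under the parametrization $\phi(a,b,s)=(a+a^{-1},b+b^{-1},ab+a^{-1}b^{-1}+s^2)$ of the locus $V^\circ=\{(a,b,s)\in(\C^*)^2\times\C^*:F(a,b,s,s)=0\}$.

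For the characterization of $X(\Gamma_n)=\overline{\tilde{X}_{irr}(\Gamma_n)}$, I would argue in two steps. Given a point $(x,y,z)\in X(\Gamma_n)$, pick a convergent sequence $(x_i,y_i,z_i)=\phi(a_i,b_i,s_i)\to(x,y,z)$ with $(a_i,b_i,s_i)\in V^\circ$. The equation $a_i+a_i^{-1}=x_i$ keeps $a_i$ in a compact subset of $\C^*$ for large $i$, similarly for $b_i$, and consequently $s_i^2=z_i-a_ib_i-a_i^{-1}b_i^{-1}$ remains bounded. A subsequential limit $(a,b,s)$ lies in $V=\{(a,b,s)\in(\C^*)^2\times\C:F(a,b,s,s)=0\}$ by continuity of $F$, and $\phi(a,b,s)=(x,y,z)$, giving one inclusion. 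Conversely, given $(a,b,s)\in V$ with $s\neq 0$, the triple already lies in $V^\circ$, so $\phi(a,b,s)\in\tilde{X}_{irr}(\Gamma_n)\subseteq X(\Gamma_n)$.

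The main obstacle is handling the case $s=0$ of the reverse inclusion: showing that such a triple still produces a character in $X(\Gamma_n)$, rather than merely in $\tilde{X}_{red}(\Gamma_n)$. This requires producing a local one-parameter family $(a(t),b(t),s(t))\in V$ with $s(t)\neq 0$ for $t\neq 0$ and $(a(0),b(0),s(0))=(a,b,0)$, so that the reducible character at $s=0$ appears as a limit of irreducible characters. The deformation follows from analyzing the equations $D,S,F_{12}',F_{21}'$ of Lemma~\ref{lemma:secondequations} near $(a,b,0)$: because $F_{12}$ and $F_{21}$ are each divisible by $s$, they vanish identically on $\{s=0\}$, so along this stratum the defining equations reduce to $D=0$ and $S=0$, and the remaining $s$-dependence of $D$ (through $t_{AB}=ab+a^{-1}b^{-1}+s^2$) guarantees that the $s=0$ locus is a proper subvariety of each irreducible component of $V$ rather than a whole component. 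I expect this deformation verification to be the main technical point; the rest of the proof is bookkeeping from the previous lemmas.
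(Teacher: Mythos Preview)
Your handling of the first assertion (the identification of $\tilde{X}_{irr}(\Gamma_n)$) matches the paper's: both directions follow immediately from Lemma~\ref{lemma:genericreps} and the trace computation.  The paper's proof in fact stops there; it is two sentences long and does not address the closure statement at all, deferring that implicitly to the explicit ideal computation in Proposition~\ref{prop:coordring1}.

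Your instinct that the $s=0$ case of the reverse inclusion is the crux is correct, but the deformation argument you propose cannot succeed, because the literal statement is false at $s=0$.  Take any $a\in\C^*$ and any $b\in\Zeta_{n+2}$ with $b\neq\pm1$, and set $s=0$.  The matrices $A(a,0)$ and $B(b,0)$ are diagonal and commute, and since $b^{n+2}=1$ the relation $B^{-n}=A^{-1}BA^2BA^{-1}=B^2$ holds; so this assignment does extend to a (reducible, abelian) representation and $(a,b,0)\in V$.  Its character $(x,y,z)=(a+a^{-1},\,b+b^{-1},\,ab+a^{-1}b^{-1})$ lies on one of the reducible conics of Proposition~\ref{prop:redreps}.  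But one checks directly (using $f_{n-1}(y)=1-y^2$ when $f_{n+2}(y)=0$) that $\varphi_1(x,y,z)=x^2-y^2$, which is nonzero for generic $a$.  Hence this entire one--parameter family of characters is \emph{not} in $X(\Gamma_n)$, and no perturbation of $(a,b,0)$ inside $V$ with $s\neq0$ can exist for generic $a$.  In your language, the $s=0$ stratum \emph{is} a union of irreducible components of $V$, not a proper subvariety of each component.

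What actually makes the paper's subsequent computation work is different: once one divides $F_{12}=-sF_{12}'$ and $F_{21}=sF_{21}'$ by $s$ (legitimate on the $s\neq0$ locus), the resulting generators $D',F_{12}',F_{21}'$ are polynomials in $(x,y,z)$ alone.  Their vanishing set is therefore Zariski closed and contains $\tilde{X}_{irr}(\Gamma_n)$; conversely, away from the finite locus where $x=\pm2$ or $y=\pm2$ one can always choose the lift $(a,b,s)$ with $s\neq0$, so $\tilde{X}_{irr}(\Gamma_n)$ is dense in this vanishing set.  That is the mechanism behind Proposition~\ref{prop:coordring1}, and it is how the closure characterization should be read: with $s\neq0$, then passing to Zariski closure in $(x,y,z)$--space, not by allowing $s=0$ in $V$.
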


\begin{proof}
By Lemma~\ref{lemma:genericreps}, an irreducible representation is conjugate to one in standard form with $s=t\neq0$.  Therefore  its traces give a point in $\A^3$ with  the desired form.
Conversely, given such a point, the representation it  induces is generic and in standard form. By Lemma~\ref{lemma:genericreps} it is irreducible if $s\neq 0$.
\end{proof}

With $A=\rho(\alpha)$ and $B=\rho(\beta)$ the single group relation  in $\Gamma_n$ corresponds to the matrix $F$. 
The assignment $\alpha \mapsto A(a,s)$ and $\beta \mapsto B(b,s)$ extends to a representation of $\Gamma_n$ if and only if $F(a,b,s,s)=0$. Therefore, we have the following proposition.  We use the notation $F_{12}'$ and $F_{21}'$ which appeared in Lemma~\ref{lemma:secondequations}  as these are the same polynomials as below when $A=\rho(\alpha)$ and $B=\rho(\beta)$.

\begin{prop}\label{prop:coordring1}
The coordinate ring of $X(\Gamma_n)$ is $\A^3[x,y,z]/(D',F_{12}', F_{21}')$ where 
\begin{align*}
D' & = f_{n+1}(y)-f_{n-1}(y)+ \Big(x^2z^2-x^3yz+x^4+x^2y^2-4x^2-y^2+2   \Big)  \\
F_{12}' & = f_n(y)  +  zx-y\\
F_{21}' & =  xz^2-x^2yz+x^3+xy^2-yz-2x .
\end{align*}
\end{prop}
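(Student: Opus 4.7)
By Proposition~\ref{prop:generalreps}, $X(\Gamma_n)$ is the Zariski closure in $\A^3(x,y,z)$ of the image, under $(a,b,s)\mapsto(a+a^{-1},\,b+b^{-1},\,ab+a^{-1}b^{-1}+s^2)$, of the set of triples $(a,b,s)$ with $a,b\in\C^*$ and $s\neq 0$ for which the matrix equation $F(a,b,s,s)=0$ holds (this last condition being precisely the requirement that the assignment $\alpha\mapsto A(a,s)$, $\beta\mapsto B(b,s)$ respects the defining relation $\bar\beta^n=\omega$). The goal is to show this closure is cut out in $\A^3$ by the three polynomials $D'$, $F_{12}'$, $F_{21}'$, and that these polynomials generate the full defining ideal.

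The starting point is Lemma~\ref{lemma:secondequations}. It asserts that $H=(F_{11},F_{12},F_{21},F_{22})\subseteq\Q[a^{\pm1},b^{\pm1},s]$ is generated by $D,S,F_{12},F_{21}$ together with the key factorizations $F_{12}=-sF_{12}'$ and $F_{21}=sF_{21}'$, the identity $S=-(b-b^{-1})F_{12}'-(a-a^{-1})F_{21}'$, and the observation that $D$ itself is already a polynomial in $(t_A,t_B,t_{AB})$ equal to $D'$. Since $s\neq 0$ on the irreducible locus (Lemma~\ref{lemma:genericreps}), the conditions $F_{12}=0$ and $F_{21}=0$ translate directly into $F_{12}'(x,y,z)=0$ and $F_{21}'(x,y,z)=0$; the condition $D=0$ becomes $D'(x,y,z)=0$; and the condition $S=0$ is automatic once the other two hold. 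Thus every character in $\tilde X_{irr}(\Gamma_n)$, and therefore every point of its Zariski closure $X(\Gamma_n)$, satisfies $D'=F_{12}'=F_{21}'=0$.

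For the reverse containment, I would take a general point $(x_0,y_0,z_0)\in V(D',F_{12}',F_{21}')$, choose $a,b\in\C^*$ with $a+a^{-1}=x_0$ and $b+b^{-1}=y_0$, set $s^2=z_0-ab-a^{-1}b^{-1}$, and use the identities of Lemma~\ref{lemma:secondequations} in reverse to check that $(a,b,s)$ kills all four entries of $F$, thereby yielding a representation of $\Gamma_n$ with character $(x_0,y_0,z_0)$. Boundary cases where $x_0=\pm2$, $y_0=\pm2$, or $s^2$ forces a reducible representation lie in $X(\Gamma_n)$ automatically by the Zariski closure definition, obtained as limits along curves of nearby generic points. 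The main obstacle is ruling out additional polynomial relations on $(x,y,z)$ that could arise from eliminating the auxiliary variables $(a,b,s)$ from the ideal $H$; this is precisely what the identity $S=-(b-b^{-1})F_{12}'-(a-a^{-1})F_{21}'$ in Lemma~\ref{lemma:secondequations} rules out, since it shows that the only entry of $F$ not already expressible in $(x,y,z)$-variables is a consequence of the three that are, so elimination introduces no further constraints and the defining ideal of $X(\Gamma_n)$ in $\C[x,y,z]$ is exactly $(D',F_{12}',F_{21}')$.
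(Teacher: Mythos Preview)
Your proposal is correct and follows essentially the same approach as the paper: both use Proposition~\ref{prop:generalreps} to reduce the question to the vanishing of $F(a,b,s,s)$, then invoke Lemma~\ref{lemma:secondequations} to see that on the locus $s\neq 0$ the four generators $D,S,F_{12},F_{21}$ of $H$ reduce to the three trace-variable polynomials $D',F_{12}',F_{21}'$ (since $S\in(F_{12}',F_{21}')$ imposes no new constraint). Your discussion of the reverse containment is a bit more explicit than the paper's---which simply appeals to the ``if and only if'' in Proposition~\ref{prop:generalreps}---but the argument is the same.
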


\begin{proof}
By Proposition~\ref{prop:generalreps}, the point $P=(x,y,z)$ is contained in $X(\Gamma_n)$ if and only if $\rho$ extends to a representation of $\Gamma_n$.  This occurs exactly when the group relation is satisfied.  On the level of matrices, this occurs exactly when $F$ is the zero matrix.   The condition that $F=[F_{ij}]$ is the zero matrix is determined by the vanishing of the polynomials $F_{ij}$, which are polynomials in $\A[a^{\pm 1}, b^{\pm 1}, s]$.  The ideal $H$ is generated by these $F_{ij}$.   
By Lemma~\ref{lemma:secondequations}, $H$ is generated by $D'$, $-sF_{12}'$, $sF_{21}'$ and the element $S$, under the identification $x=t_A = a+a^{-1}$, $y=t_B = b+b^{-1}$, and $z=t_{AB} = ab+a^{-1}b^{-1}+s^2$.  
Since $s \neq 0$ for irreducible representations by Lemma~\ref{lemma:genericreps}, we have
$sS\in (  F_{12}, F_{21} )$ and $S \in (  F_{12}', F_{21}' )$. 
It follows that $X(\Gamma_n)$ is determined by the vanishing set of $D'$, $F_{12}'$, and $F_{21}'$ as written above, and that the coordinate ring of $X(\Gamma_n)$ is $\A^3[x,y,z]/ (D',F_{12}', F_{21}')$.
\end{proof}

We now determine a nicer form for the coordinate ring of $X(\Gamma_n)$ using alternative generators for the vanishing ideal.

\begin{definition}\label{definition:phis}
Let 
\begin{align*}
\varphi_1(x,y,z) &= x^2-1+f_{n-1}(y)=x^2+h_n(y)\ell_n(y)\\ 
\varphi_2(x,y,z) &=zx-y+f_n(y)=zx+h_n(y)k_n(y)\\
\varphi_3'(x,y,z) & = xk_n(y)-z\ell_n(y)\\
\varphi_3(x,y,z) & =x( f_{n+1}(y)-1)-zf_n(y)=j_n(y) \varphi_3' 
\end{align*} 
and let  $I'= ( \varphi_1, \varphi_2, \varphi_3' )$ and $I=( \varphi_1,\varphi_2,\varphi_3).$
We define the algebraic sets   $C'$ and $C$ as the vanishing sets of $I'$ and $I$, respectively.  That is, the coordinate rings of $C'$ and $C$ are $\A^3[x,y,z]/I'$ and  $ \A^3[x,y,z]/I$.
\end{definition}

\begin{prop}\label{prop:irreps}
The variety $X(\Gamma_n)$  equals the  variety $C$ from  Definition~\ref{definition:phis}. 
\end{prop}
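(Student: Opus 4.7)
The plan is to establish the equality of algebraic sets $V(D', F_{12}', F_{21}') = V(\varphi_1, \varphi_2, \varphi_3)$ in $\A^3$ by verifying both containments. The starting observation is that $F_{12}' = \varphi_2$ exactly, so one generator of each ideal is shared; the work is to reconcile the remaining two on each side. The core technical input is the set of polynomial congruences
\[ D' \equiv \varphi_1(f_{n+1}(y) + x^2 - 3) \pmod{\varphi_2}, \quad xF_{21}' \equiv 0 \pmod{\varphi_1, \varphi_2}, \quad x\varphi_3 \equiv 0 \pmod{\varphi_1, \varphi_2}. \]
Each follows by substituting $xz \equiv y - f_n(y)$ (and the consequent $x^2z^2 \equiv (y - f_n(y))^2$ and $x^3yz \equiv x^2 y(y - f_n(y))$) from $\varphi_2$ and $x^2 \equiv 1 - f_{n-1}(y)$ from $\varphi_1$, then simplifying using the recurrence $yf_n = f_{n-1} + f_{n+1}$ and the Cassini-type identities $f_{n-1}^2 - f_n f_{n-2} = 1$ and $f_n^2 - f_{n+1}f_{n-1} = 1$ (each immediate from the closed form in Definition~\ref{definition:fibonacci}).

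For the forward inclusion $V(\varphi_1, \varphi_2, \varphi_3) \subseteq V(D', F_{12}', F_{21}')$, the congruences immediately give $D'(P) = 0$ and $xF_{21}'(P) = 0$ at any $P = (x, y, z) \in V(\varphi_1, \varphi_2, \varphi_3)$, so $F_{21}'(P) = 0$ when $x \neq 0$. When $x = 0$ the hypotheses reduce to $f_{n-1}(y) = 1$, $f_n(y) = y$, and either $z = 0$ or $y = 0$, whence a short direct check confirms both $D'(0, y, z) = 0$ and $F_{21}'(0, y, z) = -yz = 0$.

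For the reverse inclusion, $\varphi_3$ is handled via the trace identity $\tr(AF) = \varphi_3 + F_{21}'$ in $\Q[x, y, z]$ (obtained by expanding $\tr(AF) = aF_{11} + sF_{12} + a^{-1}F_{22}$ and reducing with the standard formula $\tr(AB^k) = zf_k(y) - xf_{k-1}(y)$): for any point in the Zariski-dense subset $\tilde{X}_{irr}(\Gamma_n)$ of $V(D', F_{12}', F_{21}')$ the corresponding representation satisfies $F = 0$, hence $\tr(AF) = 0$, while $F_{21}' = 0$ by hypothesis, forcing $\varphi_3 = 0$ on all of $V(D', F_{12}', F_{21}')$ by Zariski closure. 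For $\varphi_1$: the first congruence forces $\varphi_1(P)(f_{n+1}(y) + x^2 - 3) = 0$ on the variety, and on the exceptional locus $f_{n+1}(y) + x^2 = 3$ the now-established $\varphi_3 = 0$ together with $\varphi_2 = 0$ and the Cassini identity collapse to $(f_{n+1}(y) - 1)(x^2 - 1 + f_{n-1}(y)) = 0$, so $\varphi_1 = 0$ except possibly when $f_{n+1}(y) = 1$ (equivalently $x^2 = 2$), a residual finite family of sub-cases indexed by $n \pmod 4$ that is dispatched by direct substitution into $D'$ and $F_{21}'$. The main obstacle is this final case analysis on the exceptional locus, which is numerical but delicate; all other steps reduce to routine Fibonacci manipulations.
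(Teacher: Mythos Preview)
Your argument is correct, but the paper's proof is dramatically simpler. Rather than proving equality of the vanishing sets $V(D',F_{12}',F_{21}') = V(\varphi_1,\varphi_2,\varphi_3)$, the paper shows the two ideals are literally equal in $\A[x,y,z]$ by exhibiting an invertible change of generators:
\[
\varphi_1 = yF_{12}' - \tfrac12\big(D' + yF_{12}' - xF_{21}'\big),\qquad
\varphi_2 = F_{12}',\qquad
\varphi_3 = F_{21}' - (z-xy)F_{12}' - x\varphi_1.
\]
Since $\varphi_1$ is linear in $D'$ and $\varphi_3$ is linear in $F_{21}'$ (with unit leading coefficients), these relations invert, and the proof is three lines. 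This also yields the stronger conclusion that the coordinate rings coincide, not merely the varieties.

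Your route---verifying the two containments via the congruences $D' \equiv \varphi_1(f_{n+1}+x^2-3)\pmod{\varphi_2}$, $xF_{21}'\equiv 0$ and $x\varphi_3\equiv 0 \pmod{\varphi_1,\varphi_2}$ using Cassini, the trace identity $\tr(AF)=\varphi_3+F_{21}'$ with Zariski density, and a final case analysis on the locus $f_{n+1}(y)=1$, $x^2=2$---does all check out (I verified the trace identity and the congruences). What it buys you is independence from any guessed change of basis: everything is forced by the Fibonacci recursion and $SL_2$ trace relations. What it costs is the exceptional-locus bookkeeping and the need to argue separately that $\tr(AF)$ is a polynomial in $x,y,z$ (it is, since it is a difference of traces of words in $A,B$). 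Had you noticed that your congruences already imply $\varphi_1,\varphi_3 \in (D',F_{12}',F_{21}')$ once you solve for the explicit coefficients, you would have recovered the paper's argument and avoided the case analysis entirely.
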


\begin{proof}
It suffices to determine the coordinate ring.
By Proposition~\ref{prop:coordring1} the coordinate ring is generated by  $D'$, $F_{12}'$, and $F_{21}'$.  By construction: 
\begin{align*}
 \varphi_1& = yF_{12}' -\tfrac12  (D'+yF_{12}'-xF_{21}' )=  x^2-1 + f_{n-1}(y),  \\
 \varphi_2 & = F_{12}' =zx-y+f_n(y), \\
 \varphi_3 & = F_{21}'-(z-xy)F_{12}'-x\varphi_1 = x(f_{n+1}(y)-1)-z f_n(y).
 \end{align*}
As $\varphi_1$ is linear in $D'$, $\varphi_2$ is $F_{12}'$, and $\varphi_3$ is linear in $F_{21}'$ we conclude that  $I$ is generated by $\varphi_1$, $\varphi_2$, and $\varphi_3$. 
\end{proof}

\subsection{The Non-hyperbolic Cases}\label{section:nonhyperbolic}

By Lemma~\ref{lem:geometry}, the manifolds $M_n$ are non-hyperbolic if and only if $|n|\leq 2$.  
In this section we determine the entire $\text{SL}_2(\C)$ character variety of $\Gamma_n = \pi_1(M_n)$, i.e.\ both varieties $\tilde{X}_{red}(\Gamma_n)$ and $X(\Gamma_n)$, in these non-hyperbolic cases.  

\begin{prop}
The $\SL_2(\C)$ character variety  $\tilde{X}(\Gamma_n)=\tilde{X}_{red}(\Gamma_n) \cup X(\Gamma_n)$ for integers $|n|<2$ is given in Table~\ref{table:nonhypchar}.
\begin{table}
\begin{tabular}{c|c|c}
$n$ & $\tilde{X}_{red}(\Gamma_n)$ & $X(\Gamma_n)$ \\ \hline\hline
$2$ & $\{ (x,2,x)\} \cup \{(x,-2,-x)\} \cup \{ (x,0,z): x^2+z^2=4\}$ &$\{(0,y,0)\} \cup \{ (0,0,z)\}$\\
$1$ & $\{(x,2,x) \} \cup \{(x,-1,z): x^2+z^2+xz=3\}$ & $\{(1 , y, y-1)\}\cup \{(-1 , y, -y+1)\}$\\
$0$ & $\{ (x, 2, x)\} \cup \{(x,-2, -x)\}$ & $\{( \sqrt{2},  \sqrt{2}z, z)\}\cup \{( -\sqrt{2},  -\sqrt{2}z, z)\}$ \\ 
$-1$ &$\{ (x,2,-x)\}$ & $\{(x,x^2-1,x)\}$\\ 
$-2$ & $\{ (x,y,z) : x^2+y^2+z^2-xyz=4\}$ &  $\{(0,0,z)\} \cup \{(2,2,2)\} \cup \{(-2,2,-2)\}$
\end{tabular}
\caption{The $\SL_2(\C)$ character variety of $\Gamma_n = \pi_1(M_n)$ for non-hyperbolic $M_n$.}
\label{table:nonhypchar}
\end{table}

\end{prop}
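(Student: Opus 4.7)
The plan is a case-by-case computation for each integer $n$ with $|n|\le 2$, applying Proposition~\ref{prop:redreps} to obtain $\tilde{X}_{red}(\Gamma_n)$ and Proposition~\ref{prop:irreps} together with Definition~\ref{definition:phis} to obtain $X(\Gamma_n)$.

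For the reducible locus: when $n\neq -2$, Proposition~\ref{prop:redreps} describes $\tilde{X}_{red}(\Gamma_n)$ as the union of the slices of the Markov surface $x^2+y^2+z^2-xyz=4$ by the hyperplanes $y=y_0$, where $y_0$ ranges over the finite set $\Rb_{n+2}$. First I would enumerate these sets using Definition~\ref{definition:rootsofunity}: $\Rb_4=\{-2,0,2\}$ for $n=2$, $\Rb_3=\{-1,2\}$ for $n=1$, $\Rb_2=\{-2,2\}$ for $n=0$, and $\Rb_1=\{2\}$ for $n=-1$. Substituting each such value of $y_0$ into the Markov equation yields either an irreducible affine conic (when $y_0\neq\pm 2$) or the degenerate conic $(x\mp z)^2=0$, which is the line $z=\pm x$ (when $y_0=\pm 2$). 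For $n=-2$ the abelianization is $\Z\times\Z$ by Proposition~\ref{prop:redreps}, so $\tilde{X}_{red}(\Gamma_{-2})$ is the full Markov surface with no constraint on $y$.

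For the irreducible locus: for each $n$ I would compute the three Fibonacci polynomials $f_{n-1}$, $f_n$, $f_{n+1}$ directly from the recursion of Definition~\ref{definition:fibonacci} (seeded by $f_{-1}=-1,f_0=0,f_1=1$ via Lemma~\ref{lemma:fkvals}), substitute into the generators $\varphi_1,\varphi_2,\varphi_3$ of Definition~\ref{definition:phis}, and solve the resulting low-degree system by elementary algebra. For instance, $n=2$ gives $\varphi_1=x^2$, $\varphi_2=zx$, $\varphi_3=-yz$, forcing $x=0$ and $yz=0$, hence $\{(0,y,0)\}\cup\{(0,0,z)\}$; $n=-1$ gives $\varphi_1=x^2-1-y$, $\varphi_2=zx-y-1$, $\varphi_3=-x+z$, which reduce to $y=x^2-1$ and $z=x$; and the cases $n=0,1$ are comparable short computations yielding pairs of affine lines.

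The main obstacle is the exceptional case $n=-2$ flagged in Remark~\ref{remark:M-2}. Here the system $\varphi_1=x^2-y^2$, $\varphi_2=zx-2y$, $\varphi_3=yz-2x$ cuts out three affine lines $\{(0,0,z)\}\cup\{(y,y,2)\}\cup\{(-y,y,-2)\}$, and one must decide which points actually correspond to characters in $\overline{\tilde{X}_{irr}(\Gamma_{-2})}$. A direct eigenvector computation on the standard-form assignment of Proposition~\ref{prop:generalreps} shows that on the lines $\{(\pm y,y,\pm 2)\}$ the value of $s$ forced by the matrix equations is precisely the value at which $\rho(\alpha)$ and $\rho(\beta)$ acquire a common eigenvector, so these two lines consist entirely of characters of reducible representations. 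The only additional characters in $X(\Gamma_{-2})$ beyond the line $\{(0,0,z)\}$ are then the isolated characters of the exceptional representations enumerated by Lemma~\ref{lemma:exceptional}, which produces the entries displayed in Table~\ref{table:nonhypchar}.
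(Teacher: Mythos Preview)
For $n\in\{2,1,0,-1\}$ your approach is exactly the paper's: enumerate $\Rb_{n+2}$ via Proposition~\ref{prop:redreps} to obtain $\tilde X_{red}(\Gamma_n)$, then solve the system $\varphi_1=\varphi_2=\varphi_3=0$ from Definition~\ref{definition:phis} to obtain $X(\Gamma_n)$.

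The case $n=-2$ is where you diverge, and your final step has a genuine gap. The paper simply records $\varphi_1=x^2-y^2$, $\varphi_2=zx-y^2$, $\varphi_3=-2x+yz$ and solves directly to the table entry, citing Proposition~\ref{prop:irreps}. You instead compute $\varphi_2=zx-2y$ (from $f_{-2}(y)=-y$), obtain the three lines $\{(0,0,z)\}\cup\{(y,y,2)\}\cup\{(-y,y,-2)\}$, and then argue further. Your observation that the latter two lines carry only reducible characters is correct and worth noting: it exposes that a standard-form assignment with $s\neq 0$ can still be reducible (precisely when $s^2=-(a-a^{-1})(b-b^{-1})$), so the converse direction in Proposition~\ref{prop:generalreps} is more delicate than its proof suggests.

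However, your closing appeal to Lemma~\ref{lemma:exceptional} does not recover the two isolated points. Exceptional representations are by definition upper-triangular and abelian, hence reducible; their characters lie in $\tilde X_{red}(\Gamma_{-2})$ and nothing in your argument places them in $\overline{\tilde X_{irr}(\Gamma_{-2})}=X(\Gamma_{-2})$. Moreover, Lemma~\ref{lemma:exceptional} produces \emph{four} characters $(\pm2,\pm2,\pm2)$ (with the sign of $z$ the product of the other two), whereas the table lists only two of them, so the appeal does not even match numerically. If your exclusion of the two spurious lines stands, the conclusion of your own argument is $X(\Gamma_{-2})=\{(0,0,z)\}$, not the displayed entry.
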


\begin{proof}
Recall that from Proposition~\ref{prop:redreps} the reducible representations satisfy $x^2+y^2+z^2-xyz=4$ and, when $n\neq -2$,  $y\in \Rb_{n+2}$.  By Proposition~\ref{prop:irreps} the irreducible characters 
$X(\Gamma_n)$ are determined by the vanishing set of $\varphi_1$, $\varphi_2$, and $\varphi_3$.  Here we take $\epsilon = \pm1$.  Recall from Definition~\ref{definition:rootsofunity} that $\Zeta_{n+2}$ is the set of all $|n+2|^{th}$ roots of unity from which we may determine $\Rb_{n+2}$.

{\bf Case $n=2$:} 
Then $\Zeta_4= \{\pm 1,  \pm i\}$ and $y\in \{-2, 0, 2\}$.  The reducible characters are  two lines and  a conic
\[\tilde{X}_{red}(\Gamma_2)= \{ (x,2,x)\} \cup \{(x,-2,-x)\} \cup \{ (x,0,z): x^2+z^2=4\}.\]
The irreducible characters are determined by the vanishing set of the polynomials $\varphi_1$, $\varphi_2$, and $\varphi_3$.  These reduce to  $x^2$, $zx$, $x(y^2-1)-zy$ so that the irreducible characters are two lines,
\[X(\Gamma_2) = \{(0,y,0)\} \cup \{ (0,0,z)\}.\]

{\bf Case $n=1$:}
Then $\Zeta_3=\{1, (-1\pm \sqrt{-3})/2\}$ and so $y\in \{-1,2\}$.   The reducible characters are a line and conic,
\[ \tilde{X}_{red}(\Gamma_1)= \{(x,2,x) \} \cup \{(x,-1,z): x^2+z^2+xz=3\}.\] 
The polynomials $\varphi_1$, $\varphi_2$, and $\varphi_3$ reduce to 
$ x^2-1$, $zx-y+1$, and  $x(y-1)-z$ so that the irreducible characters are two lines,
\[X(\Gamma_1) = \{(1 , y, y-1)\}\cup \{(-1 , y, -y+1)\}.\]

{\bf Case $n=0$:}
In this case, $\Zeta_2=\{\pm 1\}$ so $y=2\epsilon$.  Therefore, the characters of the reducible representations satisfy $x=\epsilon z$.  
The reducible representations consist of  two lines,
\[\tilde{X}_{red}(\Gamma_0)= \{ (x, 2, x)\} \cup \{(x,-2, -x)\}.\]  
The polynomials $\varphi_1$, $\varphi_2$, and $\varphi_3$ reduce to 
$x^2-2$, $zx-y$, and $0$ so that the irreducible characters are two lines,
\[X(\Gamma_0)=\{( \sqrt{2},  \sqrt{2}z, z)\}\cup \{( -\sqrt{2},  -\sqrt{2}z, z)\}.\]

{\bf Case $n=-1$:}
Then $\Zeta_1 = 1$ and $y=2$ so the reducible characters are a line, 
\[\tilde{X}_{red}(\Gamma_{-1})= \{ (x,2,-x)\}.\]
The polynomials $\varphi_1$, $\varphi_2$, and $\varphi_3$ reduce to $x^2-1-y$, $zx-y-1$, and $-x+z$ so that the irreducible characters is a curve,
\[X(\Gamma_{-1}) = \{(x,x^2-1,x)\}.\]

{\bf Case $n=-2$:}
With no restrictions on $y$ in this case, the reducible characters are a surface,
\[ \tilde{X}_{red}(\Gamma_{-2})=\{ (x,y,z) : x^2+y^2+z^2-xyz=4\}.\]
The polynomials $\varphi_1$, $\varphi_2$, and $\varphi_3$ reduce to $x^2-y^2$, $zx-y^2$, and $-2x+yz$ so that the irreducible characters are two points and a line,
\[X(\Gamma_{-2}) = \{(0,0,z)\} \cup \{(2,2,2)\} \cup \{(-2,2,-2)\}.\]
These two points are contained in $\tilde{X}_{red}(\Gamma_{-2})$.
\end{proof}

\subsection{Points of high multiplicity in $X(\Gamma_n)$}\label{section:multiplicity}

In this section we begin our  inspection of  the irreducible components of  $X(\Gamma_n)$.

\begin{prop}\label{prop:multiplicitypoints1} 
The set $C$ is the union of $C'$   and the points
\[ P= \{ (\epsilon \sqrt{2}, y_0,  \epsilon y_0/\sqrt{2} ) : j_n(y_0)=0, \epsilon = \pm 1\}. \]
If $n=0$, then $P=C'$ is the union of two lines. If $0<|n|\leq 2$, then $P=\emptyset$. If $|n| >2$, then 
\begin{itemize}
\item $P$ is a finite set of $2\deg j_n\geq2$ points,
\item $P \subset C'$ and hence they occur in $C$ with multiplicity two,
\item no point in $P$ is the character of a reducible representation of $\Gamma_n$,  and
\item no point in $P$ is the character of a discrete faithful representation of $\Gamma_n$.
\end{itemize}
\end{prop}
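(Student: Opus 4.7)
The plan is to exploit the factorization $\varphi_3 = j_n(y)\,\varphi_3'$ built into Definition~\ref{definition:phis}. Since $C$ is the zero set of $\varphi_1, \varphi_2, \varphi_3$, we have set-theoretically
\[ C = C' \cup V(\varphi_1, \varphi_2, j_n(y)), \]
so the main tasks are to show that $V(\varphi_1, \varphi_2, j_n(y)) = P$, that $P$ lies in $C'$ (which yields both the union form $C = C' \cup P$ and the multiplicity-two statement), to count $P$ case-by-case, and finally to exclude reducible and discrete faithful characters on $P$.

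For the first step, if $j_n(y_0) = 0$ then, as in the start of the proof of Lemma~\ref{lemma:nocommonfactors}, $f_n(y_0) = 0$ and $f_{n+1}(y_0) = 1$, so the Fibonacci recursion yields $f_{n-1}(y_0) = -1$. Substituting into $\varphi_1$ and $\varphi_2$ collapses them to $x^2 - 2$ and $xz - y_0$, whose joint vanishing is precisely the four points of $P$. To see $P \subset C'$, I would use Lemma~\ref{lemma:fibonacciidentities}: at $y_0$ the identities $h_n k_n = f_n - y$ and $h_n\ell_n = f_{n-1} - 1$ become $h_n(y_0)k_n(y_0) = -y_0$ and $h_n(y_0)\ell_n(y_0) = -2$, the second of which automatically forces $h_n(y_0) \neq 0$. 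Multiplying $\varphi_3'(x, y_0, z) = xk_n(y_0) - z\ell_n(y_0)$ by $h_n(y_0)$ then gives $-xy_0 + 2z$, which vanishes on $P$ since $xz = y_0$ and $x^2 = 2$ there.

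With $P$ identified, the case analysis is immediate: for $n = 0$, Lemma~\ref{lem:constantpolys} gives $j_0 \equiv 0$, so the condition on $y_0$ is vacuous and $P$ equals the full pair of lines already seen to be $C = C'$ in Section~\ref{section:nonhyperbolic}; for $0 < |n| \leq 2$, Lemma~\ref{lem:constantpolys} says $j_n$ is a nonzero constant so $P = \emptyset$; for $|n| > 2$, Lemma~\ref{lemma:fisseparable} gives that $j_n$ is separable of degree $\geq 1$, so $|P| = 2\deg j_n \geq 2$. At each point of $P$, both factors $j_n$ and $\varphi_3'$ of $\varphi_3$ vanish (simply, using separability of $j_n$ for one and the calculation above for the other), so $\varphi_3$ vanishes to order at least two and these points appear in $C$ with multiplicity two.

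Finally, to rule out reducible and discrete faithful characters on $P$: Lemma~\ref{lemma:redchars} forces any reducible character to satisfy $x^2 + y^2 + z^2 - xyz = 4$, which on $P$ simplifies to $2 + y_0^2/2 = 4$ and hence $y_0 = \pm 2$; but a direct check using Lemma~\ref{lemma:fkvals} shows $j_n(\pm 2) \neq 0$ for any $n \neq 0$. For the discrete faithful character, Lemma~\ref{lemma:disc1} forbids $x$ from lying in any $\Rb_k$ with $k \neq 0$, whereas $x = \pm\sqrt{2} = 2\cos(\pi/4) \in \Rb_8$. The main subtlety is justifying the multiplicity-two claim rigorously (rather than just heuristically from the product $j_n\varphi_3'$); everything else is direct substitution using the Fibonacci identities compiled in Section~\ref{section:fibonacci}.
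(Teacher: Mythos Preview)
Your proof is correct and follows essentially the same route as the paper: split $C$ via the factorization $\varphi_3 = j_n(y)\varphi_3'$, evaluate $\varphi_1,\varphi_2$ using $f_n(y_0)=0$, $f_{n\pm1}(y_0)=\mp1$, multiply $\varphi_3'$ by $h_n(y_0)$ to check $P\subset C'$, and invoke Lemma~\ref{lemma:redchars} and Lemma~\ref{lemma:disc1} for the last two bullets. One tiny slip: for a fixed root $y_0$ of $j_n$ the system $x^2=2$, $xz=y_0$ has two solutions, not four. Your observation that $h_n(y_0)\ell_n(y_0)=-2$ forces $h_n(y_0)\neq 0$ is a nice direct alternative to the paper's appeal to Lemma~\ref{lemma:nocommonfactors}(\ref{lemma:jnhn}).
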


\begin{proof}
From Definition~\ref{definition:phis}, since $\varphi_3=j_n(y) \varphi'_3$, $C$ is the union of the vanishing set $C'$ of $I'=(\varphi_1, \varphi_2, \varphi_3')$ and the vanishing set $P$ of the ideal $(\varphi_1, \varphi_2, j_n)$.
If $n=0$, then $\varphi'_3 = 0$ whenever both $\varphi_1=0$ and $\varphi_2=0$.  Since $j_0 = 0$ by Lemma~\ref{lem:constantpolys} it follows that $C'=P$ when $n=0$.
Since  $j_n$ is a non-zero constant if and only if $0<|n| \leq 2$ by Lemma~\ref{lem:constantpolys}, $P= \emptyset$ in these cases.

Now assume $|n| > 2$.  By Lemma~\ref{lemma:fibonacciidentities}  when $j_n(y)=0$ it follows that $f_n(y)=0$ and $f_{n+1}(y)=1$.  Then by the Fibonacci recursion,  $f_{n-1}(y) = -1$. Therefore, $\varphi_1=0$ implies that $x^2=2$ and $\varphi_2=0$ implies that $zx=y$.   Thus $P$ is the set of points with the cardinality as claimed.  
 
It is elementary to verify that if a point in $P$ satisfies the equation $x^2+y^2+z^2-xyz=4$, then $y=\pm 2$.  However, $\pm 2$ is a root of $j_n(y)$ only if $n=0$. 
  Therefore, since $|n|>2$, these points do not correspond to reducible representations by Proposition~\ref{prop:redreps}.

Finally, to show that these points are in $C'$, it suffices to show that the points 
\[ \{ (\epsilon \sqrt{2}, y_0,  \epsilon y_0/\sqrt{2} ) : j_n(y_0)=0, \epsilon = \pm 1\} \]  
satisfy $\varphi_1$, $\varphi_2$ and $\varphi_3'$.   A point of the form $(\epsilon \sqrt{2}, y_0,  \epsilon y_0/\sqrt{2} )$ satisfies $\varphi_1$ and $\varphi_2$ trivially.  As $h_n$ and $j_n$ share no common factors, by Lemma~\ref{lemma:nocommonfactors}~(\ref{lemma:jnhn}),  the vanishing set of  equation $xk_n(y_0)-z\ell_n(y_0)$  equals the vanishing set  of 
\[  h_n(y_0) \big( xk_n(y_0)-z\ell_n(y_0) \big) = x(f_n(y_0)-y_0)-z(f_{n-1}(y_0)-1). \]
As $y_0$ is a root of $j_n$, this equation reduces to $-xy_0+2z$.  When $x= \epsilon \sqrt{2}$ and $z=\epsilon y_0/\sqrt{2}$ this is $ -\epsilon \sqrt{2}y_0 +2 \epsilon y_0/\sqrt{2} = 0.$ Therefore, these points are on $C'$. By Lemma~\ref{lemma:disc1} these representations are not discrete and faithful.
\end{proof}

\begin{remark}

Proposition~\ref{prop:multiplicitypoints1} shows that for $|n|>0$ there is a finite non-empty set of points $P$ in $X(\Gamma_n)$ with multiplicity two.
Such points are invariant under isomorphism and are therefore well-defined. In particular, they do not depend on choices such as the presentation of the group $\Gamma_n$ or the simplification of matrices. 

It is unknown to the authors what significance such points have in the character variety.

\end{remark}

\begin{question}
What geometric significance, if any, do points with multiplicity in the $\SL_2(\C)$ character variety of a hyperbolic $3$--manifold carry?
\end{question}

\begin{prop}\label{prop:multiplicitypointsrep}
For $|n|>2$, the collection of points $P$ corresponds to representations of the group 
\[ \langle \alpha, \beta :   \beta \alpha^2 = \alpha^2 \bar{\beta}, \beta^{n}, \alpha^8 \rangle. \]
When $n=2m$ they are all representations of the quotient of this group by  the normal closure of the subgroup generated by $\beta^m\alpha^{4}$.
\end{prop}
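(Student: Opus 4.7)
The plan is to realize each $(x,y,z)=(\epsilon\sqrt{2},y_0,\epsilon y_0/\sqrt{2})\in P$ by a matrix representation in standard form $\rho(\alpha)=A(a,s)$, $\rho(\beta)=B(b,s)$ with $s\neq 0$ (Lemma~\ref{lemma:genericreps}, Proposition~\ref{prop:irreps}), where $a+a^{-1}=\epsilon\sqrt{2}$ and $b+b^{-1}=y_0$, and then verify the three required relations one at a time by direct matrix computation. Before doing that I would collect all the scalar consequences of the hypotheses: from $x^2=2$ one gets $a^{-2}=-a^2$, $a^{4}=-1$ and $a^{8}=1$; from $j_n(y_0)=0$ together with the factorization $f_{n+1}-1=j_nk_n$ of Lemma~\ref{lemma:fibonacciidentities} one gets $f_n(y_0)=0$, $f_{n+1}(y_0)=1$, and hence by the Fibonacci recursion $f_{n-1}(y_0)=-1$ and $b^n=1$, exactly as in the proof of Proposition~\ref{prop:multiplicitypoints1}.

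Using the Cayley--Hamilton formulas recorded in Lemma~\ref{lemma:firstequations} for the powers of $A(a,s)$ and $B(b,s)$, the relations $\rho(\alpha^{8})=I$ and $\rho(\beta^{n})=I$ are then immediate from $a^{8}=1$, $f_{8}(x)=0$, $b^{n}=1$ and $f_n(y_0)=0$. The conjugation relation $\beta\alpha^{2}=\alpha^{2}\bar{\beta}$ is the more delicate one: writing $\rho(\alpha)^{2}=\bigl(\begin{smallmatrix}a^{2}&0\\ sx&-a^{2}\end{smallmatrix}\bigr)$ (using $a^{-2}=-a^{2}$) and expanding the two products $\rho(\beta)\rho(\alpha)^{2}$ and $\rho(\alpha)^{2}\rho(\beta)^{-1}$ entry by entry, one finds that the off-diagonal entries match automatically and that both diagonal equalities collapse to the single scalar identity $s^{2}x=a^{2}(b^{-1}-b)$, which in turn follows from $s^{2}=z-ab-a^{-1}b^{-1}$ (Remark~\ref{rem:genericxyz}) together with $xz=y_0=b+b^{-1}$.

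For the refined claim when $n=2m$, Definition~\ref{definition:fibs} gives $j_{2m}=f_m$, so $f_m(y_0)=0$ and the off-diagonal of $\rho(\beta)^{m}$ vanishes. Combined with $\rho(\alpha)^{4}=-I$ (from $a^{4}=-1$ and $f_{4}(x)=0$), the matrix identity $\rho(\beta^{m}\alpha^{4})=I$ reduces to the single scalar equality $b^{m}=-1$, equivalently $f_{m-1}(y_0)=1$ via the Fibonacci identity $b^{m}+b^{-m}=y_0 f_m(y_0)-2f_{m-1}(y_0)$. I expect this final sign to be the main obstacle: the constraint $f_{n+1}(y_0)=1$ factors as $f_{m+1}(y_0)^{2}=1$ via $f_{2m+1}=f_{m+1}^{2}-f_m^{2}$, which only forces $f_{m+1}(y_0)=\pm 1$ and hence $f_{m-1}(y_0)=\mp 1$, so extracting the correct sign requires a finer analysis of precisely which roots of $f_m$ arise as $y$-coordinates of points of $P$ within $X(\Gamma_{2m})$.
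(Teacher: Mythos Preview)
Your approach to the three relations $\alpha^{8}$, $\beta^{n}$, $\beta\alpha^{2}=\alpha^{2}\bar\beta$ is correct and is essentially what the paper does: from $x^{2}=2$ deduce $a^{4}=-1$, from $j_{n}(y_{0})=0$ and the factorizations in Lemma~\ref{lemma:fibonacciidentities} deduce $f_{n}(y_{0})=0$ and $b^{n}=1$, and then read off $A^{8}=I$, $B^{n}=I$, $BA^{2}=A^{2}B^{-1}$.  Your entry-by-entry check of the last relation is a bit more explicit than the paper's (the paper simply observes that $B^{-n}=I$ combined with the group relation $B^{-n}=A^{-1}BA^{2}BA^{-1}$ gives $BA^{2}B=A^{2}$), but the content is the same.

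Your hesitation about the $n=2m$ refinement is entirely justified, and the ``finer analysis'' you hope for does not exist.  Every root $y_{0}$ of $j_{2m}=f_{m}$ occurs as a $y$-coordinate of a point of $P$ (this is exactly the description of $P$ in Proposition~\ref{prop:multiplicitypoints1}); there is no further selection.  From $f_{m}(y_{0})=0$ one has $b^{2m}=1$, so $b^{m}=b^{-m}$ and the identity $b^{m}+b^{-m}=f_{m+1}(y_{0})-f_{m-1}(y_{0})$ gives $b^{m}=f_{m+1}(y_{0})$, which by your own observation is only $\pm1$.  Both signs genuinely occur: for $n=6$ (so $m=3$, $j_{6}=f_{3}=y^{2}-1$) the root $y_{0}=1$ gives $b^{3}=f_{4}(1)=-1$, while $y_{0}=-1$ gives $b^{3}=f_{4}(-1)=+1$.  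Since $A^{4}=-I$, at the point $(\epsilon\sqrt{2},-1,-\epsilon/\sqrt{2})\in P\subset X(\Gamma_{6})$ one has $\rho(\beta^{m}\alpha^{4})=B^{3}A^{4}=-I\neq I$, so this representation does \emph{not} factor through the quotient by $\beta^{m}\alpha^{4}$.

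The paper's own argument asserts ``$b^{m}=1$ (since $y\neq-2$)'' and then concludes $\beta^{m}=\alpha^{4}$, but these two claims are already mutually inconsistent (the first gives $B^{m}=I$, the second would require $B^{m}=-I$).  The most one can say uniformly is $(\beta^{m}\alpha^{4})^{2}=1$, which follows from $\beta^{2m}$ and $\alpha^{8}$ and adds nothing; the sharper relation $\beta^{m}\alpha^{4}=1$ holds precisely on the subset of $P$ with $f_{m+1}(y_{0})=-1$.  So the gap you identified is real and is a defect of the statement rather than of your argument.
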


\begin{proof}
As $x^2=2$ for points in $P$ and $x=a+a^{-1}$ we conclude that $a^4=-1$ so that $a=(\pm 1\pm i)/\sqrt{2}$.
Therefore, the matrix $A$ satisfies $A^4=-I$ and $A^8=I$, the identity.  As $y=b+b^{-1}$, $j_n(y)=0$, and $j_n$ is a divisor of $f_n$ by Lemma~\ref{lemma:fibonacciidentities}  it follows that  $b^{2n}=1$.  .  
 In fact, if $n=2m$ then $j_n(y)=f_m(y)$ by Lemma~\ref{definition:fibs}, and  we conclude that $b^m=1$ (since $y\neq -2$) and therefore $B^n=I$.  If $n=2m+1$ then 
\[ j_n(y)=f_{m+1}(y)+f_m(y) = \frac{b^{m+1}-b^{-m-1}+b^m-b^{-m}}{b-b^{-1}}.\]
As this is zero, $b^{2m+2}+b^{2m+1}-b-1=0$, and we conclude that $b^{2m+1}=1$. Therefore $B^n=I$ and $A^4=-B^n$ in this case as well.  

 We have 
\[ A= \mat{a}{0}{s}{a^{-1}} \rightarrow A^4= \mat{a^4}{0}{sf_4(x)}{a^{-4}}= - \mat{1}{0}{0}{1} \]
and 
\[ B= \mat{b}{s}{0}{b^{-1}} \rightarrow B^{n} = \mat{b^n}{sf_n(y)}{0}{b^{-n}} =   \mat{1}{0}{0}{1}. \]   
These are representations of the following group (as $\alpha^4$ is central)
\[ \langle \alpha, \beta :  \bar{\alpha} \beta \alpha^2 \beta \bar{\alpha}, \alpha^4\beta = \beta \alpha^4, \beta^{n}, \alpha^8 \rangle. \]
This presentation reduces to 
\[ \langle \alpha, \beta :   \beta \alpha^2 = \alpha^2 \bar{\beta}, \beta^{n}, \alpha^8 \rangle. \]
since $ \alpha^4\beta = \beta \alpha^4$ follows from $\beta \alpha^2 = \alpha^2 \bar{\beta}$.
If $n=2m$ is even, then $B^m=I$ and the additional relation $\beta^m=\alpha^4$ holds.  In this case, these are representations of the group
\[ \langle \alpha, \beta :   \beta \alpha^2 = \alpha^2 \bar{\beta},  \beta^m=\alpha^4, \beta^{2m}, \alpha^8 \rangle \]
which is a quotient of the previous group.
\end{proof}

When $n\equiv 2 \pmod 4$, we see that the affine line $(0,0,z)$ is contained in $C'$.  This follows since 
$f_{n-1}(0)=1$, and   $f_n(0)=0$.     We now collect information about this line of points.

\begin{prop}\label{prop:extraline}  
When $n\equiv 2 \pmod 4$ the line $(0,0,z)$ is contained in $C'$. 
None of the associated  characters is the character of a discrete faithful representation.  
These are characters of faithful representations of the group 
\[ \langle \alpha, \beta :  \alpha^2=\beta^2, \alpha^4 \rangle.\]
The line intersects the set of reducible characters in the points $(0,0,\pm 2)$.  No points of multiplicity two occur on this line. 
\end{prop}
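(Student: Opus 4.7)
The plan has five small tasks, corresponding to the five assertions of the proposition, and I would tackle them in order.

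For containment of the line in $C'$, I would evaluate the three defining polynomials at $(0,0,z)$. With $n=4k+2$, Lemma~\ref{lemma:fkvals} gives $f_{n-1}(0)=f_{4k+1}(0)=(-1)^{2k}=1$ and $f_n(0)=0$ (since $n$ is even), so $\varphi_1(0,0,z)=-1+f_{n-1}(0)=0$ and $\varphi_2(0,0,z)=f_n(0)=0$.  For $\varphi_3'=xk_n(y)-z\ell_n(y)$, I would use Definition~\ref{definition:fibs}: writing $n=2m$ with $m$ odd gives $\ell_n(y)=f_{m+1}(y)-f_{m-1}(y)$ with $m\pm1$ even, hence $\ell_n(0)=0$, so $\varphi_3'(0,0,z)=0$.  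This is the argument sketched right before the proposition.

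For the discrete--faithful claim, I would invoke Lemma~\ref{lemma:disc1} directly: a discrete faithful character has neither $x$ nor $y$ in any $\Rb_k$ with $k\neq0$, but $y=0=2\Re(i)\in \Rb_4$.  For the intersection with the reducible locus, I would use Lemma~\ref{lemma:redchars}: a reducible character satisfies $x^2+y^2+z^2-xyz=4$, which at $x=y=0$ becomes $z^2=4$.  For the absence of multiplicity-two points, I would cite Proposition~\ref{prop:multiplicitypoints1}, which identifies every such point with $x=\pm\sqrt2\neq0$, so none lies on the line $x=y=0$.

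The meatiest step is the realization of characters as those of faithful representations of $G=\langle\alpha,\beta:\alpha^2=\beta^2,\,\alpha^4\rangle$.  Putting a representation with character $(0,0,z)$ in standard form $\alpha\mapsto A(a,s),\ \beta\mapsto B(b,s)$, the conditions $x=0$ and $y=0$ force $a,b\in\{\pm i\}$, so $A^2=B^2=-I$; hence the representation descends to $G$ and realizes every $z$ as we let $s$ and the choice of signs vary (so that $z=ab+a^{-1}b^{-1}+s^2$ sweeps out $\C$).  For faithfulness I would analyze $G$: setting $c=\alpha^2=\beta^2$ shows $c$ is central with $c^2=1$, and $G/\langle c\rangle=\langle\alpha,\beta:\alpha^2,\beta^2\rangle\cong\Z/2*\Z/2$, the infinite dihedral group.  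For generic $s$, the matrix $AB$ has trace $\neq\pm2$ and therefore infinite order in $\PSL_2(\C)$, so the image of $G/\langle c\rangle$ in $\PSL_2(\C)$ is all of $\Z/2*\Z/2$ (every proper quotient being finite); combined with $\rho(c)=-I\neq I$, faithfulness of $\rho$ on $G$ follows.  The main obstacle is simply being careful at the two exceptional points $z=\pm2$: at each one of the two ``branches'' $ab=\pm1$ produces a degenerate $\rho$, but I would note that the other branch (with $s\neq0$) still produces a representation with $AB$ parabolic (infinite order in $\PSL_2$), and the same argument then gives a faithful representation of $G$ at those characters as well.
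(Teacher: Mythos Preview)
Your handling of four of the five assertions is correct and essentially matches the paper's approach (for discrete faithfulness you cite Lemma~\ref{lemma:disc1} while the paper argues via torsion in the quotient group; both are valid).  One small omission: for the intersection with the reducible locus, Lemma~\ref{lemma:redchars} also requires $y\in\Rb_{n+2}$ when $n\neq-2$, so you should note that $0\in\Rb_{n+2}$ because $4\mid n+2$.

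The gap is in your faithfulness argument.  The inference ``$\tr(AB)\neq\pm2$, therefore $\overline{AB}$ has infinite order in $\PSL_2(\C)$'' is false: an element of $\SL_2(\C)$ whose trace is $\zeta+\zeta^{-1}$ for a root of unity $\zeta$ has finite order.  For instance at $z=0$ one finds $(AB)^2=-I$, and at $z=1$ one finds $(AB)^6=I$; in each case $\alpha\beta$ has infinite order in $G=\langle\alpha,\beta:\alpha^2=\beta^2,\alpha^4\rangle\cong\Z/4\ast_{\Z/2}\Z/4$, so the induced representation of $G$ is \emph{not} faithful.  Thus faithfulness genuinely fails on the countable set $\{z=2\cos(\pi p/q):p/q\in\Q\}$, and no alternate choice of branch repairs this, since the trace of $AB$ is the coordinate $z$ itself.

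It is worth noting that the paper's own proof does not establish faithfulness either: it only verifies $A^2=B^2$ and $A^4=I$, so that the relations of $G$ hold, and observes $AB\neq BA$ when $s\neq0$.  The word ``faithful'' in the proposition should be read loosely --- the representations factor through $G$, and are faithful on $G$ for generic $z$ --- and your careful analysis at $z=\pm2$, while correct, is attempting to prove more than the paper actually does.
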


\begin{proof}
Recall that the Fibonacci recursion implies that $f_{2m}(0)=0$ and $f_{2m+1}(0)=(-1)^m$. 
Therefore, since $n \equiv 2 \pmod 4$, we have $f_n(0) = 0$, $f_{n-1}(0)=1$, and $\ell_n(0) = 0$.  Thus $\varphi_1$, $\varphi_2$, and $\varphi_3'$ all vanish at each point $(0,0,z)$.

Consider a representation in standard form corresponding to $(0,0,z)$.
As $x=0$, we conclude that $a+a^{-1}=0$ so that $a=\pm i$.  Similarly, since $y=0$ we have $b=\pm i$. Hence these representations are (up to conjugation) determined by 
 \[ A =  \mat{ \epsilon_A i}{0}{s}{- \epsilon_A i}, \quad B= \mat{\epsilon_B i}{s}{ 0}{-\epsilon_B i}\]
 where $\epsilon_A, \epsilon_B \in \{ \pm 1\}$.
Thus $A^4=B^4=-I$.  Furthermore, one can verify that if $s\neq 0$ then $AB\neq BA$.
Therefore, if $s\neq 0$ these points correspond to representations of the group
\[ \langle \alpha, \beta : \alpha^2=\beta^2, \alpha^4 \rangle = \langle \alpha, \beta : \alpha^2=\beta^2, \beta^4 \rangle.\] 
As this group is not torsion-free, these are not characters of a discrete faithful representation.

Now we consider the intersection of this line and the set of reducible characters.  If $n\neq -2$, the defining equation for the reducible characters is $x^2+y^2+z^2-xyz=4$ with $y=\Rb_{n+2}$.  Therefore, as $y=0$ on the line, an intersection occurs as $n$ is even.  The equation reduces to $z^2=4$, so that $z=\pm 2$.   

As the points of multiplicity two have $x=\pm \sqrt{2}$, they are not on this line. 
\end{proof}


\subsection{A new model for $C'$}\label{section:newmodel}

In this section, we prove Theorem~\ref{thm:mainsummary1}.  By Proposition~\ref{prop:irreps} $X(\Gamma_n)$ is the set $C$, and  by Proposition~\ref{prop:multiplicitypoints1} the set $C$ is the union of $C'$ and the points $P$ (which are all contained in $C'$). These points $P$ were determined by the vanishing set of $\varphi_1$, $\varphi_2$, and $j_n(y)$.    

We begin with some notation. By Lemma~\ref{lemma:nocommonfactors}~(\ref{lemma:hnln}), $y$ divides both $h_n(y)$ and $\ell_n(y)$  exactly when $n\equiv 2 \pmod 4$ and does so with multiplicity one. 

\begin{definition}\label{definition:hats}
Define $\hat{\ell}_n(y)$ and $\hat{h}_n(y)$ so that $\hat{\ell}_n(y)=\ell_n(y)$ and $\hat{h}_n(y)=h_n(y)$ unless $n\equiv 2 \pmod 4$, in which case we define them so that $y\hat{\ell}_n(y)=\ell_n(y)$ and $y\hat{h}_n(y)=h_n(y)$. 
\end{definition}

We now explicitly determine the variety $X(\Gamma_n)$ in terms of the coordinates $x,y,z$.

\begin{definition}
Let $D$ be the curve  parametrized by 
\[ \Big\{ \Big(\epsilon \sqrt{1-f_{n-1}(y)}, y,  -\epsilon k_n(y)  \sqrt{ - \frac{\hat{h}_n(y)}{\hat{ \ell}_n(y)}} \ \Big) : \epsilon = \pm1, \hat{\ell}_n(y)\neq 0 \Big\}. \]
in $\A^3(x,y,z)$.
Let $L$ be the line  $\{(0,0,z)\}$ in $\A^3(x,y,z)$. 
\end{definition}

\begin{prop}\label{prop:xyzcharvar}
If $n \not \equiv 2 \pmod 4$ then $C'=D$. 
If $n \equiv 2 \pmod 4$ then  $C'=D \cup L.$
\end{prop}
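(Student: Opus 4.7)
The plan is to solve the system $\varphi_1=\varphi_2=\varphi_3'=0$ directly, splitting into cases based on whether $\ell_n(y)$ vanishes. The key algebraic lever will be the identity
\[ \ell_n(y)\,\varphi_2 \;=\; k_n(y)\,\varphi_1 \;-\; x\,\varphi_3', \]
which holds in $\A[x,y,z]$ by direct expansion together with the identity $f_n(y)\ell_n(y)-h_n(y)k_n(y)^{0}\cdots$ from Lemma~\ref{lemma:fibonacciidentities}. I would verify it by computing
\[ k_n\varphi_1 - x\varphi_3' = k_n x^2 + h_n k_n \ell_n - x^2 k_n + xz\ell_n = \ell_n(xz + h_n k_n) = \ell_n \varphi_2. \]
Consequently, on the open locus $\ell_n(y)\neq 0$, the vanishing of $\varphi_1$ and $\varphi_3'$ automatically implies the vanishing of $\varphi_2$.

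Next I would carry out the main case $\ell_n(y)\neq 0$. Solving $\varphi_3'=0$ for $z=xk_n(y)/\ell_n(y)$ and substituting into $\varphi_1=0$ gives $x^2=-h_n(y)\ell_n(y)=1-f_{n-1}(y)$. Writing $x=\epsilon\sqrt{1-f_{n-1}(y)}$ for $\epsilon=\pm 1$ and substituting back yields $z=\epsilon k_n(y)\sqrt{-h_n(y)\ell_n(y)}/\ell_n(y)$. After absorbing $\sqrt{\ell_n(y)^2}/\ell_n(y)$ into the sign $\epsilon$, and using Definition~\ref{definition:hats} to cancel the common factor of $y$ from $h_n,\ell_n$ when $n\equiv 2\pmod 4$, this becomes exactly the parametrization of $D$, with the requirement $\hat{\ell}_n(y)\neq 0$.

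Finally I would dispose of the locus $\ell_n(y)=0$. Here $\varphi_1=0$ forces $x=0$, so $\varphi_3'=0$ holds automatically, and $\varphi_2$ reduces to $h_n(y)k_n(y)$. By Lemma~\ref{lemma:nocommonfactors}(\ref{lemma:knln}), $k_n$ and $\ell_n$ share no root, so we must have $h_n(y)=0$. By Lemma~\ref{lemma:nocommonfactors}(\ref{lemma:hnln}), $h_n$ and $\ell_n$ share a root only when $n\equiv 2\pmod 4$, and in that case the unique common root is $y=0$. Thus when $n\not\equiv 2\pmod 4$ this case contributes nothing, while when $n\equiv 2\pmod 4$ it produces exactly the line $L=\{(0,0,z)\}$ with $z$ unconstrained.

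The main obstacle is bookkeeping around the case $n\equiv 2\pmod 4$: one must confirm the parametrization of $D$ makes sense at $y=0$ (where $\ell_n$ vanishes but $\hat{\ell}_n$ does not), and check that the $y$ appearing inside $\sqrt{1-f_{n-1}(y)}=y\sqrt{-\hat{h}_n(y)\hat{\ell}_n(y)}$ cancels correctly against $\ell_n=y\hat{\ell}_n$ in the formula for $z$, so that $D$ contributes just the two points $(0,0,\pm k_n(0)\sqrt{-\hat{h}_n(0)/\hat{\ell}_n(0)})$ on the $y=0$ fiber rather than the full line $L$. Once this is checked, the two components $D$ and $L$ are exhibited as distinct irreducible pieces intersecting in finitely many points, and the decomposition $C'=D\cup L$ is established.
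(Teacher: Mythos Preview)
Your argument is correct, and it follows a genuinely different route from the paper's. The paper splits on $x=0$ versus $x\neq 0$ and works with $\varphi_3$ rather than $\varphi_3'$: when $x\neq 0$ it solves $\varphi_1$ for $x$ and $\varphi_2$ for $z$, then checks $\varphi_3$ vanishes via the identity in Lemma~\ref{lemma:fibonacciidentities}; when $x=0$ it obtains $f_{n-1}(y)=1$, $f_n(y)=y$, $zf_n(y)=0$, and must then separately identify the finite set of points $(0,y,0)$ with $y\in\Rb_{n-2}^{fib}$ and verify that each lies in $D$. Your split on $\ell_n(y)$ together with the identity $\ell_n\varphi_2=k_n\varphi_1-x\varphi_3'$ is tidier: those $(0,y,0)$ points (which have $h_n(y)=0$ but $\ell_n(y)\neq 0$) fall automatically into your generic case, and the only residual analysis is the common-root question for $h_n$ and $\ell_n$, which Lemma~\ref{lemma:nocommonfactors} settles directly.

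Two minor remarks. First, the sentence invoking ``the identity $f_n(y)\ell_n(y)-h_n(y)k_n(y)^{0}\cdots$'' is garbled; you do not need it, since the direct verification you give immediately afterward is complete. Second, the ``obstacle'' you flag in the last paragraph is not actually an obstacle. Your two cases yield $C'=D\vert_{\ell_n\neq 0}\cup L$ when $n\equiv 2\pmod 4$. But $D\vert_{y=0}$ consists of at most two points, both of the form $(0,0,\pm z_0)$, hence contained in $L$; therefore $D\cup L = D\vert_{\ell_n\neq 0}\cup L$ as sets with no further checking required. You do not need to track how the $y$ under the radical cancels, nor to confirm anything about the parametrization of $D$ at $y=0$, to establish the set-theoretic equality $C'=D\cup L$.
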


\begin{proof}
By  Proposition~\ref{prop:irreps} we just need to calculate the common zeros of $\varphi_1, \varphi_2$, and  $\varphi_3$ from  Definition~\ref{definition:phis}.

By Lemma~\ref{lemma:fibonacciidentities} $f_{n-1}(y)-1=h_n(y)\ell_n(y)$, and so the zeros of the polynomial $f_{n-1}(y)-1$  are therefore the union of the zeros of $h_n(y)$ and the zeros of $\ell_n(y)$ as given in Lemma~\ref{lem:hellzeros}.  Hence $f_{n-1}(y)=1$ if and only if $y \in (\Rb^{fib}_{2n}-\Rb^{fib}_n) \cup \Rb^{fib}_{n-2}$.  These are disjoint unions except when $n \equiv 2 \pmod 4$ in which case $0$ is the only common element.

If $x\neq 0$ then $\varphi_1=0$ gives $x= \epsilon \sqrt{1-f_{n-1}(y)}$  where $\epsilon = \pm1$.  Then $\varphi_2 = 0$ gives $z = \epsilon (y-f_n(y))/ \sqrt{1-f_{n-1}(y)}$.  Upon substituting these in, $\varphi_3$ reduces to $0$ by Lemma~\ref{lemma:fibonacciidentities}, so in this case $y$ is only subject to the condition that $f_{n-1}(y) \neq 1$.

If $x = 0$, then $\varphi_1=0, \varphi_2=0, \varphi_3=0$ yield $1=f_{n-1}(y)$, $y=f_n(y)$, $0=zf_n(y)$ respectively.  The latter two imply $yz=0$.  If $y=0$ then $z$ is free; but since $1=f_{n-1}(0)$ and $0=f_n(0)$, Lemma~\ref{lemma:fkvals} implies that $n \equiv 2 \pmod 4$.  This gives the extra line $L$ in $C'$ when $n \equiv 2 \pmod 4$.  If on the other hand $z=0$ then the first two equations imply $f_{n-2}(y)=0$ due to the recursion relation; hence by Lemma~\ref{lemma:fibroots} we have $y \in \Rb_{2(n-2)}^{fib}$.  Since $1=f_{n-1}(y)$, $y$ must also be a zero of either $h_n(y)$ or $\ell_n(y)$ as given in Lemma~\ref{lem:hellzeros}.  We conclude that either $y=0$ as well and we are in the previous case or $y\in \Rb_{n-2}^{fib}$. Any point of the form $(0,y,0)$ where $y\in \Rb_{n-2}^{fib}$  satisfies $\varphi_1=\varphi_2=\varphi_3'=0$ and therefore is in $C'$.

 We have shown that when $n \not \equiv 2 \pmod 4$ the set  $C'$ is 
\[ \big\{ \big(\epsilon \sqrt{1-f_{n-1}(y)}, y,  \frac{y-f_n(y)}{\epsilon \sqrt{1-f_{n-1}(y)}} \big): \epsilon = \pm1, f_{n-1}(y) \neq 1\} \cup  \{(0,y,0) : y \in \Rb_{n-2}^{fib}\big\},\]
and when $n\equiv 2 \pmod 4$ then $C'$ is this set union $L$.

Since  Lemma~\ref{lemma:fibonacciidentities} gives $y-f_n(y) = -h_n(y)k_n(y)$ and $1-f_{n-1}(y)= -h_n(y)\ell_n(y)$, when $f_{n-1}(y)\neq 1$ we can write the third coordinate (the $z$ coordinate) as $- \epsilon k_n(y) \sqrt{- h_n(y)/ \ell_n(y)}$. which reduces to $- \epsilon k_n(y) \sqrt{- \hat{h}_n(y)/ \hat{\ell}_n(y)}$ using Definition~\ref{definition:hats}.  The parametrization above excludes the $y$ such that $f_{n-1}(y)=1$ while the parametrization for $D$ excludes only the $y$ satisfying $\hat{\ell}_n(y)=0$.  Therefore it suffices to show that the points $(0,y,0)$ with  $y\in \Rb_{n-2}^{fib}$ are in $D$, as these are precisely the $y$ values which are roots of $f_{n-1}(y)-1$ but not roots of $\hat{\ell}_n(y)$.   Since $f_{n-1}(y)-1= h_n(y)\ell_n(y)$ for all such $y\neq 0$, these $y$ are roots of $h_n(y)$ by Lemma~\ref{lemma:nocommonfactors}.  Therefore the point $(0,y,0)$ is in $D$.  Now consider $y=0$, which only occurs as such a point when $n\equiv 2 \pmod 4$.  The point $(0,0,0)$ is not in $D$, but is in the line $L$.  Thus we have shown that $C'=D$ when $n\not \equiv 2 \pmod 4$ and $C'=D\cup L$ when $n\equiv 2 \pmod 4$.
\end{proof}

\begin{remark}\label{remark:differentparam}
The proof of Proposition~\ref{prop:xyzcharvar} shows we can also write $D$ as
\[ \big\{ \big(\epsilon \sqrt{1-f_{n-1}(y)}, y, \epsilon \frac{y-f_n(y)}{ \sqrt{1-f_{n-1}(y)}} \big): \epsilon = \pm1, f_{n-1}(y) \neq 1\} \cup  \{(0,y,0) : y \in \Rb_{n-2}^{fib}\big\}\]
\end{remark}

 By Proposition~\ref{prop:multiplicitypoints1} and Proposition~\ref{prop:xyzcharvar}, up to multiplicity the variety $X(\Gamma_n) = C$  is given by $D$ when $n\not \equiv 2 \pmod 4$, and by $D \cup L$ when $n\equiv 2\pmod 4$.  By Lemma~\ref{lemma:disc1}  the line $L$ (and the set $ \{(0,y,0) : y \in \Rb_{n-2}^{fib}\}$) does not contain the character of a discrete and faithful representation of $\Gamma_n$.  We will show that $D$ is irreducible and birational to the curve given in Theorem~\ref{thm:mainsummary1}. It will follow that $D$ is the unique canonical component $X_0(\Gamma_n)$.

\begin{lemma}\label{lemma:DE}
The curve $D$ is birational to the curve $E$ given by coordinate ring $\A^2[x,y]/(x^2 -1+f_{n-1}(y))$. 
\end{lemma}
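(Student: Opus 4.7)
The plan is to exhibit the coordinate projection $\pi\colon D\to E$, $(x,y,z)\mapsto(x,y)$, as a birational morphism whose inverse is an explicit rational map. First I would check that $\pi$ is well-defined: by the parametrization of $D$, every point of $D$ satisfies $x=\epsilon\sqrt{1-f_{n-1}(y)}$, hence $x^2-1+f_{n-1}(y)=0$, so $(x,y)\in E$.

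To construct a rational inverse, let $U\subset E$ be the dense open subset where $x\neq 0$ and define
\[ \psi\colon U\longrightarrow D, \qquad \psi(x,y)=\bigl(x,\;y,\;(y-f_n(y))/x\bigr). \]
To see that $\psi$ genuinely lands in $D$, I would use the identities of Lemma~\ref{lemma:fibonacciidentities} to rewrite $y-f_n(y)=-h_n(y)k_n(y)$ and $x^2=1-f_{n-1}(y)=-h_n(y)\ell_n(y)$. Then
\[ z=\frac{y-f_n(y)}{x}=\frac{-h_n(y)k_n(y)}{x}=-\epsilon\,k_n(y)\sqrt{-\frac{h_n(y)}{\ell_n(y)}}, \]
where $\epsilon=\pm1$ matches the sign of $x$. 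This is exactly the third coordinate of the parametrization of $D$ once we observe, from Definition~\ref{definition:hats}, that $h_n(y)/\ell_n(y)=\hat{h}_n(y)/\hat{\ell}_n(y)$ (the possible common factor of $y$ cancels when $n\equiv 2\pmod 4$). Alternatively, this is precisely the alternative parametrization recorded in Remark~\ref{remark:differentparam}.

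Finally, $\pi\circ\psi=\mathrm{id}_U$ is immediate from the formula, and $\psi\circ\pi$ restricts to the identity on the complement in $D$ of the finite set $\{(0,y,0):y\in\Rb^{fib}_{n-2}\}$ where $x=0$. Since both maps are isomorphisms on dense open subsets, $\pi$ is birational.

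The only genuinely delicate point—and hence the main obstacle worth flagging—is reconciling the two expressions for the third coordinate of $D$: the ``explicit'' form $-\epsilon k_n(y)\sqrt{-\hat{h}_n(y)/\hat{\ell}_n(y)}$ and the ``divided'' form $(y-f_n(y))/x$. This requires the Fibonacci identities of Lemma~\ref{lemma:fibonacciidentities} to rewrite $y-f_n(y)$ and $1-f_{n-1}(y)$ as products, together with the fact that the hat operation removes a common factor of $y$ from numerator and denominator and so does not change the ratio $\hat{h}_n/\hat{\ell}_n=h_n/\ell_n$ as rational functions. Once this is in place, the birational equivalence is essentially formal.
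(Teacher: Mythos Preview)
Your proof is correct and follows essentially the same approach as the paper: project $D$ onto the $(x,y)$-plane and exhibit a rational inverse. Your formulation of the inverse as $\psi(x,y)=(x,y,(y-f_n(y))/x)$ on the open set $x\neq 0$ is in fact a bit cleaner than the paper's, which writes the third coordinate as $-\epsilon k_n(y)\sqrt{-\hat{h}_n(y)/\hat{\ell}_n(y)}$ and must then argue this is rational because only finitely many $y$ on $E$ satisfy $\hat{\ell}_n(y)=0$; your version is manifestly a regular function on $\{x\neq 0\}$, and the reconciliation you carry out via Lemma~\ref{lemma:fibonacciidentities} and Remark~\ref{remark:differentparam} is exactly what is needed to see the two inverses agree.
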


\begin{proof}
Let $\varphi\colon D\rightarrow \A^2(x,y)$ be the projection map $(x,y,z) \mapsto (x,y)$.   The image $\varphi(D)$ is given by the set \[ \Big\{ \big( \epsilon \sqrt{1-f_{n-1}(y)}, y \big):  \epsilon = \pm1, \hat{\ell}_n(y)\neq 0  \Big\}.\] 
This is the set of solutions to
\[ x^2 = 1-f_{n-1}(y).\]
except for the points $(0,y)$ where $\hat{\ell}_n(y)=0$.  Since these $y$ for which $\hat{\ell}_n(y)=0$ form the finite set $(\Rb_{2n}^{fib}-\Rb_n^{fib})-\{0\}$, the map $\varphi$ from  $D$ to $E$ is rational.

On $E$, the map $\varphi$ has inverse mapping $\varphi^{-1} \colon \A^2(x,y) \rightarrow \A^3(x,y,z)$  defined by \[ \varphi^{-1}(x,y) = \Big(x,y, -\epsilon k_n(y)  \sqrt{ - \hat{h}_n(y)/ \hat{\ell}_n(y) }\Big).\]
This is rational since $\hat{\ell}_n(y)=0$ for only finitely many points on $E$, the points $(0,y)$ where $\hat{\ell}_n(y)=0$. Therefore $E$ and $D$ are birational.
\end{proof}

\begin{prop}\label{prop:extraline3}
Assume  $n \equiv 2 \pmod 4$.
The line $L$  intersects $D$ in the two points $(0,0,z_0)$ and $(0,0,-z_0)$ where $z_0 = 2\sqrt{\tfrac12-\tfrac1n}$.   
\end{prop}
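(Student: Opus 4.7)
The plan is to parametrize $D$, observe that an intersection with $L=\{(0,0,z)\}$ forces $y=0$, then compute the resulting $z$-values.

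\textbf{Setup.} Write $n=4r+2$, so $m=2r+1$ in the definitions of $h_n,\ell_n,k_n$. A point of $L\cap D$ must have $x=0$ and $y=0$. From the parametrization of $D$, the condition $x=\epsilon\sqrt{1-f_{n-1}(y)}=0$ forces $f_{n-1}(y)=1$; and by Lemma~\ref{lemma:fkvals}, $f_{n-1}(0)=f_{4r+1}(0)=(-1)^{2r}=1$, so $y=0$ is indeed a legitimate parameter value. I will also need that $y=0$ lies in the domain of the parametrization, i.e.\ that $\hat\ell_n(0)\neq 0$; this falls out of the computation below.

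\textbf{A cleaner formula for $z^2$.} At $y=0$ the expressions for the $z$-coordinate of $D$ become $0/0$, so I will first derive a formula valid in a punctured neighborhood of $y=0$. Starting from $zx=y-f_n(y)$ (which is $\varphi_2=0$) and $x^2=1-f_{n-1}(y)$, I get
\[
z^2=\frac{(y-f_n(y))^2}{1-f_{n-1}(y)}.
\]
Now I apply the identities of Lemma~\ref{lemma:fibonacciidentities}: $y-f_n(y)=-h_n(y)k_n(y)$ and $1-f_{n-1}(y)=-h_n(y)\ell_n(y)$. Since $n\equiv 2\pmod 4$, Lemma~\ref{lemma:nocommonfactors}(\ref{lemma:hnln}) and Definition~\ref{definition:hats} give $h_n(y)=y\hat h_n(y)$ and $\ell_n(y)=y\hat\ell_n(y)$, so
\[
z^2=\frac{y^2\hat h_n(y)^2k_n(y)^2}{-y^2\hat h_n(y)\hat\ell_n(y)}=-\frac{\hat h_n(y)k_n(y)^2}{\hat\ell_n(y)}.
\]
This is a regular function of $y$ near $0$, so I can simply set $y=0$.

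\textbf{Evaluation at $y=0$.} I need $f_{2k}(0)$ and $f_{2k}'(0)$. The first is $0$ by Lemma~\ref{lemma:fkvals}. For the second, differentiating the Fibonacci recursion gives $f_{n+1}'(0)=f_n(0)-f_{n-1}'(0)$, which (together with $f_0'(0)=f_1'(0)=0$) yields $f_{2k}'(0)=(-1)^{k+1}k$ by induction. Applied to $n=4r+2$, $m=2r+1$:
\begin{align*}
k_n(0)&=f_{2r+3}(0)-f_{2r+1}(0)=(-1)^{r+1}-(-1)^r=-2(-1)^r,\\
\hat h_n(0)&=f_{2r}'(0)=(-1)^{r+1}r,\\
\hat\ell_n(0)&=f_{2r+2}'(0)-f_{2r}'(0)=(-1)^r(r+1)+(-1)^r r=(-1)^r(2r+1).
\end{align*}
Since $r\geq 0$ whenever $n\equiv 2\pmod 4$ and $|n|>2$ (and the small cases are either excluded or trivial), $\hat\ell_n(0)\neq 0$, confirming that $y=0$ is in the domain of the parametrization. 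Substituting,
\[
z^2=-\frac{(-1)^{r+1}r\cdot 4}{(-1)^r(2r+1)}=\frac{4r}{2r+1}.
\]
Finally, $4r=n-2$ and $2r+1=n/2$, so $z^2=2(n-2)/n=4(\tfrac12-\tfrac1n)$, giving the two intersection points $(0,0,\pm z_0)$ with $z_0=2\sqrt{\tfrac12-\tfrac1n}$. The two sign choices $\epsilon=\pm1$ in the parametrization of $D$ yield exactly these two points and no others, since $y=0$ is the unique value making $x=0$ in $L\cap D$.

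\textbf{Main obstacle.} The only nontrivial bookkeeping is the derivative computation $f_{2k}'(0)=(-1)^{k+1}k$; this is a short induction from the differentiated Fibonacci recurrence and is really the only place where the specific arithmetic of $n\equiv 2\pmod 4$ enters in a substantial way.
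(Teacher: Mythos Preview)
Your proof is correct and follows essentially the same approach as the paper: both evaluate the $z$-coordinate of the parametrization of $D$ at $y=0$ by computing $\hat h_n(0)$, $\hat\ell_n(0)$, and $k_n(0)$ via the identity $\hat f_{2l}(0)=f_{2l}'(0)=(-1)^{l+1}l$. One small correction: your claim that $r\geq 0$ whenever $n\equiv 2\pmod 4$ and $|n|>2$ is false (for instance $n=-6$ gives $r=-2$), but this is harmless since $\hat\ell_n(0)=(-1)^r(2r+1)\neq 0$ simply because $2r+1$ is odd and hence nonzero for every integer $r$.
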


\begin{proof}
We use the parametrization from Proposition~\ref{prop:xyzcharvar}.  When $x=y=0$ the set $D$ is given by 
\[ \Big\{ \Big(\epsilon \sqrt{1-f_{n-1}(0)}, 0,  -\epsilon k_n(0)  \sqrt{ - \frac{\hat{h}_n(0)}{ \hat{\ell}_n(0)}} \ \Big) : \epsilon = \pm1 \Big\}. \]

  (When $y=0$, the defining equation for $E$, $x^2-1+f_{n-1}(y)$, reduces to  $x^2$  and  thus determines that $x=0$.)
The line $x=y=0$  is the image of the line  discussed in Proposition~\ref{prop:extraline}.    Using the Fibonacci recursion,  with $\hat{f}_{2l}(y)=f_{2l}(y)/y$ it is elementary to show that $\hat{f}_{2l}(0)=(-1)^{l-1}l$. Also, $k_n(0)=\pm 2$ when $n$ is even.  Therefore, if $n=2m$ we have 
\[ z=- \epsilon k_n(0)  \sqrt{ - \frac{\hat{h}_n(0)}{ \hat{\ell}_n(0)}} =\pm 2 \sqrt{-\frac{\hat{f}_{m-1}(0)}{\hat{f}_{m+1}(0)-\hat{f}_{m-1}(0)}} = \pm 2 \sqrt{\frac{m-1}{2m}} \]
so that $z^2= 4(m-1)/2m=2(n-2)/n = 4(\tfrac12-\tfrac1n)$.
\end{proof}

Now it suffices to consider the algebraic set $E$.

\begin{definition} 
Let $F$ be the algebraic set with coordinate ring  $\A^2[w,y]/ (  w^2+\hat{h}_n(y) \hat{\ell}_n(y)  )$. 
\end{definition}

\begin{lemma}\label{lemma:E'birationaltoF}  
The set $E$ is birationally equivalent to the set $F$.
\end{lemma}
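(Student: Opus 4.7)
The plan is to invoke the factorization identity from Lemma~\ref{lemma:fibonacciidentities}, namely $f_{n-1}(y)-1 = h_n(y)\ell_n(y)$, to rewrite the defining equation of $E$ as $x^2 = -h_n(y)\ell_n(y)$. Then I will split into cases based on the parity condition in Definition~\ref{definition:hats}, exhibit explicit rational maps in each case, and verify they are inverse to each other on dense open subsets.

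In the case $n \not\equiv 2 \pmod 4$, Definition~\ref{definition:hats} gives $\hat{h}_n = h_n$ and $\hat{\ell}_n = \ell_n$, so the defining equations of $E$ and $F$ agree after identifying $w$ with $x$; the varieties are then literally isomorphic. In the case $n \equiv 2 \pmod 4$, the definitions give $h_n(y) = y\hat{h}_n(y)$ and $\ell_n(y) = y\hat{\ell}_n(y)$, so the defining equation of $E$ becomes
\[ x^2 = -y^2\, \hat{h}_n(y)\hat{\ell}_n(y). \]
This suggests the substitution $w = x/y$, which converts the above to $w^2 = -\hat{h}_n(y)\hat{\ell}_n(y)$, the defining equation of $F$. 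Accordingly, I would define $\Phi \colon E \dashrightarrow F$ by $(x,y) \mapsto (x/y, y)$ and $\Psi \colon F \to E$ by $(w,y) \mapsto (wy, y)$, and check that $\Psi \circ \Phi = \mathrm{id}$ and $\Phi \circ \Psi = \mathrm{id}$ as rational maps.

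The only subtlety, and what I would flag as the main thing to verify, is that $\Phi$ is defined on a dense open subset of $E$. The map $\Phi$ fails only where $y=0$; but on $E$, the locus $y = 0$ imposes $x^2 = 1 - f_{n-1}(0)$, and since $n \equiv 2 \pmod 4$, Lemma~\ref{lemma:fkvals} gives $f_{n-1}(0) = 1$, forcing $x = 0$. Hence $\Phi$ is undefined at only the single point $(0,0) \in E$, so $\Phi$ is a rational map. Since $\Psi$ is regular everywhere, and both compositions are the identity on the respective complements of the $y=0$ loci, $\Phi$ and $\Psi$ provide the desired birational equivalence between $E$ and $F$.
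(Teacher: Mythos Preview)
Your proof is correct and follows essentially the same approach as the paper: split into cases by the congruence class of $n$ modulo $4$, note that $E$ and $F$ coincide when $n\not\equiv 2\pmod 4$, and in the remaining case use the map $(x,y)\mapsto(x/y,y)$ with inverse $(w,y)\mapsto(wy,y)$. Your justification that $\Phi$ is defined away from the single point $(0,0)$ via Lemma~\ref{lemma:fkvals} is slightly more explicit than the paper's, but the argument is the same.
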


\begin{proof}
The set $E$ is given by  \[ x^2=1-f_{n-1}(y).\]
These sets are identical when $n\not \equiv 2 \pmod 4$.  When $n\equiv 2 \pmod 4$, by Lemma~\ref{lemma:nocommonfactors}  $y^2$ divides $1-f_{n-1}(y)$.  We define the map $\varphi: E\rightarrow F$ by $\varphi(x,y) =(x/y,y).$ This is rational since on $E$ $y=0$ only when $x=0$.  The map has inverse given by $(w,y) \mapsto (wy,y)$.

\end{proof}

\begin{prop}\label{prop:Gsmoothirreducible} 
The affine variety $F$  is smooth and irreducible. 
\end{prop}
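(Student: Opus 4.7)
The plan is to treat $F$ as a plane curve of the form $w^2 = -\hat{h}_n(y)\hat{\ell}_n(y)$, and reduce both smoothness and irreducibility to showing that the right-hand side is separable (and non-constant, which is automatic for $|n|>2$).

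First, for smoothness I would use the standard Jacobian criterion: the curve $w^2 + g(y) = 0$ is singular at a point $(w_0, y_0)$ exactly when $2w_0 = 0$ and $g'(y_0) = 0$, which forces $g(y_0) = 0$ and so $y_0$ is a repeated root of $g$. Thus it suffices to prove that $\hat{h}_n(y)\hat{\ell}_n(y)$ has no repeated roots. I would establish this in three steps. (a) The polynomial $\hat{h}_n$ is separable: by Definition~\ref{definition:fibs}, $h_n$ equals either $f_{m-1}$ (for $n=2m$) or $f_m + f_{m-1}$ (for $n=2m+1$), each of which is separable by Lemma~\ref{lemma:fisseparable}; removing the simple factor of $y$ that appears only when $n\equiv 2\pmod4$ preserves separability. (b) Similarly, $\hat{\ell}_n$ is separable because $\ell_n$ equals either $f_{m+1} - f_{m-1}$ or $f_{m+1}-f_m$, both covered by Lemma~\ref{lemma:fisseparable}. (c) The polynomials $\hat{h}_n$ and $\hat{\ell}_n$ are coprime: by Lemma~\ref{lemma:nocommonfactors}(\ref{lemma:hnln}), $h_n$ and $\ell_n$ have no common factor except possibly $y$ (and only when $n\equiv 2\pmod 4$, with multiplicity one in each), and the hats are defined precisely to strip off this shared factor.

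Next, for irreducibility of the coordinate ring $\A^2[w,y]/(w^2 + \hat{h}_n(y)\hat{\ell}_n(y))$, I would use the elementary criterion that $w^2 - q(y)$ is irreducible in $\C[w,y]$ if and only if $q(y)$ is not a square in $\C[y]$. Since we have already shown $\hat{h}_n(y)\hat{\ell}_n(y)$ is separable, every root appears with multiplicity one, so the polynomial fails to be a square unless it is a non-zero constant. For $|n|>2$ one of $\hat{h}_n$ or $\hat{\ell}_n$ has positive degree (this follows from Lemma~\ref{lem:constantpolys} together with the explicit description of their zeros in Lemma~\ref{lem:hellzeros}), so the product is genuinely non-constant and therefore not a square, yielding irreducibility of $F$.

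The main technical point to verify carefully will be step (c), coprimality of $\hat{h}_n$ and $\hat{\ell}_n$: the one place the underlying $h_n$ and $\ell_n$ share a root is at $y=0$ when $n\equiv 2\pmod 4$, and the content of Definition~\ref{definition:hats} is that this is exactly the factor being removed. Once separability and coprimality are in hand, both smoothness (via the Jacobian calculation) and irreducibility (via the square-root criterion) follow immediately, completing the proof.
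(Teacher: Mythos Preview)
Your proposal is correct and the smoothness argument is identical to the paper's: both reduce to showing that $\hat{h}_n(y)\hat{\ell}_n(y)$ is separable, using Lemma~\ref{lemma:fisseparable} for the separability of each factor and Lemma~\ref{lemma:nocommonfactors}(\ref{lemma:hnln}) for coprimality (with the hats removing precisely the shared factor of $y$ in the $n\equiv 2\pmod 4$ case).

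For irreducibility you take a slightly different route. The paper invokes a general fact (Lemma~2.6 of \cite{MR2827003}) that a smooth curve of bidegree $(a,b)$ with $a,b>0$ in $\mathbb{P}^1\times\mathbb{P}^1$ is irreducible, so irreducibility is deduced from smoothness. You instead use the elementary criterion that $w^2-q(y)$ is irreducible in $\C[w,y]$ if and only if $q$ is not a square in $\C[y]$, which follows immediately once $q$ is known to be separable and non-constant. Your argument is more self-contained and avoids the external reference; the paper's framing via $\mathbb{P}^1\times\mathbb{P}^1$ has the side benefit of setting up the genus formula $(a-1)(b-1)$, though in the end the paper computes the genus separately anyway in the next proposition.
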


\begin{proof}
In $\mathbb{P}^1\times \mathbb{P}^1$ a variety of bidegree $(a,b)$ with $a,b>0$ is irreducible if it is smooth (see \cite{MR2827003} Lemma 2.6) and has genus $(a-1)(b-1)$. We think of $F$ as the affine portion of its projective closure.  Furthermore, smoothness is equivalent to showing that there are no simultaneously vanishing partials. As the defining equation is $w^2=-\hat{\ell}_n(y) \hat{h}_n(y)$, smoothness is equivalent to showing that $\hat{\ell}_n(y)\hat{h}_n(y)$ has no repeated roots. (These polynomials have positive degree for all $|n|>2$.)  By Lemma~ \ref{lemma:nocommonfactors}~(\ref{lemma:hnln}) the functions $\hat{\ell}_n(y)$ and $\hat{h}_n(y)$ have no common factors.   Therefore, the claim follows from the separability of $\hat{\ell}_n$ and $\hat{h}_n$ determined in  Lemma~\ref{lemma:fisseparable}.
\end{proof}

\begin{remark}  Although $F$ is smooth as an affine variety, it is not smooth at infinity.   If $\deg(f)>4$ then $y^2=f(x)$ is singular at infinity.  One can form a smooth model in a natural way in weighted projective space.   That is, if $f$ is even then we can homogenize the equation by adding the variable $w$ and then define a projective space as the quotient of $\A^3(w,x,y)$ by implementing the equivalence relation $(\lambda^{d_1}w, \lambda^{d_2}x, \lambda^{d_3}y)$ is equivalent to $(w,x,y)$.  In this case, the degrees are $d_1=d_2=1$ and $d_3 =\tfrac12 \deg(f)$. This model is smooth. 
\end{remark}

\begin{prop} 
The genus of $F$ is $\lfloor \tfrac12 (|n-1|-2)\rfloor $ if $n\not \equiv 2 \pmod 4$ and is  $\ \lfloor \tfrac12 (|n-1|-4)\rfloor$ if $n\equiv 2 \pmod 4$. 
\end{prop}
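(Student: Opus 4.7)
The plan is to reduce the computation to the standard genus formula for the smooth projective model of a hyperelliptic curve $w^2 = g(y)$ with $g$ separable, which states the genus is $\lfloor (\deg g - 1)/2 \rfloor$. The affine curve $F$ is cut out by $w^2 = -\hat{h}_n(y)\hat{\ell}_n(y)$, and by Proposition~\ref{prop:Gsmoothirreducible} together with the remark following it (which describes the standard smooth completion in weighted projective space), the relevant smooth projective model is exactly this hyperelliptic curve. The separability of $\hat{h}_n(y)\hat{\ell}_n(y)$ was already established there using Lemma~\ref{lemma:fisseparable} and Lemma~\ref{lemma:nocommonfactors}(\ref{lemma:hnln}). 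So the entire task reduces to computing $d := \deg(\hat{h}_n\hat{\ell}_n)$.

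To compute $d$, I would first handle $\deg(h_n\ell_n)$ by a case analysis on the parity and the sign of $n$, using $\deg f_k = |k|-1$ from Lemma~\ref{lemma:fibonaccievenodd}. Writing $n=2m$ in the even case, Definition~\ref{definition:fibs} gives $h_n=f_{m-1}$ and $\ell_n=f_{m+1}-f_{m-1}$; since $f_{m+1}$ and $f_{m-1}$ have different degrees, no cancellation occurs in $\ell_n$, and one obtains $\deg h_n=|m-1|-1$ and $\deg\ell_n=|m|$ after checking the two subcases $m\geq 2$ and $m\leq -2$. In the odd case $n=2m+1$, the same argument applies to $h_n=f_m+f_{m-1}$ and $\ell_n=f_{m+1}-f_m$, again with distinct degrees ruling out any leading-term cancellation. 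In each subcase, a short bookkeeping exercise shows
\[
\deg(h_n\ell_n) = |n-1|-1 \qquad \text{for all } |n|>2.
\]

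Finally, I would invoke Definition~\ref{definition:hats}: when $n\not\equiv 2\pmod 4$, $\hat{h}_n\hat{\ell}_n = h_n\ell_n$, so $d=|n-1|-1$ and the genus is $\lfloor (|n-1|-2)/2\rfloor$; when $n\equiv 2\pmod 4$, a factor of $y$ is removed from each of $h_n,\ell_n$, giving $d=|n-1|-3$ and genus $\lfloor(|n-1|-4)/2\rfloor$. The only real obstacle is the somewhat tedious case analysis on the sign of $n$ needed to verify the uniform formula $\deg(h_n\ell_n)=|n-1|-1$; everything else is a direct application of results already in hand.
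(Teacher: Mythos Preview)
Your proposal is correct and follows essentially the same approach as the paper: apply the hyperelliptic genus formula $\lfloor (d-1)/2\rfloor$ and compute $d=\deg(\hat{h}_n\hat{\ell}_n)$. The one difference worth noting is that the paper sidesteps the case analysis you flag as the ``only real obstacle'': rather than computing $\deg h_n$ and $\deg \ell_n$ separately, it invokes the identity $h_n(y)\ell_n(y)=f_{n-1}(y)-1$ from Lemma~\ref{lemma:fibonacciidentities}, so that $\deg(h_n\ell_n)=\deg f_{n-1}=|n-1|-1$ immediately from Lemma~\ref{lemma:fibonaccievenodd}, with no subcases needed.
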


\begin{proof}
The smooth hyperelliptic curve given by $v^2=f(u)$ has genus $ \lfloor \tfrac12(d-1) \rfloor$ where $d$ is the degree of $f$. 
Since $F$ is the smooth hyperelliptic
  curve given by $w^2=-\hat{\ell}_n(y) \hat{h}_n(y)$, the result follows from a calculation of the degree of $-\hat{\ell}_n(y) \hat{h}_n(y)$:  By Definition~\ref{definition:hats} and Lemma~\ref{lemma:fibonacciidentities}, $-\hat{\ell}_n(y) \hat{h}_n(y) = 1-f_{n-1}(y)$ when $n \not \equiv 2 \pmod 4$ and  $-y^2\hat{\ell}_n(y) \hat{h}_n(y) = 1-f_{n-1}(y)$ when $n \equiv 2 \pmod 4$.   Hence, by Lemma~\ref{lemma:fibonaccievenodd}, the degree of $-\hat{\ell}_n(y) \hat{h}_n(y)$ is $|n-1|-1$ when $n\not\equiv 2 \pmod4$ and it is $|n-1|-3$ when $n\equiv 2 \pmod4$.
\end{proof}

By Lemma~\ref{lemma:disc1}, as stated in the beginning of this section, $X_0(\Gamma_n)$ is not $L$.  Therefore, it is $D$.   By Lemma~\ref{lemma:DE} $D$ is birational to $E$ and by Lemma~\ref{lemma:E'birationaltoF} $E$ is birational to $F$.  Finally, by Proposition~\ref{prop:Gsmoothirreducible} the set $F$ is smooth and irreducible.  We conclude that $X_0(\Gamma_n)$ is birational to $F$.  We have shown  Theorem~\ref{thm:mainsummary1}.

%
%
\section{Discrete Faithful Representations and the Trace Field}\label{section:tracefield}
%
%

In this section we assume that $|n|>2$ and $\rho_0$ is a discrete faithful representation in standard form.  Since such a representation is irreducible Lemma~\ref{lemma:genericreps} implies we may take $\rho_0(\alpha) = A = A(a,s)$ and $\rho_0(\beta)=B=B(b,s)$. We set $x=\tr(A)=a+a^{-1}$, $y=\tr(B)=b+b^{-1}$ and $z=\tr(BA)=ab+a^{-1}b^{-1}+s^2$ as before. Recall that for $\gamma\in \Gamma_n$, $\chi_{\rho_0}(\gamma)=\tr (\rho_0(\gamma))$.

There are two discrete faithful representations of $\Gamma_n$ into $\PSL_2(\C)$, and these are (entry-wise) complex conjugates of one another.  If a representation from $\Gamma_n$ to $\PSL_2(\C)$ lifts to an $\SL_2(\C)$  representation then there are $|H^1(M_n, \Z/2\Z)|$  such lifts constructed as follows.  Let $I$ be the $2\times 2$ identity matrix, and identify $\Z/2\Z$ with $\{ \pm I\}$.  An element $\epsilon\in H^1(M_n,\Z/2\Z)$ corresponds to  a map $\epsilon:\pi_1(M_2) \rightarrow \{\pm I\}$.  If $\rho_0:\pi_1(M_n) \rightarrow \SL_2(\C)$ is a lift of $\rho_0':\pi_1(M_n) \rightarrow \PSL_2(\C)$ then another lift of $\rho_0'$ is $\epsilon \circ \rho_0$. That is, for all $\gamma \in \Gamma_n$ the representation is $(\epsilon\circ\rho_0)(\gamma) = \epsilon(\gamma) \rho_0(\gamma)$.  (See \cite{MR1695208}.)

By Lemma~\ref{lem:HomGamma}  if $n$ is even $|H^1(M_n,\Z/2\Z)|=4$ and if $n$ is odd then $|H^1(M_n,\Z/2\Z)|=2$.  Since $\Gamma_n$ is generated by $\alpha$ and $\beta$ the mapping $\epsilon$ is determined by $\epsilon(\alpha)$ and $\epsilon(\beta).$  When $n$ is even all four possibilities occur.  When $n$ is odd we have only  the identity and $\epsilon_1 $ defined by  $\epsilon_1(\alpha)=-1$, $\epsilon_1(\beta)=1$.

By Lemma~\ref{lemma:boundarypi1} the peripheral subgroup of $\pi_1(M) \cong \Gamma_n$ is generated by the ``nice'' meridian ${\mu}=\beta \alpha$ and the longitude $\lambda=\alpha \beta \bar{\alpha} \beta \alpha \bar{\beta}\bar{\alpha} \bar{\beta}$.   Since a discrete faithful representation $\rho_0$ must send the peripheral subgroup to parabolics, both $\chi_{\rho_0}(\bar{\mu})$ and $\chi_{\rho_0}(\lambda)$ are $\pm 2$.  

From the discussion above, there are $n$ for which both positive and negative signs occur.
 More precisely, $\epsilon(\mu)=\epsilon(\beta \alpha)$ so  when $n$ is odd  the non-trivial element   $\epsilon_1\in H^1(M_n,\Z/2\Z)$ acts on $\mu$ as $\epsilon_1(\mu)=-1$.  Therefore $\epsilon_1$ acts on $\chi_{\rho_0}(\mu)\in X (M_n)$ by $\epsilon_1 (\chi_{\rho_0}(\mu))=-\chi_{\rho_0}(\mu)$.  Therefore, for a single discrete faithful character in $Y(M_n)$  there  are lifts $\rho_0$ and $\epsilon_1\circ \rho_0$ to $X(M_n)$ such that $\chi_{\rho_0}(\mu)=2$ and  $\chi_{\epsilon_1 \circ \rho_0}(\mu)=-2$.  Similarly, $\chi_{\rho_0}(\beta)=-\chi_{\epsilon_1\circ \rho_0}(\beta)$.

 When $n$ is even there are two elements of $H^1(M_n,\Z/2\Z)$ whose action on $X(M_n)$ sends $\chi_{\rho_0}(\mu)$ to $-\chi_{\rho_0}(\mu)$.  (These are the elements which act non-trivially on exactly one of $\alpha$ and $\beta$.)  It follows that for  a single discrete faithful character $\rho_0'$  in $Y(M_n)$  there  are two lifts $\rho_1$ and $\rho_2$ of $\rho'$ such that  $\chi_{\rho_1}(\mu)=\chi_{\rho_2}(\mu)=2$ and two lifts $\rho_3$ and $\rho_4$ of $\rho_0'$ such that  $\chi_{\rho_3}(\mu)=\chi_{\rho_4}(\mu)=-2$. Similarly, $\chi_{\rho_1}(\beta)=\chi_{\rho_2}(\beta)=-\chi_{\rho_3}(\beta)=-\chi_{\rho_4}(\beta)$.

For any $n$, the exponent sum of both $\alpha$ and $\beta$ in $\lambda$ is even.  Therefore for any $\epsilon \in H^1(M_n,\Z/2\Z)$, $\epsilon(\lambda) = \lambda$.  Below, in Lemma~\ref{lemma:lambdaminus2}, we will show that $\chi_{\rho_0}(\lambda)=-2$ for a discrete faithful character $\rho_0$.

Now, we determine the matrices $T=\rho_0(\bar{\mu})$ and $D=\rho_0(\lambda)$ explicitly.  We compute
\[\rho_0(\mu) =  \mat{ba+s^2 }{s a^{-1}}{s b^{-1} }{b^{-1}a^{-1}} \quad \mbox{ and } \quad
\rho_0(\lambda) = D = \mat{D_{11} }{D_{12}}{D_{21}}{ D_{22}}  \]
 where
 \begin{align*}
D_{11} & =1 + \big( a^{-1}b^{-1}-  a^{-1} b+ a b^{-3} - 2 a b^{-1} + 
         b a \big) s^2 +  \big( 2 - b^{-2} -  a^2 + a^2b^{-2} \big) s^4      \\
         & \quad - a b^{-1}s^6   \\
D_{12} & = \big( -b + b^3 + a^2 b- 
          a^2b^3  \big) s +  \big( - a^{-1} +   a^{-1} b^2+  2 a - a b^{-2} - 3  ab^2 - a^3 +    a^3 b^2  \big) s^3 \\
         & \quad + \big( b^{-1} - 2           b - a^2 b^{-1} + 2 a^2b   \big) s^5  +        a s^7  \\
D_{21} & =\big( a^{-1} +  a^{-1}b^{-4} - 2  a^{-1} b^{-2}\big) s + \big( 2 b^{-3} - 3 b^{-1} + 
          b -  a^{-2} b^{-3} + 2  a^{-2}b^{-1} - a^{-2}b  \big) s^3  \\
          & \quad + \big( 2 a^{-1} - 2  a^{-1} b^{-2}- 
          a + a b^{-2} \big) s^2 t^3 - b^{-1}s^7  \\
D_{22} & =     1 + \big(-  a^{-1} b^{-3}+ 2  a^{-1}b^{-1} - 2a^{-1}b + a^{-1} b^3 
- a b^{-1} + 2  ab -   a  b^3  \big) s^2 \\
          &\quad + \big(4 - 2 b^{-2} - 
          3 b^2 - 2 a^{-2} + a^{-2} b^{-2} +a^{-2} b^2 - a^2 + b^2
           a^2 \big) s^4  \\
           & \quad + \big ( 2 a^{-1}b^{-1} - 2a^{-1} b  - 
 	a b^{-1} + 2 ab \big) s^3 t^3 + s^8. 
\end{align*}

In the remainder of the section,  will use $\epsilon=\pm1$.

\begin{lemma}\label{lemma:lambdaminus2}
If $\rho_0$ is a discrete faithful representation then  $\chi_{\rho_0}(\lambda)=-2$.
\end{lemma}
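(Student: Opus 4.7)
The plan is to combine the Fricke--Klein commutator trace identity with the faithfulness of $\rho_0$ restricted to the fiber subgroup. From the derivation preceding Lemma~\ref{lemma:boundarypi1} we have $\gamma = \alpha\beta\bar{\alpha}\bar{\beta}$ and the longitude $\lambda = (\bdry T)_* = \gamma\beta\bar{\gamma}\bar{\beta}$, so $\lambda = [\gamma,\beta]$ is a commutator inside the fiber subgroup $\pi_1(T,*) = \langle \gamma,\beta\rangle \le \Gamma_n$. The standard $\SL_2(\C)$ identity $\tr([G,H]) = (\tr G)^2+(\tr H)^2+(\tr GH)^2-(\tr G)(\tr H)(\tr GH)-2$ applied to $\rho_0(\gamma)$ and $\rho_0(\beta)$ then yields
\[ \chi_{\rho_0}(\lambda) = X^2 + Y^2 + Z^2 - XYZ - 2, \]
where $X=\chi_{\rho_0}(\gamma)$, $Y=\chi_{\rho_0}(\beta)$, $Z=\chi_{\rho_0}(\gamma\beta)$.

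Since $\lambda$ is peripheral and $\rho_0$ is discrete and faithful, $\rho_0(\lambda)$ is parabolic, so $\chi_{\rho_0}(\lambda) \in \{+2,-2\}$. The task then reduces to excluding the value $+2$, equivalently to showing $X^2+Y^2+Z^2-XYZ \ne 4$. Because $\pi_1(T,*)\cong F_2$ is free of rank two and $\rho_0$ is faithful, $\rho_0|_{\pi_1(T,*)}$ is a faithful representation of a nonabelian free group. Any reducible subgroup of $\SL_2(\C)$ is conjugate into the upper-triangular Borel subgroup, which is metabelian (hence solvable) and therefore cannot contain $F_2$; consequently $\rho_0|_{\pi_1(T,*)}$ is irreducible. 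By Proposition~1.5.2 of \cite{cs1983} an irreducible character cannot coincide with any reducible character, and the two-generator argument in Lemma~\ref{lemma:redchars} shows that every reducible character of $\langle\gamma,\beta\rangle$ satisfies $X^2+Y^2+Z^2-XYZ=4$. Hence $(X,Y,Z)$ fails this equation, forcing $\chi_{\rho_0}(\lambda) \ne 2$, and so $\chi_{\rho_0}(\lambda) = -2$.

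The only nonformal ingredient is the irreducibility of $\rho_0|_{\pi_1(T,*)}$, which ultimately reflects only the fact that a rank-two free group does not embed in a solvable group; I therefore anticipate no significant obstacle, and the identification of $\lambda$ with a commutator of free generators of the fiber subgroup is precisely what makes the Fricke identity give $-2$ rather than $+2$.
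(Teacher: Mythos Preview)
Your proof is correct and takes a genuinely different route from the paper's. The paper argues computationally: it writes $\chi_{\rho_0}(\lambda)$ out explicitly as a degree-four polynomial in $(x,y,z)$, sets $z=2\epsilon$ from the parabolicity of $\mu$, and then factors the matrix-entry relation $F_{21}=0$ into cases, invoking Lemma~\ref{lemma:disc1} to eliminate the degenerate branch $a=\epsilon b^{\pm1}$. Your argument is coordinate-free: recognizing $\lambda=[\gamma,\beta]$ as the commutator of free generators of the fiber subgroup lets the Fricke identity reduce the whole question to the irreducibility of $\rho_0|_{\pi_1(T)}$, which is immediate from faithfulness on a nonabelian free group. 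This is shorter and applies verbatim to any hyperbolic once-punctured torus bundle, not only the tunnel-number-one family; the paper's computation, by contrast, is tied to the specific presentation and standard-form matrices, though part of that work is reused downstream in Proposition~\ref{prop:disc2}. One point to tighten: from ``every reducible character lies on $X^2+Y^2+Z^2-XYZ=4$'' and ``an irreducible character cannot coincide with a reducible one'' you cannot yet conclude that the irreducible character $(X,Y,Z)$ avoids the surface --- you also need the (standard, easy) converse that every point of this surface is the character of some reducible representation of $F_2$, obtained by diagonal matrices with eigenvalues $a,a^{-1}$ and $b,b^{-1}$ chosen so that $Z=ab+a^{-1}b^{-1}$. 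With that sentence added, your argument is complete.
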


\begin{proof}
Computing these traces we obtain
\[ \chi_{\rho_0}(\mu)  = ab+a^{-1}b^{-1}+s^2 = z\]
and 
\begin{multline}\label{equation:lambda}
\chi_{\rho_0}(\lambda) = z^4-2xyz^3+(x^2y^2+y^2+2x^2-4)z^2\\+(-2x^3y-y^3x+4xy)z +x^4+x^2y^2-4x^2+2.
\end{multline}
Substitute $\chi_{\rho_0}(\mu) = z=2\epsilon$ where $\epsilon = \pm1$ into the  $F_{21}=0$ equation.  This is equivalent to 
\begin{equation}\label{equation:F21}  (a-\epsilon b)(ab-\epsilon)(-\epsilon(a+a^3)+(1+4a^2+a^4)b-\epsilon(a+a^3)b^2) =0.\end{equation}

Assuming $a \neq \epsilon b^{\pm1}$, this further implies $x^2-\epsilon xy + 2 =0$.
Then together $z=2\epsilon$ and $x^2-\epsilon xy + 2 =0$ imply $ \chi_{\rho_0}(\lambda)=-2$.
On the other hand, if $a = \epsilon b^{\pm1}$ then $x = \epsilon y$.  Since  $z=2\epsilon$ and $x=\epsilon y$ together with $\varphi_2 =0$ (from Definition~\ref{definition:phis}) imply $y=-f_n(y)$. If $y\neq0$ then $\varphi_3=0$ and the recursion further implies  $f_{n+2}(y) =0$.  If $y=0$ then $f_{2m}(y)=0$.  In either case, Lemma~\ref{lemma:disc1} implies $\rho_0$ is not discrete and faithful, a contradiction.
\end{proof}

\begin{prop}\label{prop:disc2}
If $\rho_0$ is a discrete faithful representation then 
\[ 2+x^2-\epsilon xy=0 \]
Moreover, $ x^2 = 1-f_{n-1}(y)$ and $ 2\epsilon x = y-f_n(y)$.
\end{prop}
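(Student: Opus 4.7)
The plan is to extract the first identity directly from the proof of Lemma~\ref{lemma:lambdaminus2} and then obtain the other two as immediate specializations of $\varphi_1$ and $\varphi_2$ from Definition~\ref{definition:phis}.

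First I would observe that since the meridian $\mu = \beta\alpha$ is peripheral and $\rho_0$ is discrete faithful, $\rho_0(\mu)$ is parabolic (non-trivial, since $\rho_0$ is faithful). Hence $z = \chi_{\rho_0}(\mu) = \tr(\rho_0(\mu)) = 2\epsilon$ for some $\epsilon \in \{\pm 1\}$. Substituting $z = 2\epsilon$ into the relation $F_{21} = 0$ yields the factorization given in equation~\eqref{equation:F21}, so at least one of the three factors vanishes: either $a = \epsilon b^{\pm 1}$, or $-\epsilon(a+a^{3}) + (1+4a^{2}+a^{4})b - \epsilon(a+a^{3})b^{2} = 0$. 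In the latter case, dividing by $a$ and rewriting in terms of $x = a + a^{-1}$ and $y = b + b^{-1}$ produces exactly $x^{2} - \epsilon xy + 2 = 0$, which is the desired first identity.

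To rule out the degenerate factors, I would argue (as in the proof of Lemma~\ref{lemma:lambdaminus2}) that $a = \epsilon b^{\pm 1}$, i.e.\ $x = \epsilon y$, combined with $z = 2\epsilon$ and $\varphi_{2} = 0$, forces $y = -f_{n}(y)$; then the recursion applied to $\varphi_{3} = 0$ gives $f_{n+2}(y) = 0$, so by Lemma~\ref{lemma:fibroots} the value $y$ lies in some $\Rb_{2(n+2)}^{fib}$. But this contradicts Lemma~\ref{lemma:disc1}, which forbids $y$ from lying in any $\Rb_{k}$ for $k \neq 0$ when $\rho_{0}$ is discrete faithful. Hence only the factor giving $x^{2} - \epsilon xy + 2 = 0$ can be responsible.

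For the remaining two identities, I would simply invoke Proposition~\ref{prop:irreps}: the character $\chi_{\rho_{0}}$ lies in $X(\Gamma_{n}) = C$, so $\varphi_{1}(x,y,z) = x^{2} - 1 + f_{n-1}(y) = 0$ and $\varphi_{2}(x,y,z) = zx - y + f_{n}(y) = 0$. These give $x^{2} = 1 - f_{n-1}(y)$ immediately, and substituting $z = 2\epsilon$ into $\varphi_{2} = 0$ yields $2\epsilon x = y - f_{n}(y)$. The main (and only non-routine) obstacle is the case analysis excluding $a = \epsilon b^{\pm 1}$, but this is handled by the same discrete-faithful/Fibonacci-root argument used in Lemma~\ref{lemma:lambdaminus2}, so essentially no new work is required.
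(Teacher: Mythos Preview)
Your proposal is correct and very close to the paper's own argument, though the two proofs are organized slightly differently. The paper first reads off $x^2=1-f_{n-1}(y)$ and $2\epsilon x=y-f_n(y)$ from $\varphi_1=\varphi_2=0$ with $z=2\epsilon$, then substitutes these into $\varphi_3$ to obtain the factorization $(x-\epsilon y)(2+x^2-\epsilon xy)=0$; it then excludes $x=\epsilon y$ by plugging directly into Equation~\eqref{equation:lambda} and observing this would force $\chi_{\rho_0}(\lambda)=2$, contradicting Lemma~\ref{lemma:lambdaminus2}. You instead cite the factorization of $F_{21}$ already obtained in the proof of Lemma~\ref{lemma:lambdaminus2} and reuse the Fibonacci/Lemma~\ref{lemma:disc1} argument from that proof to kill the $x=\epsilon y$ case. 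Both routes are valid; yours economizes by recycling the earlier lemma's proof, while the paper's route is more self-contained and gives a cleaner elimination of $x=\epsilon y$ via the trace of $\lambda$. One small remark: your derivation of $f_{n+2}(y)=0$ from $\varphi_3=0$ tacitly assumes $y\neq0$, but since $0\in\Rb_4$, Lemma~\ref{lemma:disc1} dispatches $y=0$ immediately in any case.
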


\begin{proof} 
Since $\rho_0$ is discrete faithful, it is irreducible so that $s\neq 0$. With $z=2\epsilon$, $\varphi_1 = 0$ and $\varphi_2=0$ of Definition~\ref{definition:phis} yield $f_{n-1}(y)= 1- x^2 $ and $ f_n(y) = y-2\epsilon x$.  The recursion implies $f_{n+1}(y) = x^2 + y^2 = 2 \epsilon xy -1$.  Substituting with these,  $\varphi_3$ factors as $(x-\epsilon y)(2+x^2-\epsilon xy)$.  So either $x=\epsilon y$ or $2+x^2-\epsilon xy=0$ as desired.

If $x=\epsilon y$, then with the substitution $z=2\epsilon$, Equation~\eqref{equation:lambda} simplifies to gives $\chi_{\rho_0}(\lambda)=2$, contrary to Lemma~\ref{lemma:lambdaminus2}.
\end{proof}

\begin{definition} \label{defn:phat}
Let $p_n(y) = f_{n+1}(y)-f_{n-1}(y)-y^2+6$.  
By Lemma~\ref{lemma:fkvals}, $y=-2$ is a root of $p_n(y)$ when $n$ is odd but not when $n$ is even.

If $n$ is even, let $\hat{p}_n(y) = p_n(y)$.   If $n$ is odd, let $\hat{p}_n(y)$ be the polynomial such that $p_n(y) =(y+2)\hat{p}_n(y)$. 
\end{definition}

\begin{prop}\label{prop:discfaithfulyeqn} 
For a discrete faithful representation of $\Gamma_n$,
\begin{enumerate}
\item $\hat{p}_n(y) = 0$, 
\item $f_n(y) = \pm \sqrt{y^2-8}$, and
\item $(x,y,z) = ( \tfrac{1}2 \epsilon(y\mp \sqrt{y^2-8}),y,2\epsilon)$.
\end{enumerate}
\end{prop}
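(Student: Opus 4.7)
The strategy is to combine the three identities furnished by Proposition~\ref{prop:disc2} with the Fibonacci recursion and solve for $y$ and $x$ in sequence.

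First I would eliminate $x$ from the system. From $2\epsilon x = y - f_n(y)$ and $\epsilon^2 = 1$, squaring gives $4x^2 = (y-f_n(y))^2$, while the identity $x^2 = 1 - f_{n-1}(y)$ yields $4x^2 = 4 - 4f_{n-1}(y)$. Instead of squaring, though, it is cleaner to substitute $\epsilon x = \tfrac12(y - f_n(y))$ directly into $2 + x^2 - \epsilon xy = 0$. This gives
\[
2 + (1 - f_{n-1}(y)) - \tfrac{y}{2}\bigl(y - f_n(y)\bigr) = 0,
\]
and doubling then applying the Fibonacci recursion $yf_n(y) = f_{n+1}(y) + f_{n-1}(y)$ collapses the expression to $f_{n+1}(y) - f_{n-1}(y) - y^2 + 6 = 0$, i.e.\ $p_n(y) = 0$. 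To upgrade this to $\hat{p}_n(y) = 0$ in the odd case I need $y \neq -2$, which is immediate from Lemma~\ref{lemma:nonperipheralgenerators}: $\beta$ is hyperbolic, so $y = \tr \rho_0(\beta) \neq \pm 2$. This proves~(1).

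For (2), I return to the squared relation $(y - f_n(y))^2 = 4 - 4f_{n-1}(y)$ and expand, obtaining
\[
f_n(y)^2 = 4 - 4f_{n-1}(y) + 2yf_n(y) - y^2
= 4 + 2\bigl(f_{n+1}(y) - f_{n-1}(y)\bigr) - y^2,
\]
after using the recursion once more on the $2yf_n(y)$ term. Substituting the relation $f_{n+1}(y) - f_{n-1}(y) = y^2 - 6$ coming from~(1) gives $f_n(y)^2 = y^2 - 8$, whence $f_n(y) = \pm\sqrt{y^2 - 8}$.

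Part (3) is then immediate assembly: $z = 2\epsilon$ was the standing assumption, and $2\epsilon x = y - f_n(y) = y \mp \sqrt{y^2 - 8}$ gives $x = \tfrac12 \epsilon(y \mp \sqrt{y^2 - 8})$. There is no real obstacle here beyond bookkeeping; the one subtlety worth flagging is that one must invoke Lemma~\ref{lemma:nonperipheralgenerators} to rule out the spurious root $y = -2$ when $n$ is odd, which is precisely what distinguishes $\hat p_n$ from $p_n$ in Definition~\ref{defn:phat}.
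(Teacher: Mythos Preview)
Your proof is correct. Parts (1) and (3) proceed exactly as the paper does: substitute the identities from Proposition~\ref{prop:disc2} into $2 + x^2 - \epsilon xy = 0$, reduce via the Fibonacci recursion to $p_n(y) = 0$, and invoke Lemma~\ref{lemma:nonperipheralgenerators} to exclude $y = -2$.

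For (2), however, your argument is genuinely different from the paper's and more elementary. The paper passes to the explicit form $y = s + s^{-1}$, rewrites $s^n p_n(s+s^{-1})$ as a quadratic in $s^n$ with coefficient $Q = s^2 - 4 + s^{-2}$, solves to obtain $s^n - s^{-n} = \pm\sqrt{Q^2 - 4}$, and then simplifies $f_n(y) = (s^n - s^{-n})/(s - s^{-1})$ to $\pm\sqrt{y^2 - 8}$. You instead stay entirely in the $(x,y)$ variables: squaring $2\epsilon x = y - f_n(y)$, combining with $4x^2 = 4 - 4f_{n-1}(y)$, and substituting the relation $f_{n+1}(y) - f_{n-1}(y) = y^2 - 6$ already established in (1) yields $f_n(y)^2 = y^2 - 8$ directly. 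Your route avoids the auxiliary variable $s$ altogether and makes clearer that (2) is really just a formal consequence of the three relations in Proposition~\ref{prop:disc2}; the paper's approach, on the other hand, gives a bit more by exhibiting the actual value of $s^n$ in terms of $Q$.
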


\begin{proof} 
For the first, we use the equations from Proposition~\ref{prop:disc2}.
Upon multiplying the  equation $x^2-\epsilon x y+2=0$ by $2$ and substituting $1-f_{n-1}(y)$ for $x^2$ and $y-f_n(y)$ for $2\epsilon x$ we have 
$6-2f_{n-1}(y)-y^2+yf_n(y)=0$. The Fibonacci identity makes this $f_{n+1}(y)-f_{n-1}(y)-y^2+6=0$, i.e.\ $p_n(y)=0$. Since Lemma~\ref{lemma:nonperipheralgenerators} shows that $\beta\in \Gamma_n$ is a hyperbolic element, $y = \chi_{\rho}(\beta) \neq \pm2$. By Lemma~\ref{lemma:fkvals} $p_n(2) \neq 0$ but $p_n(-2)=0$ when $n$ is odd.  Thus $y$ must be a root of $\hat{p}_n(y)$.

Since $\hat{p}_n(y)=0$, we may simplify the expression of $f_n(y)$.  Using the substitution $y=s+s^{-1}$ we have $p_n(s+s^{-1}) = s^n-s^2+4-s^{-2}+s^{-n}$.  Therefore, setting $Q=s^2-4+s^{-2}$, 
\begin{align*}
 s^n p_n(s+s^{-1}) &= s^{2n}-Qs^n+1\\
  &=(s^n - \tfrac{1}{2} (Q+\sqrt{Q^2-4}))(s^n - \tfrac{1}{2} (Q-\sqrt{Q^2-4})).
  \end{align*}
 Assuming $y=s+s^{-1}$ is a root of $p_n(y)$ other than $\pm2$, then $s^n p_n(s+s^{-1}) =0$.   Hence $s^n = \tfrac{1}{2} (Q\pm\sqrt{Q^2-4})$.  Then we may simplify $s^n-s^{-n} = \pm \sqrt{Q^2-4}$ so that 
 \[f_n(y) = \frac{s^n-s^{-n}}{s-s^{-1}} = \pm \frac{\sqrt{Q^2-4}}{s-s^{-1}} = \pm \sqrt{s^2-6+s^{-2}}.\]
  Then observe that $s^2-6+s^{-2}=(s+s^{-1})^2-8 = y^2-8$ gives the second result.
  
 Finally, since $x=\epsilon (y-f_n(y))/2$ by Proposition~\ref{prop:disc2} and $z= \chi_{\rho} (\bar{\mu})=2\epsilon$, the third result follows.
\end{proof}

\begin{remark}
Because $f_n(y) = -f_{-n}(y)$, the roots of $f_{n+1}(y)-f_{n-1}(y)-y^2+6$, and hence the roots of $\hat{p}_n(y)$, only depend on $|n|$.
\end{remark}

\begin{prop}\label{prop:tracefield} 
For $|n|>2$ the trace field of $M_n$  is contained in $\Q(y_0)$ where $y_0=\tr(\rho_0(\beta))$ for the discrete faithful representation. This $y_0$ is a root of $\hat{p}_n(y)$.   
The degree of  $\hat{p_n}(y)$ is   $|n|-e$ where $e=0$ if $n$ is even and $e=1$ if $n$ is odd.
\end{prop}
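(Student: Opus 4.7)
The plan is to peel off the three assertions in turn, each following directly from results already in hand. First, to see that the trace field of $M_n$ is contained in $\Q(y_0)$, recall from Section~\ref{sec:charvar} that because $\Gamma_n$ is generated by $\alpha$ and $\beta$ the coordinate ring $T$ of $\tilde X(\Gamma_n)$ is generated over $\Z$ by $t_\alpha, t_\beta, t_{\alpha\beta}$. Consequently every $\chi_{\rho_0}(\gamma)$ is an integer polynomial in $x,y,z$, so the trace field is contained in $\Q(x,y,z)$. Applying the discrete faithful data, Proposition~\ref{prop:discfaithfulyeqn} yields $z=2\epsilon\in\Q$, and Proposition~\ref{prop:disc2} yields $x=\tfrac12\epsilon(y-f_n(y))$. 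Since $f_n\in\Z[y]$, this places $x$ in $\Q(y_0)$, and hence $\Q(x,y,z)\subseteq\Q(y_0)$, proving the first assertion.

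The second assertion, that $y_0$ is a root of $\hat p_n$, is precisely statement~(1) of Proposition~\ref{prop:discfaithfulyeqn}, so I would simply cite it. The reduction from $p_n$ to $\hat p_n$ in the odd case relies on $y_0\neq-2$, which in turn follows from Lemma~\ref{lemma:nonperipheralgenerators} (since $\beta$ is hyperbolic, its trace is not $\pm 2$). For the degree computation I would write $p_n(y)=f_{n+1}(y)-f_{n-1}(y)-y^2+6$ and invoke Lemma~\ref{lemma:fibonaccievenodd}, which gives $\deg f_k=|k|-1$ for $k\neq 0$. When $n>2$ the leading contribution is from $f_{n+1}$ and has degree $n$; when $n<-2$ the leading contribution is from $-f_{n-1}$ and has degree $|n|$. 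Either way $\deg p_n=|n|$, which for $|n|>2$ dominates the $-y^2$ term. In the even case $\hat p_n=p_n$, so $\deg\hat p_n=|n|$; in the odd case $\hat p_n=p_n/(y+2)$, so $\deg\hat p_n=|n|-1$. These combine into the uniform formula $|n|-e$.

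There is no serious obstacle; the proposition is really a consolidation of Propositions~\ref{prop:disc2} and \ref{prop:discfaithfulyeqn} with the elementary degree calculation above. The one point that bears underlining is that the identities $z\in\Q$ and $x\in\Q[y_0]$ are \emph{polynomial} (not merely algebraic) in $y_0$, which is what allows the clean conclusion $\Q(x,y,z)\subseteq\Q(y_0)$ rather than a containment in some proper algebraic extension of $\Q(y_0)$. (Irreducibility of $\hat p_n$, and hence the stronger statement that $\Q(y_0)$ has degree equal to $\deg\hat p_n$ over $\Q$, is a separate matter handled by Lemma~\ref{lemma:tracepolyirred}.)
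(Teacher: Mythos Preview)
Your proof is correct and follows essentially the same approach as the paper's: both use Proposition~\ref{prop:disc2} to express $x$ as a polynomial in $y$ (together with $z=2\epsilon$) to conclude the trace field lies in $\Q(y_0)$, cite Proposition~\ref{prop:discfaithfulyeqn} for $y_0$ being a root of $\hat p_n$, and invoke Lemma~\ref{lemma:fibonaccievenodd} for the degree. You supply a bit more detail than the paper does---explicitly noting that the trace field is generated by $x,y,z$ via the two-generator structure of $\Gamma_n$, and distinguishing the $n>2$ and $n<-2$ cases in the degree count---but the argument is the same.
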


\begin{proof}
As $2\epsilon x=y-f_n(y)$ and $z=2\epsilon$ we conclude that the trace field is $\Q(y_0)$ where $y_0$ is the $y$--value corresponding to  a discrete faithful representation.  From Proposition~\ref{prop:discfaithfulyeqn} we see that such a $y_0$ is a root of $\hat{p}_n(y)$. By Lemma~\ref{lemma:fibonaccievenodd} the degree of $f_k$ is $|k|-1$.  
\end{proof}

\begin{thm}\label{thm:tracefield}
When $|n|>2$ the polynomial $\hat{p}_n(y)$ is irreducible over $\Q$ and is the minimal polynomial for the trace field of $M_n$. The degree of the trace field is $|n|-e$ where $e=0$ if $n$ is even and $e=1$ if $n$ is odd.
\end{thm}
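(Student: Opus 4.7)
My plan is to deduce the theorem by combining two facts: $\hat p_n(y)$ is irreducible over $\Q$, and $y_0 := \chi_{\rho_0}(\beta)$ generates the trace field. The second fact is immediate from Proposition~\ref{prop:tracefield}, which shows the trace field is contained in $\Q(y_0)$, together with the obvious reverse inclusion ($y_0$ is itself a trace). Granting irreducibility, $\hat p_n$ is then the minimal polynomial of $y_0$, and its degree $|n|-e$ follows from Definition~\ref{defn:phat} and Lemma~\ref{lemma:fibonaccievenodd}: $p_n(y)$ has degree $|n|$, with a linear factor $(y+2)$ divided off precisely when $n$ is odd.

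The substantive work is showing $\hat p_n(y)$ is irreducible over $\Q$. Following Hajir's approach for even $n$, I plan to lift to the palindromic polynomial
\[
P_n(s) = s^{|n|}\, p_n(s + s^{-1}) = s^{2|n|} - s^{|n|+2} + 4s^{|n|} - s^{|n|-2} + 1.
\]
Setting $Q_n(s) = s^{\deg \hat p_n}\, \hat p_n(s + s^{-1})$, a short computation shows $Q_n = P_n$ when $n$ is even and $P_n = (s+1)^2 Q_n$ when $n$ is odd, with $Q_n$ reciprocal of degree $2\deg \hat p_n$ in either case. By Gauss's lemma together with the $s \mapsto s^{-1}$ symmetry, any nontrivial rational factorization of $\hat p_n(y)$ corresponds to a nontrivial factorization $Q_n = AB$ in $\Z[s]$ with both $A$ and $B$ reciprocal. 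Note that a non-reciprocal factorization $Q_n = A \cdot A^*$ (with $A^*(s) = s^{\deg A} A(1/s)$) does not descend to a factorization of $\hat p_n$, because $A$ and $A^*$ contribute the same multiset of $y$-values $s + s^{-1}$.

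It therefore suffices to exclude nontrivial reciprocal factorizations of $Q_n$ over $\Z$. Hajir's tool in the even case is a $p$-adic Newton-polygon analysis on $P_n$ at a carefully chosen prime, exploiting the coefficient pattern in which a $4$ is flanked by $\pm 1$'s; the resulting slope constraints obstruct the constant terms and leading coefficients demanded by a palindromic splitting. The main obstacle in extending to odd $n$ is that dividing out $(s+1)^2$ can alter the Newton polygon in delicate ways, so the slope analysis must be carried out directly on $Q_n$ rather than on $P_n$, and one must verify that removing the factor does not open up new reciprocal factorizations. Once the Newton-polygon argument rules out such splittings uniformly in $n$, irreducibility of $\hat p_n$, and hence the theorem, follows.
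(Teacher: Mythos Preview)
Your reduction to irreducibility of $\hat p_n$ via Proposition~\ref{prop:tracefield} is fine, and lifting along $y=s+s^{-1}$ to the palindromic polynomial $P_n(s)=s^{2|n|}-s^{|n|+2}+4s^{|n|}-s^{|n|-2}+1$ is exactly what the paper does. However, the proposal has a genuine gap at the crucial step.

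First, you have misidentified Hajir's tool. The paper's argument is \emph{not} a Newton-polygon analysis. Rather, the paper exploits a further factorization of the lift: for $n=2m$ even one has
\[
P_n(b)=\bigl(b^{2m}-b^{m+1}+b^{m-1}+1\bigr)\bigl(b^{2m}+b^{m+1}-b^{m-1}+1\bigr),
\]
two \emph{non-reciprocal} factors related by $b\mapsto -b$; for $n$ odd one substitutes $b=c^2$ and obtains the analogous splitting $P_n(c^2)=r_n(c)\,r_n(-c)$ with $r_n(c)=c^{2n}-c^{n+2}+c^{n-2}+1$. Each half is a quadrinomial with all coefficients $\pm1$, and Ljunggren's theorem says such a polynomial is irreducible provided no root of unity is a zero (and otherwise the cyclotomic part factors off leaving an irreducible cofactor). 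The paper then checks the root-of-unity condition by the elementary trick of dividing $q_m(\omega)=0$ by $\omega^m$ to force $\omega^m+\omega^{-m}=\omega-\omega^{-1}$, equating real and purely imaginary quantities. Since $q_m$ (resp.\ $r_n/(c^2+1)$) is irreducible of degree equal to $\deg \hat p_n$ and is not reciprocal, no nontrivial reciprocal factorization of $P_n$ (resp.\ $Q_n$) can exist, and $\hat p_n$ is irreducible.

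Second, even setting aside the misattribution, a Newton-polygon route does not appear to succeed here and you have not supplied one. The only prime at which $P_n$ has nontrivial $p$-adic structure is $p=2$, coming from the middle coefficient $4$; but the $2$-adic Newton polygon of $P_n$ is the single horizontal segment from $(0,0)$ to $(2|n|,0)$, since the extreme coefficients are units. This yields no constraint on degrees of factors, let alone on reciprocal factorizations. Your proposal ends by asserting that ``the Newton-polygon argument rules out such splittings uniformly in $n$'' without indicating which prime, which polygon, or which slope obstruction you have in mind; as written this is not a proof but a hope, and the obvious candidate prime fails.
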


\begin{proof}
Using the substitution $y=b+b^{-1}$ after clearing denominators (of $b^n$),  $ p_n(y)$ 
is the  polynomial  \[ b^{2n}-b^{n+2}+4b^n-b^{n-2}+1.\]   

If $n=2m$ is even the polynomial factors as 
\[ (b^{2m}-b^{m+1}+b^{m-1}+1)(b^{2m}+b^{m+1}-b^{m-1}+1).\]  
This factorization mirrors the symmetry of the substitution $y=b+b^{-1}$.  The polynomials $b^{2m}-b^{m+1}+b^{m-1}+1$ and $b^{2m}+b^{m+1}-b^{m-1}+1$ both have the property that if $\omega$ is a root, then so is $-1/\omega$.  Let 
$q_m(b)= b^{2m}-b^{m+1}+b^{m-1}+1$.  To show that $p_{2m}$ is irreducible, it suffices to show that $q_m$ is irreducible over $\Q$.

If $n$ is odd, then $(b+1)^2$ is a factor corresponding to the $y+2$ factor of $p_n(y)$. 
Using the substitution $b=c^2$  and the fact that $n$ is odd, we can write 
\[ b^{2n}-b^{n+2}+4b^n-b^{n-2}+1= (c^{2n}-c^{n+2}+c^{n-2}+1)(c^{2n}+c^{n+2}-c^{n-2}+1).\] 
Let $r_n(x)=x^{2n}-x^{n+2}+x^{n-2}+1$. As in the even case, if $\omega$ is a root of $r_n(x)$ then so is $-1/\omega$.
The factor $(b+1)^2$ corresponds to a factor of $c^2+1$ in both $r_n(c)$ and in $r_n(-c)$.  To show that $\hat{p}_n$ is irreducible it suffices to show that $r_n(x)$ is $x^2+1$ times a polynomial which is  irreducible over $\Q$.

The irreducibility of each of these two polynomials is determined in Lemma~\ref{lemma:tracepolyirred} below.
\end{proof}

\begin{lemma}\label{lemma:tracepolyirred}
The polynomial $q_m(x)$ is irreducible. When $n$ is odd the polynomial $r_n(x)$ is the product of $x^2+1$ times an irreducible polynomial. 
 \end{lemma}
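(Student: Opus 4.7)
The plan is to exploit the clean algebraic identity
\[ q_m(x) - 2 = (x^{m-1}-1)(x^{m+1}+1), \]
together with its analog $r_n(x) - 2 = (x^{n-2}-1)(x^{n+2}+1)$; both can be verified by direct expansion. This packages essentially all the structure we need: $q_m$ attains the value $2$ at each of the $(m-1)$-th roots of unity and each of the $(m+1)$-th roots of $-1$, accounting for all $2m$ zeros of $q_m - 2$ (with possible double multiplicity at $\pm i$ when $m\equiv 1\pmod 4$, which must be tracked but does not affect the strategy).

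Following Hajir's strategy for the even-$n$ case, I would argue by contradiction. Suppose $q_m=PQ$ with $P,Q\in\Z[x]$ monic of positive degrees (by Gauss's lemma we may work over $\Z[x]$), and set $d=\deg P$ with $1\le d\le m$. For every root $\alpha$ of $P$, the identity forces
$(\alpha^{m-1}-1)(\alpha^{m+1}+1) = -2$
in the ring of integers of $\Q(\alpha)$. Taking the product over the $d$ Galois conjugates of $\alpha$ yields the integer identity
\[ \operatorname{Res}\bigl(P,\, x^{m-1}-1\bigr)\cdot \operatorname{Res}\bigl(P,\, x^{m+1}+1\bigr) = (-2)^d. \]
Via the cyclotomic factorizations $x^{m-1}-1 = \prod_{e\mid m-1}\Phi_e(x)$ and $x^{m+1}+1 = \prod_{e\mid 2(m+1),\ e\nmid m+1}\Phi_e(x)$, each resultant further breaks as a product $\prod_e N_{\Q(\zeta_e)/\Q}(P(\zeta_e))$. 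Thus for each distinguished $e$ the element $P(\zeta_e)\in\Z[\zeta_e]$ has norm dividing a specific power of $2$.

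The hardest step, and the technical heart of the proof, is to turn this collection of norm constraints into a contradiction when $\deg P < 2m$. I plan to use the prime factorization of $(2)$ in each $\Z[\zeta_e]$ --- dictated by the multiplicative order of $2$ modulo $e$, with ramification only when $e$ is even --- to pin down, up to cyclotomic units, the ideal $(P(\zeta_e))$. Since exactly one of $m-1,m+1$ is even, the $2$-adic ramification lives on only one of the two sides, providing the asymmetry that makes the bookkeeping tractable. Combining the local constraints across all relevant $e$ with the fact that $P$ carries only $d<2m$ coefficients yet is constrained simultaneously at $2m$ roots of unity should force $P$ to be constant, contradicting $\deg P\ge 1$.

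For the odd-$n$ case of $r_n$, a direct computation at $x=\pm i$ (using that $n$ odd yields $i^{n\pm 2} = -i^n$ and hence $(i^{n-2}-1)(i^{n+2}+1) = -2$) shows that $x^2+1\mid r_n$, so $r_n(x) = (x^2+1)\tilde r_n(x)$. The roots of $\tilde r_n$ still satisfy $(\alpha^{n-2}-1)(\alpha^{n+2}+1)=-2$, but now both $n\pm 2$ are odd, so $(2)$ is unramified in every relevant cyclotomic ring --- this is precisely where the ``extension'' of Hajir's even-case argument is required, since the tactic of isolating the ramified side is unavailable. Applying the resultant analysis to $\tilde r_n$ in place of $q_m$, and exploiting unramifiedness to simplify the local bookkeeping, then yields irreducibility of $\tilde r_n$ over $\Q$.
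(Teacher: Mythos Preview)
Your identities $q_m(x)-2=(x^{m-1}-1)(x^{m+1}+1)$ and $r_n(x)-2=(x^{n-2}-1)(x^{n+2}+1)$ are correct, and the resultant computation $\operatorname{Res}(P,x^{m-1}-1)\cdot\operatorname{Res}(P,x^{m+1}+1)=(-2)^{\deg P}$ for any monic $P\mid q_m$ is valid. The verification that $x^2+1\mid r_n$ for odd $n$ is also fine.

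However, the proposal has a genuine gap at exactly the point you flag as ``the hardest step.'' Knowing that each $N_{\Q(\zeta_e)/\Q}(P(\zeta_e))$ is $\pm$ a power of $2$ does \emph{not} pin down $P(\zeta_e)$ in any useful way: for $e>6$ the unit group of $\Z[\zeta_e]$ is infinite, so ``up to units'' is essentially no constraint. Your final sentence---that the constraints at $2m$ roots of unity combined with $P$ having only $d$ coefficients ``should force $P$ to be constant''---is not an argument; the constraints are norm conditions, not value conditions, and there is no interpolation-style contradiction available. As written, the sketch does not close, and I do not see a way to complete it along these lines without substantial new input.

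The paper's proof is entirely different and much shorter. It invokes Ljunggren's theorem \cite{MR0124313}: a quadrinomial $x^{k_1}+\epsilon_1 x^{k_2}+\epsilon_2 x^{k_3}+\epsilon_3$ with $\epsilon_i=\pm1$ factors over $\Q$ as (product of its cyclotomic root factors) $\times$ (irreducible). So the whole problem reduces to determining which roots of unity are zeros of $q_m$ and $r_n$. For $q_m$, dividing $q_m(\omega)=0$ by $\omega^m$ gives $\omega^m+\omega^{-m}=\omega-\omega^{-1}$; the left side is real and the right side is purely imaginary, forcing $\omega=\pm1$, neither of which is a root. For $r_n$ the same trick gives $\omega^n+\omega^{-n}=\omega^2-\omega^{-2}$, forcing $\omega^2=\pm1$; one checks $\pm1$ are not roots while $\pm i$ are simple roots. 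Ljunggren then gives the result immediately. Your identity is pleasant but is not what drives the argument.
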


\begin{proof} 
The proof of irreducibility of $q_m(x)$ is due to Farshid Hajir.  The other case is based on this idea as well. 

Consider a polynomial of the form $f(x) = x^{k_1}+\epsilon_1x^{k_2}+\epsilon_2x^{k_3}+\epsilon_3$ where $\epsilon_i=\pm1$ for $i=1,2,3$.   By a theorem of Ljunggrem \cite{MR0124313} if $f$ has no zeros which are roots of unity then $f$ is irreducible over $\Q$.  Further, if $f$ has $\ell$ roots of unity as roots then $f$ can be decomposed into two factors, one of degree $\ell$ which has these roots of unity as zeros and the other which is irreducible over $\Q$.

For the first assertion  it suffices to show that no root of unity is a root of $q_m$.  Suppose that $\omega$ is a root of unity such that  $q_m(\omega)=0$. Then
\[ \omega^{2m}-\omega^{m+1}+\omega^{m-1}+1=0.\]
Dividing the equation by $\omega^m$ we see that 
 \[ \omega^m+\omega^{-m} =\omega-\omega^{-1}.\]
Since $\omega$ is a root of unity, the left hand side is real, but the right hand side is $2\text{Im}(\omega) i$. We conclude that $\text{Im}(\omega)=0$, so that  $\omega$ is real.  Therefore, $\omega=\pm1$.  This provides a contradiction, as $q_m(\pm 1) \neq 0$.

For the second assertion it suffices to show that $\pm i$ are the only roots of unity which are roots of $r_n$  and that these occur with multiplicity one.   Let $\omega$ be a root of unity   such that  $r_n(\omega)=0$.  Then 
\[\omega^{2n}-\omega^{n+2}+\omega^{n-2}+1=0.\]
Dividing the equation by $\omega^{n}$ we see that
\[ \omega^n+\omega^{-n} = \omega^2-\omega^{-2}.\]
Since $\omega$ is a  root of unity, the left hand side is  real, but the right hand side is $ 2\text{Im}(\omega^2)i$. We conclude that  $\omega^2=\pm 1$.  Therefore $\omega\in \{\pm 1, \pm i\}$.  It is easy to see that $\pm 1$ is not a root of $r_n$ while $\pm i$ are roots of $r_n$.  It is elementary to verify that $r_n'(\pm i)\neq 0$, so $\pm i$ are roots of multiplicity one. 
\end{proof}

%
%
\section{Computation of the $\PSL_2(\C)$ Character Variety}\label{section:PSLcalc}
%
%

In this section, we compute the $\PSL_2(\C)$ character variety of $\Gamma_n$.   We refer the reader to Section~\ref{section:PSL} for the construction of the $\PSL_2(\C)$ character variety.

First, we note that
if the degree of a separable polynomial $f(x)$ is greater than four, then the equation $y^2=f(x)$ determines a hyperelliptic curve.  The elliptic curves are those where the degree of $f$ is 3 or 4, and when the degree is less than three the equation defines lines or a conic.  For ease, will will call all such curves hyperelliptic.   Each of these curves have an involution given by $(x,y) \mapsto (x,-y)$ called the {\em hyperelliptic involution}.  It is well-known (in fact, it is a defining feature of hyperelliptic curves) that the quotient by this involution is birational to  $\A^1$.

\subsection{Odd $n$}
As stated in Section~\ref{section:PSL}, $Y(\Gamma_n)=X(\Gamma_n)/\mu_2$  where the action of $\mu_2$ is determined by $(\chi_{\rho}(\alpha), \chi_{\rho}(\beta), \chi_{\rho}(\alpha \beta)) \mapsto (-\chi_{\rho}(\alpha), \chi_{\rho}(\beta), -\chi_{\rho}(\alpha \beta))$.

\begin{thm}\label{thm:pslasinvolution} Assume $|n|>2$ is odd. 
The identification $Y(\Gamma_n) = X(\Gamma_n)/\mu_2$ corresponds to the identification of $F$ under the hyperelliptic involution $(y,w) \mapsto (y,-w)$.  The algebraic set $Y(\Gamma_n)=Y_0(\Gamma_n)$ and is birational to an affine line. 
\end{thm}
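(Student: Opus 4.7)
The plan is to push the explicit $\mu_2$-action on $\tilde{X}(\Gamma_n)$ through the birational equivalences established in Section~\ref{section:charactervarietycalculations} and read off the induced involution on the smooth model $F$.

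First I would handle the ``$Y=Y_0$'' part. Since $n$ is odd, Lemma~\ref{lem:H2Gamma} gives $H^2(\Gamma_n;\Z/2\Z)=0$, so every $\PSL_2(\C)$ representation of $\Gamma_n$ lifts to $\SL_2(\C)$; hence $\tilde{Y}(\Gamma_n)=\tilde{X}(\Gamma_n)/\mu_2$, and passing to Zariski closures of the irreducible loci gives $Y(\Gamma_n)=X(\Gamma_n)/\mu_2$. Because $n$ odd forces $n\not\equiv 2\pmod 4$, Theorem~\ref{thm:mainsummary1} says $X(\Gamma_n)=X_0(\Gamma_n)$ is a single (canonical) component, so $Y(\Gamma_n)=Y_0(\Gamma_n)$ is also a single component.

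Next I would trace the $\mu_2$-action through the chain $X_0(\Gamma_n)=D\dashrightarrow E\dashrightarrow F$. From Section~\ref{section:PSL}, the nontrivial element of $\mu_2$ acts on $\A^3(x,y,z)$ by $(x,y,z)\mapsto(-x,y,-z)$. The birational map $D\to E$ of Lemma~\ref{lemma:DE} is projection $(x,y,z)\mapsto(x,y)$, with rational inverse $(x,y)\mapsto\bigl(x,y,(y-f_n(y))/x\bigr)$ coming from $\varphi_2=0$; both directions are $\mu_2$-equivariant, and the induced action on $E$ is $(x,y)\mapsto(-x,y)$. Since $n\not\equiv 2\pmod 4$, Lemma~\ref{lemma:E'birationaltoF} (and its proof) show $E=F$ with the $x$-coordinate playing the role of $w$ in the defining equation $w^2=-\hat{h}_n(y)\hat{\ell}_n(y)$. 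Thus the induced action on $F$ is exactly the hyperelliptic involution $(y,w)\mapsto(y,-w)$.

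Finally I would observe that the quotient of the affine hyperelliptic curve $F\colon w^2=-\hat{h}_n(y)\hat{\ell}_n(y)$ by $w\mapsto -w$ is, on coordinate rings, the $\mu_2$-invariant subring $\C[y,w^2]/(w^2+\hat{h}_n(y)\hat{\ell}_n(y))\cong\C[y]$, so the quotient is birational (in fact isomorphic) to $\A^1$. Combining the three steps gives the identification of $Y(\Gamma_n)$ with $F/\langle w\mapsto -w\rangle$ and the birational equivalence $Y(\Gamma_n)\sim\A^1$.

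The work is essentially bookkeeping given what has already been set up; the only genuine check is the equivariance of the $D\dashrightarrow E$ map, which reduces to observing that the relation $zx=y-f_n(y)$ (an invariant equation) forces $z\mapsto -z$ precisely when $x\mapsto -x$, so the projection and its inverse intertwine the $(x,y,z)\mapsto(-x,y,-z)$ action with $(x,y)\mapsto(-x,y)$.
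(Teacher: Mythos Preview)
Your proof is correct and takes essentially the same approach as the paper: identify the $\mu_2$-action $(x,y,z)\mapsto(-x,y,-z)$ with the hyperelliptic involution on $F$, observe that the quotient of a hyperelliptic curve by this involution is $\A^1$, and use Theorem~\ref{thm:mainsummary1} with $n$ odd to get $X(\Gamma_n)=X_0(\Gamma_n)$ and hence $Y(\Gamma_n)=Y_0(\Gamma_n)$. The paper compresses the equivariance check into the phrase ``readily seen to correspond,'' whereas you actually trace the action through the chain $D\dashrightarrow E=F$ of Lemmas~\ref{lemma:DE} and~\ref{lemma:E'birationaltoF}; your version is more explicit but the content is the same.
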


\begin{proof}
In the natural model, $C$,  for $X(\Gamma_n)$ the hyperelliptic involution of $F$  is readily seen to correspond to $(x,y,z) \mapsto (-x,y,-z)$.  Identification by this map is precisely identification by the action of $\mu_2$.  The quotient of a hyperelliptic curve by this involution yields an $\A^1$.
Since $n$ is odd, $X_0(\Gamma_n) = X(\Gamma_n)$ by Theorem~\ref{thm:mainsummary1} and hence $Y_0(\Gamma_n) = Y(\Gamma_n)$.
\end{proof}

\subsection{Even $n$}

When $|n|>2$ is even,  the $\PSL_2(\C)$ character variety is determined by the $\chi_{\rho}(\gamma)$ for $\gamma \in \Gamma$ with even exponent sum in both $\alpha$ and $\beta$. Therefore, it is determined by the variables $\bar{x}=x^2$, $\bar{y}=y^2$ and $\bar{z}=z^2$ where $x=\chi_{\rho}(\alpha)$, $y=\chi_{\rho}(\beta)$ and $z=\chi_{\rho}(\alpha \beta)$ as before.

We begin by considering those representations that lift to $\SL_2(\C)$.

\begin{lemma}\label{lemma:psleven1}
 Assume $|n|>2$ is even. The quotient $Y_0(\Gamma_n)= X_0(\Gamma_n)/\mu_2 $ corresponds to the identification of $F$ under the hyperelliptic involution $(y,w)\mapsto (y,-w)$ and the involution $(y,w) \mapsto (-y,w)$.  The set $Y_0(\Gamma_n)$ is birational to an $\A^1$.  When $n\equiv 2 \pmod 4$ the identification on the additional line component $L$ is given by an involution. 
\end{lemma}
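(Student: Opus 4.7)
The plan is to verify that the two generators of the $\mu_2\times\mu_2$ action described in Section~\ref{section:PSL}, namely
\[
\sigma_\alpha\colon(x,y,z)\mapsto(-x,y,-z)\qquad\text{and}\qquad\sigma_\beta\colon(x,y,z)\mapsto(x,-y,-z),
\]
preserve the canonical component $X_0(\Gamma_n)=D$ and descend under the birational chain $D\to E\to F$ of Lemmas~\ref{lemma:DE} and~\ref{lemma:E'birationaltoF} to the Klein four-group generated by the two involutions in the statement. Since $n$ is even, Lemma~\ref{lemma:fibonaccievenodd} gives that $f_{n-1}(y)$ is even and $f_n(y)$ is odd, and a short induction from Definition~\ref{definition:fibs} shows that $k_n$ and $\ell_n$ have opposite parity, with the specific parities swapping between the cases $n\equiv 0$ and $n\equiv 2\pmod 4$. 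Using these parities I will verify that $\sigma_\alpha$ and $\sigma_\beta$ each send the generators $\varphi_1,\varphi_2,\varphi_3'$ of the ideal $I'$ of Definition~\ref{definition:phis} to $\pm$ themselves, so both involutions preserve $C'$; since neither fixes any point of the multiplicity-two set $P$ of Proposition~\ref{prop:multiplicitypoints1}, they also preserve $D=X_0(\Gamma_n)$.

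Next I will track the images of $\sigma_\alpha$ and $\sigma_\beta$ through $D\to E\to F$. The involution $\sigma_\alpha$ simply exchanges the two sign choices $\epsilon=\pm1$ in the parametrization of $D$ from Proposition~\ref{prop:xyzcharvar}, so after the projection $D\to E$ it becomes $(x,y)\mapsto(-x,y)$, which in the $(y,w)$-coordinates on $F$ is the hyperelliptic involution $(y,w)\mapsto(y,-w)$. The involution $\sigma_\beta$ descends to $(x,y)\mapsto(x,-y)$ on $E$; when $n\equiv 0\pmod 4$ the map $E\to F$ is the identity and this is exactly $(y,w)\mapsto(-y,w)$, while when $n\equiv 2\pmod 4$ the map $\varphi(x,y)=(x/y,y)$ of Lemma~\ref{lemma:E'birationaltoF} converts it instead to $(y,w)\mapsto(-y,-w)$. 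In the latter case the composition $\sigma_\alpha\sigma_\beta$ descends to $(y,w)\mapsto(-y,w)$, so in either case the resulting group of automorphisms of $F$ is precisely the Klein four-group $\{1,(y,-w),(-y,w),(-y,-w)\}$ generated by the two involutions in the statement.

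To pin down the birational structure of $Y_0(\Gamma_n)$, I will quotient $F$ by this Klein four-group in two steps: first modding out by $(y,w)\mapsto(y,-w)$ yields an affine line with coordinate $y$, and the residual action $y\mapsto-y$ then gives an affine line with coordinate $y^2$. Thus $Y_0(\Gamma_n)$ is birational to $\A^1$. Finally, for the extra component $L=\{(0,0,z)\}$ appearing when $n\equiv 2\pmod 4$ (from Proposition~\ref{prop:extraline}), both $\sigma_\alpha$ and $\sigma_\beta$ restrict on $L$ to the single involution $(0,0,z)\mapsto(0,0,-z)$, so the $\mu_2\times\mu_2$ action collapses to an involution as claimed.

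The main technical obstacle I anticipate is organizing the parity bookkeeping of the auxiliary Fibonacci polynomials $h_n,j_n,k_n,\ell_n$ carefully enough to confirm that $\sigma_\beta$ preserves each $\varphi_i$ in both congruence classes mod $4$, and correctly converting the $\sigma_\beta$-action across the slightly different birational maps $E\to F$ used in the two cases; once this bookkeeping is done, the rest of the argument is the routine quotient computation above.
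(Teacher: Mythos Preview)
Your proposal is correct and follows essentially the same route as the paper's proof: identify the two generators of the $\mu_2$--action in $(x,y,z)$--coordinates, observe via the parity of the Fibonacci polynomials that they preserve $C'$, push them through $D\to E\to F$ to obtain the Klein four-group generated by $(y,w)\mapsto(y,-w)$ and $(y,w)\mapsto(-y,w)$, and then quotient in two steps using that $-\hat h_n(y)\hat\ell_n(y)$ is an even polynomial. Your extra care in noting that $\sigma_\beta$ itself lands on $(y,w)\mapsto(-y,-w)$ when $n\equiv 2\pmod 4$ (so that one must compose with $\sigma_\alpha$ to get $(-y,w)$) is a refinement the paper elides; the one extraneous step is the remark about $P$ having no fixed points, which is neither needed (preservation of $D$ follows from preservation of $C'$ together with preservation of $L$) nor quite true when $n\equiv 0\pmod 4$, since then $j_n(0)=f_{n/2}(0)=0$ and $(\pm\sqrt 2,0,0)\in P$ are fixed by $\sigma_\beta$.
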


\begin{proof}

The action of $H^1(\Gamma_n; \Z/2\Z)$ is generated by  $(x,y,z)$ by $\sigma_1 (x,y,z) = (-x,y,-z)$ and $\sigma_2(x,y,z)=(x,-y,-z)$.  These correspond to negating $\rho(\alpha)$ and negating $\rho(\beta)$, respectively.  The hyperelliptic involution corresponds to $\sigma_1$, and $\sigma_2$ descends to the involution of $y$ on both the canonical component and the line $(0,0,z)$.   This descends to the involutions generated by $(w,y) \mapsto (-w,y)$ and $(w,y)\mapsto (w,-y)$ on  $F$ which is given by the model  $w^2=  -\hat{h}_n(y)\hat{\ell}_n(y).$  Since the polynomial $-\hat{h}_n(y)\hat{\ell}_n(y)$ is even, we may write it as $p(y^2)$.  Then the action on $w^2=p(y^2)$ gives a quotient $\bar{w}=p(\bar{y})$,  where $\bar{w}=w^2$ and $\bar{y} = y^2$. This is birational to $\A^1$. 
\end{proof}

Since $H^2(\Gamma_n; \Z/2\Z)$ is non-trivial there are representations into $\PSL_2(\C)$ which do not lift to representations of $\SL_2(\C)$.  These are precisely those representations into $\PSL_2(\C)$ which extend to representations  $\rho$ to $\SL_2(\C)$ with the property that $\rho(\beta^{-n})=-\rho(\alpha^{-1}\beta \alpha^2\beta \alpha^{-1})$.  Let $\rho(\alpha) = \epsilon_A A$ and $\rho(\beta)=\epsilon_B B$ where $A$ and $B$ are chosen as in Definition~\ref{defn:genericexceptional} and $\epsilon_A, \epsilon_B =\pm 1$.  The set $Y(\Gamma_n)$ is determined by the solutions to $B^{-n}=\epsilon A^{-1}BA^2BA^{-1}$ where $\epsilon=\pm1$.  We now consider these representations that do not lift.

  It was shown in Proposition~\ref{prop:irreps} that $X(\Gamma_n)$ is defined by its coordinate ring $I=(\varphi_1, \varphi_2, \varphi_3)$ for the $\varphi_i$  defined in  Definition~\ref{definition:phis}.  
The sign $\epsilon$ changes these equations to 
\begin{align*}
\varphi_1 & = f_{n-1}(y) + \epsilon (x^2-1) \\
\varphi_2 & = f_n(y)+\epsilon (xz-y)\\
\varphi_3 & = x(f_{n+1}(y)-\epsilon) -z f_n(y).
\end{align*}
As the representations with $\epsilon=1$ lift, we will now assume $\epsilon=-1$.  We wish to determine the variety defined by 
\begin{align*}
\phi_1 & = f_{n-1}(y) - (x^2-1) \\
\phi_2 & = f_n(y)-(xz-y)\\
\phi_3 & = x(f_{n+1}(y)+1) -z f_n(y)
\end{align*}
in terms of the variables $x^2$, $y^2$ and $z^2$.

First, we consider characters such that $f_n(y)=0$ and $f_{n-1}(y)+1=0$.  (There are $\tfrac{n}2-1$ values which satisfy both conditions.)
\begin{lemma}\label{lemma:psllemma1}
 Assume $\rho$ is a representation from $\Gamma_n$ to $\PSL_2(\C)$ and  $f_n(y)=0$. 
 If $n \equiv 2 \pmod 4$ then the characters are 
 \[ \{(2,\bar{y},\tfrac12 \bar{y})\in \A^3(\bar{x},\bar{y},\bar{z}): \bar{y}\in (\Rb_{2n}^{fib})^2- (\Rb_{n}^{fib})^2 \}. \]
If  $n\equiv 0 \pmod 4$ we also have the characters $\{(0,0,\bar{z})\in \A^3(\bar{x},\bar{y},\bar{z}) \}$. 
  \end{lemma}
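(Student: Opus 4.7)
The plan is to substitute $f_n(y)=0$ into the sign-twisted equations $\phi_1,\phi_2,\phi_3$ set up just before the lemma, extract the constraints on $(x,y,z)$, and then pass to the squared coordinates $\bar x=x^2,\bar y=y^2,\bar z=z^2$. With $f_n(y)=0$, the Fibonacci recursion forces $f_{n+1}(y)=-f_{n-1}(y)$, and Lemma~\ref{lemma:pslfib} collapses to $(f_{n-1}(y)+1)(f_{n+1}(y)+1)=0$. Since these two factors cannot vanish simultaneously, exactly one of the following holds: either (i) $f_{n+1}(y)=-1$ and $f_{n-1}(y)=1$, or (ii) $f_{n+1}(y)=1$ and $f_{n-1}(y)=-1$.

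In case (i), $\phi_3$ vanishes automatically because both $f_{n+1}(y)+1$ and $f_n(y)$ are zero, while $\phi_1=0$ reduces to $x^2=2$ and $\phi_2=0$ reduces to $xz=y$. Squaring gives $\bar x=2$ and $\bar z=\bar y/2$. To describe the allowable $y$, write $y=s+s^{-1}$ with $s\in\Zeta_{2n}^{fib}$ via Lemma~\ref{lemma:fibroots}; the closed form for $f_{n+1}$ shows $f_{n+1}(y)=-1$ iff $s^n=-1$, i.e.\ $s\in\Zeta_{2n}\setminus\Zeta_n$, so $y\in\Rb_{2n}^{fib}\setminus\Rb_n^{fib}$. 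Passing to $\bar y=y^2$ yields $\bar y\in(\Rb_{2n}^{fib})^2\setminus(\Rb_n^{fib})^2$; the reverse inclusion uses that $(s+s^{-1})^2=(s'+s'^{-1})^2$ with $s'\in\Zeta_n$ would force $s=\pm s'^{\pm 1}$ and, since $n$ is even, $s^n=1$, contradicting $s^n=-1$.

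In case (ii), $\phi_1=0$ forces $x=0$, then $\phi_2=0$ forces $y=0$, and $\phi_3=0$ is automatic, leaving $z$ free and producing the line $\{(0,0,\bar z)\}$. By Lemma~\ref{lemma:fkvals}, the simultaneous conditions $f_n(0)=0$ and $f_{n-1}(0)=-1$ hold precisely when $n\equiv 0\pmod 4$, which accounts for the conditional appearance of this component in the statement. As a consistency check, the single value $y=0$ belongs to case (i) exactly when $s=\pm i$ satisfies $i^n=-1$, i.e.\ when $n\equiv 2\pmod 4$, so the two cases are disjoint and the point $(2,0,0)$ appears in the $n\equiv 2\pmod 4$ list while $(0,0,\bar z)$ is genuinely new for $n\equiv 0\pmod 4$.

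The substitutions in each case are essentially immediate; the only real technical step is the bookkeeping that translates the $y$-level set $\Rb_{2n}^{fib}\setminus\Rb_n^{fib}$ into the $\bar y$-level set $(\Rb_{2n}^{fib})^2\setminus(\Rb_n^{fib})^2$, which is where the parity of $n$ plays an essential role because squaring collapses $\pm y$.
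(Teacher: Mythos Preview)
Your proof is correct and follows essentially the same approach as the paper's: split into two cases according to whether $f_{n-1}(y)=-1$ or $f_{n-1}(y)=1$, then read off the constraints from $\phi_1,\phi_2,\phi_3$. The paper derives the dichotomy directly from the root description $y=b+b^{-1}$, $b^{2n}=1$, noting that $b^n=\pm1$ determines $f_{n\pm1}(y)$; you obtain the same dichotomy by plugging $f_n(y)=0$ into Lemma~\ref{lemma:pslfib}, which is a perfectly good alternative. Your treatment of the passage from $\Rb_{2n}^{fib}\setminus\Rb_n^{fib}$ to $(\Rb_{2n}^{fib})^2\setminus(\Rb_n^{fib})^2$ is actually more explicit than the paper's, which leaves this step to the reader.
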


\begin{proof}
If $f_n(y)=0$ then $y\in \Rb_{2n}^{fib}$.  Therefore either $f_{n-1}(y)=-1$ and $f_{n+1}(y)=1$ or $f_{n-1}(y)=1$ and $f_{n+1}(y)=-1$.  (If $y=b+b^{-1}$ then $b^{2n}=1$ and the first case is for those $y$ such that $b^n=1$ and the second corresponds to those $y$ such that $b^n=-1$.)   In the first case,  the equation $\phi_1=0$ implies that $x=0$ and $\phi_2=0$ implies that $y=0$. The $\phi_3=0$ equation holds for any $z$.   However, $f_{n-1}(0)=-1$ holds  only if $n\equiv 0 \pmod 4$.  
In the second case, $f_{n-1}(y)=1$ and $f_{n+1}(y)=-1$ and $\phi_1$ reduces to $2-x^2$.  Similarly, $\phi_2= y-xz$ and $\phi_3$ is identically zero.  Therefore, $x^2=2$ and $y^2=x^2z^2=2z^2$. 
\end{proof}

\begin{lemma}\label{lemma:pslroots2}
 Assume $\rho$ is a representation from $\Gamma_n$ to $\PSL_2(\C)$ and  $f_n(y)=-1$.
  If $n \equiv 2 \pmod 4$ then the characters are 
\[ \{ (0, \bar{y},0)\in \A^3(\bar{x}, \bar{y},\bar{z}): \bar{y} \in (\Rb_{n-2}^{fib})^2-(\Rb_{(n-2)/2}^{fib})^2    \}. \]  
If $n\equiv 0 \pmod 4$ we also have the characters $\{ (0,0,\bar{z})\in  \A^3( \bar{x},\bar{y}, \bar{z})\}$.
\end{lemma}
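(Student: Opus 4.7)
The plan is to mirror the proof of Lemma~\ref{lemma:psllemma1}, now with the plus-sign factorization identity
\[(f_{n-1}(y)+1)(f_{n+1}(y)+1) = f_n(y)\bigl(y+f_n(y)\bigr)\]
from Lemma~\ref{lemma:pslfib} playing the role of the minus-sign identity of Lemma~\ref{lemma:fibonacciidentities}. The $\PSL_2(\C)$ characters that do not lift to $\SL_2(\C)$ are the solutions of the modified system
\[\phi_1=f_{n-1}(y)-(x^2-1),\quad \phi_2=f_n(y)-(xz-y),\quad \phi_3=x(f_{n+1}(y)+1)-zf_n(y)\]
set up just above the statement, and the hypothesis is what singles out the clean stratum $\bar x=0$.

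First I would combine the hypothesis with Lemma~\ref{lemma:pslfib} and the Fibonacci recursion to force $x=0$, hence $\bar x=0$; then $\phi_2=0$ gives $f_n(y)=-y$ and $\phi_3=0$ collapses to $yz=0$. This produces the dichotomy $y=0$ or $z=0$. The branch $y=0$ is consistent with $f_{n-1}(0)=-1$ only when $n\equiv 0\pmod 4$, by Lemma~\ref{lemma:fkvals}, and it yields the additional line $\{(0,0,\bar z)\}$. On the branch $z=0$ the recursion combines $f_{n-1}(y)=-1$ and $f_n(y)=-y$ into $f_{n-2}(y)=0$, so by Lemma~\ref{lemma:fibroots} we may write $y=s+s^{-1}$ with $s^{2(n-2)}=1$ and $s\neq\pm 1$; inserting this into the closed form of $f_{n-1}$ singles out exactly those $s$ with $s^{n-2}=-1$.

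To finish, I would pass to $t=s^2$, note $t^{n-2}=1$ and $t^{(n-2)/2}=-1$ so that $t\in\Zeta_{n-2}-\Zeta_{(n-2)/2}$, and rewrite $\bar y=(s+s^{-1})^2=t+2+t^{-1}$. The two-to-one map $\eta\mapsto\eta+\eta^{-1}$ on roots of unity, together with elementwise squaring, should then identify the set of allowed $\bar y$ with $(\Rb^{fib}_{n-2})^2-(\Rb^{fib}_{(n-2)/2})^2$. The main obstacle will be this last bookkeeping: one must track the index-halving under $s\mapsto s^2$ carefully and verify that the excluded cases $s=\pm 1,\pm i$ are handled correctly in each residue class of $n$ modulo~$4$, in particular confirming that when $n\equiv 2\pmod 4$ the degenerate value $y=0$ fails $s^{n-2}=-1$ and does not contribute, while when $n\equiv 0\pmod 4$ it does contribute and recombines with the first branch as the $\bar z$--line.
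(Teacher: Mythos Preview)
Your proposal is correct and follows essentially the same route as the paper, just with the order of operations reversed: the paper first factors the roots of $f_{n-1}(y)+1=0$ into the two cases $b^n=1$ (which it refers back to Lemma~\ref{lemma:psllemma1}) and $b^{n-2}=-1$, and then substitutes into $\phi_1,\phi_2,\phi_3$; you instead substitute first and recover $s^{n-2}=-1$ at the end.  One small point: invoking Lemma~\ref{lemma:pslfib} to ``force $x=0$'' is unnecessary --- the hypothesis (which is $f_{n-1}(y)=-1$, the statement's ``$f_n(y)=-1$'' being a typo) plugged directly into $\phi_1$ already gives $x^2=f_{n-1}(y)+1=0$.
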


\begin{proof} The roots of $f_{n-1}(y)+1$ are those $y=b+b^{-1}$ where $b^n=1$ or $b^{n-2}=-1$.  The first case is the first case of Lemma~\ref{lemma:psllemma1}, giving the characters $(0,0,\bar{z})$ for $n\equiv 0 \pmod 4$.   Assume $b^{n-2}=-1$, so that $f_{n-2}(y)=0$, $f_n(y)=-y$ and $f_{n+1}(y)=1-y^2$. The equations reduce to $\phi_1=-x^2  $, $\phi_2 = -xz $, and $\phi_3= x(2-y^2)+yz$.  Therefore, $x=0$ and $yz=0$.  The solutions are $(0,0,z^2)$ (which can occur only if $n\equiv 0 \pmod 4$) and $(0,y^2,0)$ for $y^2 \in (\Rb_{n-2}^{fib})^2-(\Rb_{(n-2)/2}^{fib})^2  $.
\end{proof}

\begin{remark}
The characters $(0,0,\bar{z})$  correspond to representations of $\langle \alpha, \beta: \alpha^2, \beta^2 \rangle \cong \Z/2\Z * \Z/2\Z$.  When $n\equiv 2 \pmod 4$ there are $\PSL_2(\C)$  representations corresponding to the characters $(0,0,z)$ that lift.  These are given by 
\[ \rho(\alpha) = \epsilon_A \mat{i}{0}{s}{-i}, \quad \rho(\beta)= \epsilon_B \mat{i}{s}{0}{-i}.\]
The representations corresponding to the characters $(0,0,\bar{z})$ when $n\equiv 0 \pmod 4$ are also as above but do not lift.  They do not lift because the relation $\beta^{-n} = \alpha^{-1}\beta \alpha^2 \beta \alpha^{-1}$ on the level of matrices is $(-I)^{n/2}=-I$, where $I$ is the $2\times 2$ identity matrix.

The characters $(2, \bar{y}, \tfrac12 \bar{y})$ with $\bar{y}\in (\Rb_{2n}^{fib})^2- (\Rb_{n}^{fib})^2$   correspond to representations of $\langle \alpha, \beta: \beta^n, \alpha^4,  \beta \alpha^2 \beta = \alpha^2 \rangle$.  The condition on $\bar{y}$ ensures that considering the matrices in $\SL_2(\C)$, $\rho(\beta^n) = -I$. 
The representations corresponding to the characters $(0,\bar{y},0)$ with $\bar{y} \in (\Rb_{n-2}^{fib})^2-(\Rb_{(n-2)/2}^{fib})^2  $  are representations of  $\langle \alpha, \beta : \alpha^2, \beta^{n-2}, \beta^2  \alpha \beta^2=\alpha  \rangle.$ The obstruction to lifting many of these representations if evident by the existence of elements of order two.

\end{remark}

Equations $\phi_1$, $\phi_2$ and $\phi_3$ allow us to parametrize the remaining solutions.   The coordinate ring is contained in the ring $R$ generated by the polynomials $\phi_1$, $\phi_2' = \phi_2(f_n(y)+(xz-y))$, and $\phi_3' = \phi_3(x(f_{n+1}(y)+1)+zf_n(y))$. Using that $\phi_2 = 0$, these polynomials are 
\begin{align*}
\phi_1 & = f_{n-1}(y)+1 -x^2\\
\phi_2' & = (f_n(y)+y)^2-z^2x^2\\
\phi_3' & = x^2(f_{n+1}(y)+1)^2-z^2f_n(y)^2.
\end{align*}
Note that these can be expressed in terms of the variables $x^2$, $y^2$ and $z^2$.  
 By Lemma~\ref{lemma:fibonaccievenodd} the polynomial $f_k$ is even when $k$ is odd and odd when $k$ is even.  Therefore, since $n$ is even, $f_{n+1}(y)$, $f_{n-1}(y)$, and $(f_n(y)+y)^2$ are polynomials in $y^2$.  
Substitution for $x^2$ can be obtained by considering $\phi_2'-z^2\phi_1$ and $\phi_3'+(f_{n+1}(y)+1)^2x^2$.  These equations are linear in $\phi_2'$ and $\phi_3'$,  so the polynomials
\begin{align*}
\phi_1 & = f_{n-1}(y)+1 -x^2\\
\phi_2'' & = (f_n(y)+y)^2-(f_{n-1}(y)+1)z^2\\
\phi_3'' & = (f_{n-1}(y)+1)(f_{n+1}(y)+1)^2-z^2f_n(y)^2.
\end{align*}
generate $R$ as well.
Assuming that $f_{n-1}(y)+1$ and $f_n(y)$ are non-zero, this gives $z^2$ as $(f_{n}(y)+y)^2/(f_{n-1}(y)+1)$ and $(f_{n-1}(y)+1)(f_{n+1}(y)+1)^2/f_n(y)^2$.  By Lemma~\ref{lemma:pslfib} this is in fact an identity.  Therefore this is a parametrization of the solutions as
\[  \big( f_{n-1}(y)+1, y, (f_n(y)+y)^2/(f_{n-1}(y)+1) \big). \]

\begin{definition} Define the polynomials $q_1(u)$, $q_2(u)$ and $q_3(u)$ so that $q_1(u^2) = f_{n-1}(u)+1$, $q_2(u^2)= (f_n(u)+u)^2$, and $q_3(u^2) = f_{n-1}(u)+1$. \end{definition}
Therefore, the parametrization is 
\[ \big(q_1(y^2), y^2, \frac{q_2(y^2)}{q_3(y^2)}\big).\] 
 Together with Lemma~\ref{lemma:psleven1} we have shown the following.

\begin{thm}\label{thm:psleven}

Assume $|n|>2$ is even. The quotient $Y_0(\Gamma_n)= X_0(\Gamma_n)/\mu_2 $ corresponds to the identification of $F$ under the hyperelliptic involution $(y,w)\mapsto (y,-w)$ and the involution $(y,w) \mapsto (-y,w)$.  The set $Y_0(\Gamma_n)$ is birational to an $\A^1$.  When $n\equiv 2 \pmod 4$ the identification on the additional line component is an involution.

 The portion of $Y(\Gamma_n) \subset \A^3(\bar{x},\bar{y},\bar{z})$ that does not lift to $X(\Gamma_n)$ is determined by components isomorphic to points and lines. The points are $(2,\bar{y}, \tfrac12 \bar{y})$ for $\bar{y} \in (\Rb_{2n}^{fib})^2- (\Rb_{n}^{fib})^2$ and $(0,\bar{y},0)$ for $\bar{y}\in (\Rb_{n-2}^{fib})^2- (\Rb_{(n-2)/2}^{fib})^2$.
The lines are given   parametrically by $(q_1(\bar{y}), \bar{y}, \frac{q_2(\bar{y})}{q_3(\bar{y})})$ and when $n\equiv 0 \pmod 4$ there is an additional  line component with characters $(0,0,\bar{z})$.
\end{thm}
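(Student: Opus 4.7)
The plan is to assemble this theorem from the lemmas immediately preceding it, together with the explicit parametrization derived just above. The first paragraph of the statement---that $Y_0(\Gamma_n) = X_0(\Gamma_n)/\mu_2$ is identified with the quotient of $F$ by the two commuting involutions $(y,w)\mapsto(y,-w)$ and $(y,w)\mapsto(-y,w)$, that this quotient is birational to $\A^1$, and that the extra line $L$ (when $n\equiv 2\pmod 4$) is quotiented by an involution---is precisely Lemma~\ref{lemma:psleven1}, so nothing new needs to be proved there.

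For the non-lifting portion I would first recall from Section~\ref{section:PSL} that a $\PSL_2(\C)$ representation fails to lift to $\SL_2(\C)$ exactly when the group relation is realized at the matrix level only up to the sign $-I$. Setting $\epsilon=-1$ in the equations of Proposition~\ref{prop:irreps} therefore gives the sign-flipped triple
\[
\phi_1 = f_{n-1}(y) - (x^2 - 1), \quad \phi_2 = f_n(y) - (xz - y), \quad \phi_3 = x(f_{n+1}(y) + 1) - z f_n(y),
\]
whose common zero set, pushed down to the invariant coordinates $\bar{x}=x^2$, $\bar{y}=y^2$, $\bar{z}=z^2$, describes the non-lifting locus inside $Y(\Gamma_n)$.

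Next I would split the analysis into cases according to whether $f_n(y)$ or $f_{n-1}(y)+1$ vanishes. The case $f_n(y)=0$ is exactly Lemma~\ref{lemma:psllemma1}, which produces the isolated-point components $(2,\bar{y},\tfrac12\bar{y})$ indexed by $\bar{y}\in(\Rb_{2n}^{fib})^2-(\Rb_n^{fib})^2$, together with the extra line $(0,0,\bar{z})$ exactly when $n\equiv 0\pmod 4$. The case $f_{n-1}(y)+1=0$ with $f_n(y)\neq 0$ is exactly Lemma~\ref{lemma:pslroots2}, producing the isolated point components $(0,\bar{y},0)$ indexed by $\bar{y}\in(\Rb_{n-2}^{fib})^2-(\Rb_{(n-2)/2}^{fib})^2$.

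In the remaining generic case, where neither $f_n(y)$ nor $f_{n-1}(y)+1$ vanishes, I would appeal to the rewritten generators
\[
\phi_1, \qquad \phi_2'' = (f_n(y)+y)^2 - (f_{n-1}(y)+1)z^2, \qquad \phi_3'' = (f_{n-1}(y)+1)(f_{n+1}(y)+1)^2 - z^2 f_n(y)^2,
\]
derived immediately before the theorem, all of which lie in $\Z[x^2,y^2,z^2]$. Then $\phi_1=0$ gives $\bar{x}=q_1(\bar{y})$, and either of the equations $\phi_2''=0$ or $\phi_3''=0$ expresses $\bar{z}$ as $q_2(\bar{y})/q_3(\bar{y})$; the two expressions for $\bar{z}$ agree by the Fibonacci identity of Lemma~\ref{lemma:pslfib}. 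This yields the asserted rational line component parametrized by $\bar{y}$. The main obstacle is purely organizational: checking that the three cases are pairwise consistent and jointly exhaustive, and tracking the residue of $n$ modulo $4$ carefully to confirm that the extra line $(0,0,\bar{z})$ appears in the non-lifting locus precisely when $n\equiv 0\pmod 4$ rather than when $n\equiv 2\pmod 4$.
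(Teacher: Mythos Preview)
Your proposal is correct and follows essentially the same approach as the paper. The paper assembles the theorem exactly as you describe: the first paragraph is Lemma~\ref{lemma:psleven1}, the isolated points come from Lemmas~\ref{lemma:psllemma1} and \ref{lemma:pslroots2}, and the generic parametric line comes from the manipulation of $\phi_1,\phi_2'',\phi_3''$ together with the identity of Lemma~\ref{lemma:pslfib}; the paper simply states ``Together with Lemma~\ref{lemma:psleven1} we have shown the following'' before the theorem.
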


\begin{remark}
The parametrization is not well-defined for $y$ which are roots of $f_{n+1}(y)+1$  these solutions are those from Lemma~\ref{lemma:pslroots2}.
\end{remark}

%
%
\section{The Whitehead link and lens space fillings}\label{section:whitehead}
%
%

The  manifolds $M_n$ are precisely the once-punctured torus bundles that are obtained by filling one boundary component of the Whitehead link exterior.

\subsection{Fillings of the Whitehead link}
Let $W=W(\cdot,\cdot)$ denote the exterior of the Whitehead link shown in Figure~\ref{fig:whiteheadlink}, and
 $W(p/q, \cdot)$ denote $p/q$ Dehn filling on one boundary component.
In particular, for a component $K$ of the Whitehead link with regular solid torus neighborhood $N(K)$, $\bdry N(K)$ is a component of $\bdry W$, and $W(p/q,\cdot)$ is formed by attaching a solid torus to $W$ along $\bdry N(K)$ so that an oriented curve representing $p \mu_K + q \lambda_K \in H_1 (\bdry N(K))$ bounds a meridional disk.  
Here for some orientation of the knot $K$, $\mu_K$ is the class in $H_1(\bdry N(K))$ of the oriented meridian of $N(K)$ linking $K$ once positively and $\lambda_K$ is the class of the boundary of a  Seifert surface oriented parallel to $K$.

\begin{figure}
\begin{center}
\includegraphics[width=1.5in]{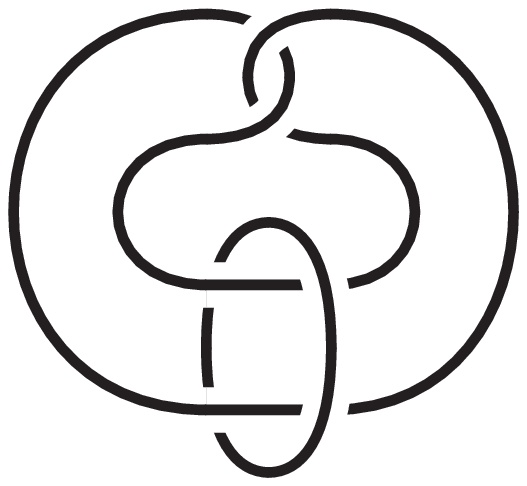}
\end{center}
\caption{}
\label{fig:whiteheadlink}
\end{figure}

\begin{lemma}[Proposition~3, \cite{hmw:sotwlygsm}]\label{lem:whiteheadfiber}
$W(p/q,\,\cdot\,)$ fibers over the circle if and only if $|q|\leq1$.  For each $p \in \Z$, $W(p/1, \,\cdot\,)$ fibers with a once-punctured torus fiber and monodromy $\phi \cong \tau_c \tau_b^{-p}$.
\end{lemma}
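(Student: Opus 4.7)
I would prove the claim in two parts: constructive for $|q|\le 1$, obstructive for $|q|\ge 2$.

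For the constructive direction, I would reduce to the base case $p=0$ and then propagate. The case $q=0$ (the $\infty$-slope) is immediate, since $\infty$-filling on the unknotted component $K_1$ simply reconstitutes $S^3$ minus a tubular neighborhood of $K_2$, a solid torus $D^2 \times S^1$. For $q=1$, I would first identify $W(0/1, \cdot)$ with $M_{-2}$, a once-punctured torus bundle with monodromy $\tau_c$, either by exhibiting a fibration of $W$ whose fiber is a twice-punctured torus meeting $\bdry N(K_1)$ in its longitude (so that $0/1$-filling caps off that boundary circle with a meridian disk, reducing the fiber to a once-punctured torus), or by directly constructing the once-punctured torus fiber in $W(0/1,\cdot)$ from an explicit Seifert surface for $K_2$ in the solid torus $S^3\setminus N(K_1)$. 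The monodromy $\tau_c$ can then be read off the geometry of this fibration and matches the $n=-2$ case of the monodromies studied in Section~\ref{section:preliminaries}.

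To propagate from $p=0$ to general $p$, I would view $W(p/1, \cdot)$ as obtained from $M_{-2}$ by Dehn surgery on the core $K_1$ of the $0/1$-filling solid torus. Because $K_1$ is isotopic in $M_{-2}$ to a curve in a fiber parallel to the simple closed curve $b$, the standard surgery formula for changing the monodromy of a surface bundle by twisting along a fiber curve applies: performing the appropriate Dehn surgery on $K_1$ (which reconstructs $W(p/1,\cdot)$ from $M_{-2}$) modifies the monodromy by post-composition with $\tau_b^{-p}$. This yields monodromy $\tau_c \tau_b^{-p}$, as claimed, and simultaneously verifies that $W(p/1,\cdot)$ fibers with once-punctured torus fiber for every $p\in\Z$.

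The main obstacle is the ``only if'' direction. To show that $W(p/q,\cdot)$ does not fiber when $|q|\ge 2$, I would invoke Thurston's theory of fibered faces: any fibration of $W(p/q,\cdot)$ would pull back to a nontrivial integral cohomology class of $W$ lying in the closure of a fibered cone, whose dual norm-minimizing surface must meet $\bdry N(K_1)$ in circles of slope $p/q$. A direct computation of the Thurston norm ball of $W$ would show that the boundary slopes on $\bdry N(K_1)$ realized by the fibered cone are precisely the integral slopes $\{p/1 : p \in \Z\}$ together with $\infty$, which would rule out $|q|\ge 2$. Carrying out this Thurston norm computation rigorously is the technical heart of the result, and is the content of Proposition~3 of \cite{hmw:sotwlygsm}.
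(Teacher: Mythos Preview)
The paper does not supply its own proof of this lemma; it simply quotes the result as Proposition~3 of \cite{hmw:sotwlygsm} and moves on. So there is nothing in the paper to compare your argument against.

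That said, your outline is a reasonable reconstruction of how such a result is typically proved. The constructive half is essentially correct: the identification $W(0,\cdot)\cong M_{-2}$ with monodromy $\tau_c$ is exactly consistent with the paper's convention $M_n\cong W(-(n+2),\cdot)$, and the ``surgery on a curve in the fiber changes the monodromy by a power of a Dehn twist'' principle is the standard way to propagate to general $p$. One small caution for the obstructive half: saying that a fibration of $W(p/q,\cdot)$ ``pulls back'' to a fibered class of $W$ is not automatic---the fiber may meet the core of the filling torus, and after drilling you get a properly embedded surface in $W$ whose class need not lie in a fibered cone without further argument (e.g.\ via Gabai's results relating the Thurston norm before and after surgery, or a direct norm computation for $W$). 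You correctly flag this as the technical heart and defer it to \cite{hmw:sotwlygsm}, which is exactly what the present paper does as well.
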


Thus these manifolds are surgeries on the Whitehead link exterior: $M_n \cong W(-(n+2),\cdot)$.  We also want to consider the lens space fillings of $M_n$.   Fortunately, the lens space fillings of $W$ all factor through at least one of our $M_n$.

\begin{lemma}[Martelli-Petronio,  \cite{mp:dfotm3m}]\label{lem:whiteheadlensspace}
$W(\gamma_1, \gamma_2)$ is a lens space if and only if $\{\gamma_1,\gamma_2\}$ is  $\{-1,-6+1/k\}$, $\{-2, -4+1/k\}$, $\{-3,-3+1/k\}$, or $\{p/q, \infty\}$ for some $k,p,q\in\Z$ with $(p,q)=1$. 
\end{lemma}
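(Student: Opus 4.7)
The plan is to combine an explicit construction producing each of the four listed families of lens space fillings with an obstruction argument showing that no other fillings of $W$ yield lens spaces. Denote the two components of the Whitehead link by $K_1, K_2$ so that $W(\gamma_1,\gamma_2)$ fills $K_i$ along slope $\gamma_i$; by symmetry, I may (and will) single out $\gamma_1$ as the first filled slope.

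\textbf{Constructive direction.} The family $\{p/q,\infty\}$ is immediate: each component of the Whitehead link is an unknot in $S^3$, so $\infty$--filling on $K_2$ recovers $S^3 \setminus \mathring{N}(K_1)$, a standardly embedded solid torus, and $p/q$--filling that solid torus produces $L(p,q)$. For the remaining three families I would use the identifications $W(-1,\,\cdot\,) = M_{-1}$, $W(-2,\,\cdot\,) = M_0$, and $W(-3,\,\cdot\,) = M_1$ coming from $M_n = W(-(n+2),\,\cdot\,)$. By Remark~\ref{remark:someoftheMs}, $M_{-1}$ is the trefoil exterior, and by Lemma~\ref{lem:geometry} both $M_0$ and $M_1$ are Seifert fibered over the disk with two exceptional fibers. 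I would then invoke Moser's classification of Dehn surgeries on torus knots, together with the analogous classification of lens space fillings of small Seifert fibered spaces over the disk with two exceptional fibers, to verify that the integer--plus--$1/k$ slopes listed are exactly the ones producing lens spaces (with the appropriate arithmetic $-6$, $-4$, $-3$ dictated by the multiplicities of the two exceptional fibers in each case).

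\textbf{Obstructive direction.} I would now rule out lens space fillings beyond the four listed families. By Lemma~\ref{lem:geometry}, $M_n$ is hyperbolic for $|n|>2$; by Thurston's hyperbolic Dehn surgery theorem each such $M_n$ admits only finitely many non-hyperbolic fillings, and hence only finitely many potential lens space fillings. Applied to the two-cusped hyperbolic manifold $W$, the same theorem reduces the entire problem to a finite enumeration of exceptional filling pairs. To make this enumeration effective, I would first apply the $6$--theorem of Agol and Lackenby to bound exceptional slopes on one cusp of $W$, then bound exceptional second fillings on each resulting cusped manifold. A lens space has cyclic fundamental group, so the remaining candidates could be tested by checking whether the resulting $\pi_1$ is cyclic --- for instance, by computing $H_1$ and then ruling out non-cyclic fundamental groups via the representation-theoretic or geometric structure of the filled manifold.

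\textbf{Main obstacle.} The serious work lies in the obstructive step: pruning the finite list of exceptional fillings down to exactly the four families requires a substantial and careful case analysis, and this is where Martelli and Petronio use the systematic (computer-assisted) machinery of their monograph. A fully hand-checkable argument would likely require a more refined tool, such as Culler--Shalen semi-norms on $X(\pi_1(W))$ to bound the slopes of cyclic fillings directly from the character variety, or an explicit Heegaard decomposition argument on the once-punctured torus bundle models $M_n$. I would expect the bulk of the effort to be concentrated in this step; the constructive direction and the initial finiteness reduction are standard.
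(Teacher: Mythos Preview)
Your proposal attempts to reprove the Martelli--Petronio classification from scratch, but the paper does no such thing: the lemma is explicitly attributed to \cite{mp:dfotm3m}, and the paper's proof is a single sentence stating that a bit of Kirby calculus shows the statement is a direct consequence of the results listed in Table~A.5 of that reference. In other words, the entire obstructive direction you outline --- hyperbolic Dehn surgery finiteness, the $6$--theorem, pruning the exceptional list --- is already packaged inside the cited table, and the only work the authors actually perform is translating Martelli and Petronio's surgery conventions (their tables are organized around fillings of the ``magic'' chain-link manifold, of which $W$ is itself a filling) into the standard meridian--longitude framing on the Whitehead link via Kirby moves.

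So your constructive direction is fine and your obstructive outline is a reasonable sketch of what Martelli--Petronio themselves do, but it is not what is being asked for here. The appropriate ``proof'' at this point in the paper is simply to cite Table~A.5 and indicate the Kirby-calculus translation; your elaborate plan would amount to rederiving a published classification rather than invoking it.
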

\begin{proof}
A bit of Kirby Calculus shows this is a direct consequence of the results listed in Table~A.5 of  \cite{mp:dfotm3m}.
\end{proof}

\begin{lemma}\label{lem:lensspacefillings}
The tunnel number one, once-punctured torus bundle $M_n = W(p, \cdot)$ for $p\in \Z$ has the single lens space filling $W(p,\infty)=L(p,1)$ with the following exceptions.
\begin{itemize}
\item $p=-1$:   $W(-1,-6+1/k) = L(6k-1, 2k-1)$, $k\in \Z$
\item $p=-2$:   $W(-2, -4+1/k) = L(8k-2, 2k-1)$, $k\in \Z$
\item $p=-3$:   $W(-3, -3+1/k)=L(9k-3,3k-2)$, $k\in \Z$
\item $p=-4$:  $W(-4, \infty)=L(4,-1)$ and $W(-4,-3)=L(12,5)$
\item $p=-5$:  $W(-5,\infty)=L(5,-1)$,  $W(-5,-1)=L(5,1)$, and $W(-5,-2)=L(10,3)$
\item $p=-7$:  $W(-7,\infty)=L(7,-1)$ and $W(-7,-1)=L(7,3).$
\end{itemize}
\end{lemma}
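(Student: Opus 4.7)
The plan is to read the conclusion off Lemma~\ref{lem:whiteheadlensspace} by enumerating which pairs $\{p,\gamma\}$ of slopes appear on the Martelli--Petronio list. First I would observe that $\{p,\infty\}$ is always on that list and produces $W(p,\infty)=L(p,1)$: indeed, $\infty$-filling one component of the Whitehead link yields the exterior of the other Whitehead component, which is an unknot in $S^3$, so $p$-filling the remaining component gives $L(p,1)$. This accounts for the generic claim of the lemma.

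Next, any further lens space filling $W(p,\gamma)$ must have $\{p,\gamma\}$ equal to one of the three parametric pairs $\{-r,\,-(r+1)+1/k\}$ with $r\in\{1,2,3\}$ and $k\in\Z$. Since $p\in\Z$, either $p=-r$ (in which case $\gamma=-(r+1)+1/k$ is free to range over $k\in\Z$), or else $p=-(r+1)+1/k$, which forces $1/k\in\Z$ and hence $k=\pm1$, giving $p\in\{-r,-r-2\}$. Walking through $r=1,2,3$ produces exactly the exceptional slopes stated: infinite families for $p\in\{-1,-2,-3\}$, together with $\gamma=-3$ for $p=-4$, $\gamma\in\{-1,-2\}$ for $p=-5$, and $\gamma=-1$ for $p=-7$. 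Any other integer $p$ admits no lens space filling beyond $W(p,\infty)$.

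It then remains to identify each lens space. The three one-parameter families $L(6k-1,2k-1)$, $L(8k-2,2k-1)$, $L(9k-3,3k-2)$ can be read off the relevant rows of Table~A.5 of \cite{mp:dfotm3m}; alternatively, each can be recomputed by a Kirby calculus on the Whitehead link, sliding the $(-(r+1)+1/k)$-framed component over the $-r$-framed component and then blowing down via the continued fraction of the surviving framed unknot. The isolated identifications $L(12,5)$, $L(5,1)$, $L(10,3)$, $L(7,3)$ are then specializations of these same formulas after swapping the two link components; for instance $W(-5,-1)$ sits in the $\{-1,-6+1/k\}$ family at $k=1$ and hence equals $L(5,1)$, while $W(-4,-3)$ sits in the $\{-3,-3+1/k\}$ family at $k=-1$ and hence equals $L(12,5)$.

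The main obstacle is the bookkeeping of overlaps and orientations: the $\infty$-filling is itself the $k=0$ specialization of each infinite family, and overlaps between the three families (for example $W(-2,-3)=W(-3,-2)$ appears in two families at $k=1$) must yield consistent lens spaces. Verifying this consistency is routine but error-prone, so care is required in tracking longitudes and signs on the two Whitehead components throughout the Kirby calculus step. Once these checks are made, the lemma follows immediately from the enumeration.
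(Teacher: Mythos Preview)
Your approach is exactly the paper's: the proof there is a one-liner citing Lemma~\ref{lem:whiteheadfiber} and Lemma~\ref{lem:whiteheadlensspace}, and you have simply unpacked that citation by enumerating which unordered pairs $\{p,\gamma\}$ with $p\in\Z$ occur in the Martelli--Petronio list. Your final enumeration and the sample identifications (e.g.\ $W(-5,-1)$ in the $\{-1,-6+1/k\}$ family at $k=1$, $W(-4,-3)$ in the $\{-3,-3+1/k\}$ family at $k=-1$) are correct.

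There is, however, a concrete slip in your write-up: the three families from Lemma~\ref{lem:whiteheadlensspace} are $\{-1,-6+1/k\}$, $\{-2,-4+1/k\}$, $\{-3,-3+1/k\}$, which are \emph{not} of the uniform shape $\{-r,-(r+1)+1/k\}$ you wrote. With your stated parametrization the case $p=-7$ would never arise (since $-(r+1)+1/k$ integral gives $p\in\{-r,-r-2\}\subset\{-1,\dots,-5\}$), yet you correctly list it in your conclusion. Evidently your actual computation used the correct families (as your later examples confirm), but the displayed form must be fixed so the argument reads consistently. Once that is corrected, your bookkeeping of overlaps and the identification of the lens spaces via the formulas in Lemma~\ref{lem:whiteheadlensspace} (specialized and swapped as you describe) completes the proof.
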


\begin{proof}  
This follows from Lemma~\ref{lem:whiteheadfiber} and Lemma~\ref{lem:whiteheadlensspace}.
\end{proof}

\begin{remark}
Lemma~\ref{lem:lensspacefillings} may be viewed as describing the knots in lens spaces whose exteriors are tunnel number one, once-punctured torus bundles and hence homeomorphic to some $M_n$.  Each core of the filling $W(\gamma_1, \gamma_2)$ listed gives such a knot except in the cases where the other filling slope is $\infty$ (which causes the knot exterior to be a  solid torus).  
   
Among the null homologous, genus one, fibered knots in lens spaces, those whose exteriors have  tunnel number one account for only a portion, \cite{BJK}.  On the other hand, if the exterior of a non null homologous knot in a lens space is a once-punctured torus bundle, then it necessarily has tunnel number one, \cite{optils}.
\end{remark}

\begin{lemma}\label{lemma:cyclicquotients}
For each $n \in \Z$ the only cyclic quotient of $\Gamma_n$ by a primitive peripheral element is $\Gamma_n/\langle \mu \rangle = \langle \beta \colon \beta^{n+2}\rangle \cong \Z/ (n+2)\Z$ with the following additions:
\begin{itemize}
\item $\Gamma_{-1}/\langle \lambda^{k}\mu^{6k-1}\rangle  = \langle \eta \vert \eta^{6k-1} \rangle \cong \Z/ (6k-1)\Z$, $k \in \Z$ where $\eta = \alpha \beta \alpha \beta \alpha$
\item $\Gamma_{0}/\langle \lambda^{k}\mu^{4k-1}\rangle = \langle \alpha \vert \alpha^{8k-2} \rangle \cong \Z/ (8k-2)\Z$, $k \in \Z$ where $\beta = \alpha^{4k-1}$
\item $\Gamma_{1}/\langle \lambda^{k}\mu^{3k-1}\rangle  = \langle \alpha \vert \alpha^{9k-3} \rangle \cong \Z/ (9k-3)\Z$, $k \in \Z$ where $\beta = \alpha^{3k-1}$
\item $\Gamma_{2}/\langle \lambda\mu^3\rangle= \langle \alpha \vert \alpha^{12} \rangle \cong \Z/ 12\Z$, where $\beta = \alpha^3$
\item $\Gamma_{3}/\langle \lambda\mu\rangle= \langle \beta \vert \beta^5 \rangle \cong \Z/ 5\Z$ where $\alpha= \beta^4$ 
\item $\Gamma_{3}/\langle \lambda\mu^2\rangle = \langle \alpha \vert \alpha^{10} \rangle \cong \Z/10\Z$ where $\beta = \alpha^4$,
\item $\Gamma_{5}/\langle \lambda\mu\rangle = \langle \eta \vert \eta^7 \rangle \cong \Z/7\Z$  where $\eta = \beta \bar{\alpha}$.
\end{itemize}
\end{lemma}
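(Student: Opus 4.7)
The plan is to proceed in two parts, treating first the universal slope $\mu$ and then the exceptional families in turn.

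For the universal case I would perform Tietze transformations on the presentation $\Gamma_n = \langle\alpha,\beta : \bar{\beta}^n = \bar{\alpha}\beta\alpha^2\beta\bar{\alpha}\rangle$. Setting $\mu = \beta\alpha = 1$ yields $\alpha = \bar{\beta}$, and substituting into the defining relation collapses the right-hand side to $\beta\cdot\beta\cdot\bar{\beta}^2\cdot\beta\cdot\beta = \beta^2$. Hence $\bar{\beta}^{n+2}=1$ and $\Gamma_n/\langle\mu\rangle \cong \langle\beta : \beta^{n+2}\rangle$.

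For the exceptional slopes, the essential observation is that Lemma~\ref{lem:lensspacefillings} identifies each listed filling as a lens space, so $\Gamma_n/\langle\!\langle\lambda^k\mu^m\rangle\!\rangle$ is cyclic of the corresponding order. Because a cyclic quotient is abelian, it factors through the abelianization, reducing the problem to a computation with abelian groups. From Proposition~\ref{prop:redreps} we have $\Gamma_n^{ab} \cong \Z\langle\alpha\rangle \oplus (\Z/(n+2)\Z)\langle\beta\rangle$ for $n \neq -2$, and from the word $\lambda = (\alpha\beta\bar{\alpha})\beta(\alpha\bar{\beta}\bar{\alpha})\bar{\beta}$ in Lemma~\ref{lemma:boundarypi1} one reads off that $[\lambda]=0$ in $\Gamma_n^{ab}$ while $[\mu] = [\alpha]+[\beta]$. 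Hence
\[ \Gamma_n/\langle\!\langle\lambda^k\mu^m\rangle\!\rangle \;\cong\; \bigl(\Z \oplus \Z/(n+2)\Z\bigr)\big/\bigl\langle m([\alpha]+[\beta])\bigr\rangle. \]
For each exceptional $n$ I would carry out the quotient computation on the right, verify the resulting cyclic order agrees with the lens space order predicted by Lemma~\ref{lem:lensspacefillings}, and then read off the expression of one generator as a power of another.

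The main obstacle, really just bookkeeping, is verifying that the specific generators named in the lemma---such as $\eta = \alpha\beta\alpha\beta\alpha$ in the $\Gamma_{-1}$ case or $\eta = \beta\bar{\alpha}$ in the $\Gamma_5$ case---actually generate the full cyclic quotient rather than a proper subgroup. This reduces to checking that the abelianization class of the named element is a unit in the cyclic group of order $N$: for instance, $[\eta] = 3[\alpha]$ in $\Gamma_{-1}^{ab}$ (using $[\beta]=0$ when $n+2 = 1$), and $\gcd(3,6k-1)=1$ because $6k-1 \equiv 2 \pmod 3$. Analogous congruence checks dispose of the remaining cases. Completeness of the list---that no other primitive peripheral slope yields a cyclic quotient---is inherited directly from the corresponding completeness in Lemma~\ref{lem:lensspacefillings}, since any filling giving a cyclic group must be a lens space.
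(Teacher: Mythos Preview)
Your approach is correct and matches the paper's at the structural level: both invoke Lemma~\ref{lem:lensspacefillings} to enumerate the lens space fillings and hence the cyclic quotients, and both leave the translation between Whitehead link filling coordinates and $(\lambda,\mu)$ coordinates on $\partial M_n$ implicit. The paper then simply declares the determination of generators to be ``a straightforward (yet tedious) exercise,'' pointing the reader to the explicit words for $\mu$ and $\lambda$ and presumably intending direct Tietze transformations on the two-generator presentation.

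Your route through the abelianization is a tidy shortcut for that exercise. Since each listed quotient is already known to be cyclic, it coincides with its own abelianization, and the computation collapses to linear algebra in $\Gamma_n^{ab}\cong\Z\oplus\Z/(n+2)\Z$ using $[\lambda]=0$ and $[\mu]=[\alpha]+[\beta]$; checking that a named element such as $\eta=\alpha\beta\alpha\beta\alpha$ or $\eta=\beta\bar{\alpha}$ generates becomes a single $\gcd$ condition rather than a chain of word rewrites. This is cleaner than what the paper sketches, at the small cost that you must import cyclicity from Lemma~\ref{lem:lensspacefillings} before computing, whereas a direct Tietze reduction would exhibit cyclicity and the generator in one stroke.
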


\begin{proof}
Any quotient of $\Gamma_n$ by a primitive peripheral element gives the fundamental group of the corresponding Dehn filling of $M_n$.  Thus Lemma~\ref{lem:lensspacefillings} determines the possible cyclic quotients by primitive peripheral elements.  To determine the resulting generators, is a straightforward (yet tedious) exercise.   For this purpose, note that we have
 $\Gamma_n = \langle  \alpha, \beta \colon
\bar{\beta}^n = \bar{\alpha} \beta \alpha^2 \beta \bar{\alpha} \rangle$ with $\mu = \beta \alpha$ and 
$\lambda = \gamma \beta \bar{\gamma} \bar{\beta} = \alpha \beta \bar{\alpha} \beta \alpha \bar{\beta} \bar{\alpha} \bar{\beta}$.
\end{proof}

\subsection{Characters of Lens Space Fillings}

The representation of a quotient of $\Gamma_n$ also induces a representation of $\Gamma_n$.  Lemma~\ref{lemma:cyclicquotients} gives the list of cyclic quotients of $\Gamma_n$ by primitive peripheral elements of the form $\lambda^p \mu^q$ where $(p,q)=1$.  We will identify the characters of these cyclic quotients on the character variety of $\Gamma_n$.  As usual, the characters of a representation $\rho \colon \Gamma_n \to \SL_2(\C)$ are given as points $(x,y,z)$ where $x=\text{tr}(\rho(\alpha))$, $y=\text{tr}(\rho(\beta))$, and $z=\text{tr}(\rho(\alpha\beta))$.

Recall from Definition~\ref{definition:rootsofunity} that $\Zeta_n$ in the set of all $|n|^{th}$ roots of unity.  Set \[ Z(\zeta) = \mat{\zeta}{0}{0}{\zeta^{-1}}.\]

Assume $\rho$ is a representation of such a cyclic quotient of $\Gamma_n$ with $\rho(\alpha)=A(a,s)$ and $\rho(\beta)=B(b,s)$.  By Lemma~\ref{lemma:2generatorconjugation} we may assume both $A$ and $B$ are upper-triangular.    Since $\rho$ is abelian, the group relation in $\Gamma_n$ implies $B^{n+2} = I$.  Also being abelian implies $\rho(\lambda)=I$ so that we further obtain the relation $\rho(\lambda^p \mu^q) = B^q A^q = I$ in the quotient.     

 \begin{prop}\label{prop:quotientbymeridian}
For each integer $n\neq-2$ the characters of the cyclic quotient $\Gamma_n/\langle \mu \rangle \cong \Z/ (n+2)\Z$ are given by the finite set of points
$ \{(y,y,2) : y \in \Rb_{n+2} \}$.

The characters  of the cyclic quotient $\Gamma_{-2}/\langle \mu \rangle \cong \Z$ are given by the points in the line $\{(y,y,2)\} $.
\end{prop}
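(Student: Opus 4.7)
The plan is to first simplify the group $\Gamma_n/\langle \mu \rangle$ directly from the presentation, and then to parametrize all representations to $\SL_2(\C)$ of this simplified cyclic group and read off their characters as points in $(x,y,z)$-space.

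First I would kill $\mu = \beta \alpha$ in $\Gamma_n = \langle \alpha, \beta \colon \bar{\beta}^n = \bar{\alpha} \beta \alpha^2 \beta \bar{\alpha}\rangle$, which forces $\alpha = \bar{\beta}$. Substituting into the defining relation, the right-hand side $\bar{\alpha}\beta\alpha^2\beta\bar{\alpha}$ becomes $\beta \cdot \beta \cdot \bar{\beta}^2 \cdot \beta \cdot \beta = \beta^2$, so the relation collapses to $\beta^{n+2} = 1$. This identifies $\Gamma_n/\langle \mu \rangle$ with $\langle \beta \colon \beta^{n+2}\rangle$, which is $\Z/(n+2)\Z$ when $n \neq -2$ and $\Z$ when $n=-2$. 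This also establishes the isomorphism stated in the proposition.

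Next, since the quotient is cyclic, every representation is abelian and can be put in standard form by Lemma~\ref{lemma:genericreps}; for a cyclic image the single generator may further be conjugated to be either diagonal or an exceptional parabolic. In the diagonal case, write $\rho(\beta) = \mathrm{diag}(b, b^{-1})$; the relation $\alpha = \bar{\beta}$ in the quotient forces $\rho(\alpha) = \mathrm{diag}(b^{-1}, b)$. A direct computation then gives $x = \tr(\rho(\alpha)) = b+b^{-1}$, $y = \tr(\rho(\beta)) = b+b^{-1}$, and $z = \tr(\rho(\alpha\beta)) = \tr(I) = 2$, so the character is $(y,y,2)$. For $n \neq -2$ the constraint $\beta^{n+2}=1$ forces $b \in \Zeta_{n+2}$, hence $y \in \Rb_{n+2}$, yielding the finite set of points in the statement; for $n=-2$ the element $b \in \C^{\ast}$ is unconstrained, giving the full line $\{(y,y,2)\}$.

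The only loose end to address is the possibility of exceptional representations, which I would dispose of via Lemma~\ref{lemma:exceptional}: exceptional representations exist only for $n=-2$, and for such a representation of the quotient $\Z$ one has $\rho(\beta) = \pm \bigl(\begin{smallmatrix} 1 & 1 \\ 0 & 1\end{smallmatrix}\bigr)$ and $\rho(\alpha) = \rho(\beta)^{-1}$, whose traces give $(\pm 2, \pm 2, 2)$, which already lie on the line $\{(y,y,2)\}$. I do not anticipate a real obstacle here; the only mild care needed is the verification that the group-theoretic reduction $\bar{\alpha}\beta\alpha^2\beta\bar{\alpha} \rightsquigarrow \beta^2$ is carried out correctly and that the standard-form conjugation does not miss any characters, both of which are routine.
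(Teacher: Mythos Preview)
Your proposal is correct and follows essentially the same approach as the paper's proof: identify the quotient as $\langle \beta : \beta^{n+2}\rangle$ (the paper cites Lemma~\ref{lemma:cyclicquotients} for this, whereas you verify it directly), then diagonalize and read off the traces. Your explicit treatment of the exceptional parabolic case for $n=-2$ is a small addition that the paper subsumes by simply allowing $B$ to be any element of $\SL_2(\C)$.
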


\begin{proof}
By Lemma~\ref{lemma:cyclicquotients}, we have the cyclic quotients $\Gamma_n/\langle \mu \rangle = \langle \beta : \beta^{n+2} \rangle$ for all $n \in \Z$ so that $A = B^{-1}$.  Therefore $x=y$ and $z=2$.  When $n \neq -2$, $B =Z(\zeta)$ for $\zeta \in \Zeta_{n+2}$, and hence $y \in \Rb_{n+2}$.   When $n=-2$, $B$ may be any element of $\SL_2(\C)$.  
\end{proof}

Proposition~\ref{prop:quotientbymeridian} together with Lemma~\ref{prop:redreps} gives the following.

\begin{lemma} 
For each $n\in \Z$ there is a representation of $\Gamma_n$ corresponding to a lens space filling (or quotient of a lens space filling) of $M_n$ on each reducible (and therefore on each abelian) conic and line component. \qed
\end{lemma}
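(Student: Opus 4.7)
The plan is to match each component of the reducible character variety, as enumerated in Proposition~\ref{prop:redreps}, with a character of the meridional cyclic quotient identified in Proposition~\ref{prop:quotientbymeridian}. Since Lemma~\ref{lem:lensspacefillings} tells us that $W(-(n+2),\infty) = L(n+2,1)$ is always a lens space filling of $M_n$ (allowing the degenerate cases $S^3$, $\RP^3$, and $S^2\times S^1$), and since by Lemma~\ref{lemma:cyclicquotients} its fundamental group is exactly the quotient $\Gamma_n/\langle\mu\rangle$, this is enough to produce the required representations for every $n$.

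First, suppose $n\neq -2$. By Proposition~\ref{prop:redreps}, the reducible character variety decomposes as a union of conics and lines indexed by $y_0\in\Rb_{n+2}$: for each such $y_0$ we have the component
\[ C_{y_0} = \{(x,y_0,z)\in\A^3 : x^2+y_0^2+z^2-xy_0z=4\}. \]
By Proposition~\ref{prop:quotientbymeridian}, the characters of the cyclic quotient $\Gamma_n/\langle\mu\rangle\cong\Z/(n+2)\Z$ are precisely the finite set of points $\{(y_0,y_0,2) : y_0\in\Rb_{n+2}\}$. The sole remaining step is a trivial substitution: setting $(x,y,z)=(y_0,y_0,2)$ gives
\[ y_0^2+y_0^2+4 - 2y_0^2 = 4, \]
so the point $(y_0,y_0,2)$ lies on $C_{y_0}$. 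Thus each conic/line component of $\tilde{X}_{red}(\Gamma_n)$ contains the character of a representation factoring through $\pi_1(L(n+2,1))$.

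For the exceptional case $n=-2$, Proposition~\ref{prop:redreps} asserts that $\tilde{X}_{red}(\Gamma_{-2})$ is the single surface $x^2+y^2+z^2-xyz=4$, and Proposition~\ref{prop:quotientbymeridian} identifies the characters of $\Gamma_{-2}/\langle\mu\rangle\cong\Z$ as the entire line $\{(y,y,2)\}$, which again satisfies the defining equation by the same substitution. Although $W(0,\infty)=S^2\times S^1$ is not itself a lens space, every further cyclic quotient $\Z\to\Z/m\Z$ corresponds to an additional Dehn filling yielding a lens space, so these characters arise from quotients of a lens space filling in the sense of the statement.

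The argument is essentially a bookkeeping match between two prior results, so there is no real obstacle; the only point requiring care is to observe that the indexing of the components in Proposition~\ref{prop:redreps} by $y_0\in\Rb_{n+2}$ is compatible with the indexing in Proposition~\ref{prop:quotientbymeridian}, which is immediate from the fact that both arise from the same constraint $b^{n+2}=1$ on the diagonal entry of $\rho(\beta)$.
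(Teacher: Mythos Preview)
Your proof is correct and follows exactly the approach the paper intends: the lemma is stated with a \qed and the text immediately preceding it says it follows from Proposition~\ref{prop:quotientbymeridian} together with Proposition~\ref{prop:redreps}, which is precisely the match you carry out. Your verification that $(y_0,y_0,2)$ satisfies the defining equation of $C_{y_0}$ is the only computation needed, and you handle the $n=-2$ case with appropriate care.
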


 Lemma~\ref{lemma:cyclicquotients} also demonstrates that there are additional cyclic quotients when $n\in \{ -1,0,1,2,3,5\}$.   We determine the characters of these.
 \begin{prop}\
 \begin{itemize}
 \item The characters of $\Gamma_{-1}/\langle \lambda^{k}\mu^{6k-1}\rangle   \cong \Z/ (6k-1)\Z$, $k \in \Z$, are given by the points $\{(x,2,x) : x \in \Rb_{18k-3}\}$.
 
 \item The characters of $\Gamma_{0}/\langle \lambda^{k}\mu^{4k-1}\rangle  \cong \Z/ (8k-2)\Z$, $k \in \Z$, are given by the points 
 $\{(x,2,x) : x \in \Rb_{4k-1}\} \cup \{(x,-2,x) : x \in \Rb_{8k-2}-\Rb_{4k-1}\}$.
 
 \item The characters of $\Gamma_{1}/\langle \lambda^{k}\mu^{3k-1}\rangle  \cong \Z/ (9k-3)\Z$, $k \in \Z$, are given by the points 
$\{ (2\Re(\zeta),2\Re(\zeta^{3k-1}),2\Re(\zeta^{3k}) ) : \zeta \in \Zeta_{9k-3} \}$.

 \item The characters of $\Gamma_{2}/\langle \lambda\mu^3\rangle \cong \Z/ 12\Z$ are given by the points 
$\{ (2\Re(\zeta),2\Re(\zeta^3),2\Re(\zeta^4)) : \zeta \in \Zeta_{12}\}$.

 \item The characters of  $\Gamma_{3}/\langle \lambda\mu\rangle \cong \Z/ 5\Z$ are given by the points 
 $\{(x,x,2) : x \in \Rb_5 \}$
 
  \item The characters of $\Gamma_{3}/\langle \lambda\mu^2\rangle  \cong \Z/10\Z$ are given by the points 
  $\{(x,x,2) : x \in \Rb_5 \} \cup \{(x,-x,-2) : x \in \Rb_{10}-\Rb_5\}$.

  \item The characters of $\Gamma_{5}/\langle \lambda\mu\rangle \cong \Z/7\Z$ are given by the points 
  $\{(x,x,2) : x \in \Rb_7 \}$
  
 \end{itemize}

 For $n=3$ and $n=5$, the characters of the quotient by $\lambda \mu$ are the same as the characters of the quotient by $\mu$.

\end{prop}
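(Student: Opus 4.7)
The plan is to proceed case by case, exploiting that each cyclic quotient listed in Lemma~\ref{lemma:cyclicquotients} is abelian, so any representation to $\SL_2(\C)$ is abelian. By Lemma~\ref{lemma:2generatorconjugation} and the discussion in Proposition~\ref{prop:redreps}, any abelian representation is conjugate to a diagonal one (or has the same character as one), so we may assume $\rho(\alpha) = Z(a)$ and $\rho(\beta) = Z(b)$ for some $a, b \in \C^*$. This gives $x = a + a^{-1}$, $y = b + b^{-1}$, and $z = ab + a^{-1}b^{-1}$.

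For each of the seven quotients, Lemma~\ref{lemma:cyclicquotients} selects a generator $g$ and cyclic order $N$, along with enough information to express $\alpha$ and $\beta$ as specific powers of $g$ in the quotient. Taking $\rho(g) = Z(\zeta)$ with $\zeta$ ranging over $\Zeta_N$, we then obtain $\rho(\alpha) = Z(\zeta^p)$ and $\rho(\beta) = Z(\zeta^q)$ for some exponents $p, q \in \Z/N\Z$, so $(x, y, z) = (\zeta^p + \zeta^{-p},\ \zeta^q + \zeta^{-q},\ \zeta^{p+q} + \zeta^{-(p+q)})$. Reading off $p$ and $q$ from the presentations yields the listed characters; for example, for $\Gamma_2/\langle\lambda\mu^3\rangle$ the presentation $\langle \alpha \mid \alpha^{12}\rangle$ with $\beta = \alpha^3$ gives $p = 1$, $q = 3$, $N = 12$. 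For the cases where Lemma~\ref{lemma:cyclicquotients} uses an auxiliary generator $\eta$ (namely $\Gamma_{-1}/\langle\lambda^k\mu^{6k-1}\rangle$ with $\eta = \alpha\beta\alpha\beta\alpha$ and $\Gamma_5/\langle\lambda\mu\rangle$ with $\eta = \beta\bar{\alpha}$), I would first use the abelianization $\Gamma_n^{ab}$ from Proposition~\ref{prop:redreps} to re-express $\alpha$ and $\beta$ as powers of $\eta$ in the quotient (for instance, in the $\Gamma_{-1}$ case $\beta$ is trivial in the abelianization so $\eta = \alpha^3$, and inverting $3$ modulo $6k-1$ gives $\alpha = \eta^{2k}$).

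The last assertion, that for $n = 3$ and $n = 5$ the characters of the quotient by $\lambda\mu$ coincide with those of the quotient by $\mu$, follows from the observation that $\lambda$ has zero exponent sum in both generators and therefore lies in the kernel of every abelian representation. Hence $\rho(\lambda\mu) = \rho(\mu)$, so on abelian representations killing $\lambda\mu$ is equivalent to killing $\mu$, and the characters match those given by Proposition~\ref{prop:quotientbymeridian}.

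The main obstacle is not conceptual but bookkeeping: for each quotient I must correctly express $\alpha$ and $\beta$ as powers of the chosen generator, attending to exponents modulo $N$. The $k$-parameterized cases for $\Gamma_{-1}, \Gamma_0, \Gamma_1$ are the most delicate, since the exponents involve inverses modulo $6k-1$, $8k-2$, and $9k-3$; once these are fixed the trace formula is immediate, and the splittings into subsets of $\Rb_N$ (as in the $\Gamma_0$ and $\Gamma_3$ (at $\lambda\mu^2$) cases, where the image of $\beta$ is $\pm I$ according to whether $\zeta^q = 1$ or $-1$) merely reflect when the relevant power of $\zeta$ lies in $\Zeta_{N/2}$ versus $\Zeta_N \setminus \Zeta_{N/2}$.
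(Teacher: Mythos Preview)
Your approach is essentially the paper's: reduce to diagonal representations, parameterize by a root of unity of the appropriate cyclic order, and read off $(x,y,z)$. The paper does exactly this case by case, sometimes parameterizing by $A$ and sometimes by $B$ depending on which of $\alpha,\beta$ is most directly constrained, but the substance is identical to your plan, including the final observation that $\rho(\lambda)=I$ forces the $\lambda\mu$ and $\mu$ quotients to have the same abelian characters.

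There is one point where your plan will diverge from the paper, and it is worth flagging. For $\Gamma_{-1}$ your method gives $\alpha=\eta^{2k}$ in $\Z/(6k-1)\Z$, so $\rho(\alpha)=Z(\zeta^{2k})$ with $\zeta\in\Zeta_{6k-1}$, yielding $x\in\Rb_{6k-1}$ --- not the $\Rb_{18k-3}$ asserted in the proposition. The paper obtains $\Rb_{18k-3}$ by arguing only that $A^3=\rho(\eta)$ lies in $\Zeta_{6k-1}$ and hence $A\in\Zeta_{18k-3}$; it does not separately impose the relation $\rho(\lambda^k\mu^{6k-1})=A^{6k-1}=I$ (which holds since $B=I$ here and $\rho(\lambda)=I$). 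Your route through the abelianization automatically enforces that constraint, so carrying out your plan literally you will land on the strictly smaller set $\Rb_{6k-1}$. Note the discrepancy rather than trying to force agreement with the statement as written.
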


\begin{proof}

For $\Gamma_{-1}/\langle \lambda^k \mu^{6k-1} \rangle \cong \Z/(6k-1)\Z$,
since $n=-1$ we have $B = I$.   Lemma~\ref{lemma:cyclicquotients} then implies $\rho(\eta) = A^3 = Z(\zeta')$ for $\zeta' \in \Zeta_{6k-1}$ so that $A = Z(\zeta)$ for $\zeta \in \Zeta_{18k-3}$ .  Thus $y=2$ and $x=z\in \Rb_{18k-3}$.

\medskip

For $\Gamma_{0}/\langle \lambda^k \mu^{4k-1} \rangle$, since $n=0$ we have $B^2 = I$.  Lemma~\ref{lemma:cyclicquotients} implies $A = Z(\zeta)$ for $\zeta \in \Zeta_{8k-2}$  so that $x \in \Rb_{8k-2}$.  We also must have $B = A^{4k-1}$. So if $B=I$ then $x \in \Rb_{4k-1}$, $y=2$, and $z=x$.  But if $B=-I$ then $x \in \Rb_{8k-2}-\Rb_{4k-1}$, $y=-2$, and $z=x$.

\medskip

For $\Gamma_{1}/\langle \lambda^k \mu^{3k-1} \rangle$, since $n=1$ we have $B^3 = I$.  Lemma~\ref{lemma:cyclicquotients} implies $A = Z(\zeta)$ for $\zeta \in \Z_{9k-3}$ so that $x \in \Rb_{9k-3}$.  We also must have $B=A^{3k-1} = Z(\zeta^{3k-1})$ and $AB =Z(\zeta^{3k})$.  
Thus $x = 2\Re(\zeta)$, $y =2\Re(\zeta^{3k-1})$, and $z = 2\Re(\zeta^{3k})$ where $\zeta \in \Zeta_{9k-3}$.
In particular  if $B=I$ then $\zeta \in \Zeta_{3k-1}$ and $x\in\Rb_{3k-1}$, $y=2$, and $z=x$.

\medskip

For $\Gamma_2/\langle \lambda \mu^3 \rangle$,  Lemma~\ref{lemma:cyclicquotients} implies $A=Z(\zeta)$ for $\zeta \in \Zeta_{12}$, $B = A^3 = Z(\zeta^3)$, and $AB = Z(\zeta^4)$.  Thus $x = 2\Re(\zeta)$, $y =  2\Re(\zeta^3)$, and  $z =2\Re(\zeta^4)$.

\medskip

For $\Gamma_3/\langle \lambda \mu \rangle$,  Lemma~\ref{lemma:cyclicquotients} implies $B=Z(\zeta)$ for $\zeta \in \Zeta_5$, $A = B^4 = B^{-1}$, and $AB = I$.  Thus $x=y \in \Rb_5$ and  $z =2$, the same characters as the quotient by $\mu$. 

\medskip

For $\Gamma_3/\langle \lambda \mu^2 \rangle$, Lemma~\ref{lemma:cyclicquotients} implies $A=Z(\zeta)$ for $\zeta \in \Zeta_{10}$, $B = A^4 = Z(\zeta^4)$, and $AB = Z(\zeta^5)=\pm I$.  Thus $x = 2\Re(\zeta)$, $y =  2\Re(\zeta^4)$, and  $z =2\Re(\zeta^5)$ for $\zeta \in \Zeta_{10}$.
Hence if $AB=I$ then $\zeta\in\Zeta_5$ and $x \in \Rb_5$, $y=x$, and $z=2$.  If $AB=-I$, then $\zeta \in \Zeta_{10}-\Zeta_5$ and $x \in \Rb_{10}-\Rb_5$, $y=-x$, and $z=-2$.

\medskip

Finally, for $\Gamma_5/\langle \lambda \mu \rangle$ we have $n=5$ so that $B^7 = I$ implying $B = Z(\zeta)$ for $\zeta \in \Zeta_7$.   Since $AB=1$, Lemma~\ref{lemma:cyclicquotients} then implies $\rho(\eta) =BA^{-1} = B^2 = Z(\zeta^2) $ giving no further relations.  Thus $x=y \in \Rb_7$ and $z = 2$,  the same characters as the quotient by $\mu$. 
\end{proof}

%
%
\section{The Intersection of the Reducible and Irreducible  Components}\label{section:intersections}
%
%

\begin{lemma}  
If $n\neq -2$, the intersection of the reducible components and the variety $C$ is the set of points
\[\{(\epsilon y, y, 2\epsilon ) : y \in \Rb_{2(n+2)}^{fib}, \epsilon = \pm1 \}.\]
with the points
 \[ \{(-\sqrt{2-n}, 2, -\sqrt{2-n}), (\sqrt{2-n}, 2, \sqrt{2-n})\}\]
and, if $n$ is even, the points 
\[ \{(\sqrt{2-n},-2,-\sqrt{2-n}), (-\sqrt{2-n},-2, \sqrt{2-n})\}. \] 
Therefore each conic or line of reducible (abelian) representations intersects the irreducible component twice.  

When $|n|>2$, each of these intersections occurs on the canonical component $X_0(\Gamma_n)$ except when $n\equiv 2 \pmod 4$.  
In that case all points  are on the canonical component,  except $(0,0,\pm 2)$ which are on the line $x=y=0$. 
\end{lemma}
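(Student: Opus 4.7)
The plan is to intersect the explicit defining equations of the two sets directly. By Proposition~\ref{prop:redreps} the reducible characters satisfy $x^2+y^2+z^2-xyz=4$ with $y\in\Rb_{n+2}$ (for $n\neq -2$), and by Proposition~\ref{prop:irreps} the variety $C=X(\Gamma_n)$ is cut out by the polynomials $\varphi_1,\varphi_2,\varphi_3$ of Definition~\ref{definition:phis}. I would split into the generic case $y\in\Rb_{n+2}^{fib}$ and the two boundary cases $y=\pm 2$ (with $y=-2$ occurring only when $n$ is even, since $-2\in\Rb_{n+2}$ requires $n+2$ to be even).

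In the generic case I would write $y=b+b^{-1}$ with $b^{n+2}=1$ and $b\neq\pm 1$, so that the closed form $f_k(y)=(b^k-b^{-k})/(b-b^{-1})$ immediately gives $f_{n+1}(y)=-1$, $f_n(y)=-y$, and $f_{n-1}(y)=1-y^2$. Substituting these into $\varphi_1,\varphi_2,\varphi_3$ collapses them to $x^2=y^2$, $xz=2y$, and $yz=2x$, which (for $y\neq 0$) have the two solutions $x=\epsilon y$, $z=2\epsilon$ with $\epsilon=\pm 1$. A direct check confirms that such points satisfy $x^2+y^2+z^2-xyz=4$, so each reducible conic meets $C$ in exactly two points.

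For the boundary cases I would use $f_k(2)=k$ and $f_k(-2)=(-1)^{k+1}k$ from Lemma~\ref{lemma:fkvals}. At $y=2$ the equations $\varphi_i=0$ become $x^2=2-n$, $xz=2-n$, and $n(x-z)=0$, which for $|n|>2$ force $x=z=\pm\sqrt{2-n}$. At $y=-2$ (only possible when $n$ is even) the analogous simplification gives $x^2=2-n$, $xz=n-2$, and $n(x+z)=0$, yielding $x=-z=\pm\sqrt{2-n}$. In each case the line component of reducible characters again meets $C$ in exactly two points, matching the two lines $x=z$ and $x=-z$ arising from $(x\mp z)^2=0$ in Lemma~\ref{lemma:redchars}.

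The final claim about the canonical component would follow from Theorem~\ref{thm:mainsummary1} together with Proposition~\ref{prop:xyzcharvar}. For $|n|>2$ we have $X_0(\Gamma_n)=D$, and the only ``extra'' component of $C$ is the line $L=\{(0,0,z)\}$, which exists precisely when $n\equiv 2\pmod 4$. A point of the form $(\epsilon y,y,2\epsilon)$ lies on $L$ only if $y=0$, and $0\in\Rb_{n+2}$ iff $4\mid n+2$, i.e.\ $n\equiv 2\pmod 4$; in that case the formula specializes to $(0,0,\pm 2)$, which lie on $L$ by Proposition~\ref{prop:extraline}. All remaining intersection points have $(x,y)\neq(0,0)$ and so lie on $D=X_0(\Gamma_n)$ by the explicit parametrization in Proposition~\ref{prop:xyzcharvar}. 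The main (quite mild) obstacle is carefully accounting for this $n\equiv 2\pmod 4$ case so that no intersection point is mis-assigned, and verifying that the two points $(0,0,\pm 2)$ are genuinely on $L$ rather than on $D$.
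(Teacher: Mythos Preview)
Your argument is correct and follows the same overall structure as the paper's proof: split according to whether $y\in\Rb_{n+2}^{fib}$ or $y=\pm 2$, reduce the $\varphi_i$ using the known values of the Fibonacci polynomials, and then locate the points on $D$ versus $L$ using Proposition~\ref{prop:xyzcharvar}.

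There is one genuine simplification in your treatment of the generic case. You use $b^{n+2}=1$ directly to compute $f_{n+1}(y)=-1$, $f_n(y)=-y$, $f_{n-1}(y)=1-y^2$, after which $\varphi_1,\varphi_2,\varphi_3$ collapse to $x^2=y^2$, $xz=2y$, $yz=2x$ and the answer falls out immediately. The paper instead only records $f_{n+2}(y)=0$, carries $f_{n+1}(y)$ as an unknown, and then argues via the reducible equation that $z\in\{0,\pm2\}$, eliminates $z=0$, and only at the end recovers $f_{n+1}(y)=-1$. Your route is shorter and avoids the separate $y=0$ sub-case analysis that the paper performs; since $y\in\Rb_{n+2}^{fib}$ already forces $b^{n+2}=1$, the value $f_{n+1}(y)=-1$ is available from the start and the paper's extra case $f_{n+1}(0)=+1$ never actually occurs.

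Two small points to tighten. First, in the $y=\pm 2$ cases you invoke $|n|>2$ to conclude $x=\pm z$ from $n(x\mp z)=0$, but the lemma is stated for all $n\neq -2$; it is cleaner to combine $\varphi_1$ and $\varphi_2$ to get $x(z\mp x)=0$, which gives $x=\pm z$ (or $x=0$, forcing $n=2$) without appealing to $n\neq 0$. Second, you correctly flag that one must check $(0,0,\pm 2)\notin D$, but you do not close this. The paper does so by invoking Proposition~\ref{prop:extraline3}, which identifies $L\cap D=\{(0,0,\pm z_0)\}$ with $z_0=2\sqrt{\tfrac12-\tfrac1n}$; since $z_0^2=2-\tfrac4n\neq 4$ for $|n|>2$, the points $(0,0,\pm 2)$ lie on $L$ but not on $D$.
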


\begin{proof}
By Lemma~\ref{lemma:redchars}, when $n\neq -2$ the reducible components $\tilde{X}_{red}(\Gamma_n)$ are defined by $y\in \Rb_{n+2}$   and the equation $x^2+y^2+z^2-xyz=4$.  
By Proposition~\ref{prop:irreps} the irreducible component $X(\Gamma_n)$ is given by the variety $C$ from Definition~\ref{definition:phis}.
Assume $(x,y,z)$ is in the intersection of these components.

Recall $\Rb_N^{fib} = \Rb_N - \{ \pm 2\}$.  First we consider $y\neq \pm 2$. Then by Lemma~\ref{lemma:fibroots} $f_{n+2}(y) =0$ from with the Fibonacci recurrence gives $f_n(y) = y f_{n+1}(y)$ and $f_{n-1}(y) = (y^2-1)f_{n+1}(y)$.
  Therefore, 
\begin{align*}  
\varphi_1(x,y,z) & = x^2-1+f_{n-1}(y) = x^2-1+(y^2-1)f_{n+1}(y) \\
\varphi_2(x,y,z) & =  zx-y+f_n(y)  = zx-y+y f_{n+1}(y)\\
\varphi_3(x,y,z) & = x(f_{n+1}(y)-1)-zf_n(y) = -x+(x-zy)f_{n+1}(y)
\end{align*}
If $y=0$ then $f_{n+1}(0) =0$ or $\pm1$, by Lemma~\ref{lemma:fkvals}.  First consider the case when $f_{n+1}(0)=0$.  By the Fibonacci recursion, this can only occur  when $n$ is odd.   However, $\pm i$ is not a $2(n+2)$nd root of unity in this case, and therefore $y\neq 0$.  

Now, assume that $f_{n+1}(0)=\pm1$, so that $x^2= 1\pm 1$ by $\varphi_1$, $zx=0$ by $\varphi_2$, and $x = \pm x$ by $\varphi_3$.  If $\pm = +$, then $x^2 =2$ and $z=0$, but this contradicts that $x^2 + y^2 + z^2 -xyz=4$.  Hence $x=0$ and $z= \pm2$.

If $y\neq0$ then $\varphi_2$ implies $y\varphi_3 = -xy + (x-zy)(y-zx) = -z(x^2+y^2-xyz).$  Thus, using $x^2+y^2+z^2-xyz=4$, either $z=0$ or $z = \pm2$.
If $z=0$ then $x^2+y^2 = 4$ but also $\varphi_2$ implies $f_{n+1}(y) = 1$ so that $\varphi_1$ implies $x^2 + y^2 = 2$, a contradiction.  Hence $z=\epsilon 2$ where $\epsilon = \pm1$.  Thus $x^2- \epsilon2xy+y^2 = 0$ from which we conclude $x=\epsilon y$.  Then $\varphi_2 = y(1+f_{n+1}(y))$ so that $f_{n+1}(y) = -1$, satisfying $\varphi_1$ and $\varphi_3$.  Thus, together with the points from the case $y=0$, we have the points
\[\{(\epsilon y, y, 2\epsilon ) : y \in \Rb_{2(n+2)}^{fib}, \epsilon = \pm1 \}.\]

If $y=2$, then as $f_k(2)=k$ by Lemma~\ref{lemma:fkvals}, at the point $(x,2,z)$, 
\[  \varphi_1 = x^2+n-2, \ \varphi_2=  zx+n-2, \ \text{ and } \  \varphi_3  = n(x-z). \]
This corresponds to the points $(-\sqrt{2-n}, 2, -\sqrt{2-n})$ and $(\sqrt{2-n}, 2, \sqrt{2-n})$.

If $y=-2$, then $n$ must be even to have $-2 \in \Rb_{2(n+2)}$.  As $f_k(-2)=k(-1)^{k+1}$ by Lemma~\ref{lemma:fkvals},  at the point $(x,-2,z)$, 
\[  \varphi_1 = x^2+n-2, \ \varphi_2  =  zx-n+2, \ \text{ and } \  \varphi_3  = n(x+z).\]
This corresponds to the points $(\sqrt{2-n},-2,-\sqrt{2-n})$ and $(-\sqrt{2-n},-2, \sqrt{2-n})$.

When $|n|>2$, these points are all on the canonical component unless perhaps $x=y=0$, as $C$ corresponds to the canonical component except when $n\equiv 2 \pmod 4$ in which case there is the additional line determined by $x=y=0$ by Proposition~\ref{prop:extraline}.  The points $(0,0,\pm 2)$ are of this form.  By Proposition~\ref{prop:extraline3} they are not on $X_0$.  
\end{proof}

Schematic diagrams of the  components of $\tilde{X}(M_n)$ and the intersections of the various components are shown in Figure~\ref{fig:schematic} and Figure~\ref{fig:schematic2}.

\begin{figure}
\begin{center}
\includegraphics[width=5in]{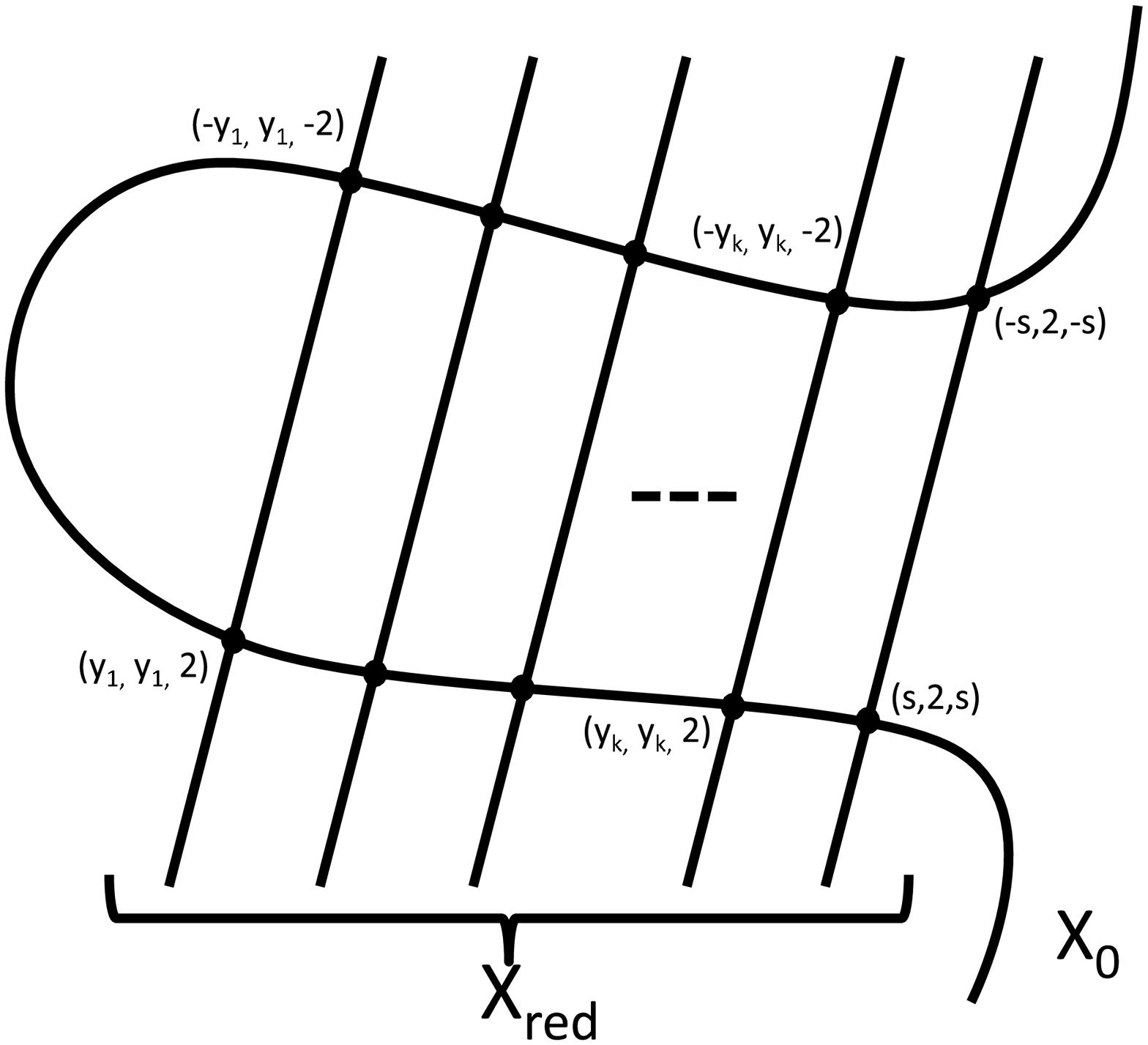}
\end{center}
\caption{A schematic of the  intersection of $X_0(\Gamma_n)$ and $X_{red}(\Gamma_n)$ when $n$ is odd. The $y_k$ are elements of $\Rb_{2(n+2)}^{fib}$ and $s=\sqrt{2-n}$.}
\label{fig:schematic}
\end{figure}

\begin{figure}
\begin{center}
\includegraphics[width=5in]{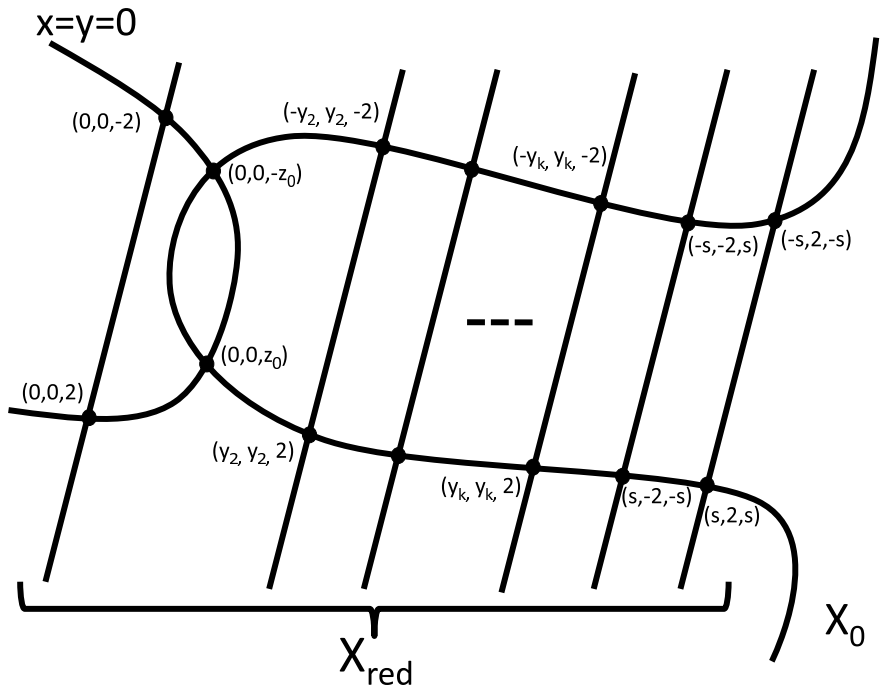}
\end{center}
\caption{A schematic of the  intersection of $X_0(\Gamma_n)$ and $X_{red}(\Gamma_n)$ when $n\equiv 2 \pmod 4$ with $z_0=2\sqrt{\tfrac12-\tfrac1n}$ (as discussed in Proposition~\ref{prop:extraline3}).}
\label{fig:schematic2}
\end{figure}

\subsection{Representations Corresponding to Intersections}

For all integers $n$ such that $|n|>2$ 
we now describe representations corresponding to the intersection points. That is, we determine the intersections in the hyperbolic cases.  For the intersection points of $X_{red}(\Gamma_n)$ and $X(\Gamma_n)$ we describe the associated diagonal representation.  For the points $(y,y,2)$ where $y\in \Rb_{n+2}^{fib}$, these correspond to the cyclic quotients by $\langle \mu \rangle$, to $\Z/(n+2)\Z$ and its quotients (except the trivial quotient and $\Z/2\Z$ quotient, when $n$ is even).   (When $n\equiv 2 \pmod 4$ the representations with $x=y=0$ and $z=\pm 2$ are not on the canonical component, but are on the line $x=y=0$. These correspond to faithful  representations of $\Z/4\Z$.)
When $n=3$ these also correspond to the quotients by $\lambda \mu$ and $\lambda \mu^2$.  When $n=5$ these correspond also to the quotient by $\lambda \mu$.  

The intersection points $(y,-y,-2)$ where $y\in \Rb_{n+2}^{fib}$ correspond to representations where $\rho(\alpha)=-\rho(\beta)$ and $\rho(\alpha^{n+2})=\rho(\beta^{n+2})=I$.  These are representations of $\Z/(n+2)\Z $ and its quotients. When $n=3$ these also correspond to the quotient by $\langle \lambda \mu^2 \rangle$.  The  $(y,-y,2)$ points correspond  to faithful representations of $\Z/10 \Z$ and the $(y,y,2)$ points correspond to faithful representations of $\Z/5\Z$.

Next, consider the points $(\sqrt{2-n},2,\sqrt{2-n})$ and $(-\sqrt{2-n}, 2, -\sqrt{2-n})$ which lie on the line $x=z, y=2$.
These correspond to diagonal representations where $\rho(\beta)=I$ and are faithful representations of $\Z$, generated by $\rho(\alpha)$.  Note that the characters of the trivial representation and of the representation $\alpha \mapsto -I$, $\beta \mapsto I$ are on this line, but are not on the canonical component.

When $n$ is even, the line $x=-z,y=-2$ 
intersects the canonical component in the points $(\sqrt{2-n},-2,-\sqrt{2-n})$ and $(-\sqrt{2-n},-2,\sqrt{2-n})$.  The corresponding diagonal representation is defined by $\beta \mapsto -I$, (so $z=-x$) and $\rho(\alpha)$ is free.  It is elementary to verify that the representations at the intersection points are faithful representations of $\Z$. 
Note that the representation,  $\alpha \mapsto I$, $\beta \mapsto -I$ as well as $\alpha \mapsto -I$, $\beta \mapsto -I$  are on this component, but not on the canonical component.   

As discussed in  Proposition~\ref{prop:extraline3}, in the case when $n\equiv 2 \pmod 4$, the intersection of the line $x=y=0$ with the canonical component consists of the points $(0,0, z_0)$ and $(0,0,-z_0)$  where $z_0 = 2\sqrt{\tfrac{1}{2}-\tfrac{1}{n}}$.  These correspond to faithful representations of $\langle \alpha, \beta : \alpha^2, \beta^2, \alpha^4 \rangle$. (These are not diagonal representations.)

%
%
\section{Symmetries of $M_n$}\label{section:symmetries}
%
%
We give explicit descriptions of two involutions, called $\spin$ and $\flip$, of our once-punctured torus bundles $M_n$ and their actions upon the fundamental group $\Gamma_n$.  Lemma~\ref{lem:symmetries} shows that these two involutions generate the symmetry group of $M_n$ for $|n|>3$.  Then Proposition~\ref{prop:action} shows how they each act on the character variety $X(\Gamma_n)$.   When $|n|=3$, there is also an orientation reversing diffeomorphism of order $4$ whose square is $\spin$.

\begin{lemma}\label{lem:symmetries}
For $|n|>3$, the group $\pi_0\, {\rm diff}(M_n)$ of diffeomorphisms of the manifolds $M_n$ modulo isotopy is $\Z/2\Z \times \Z/2\Z$.  When $n=-3$, $M_{-3}$ is the figure eight knot exterior and this group is the dihedral group $D_4$.  When $n=3$, $M_{-3}$ is the sister of the figure eight knot exterior and this group is $\Z/2\Z \times \Z/4\Z$.
\end{lemma}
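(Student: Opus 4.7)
The plan is to combine Mostow rigidity with a mapping-class-group calculation on the fiber. Since $M_n$ is hyperbolic for $|n|>2$ by Lemma~\ref{lem:geometry}, Mostow rigidity gives $\pi_0\,{\rm diff}(M_n) \cong \mathrm{Isom}(M_n)$, a finite group. For $n \neq -2$, Proposition~\ref{prop:redreps} yields $H_1(M_n;\Z) \cong \Z \oplus \Z/(n+2)\Z$ of rank one, so the Thurston norm ball in $H^1(M_n;\R)$ is one-dimensional and the fibration $M_n \to S^1$ with once-punctured torus fiber $T$ is unique up to isotopy. Hence every self-diffeomorphism of $M_n$ preserves this fibration up to isotopy, and the symmetry group injects into the set of pairs $(h,\epsilon) \in \mathrm{MCG}^\pm(T) \times \{\pm 1\}$ satisfying $h \phi_n h^{-1} = \phi_n^\epsilon$ in $\mathrm{MCG}(T)$, modulo the equivalence $(h,\epsilon) \sim (h \phi_n^k, \epsilon)$ coming from shifting the zero-section of the fibration.

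Via the faithful action $\mathrm{MCG}^\pm(T) \cong \mathrm{GL}_2(\Z)$ on $H_1(T)$ and the expression $[\phi_n] = \mat{-(n+1)}{1}{-(n+2)}{1}$ from Lemma~\ref{lem:geometry}, this becomes a linear computation. For $|\tr [\phi_n]| = |n|>2$ the matrix is hyperbolic, its centralizer in $\mathrm{GL}_2(\Z)$ is $\langle [\phi_n] \rangle \times \{\pm I\}$ provided $[\phi_n]$ is primitive in this centralizer, and its normalizer is a $\Z/2\Z$-extension obtained by adjoining an involutory matrix that conjugates $[\phi_n]$ to $[\phi_n]^{-1}$ (for instance, a suitable off-diagonal swap). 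Quotienting by $\langle [\phi_n] \rangle$ yields a group of order four for $|n|>3$, and I would realize the two commuting $\Z/2\Z$-generators concretely as $\spin$ and $\flip$: $\spin$ lifts the hyperelliptic involution of $T$ (which acts as $-I$ on $H_1(T)$ and commutes with every mapping class), while $\flip$ reverses the $S^1$-direction of the fibration and applies an orientation-preserving involution of $T$ that conjugates $\tau_b, \tau_c$ to $\tau_b^{-1}, \tau_c^{-1}$ and hence $\phi_n$ to $\phi_n^{-1}$.

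For the exceptional cases $n = \pm 3$, by Remark~\ref{remark:someoftheMs} $M_{-3}$ is the figure eight knot exterior and $M_3$ is its sister, both arithmetic hyperbolic manifolds with well-documented isometry groups of order eight, namely $D_4$ and $\Z/2\Z \times \Z/4\Z$ respectively; one cites these from the literature (or reads them from the census via SnapPy). The extra symmetries here correspond to the amphichirality of the figure eight knot and an analogous coincidence for the sister, reflected in the fact that the centralizer of $[\phi_{\pm 3}]$ in $\mathrm{GL}_2(\Z)$ is larger than $\langle [\phi_{\pm 3}] \rangle \times \{\pm I\}$ because $[\phi_{\pm 3}]$ fails to be primitive in the unit group of its order. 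The main obstacle is the lifting step: one must verify that each element of the algebraically-computed quotient actually lifts to a genuine self-diffeomorphism of $M_n$, not merely a self-equivalence of $\Gamma_n$, and in particular respects the peripheral structure. Since $M_n$ has a single cusp for $|n|>2$, this reduces to checking the induced action on the boundary torus, which can be done explicitly using the meridian $\mu = \beta\alpha$ and longitude $\lambda$ identified in Lemma~\ref{lemma:boundarypi1}.
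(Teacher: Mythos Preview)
Your overall strategy matches the paper's: reduce to a normalizer computation for the monodromy in the mapping class group of the fiber. The paper, however, does not invoke Mostow rigidity or the Thurston norm; it cites \cite{FH} directly for the statement that every diffeomorphism of a once-punctured torus bundle is isotopic to a fiber-preserving linear one, giving $\pi_0\,\mathrm{diff}(M_n) \cong N\langle[\phi_n]\rangle/\langle[\phi_n]\rangle$ with the normalizer taken in $\SL_2(\Z)$.

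The substantive difference is in how the normalizer is actually computed. You assert that for $|n|>3$ the centralizer of $[\phi_n]$ in $\GL_2(\Z)$ is $\langle[\phi_n]\rangle\times\{\pm I\}$ ``provided $[\phi_n]$ is primitive,'' but you never verify this primitivity, and that verification is the content of the lemma. The paper sidesteps any unit-group analysis in a real quadratic order by using the Farey-tessellation picture from \cite{FH}: the conjugacy class of $[\phi_n]$ determines a periodic strip $\Sigma_{\phi_n}$ of Farey triangles, and $N\langle[\phi_n]\rangle/\langle[\phi_n],\pm I\rangle$ is exactly the symmetry group of the triangulated cylinder $\Sigma_{\phi_n}/\phi_n$. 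For $|n|>3$ one reads off by inspection that this cylinder has only its single reflection symmetry, while for $|n|=3$ the cylinder consists of just two triangles and acquires an additional involution exchanging its two boundary circles. The paper then resolves your ``lifting obstacle'' exactly as you propose, by exhibiting $\spin$ and $\flip$ explicitly via pictures, and for $|n|=3$ an orientation-reversing order-four map $r$ with $r^2=\spin$; the commutation relation between $r$ and $\flip$ is checked directly to distinguish $D_4$ (for $n=-3$) from $\Z/2\Z\times\Z/4\Z$ (for $n=3$), rather than simply citing the census.
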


\begin{proof}
The second paragraph of page 268 of \cite{FH} describes a method for determining the group $\pi_0\, {\rm diff}(M_n)$ based on the fact that any diffeomorphism of $M_n$ is isotopic to one that is fiber preserving and linear on each fiber.   Therefore $\pi_0\, {\rm diff}(M_n)$ is the quotient $N\langle [\phi] \rangle/\langle [\phi] \rangle$ where $\langle [\phi] \rangle$ is the subgroup of $\SL_2(\Z)$ generated by the monodromy matrix $[\phi]$ and $N\langle [\phi] \rangle$ is its normalizer in $\SL_2(\Z)$.  Furthermore, the $\Z/2\Z$--quotient $N\langle [\phi] \rangle/\langle [\phi], \pm I \rangle$ can be viewed in terms of the Farey Diagram of $\PSL_2(\Z)$ in the hyperbolic plane $\H$.  

When $|n|>2$ so that $M_n$ is hyperbolic and its monodromy $\phi$ is pseudo-Anosov, $\phi$ acts on $\H$ by translation and thus defines a periodic strip $\Sigma_\phi$ (shown in Figure~\ref{fig:strip}) of triangles in the Farey Diagram, as described at the bottom of page 266 of \cite{FH}.  The group  $N\langle [\phi] \rangle/\langle [\phi], \pm I \rangle$ is then the symmetry group of the triangulated cylinder $\Sigma_\phi/\phi$.  When $|n|>3$ one readily observes that this triangulated cylinder has symmetry group $\Z/2\Z$, generated by an orientation reserving involution of the cylinder that preserves its boundary components: flip the cylinder across two spanning arcs.    When $|n|=3$, the cylinder is triangulated with just two triangles and there is another orientation reversing involution that exchanges its boundary components: mirror across the center curve of the cylinder and then rotate around the cylinder by $\pi$.  These two involutions of the cylinder commute and thus the symmetry group of this triangulated cylinder is $\Z/2\Z \times \Z/2\Z$.

To complete the proof, below we describe two commuting involutions $\spin$ and $\flip$ upon each manifold $M_n$, shown schematically in Figure~\ref{fig:spininvolution} and Figure~\ref{fig:flipinvolution}.   Their induced actions on $\bdry M_n$, for example, show they are inequivalent.   Thus these generate $\pi_0\, {\rm diff}(M_n)$ for $|n|>3$.  Indeed, in the computation above, the $\spin$ involution corresponds the $\Z/2\Z$ action of $-I$ on $\SL_2(\Z)$ while the (orientation preserving) $\flip$ involution corresponds to the flip of the  triangulated cylinder $\Sigma_\phi/\phi$ that preserves its boundary components.  
When $|n|=3$, the two manifolds are the familiar figure eight knot complement and its sister. These are archiral and they each admit an orientation reversing diffeomorphism $r$ of order $4$ whose square is $\spin$ and descends to the extra involution on the triangulated cylinder.  One may check that $r$ commutes with $\flip$ on the sister $M_3$ whereas $r \flip = \flip r^{-1}$ on the figure eight knot complement $M_{-3}$.  We then have $\pi_0\, {\rm diff}(M_3) = \Z/2\Z \times \Z/4\Z$ and $\pi_0\, {\rm diff}(M_{-3}) =D_4$.
\end{proof}

\begin{figure}
\begin{center}
\includegraphics[height=1in]{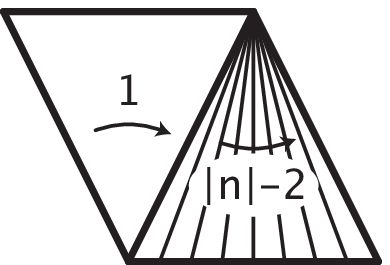}\\
\includegraphics[height=1in]{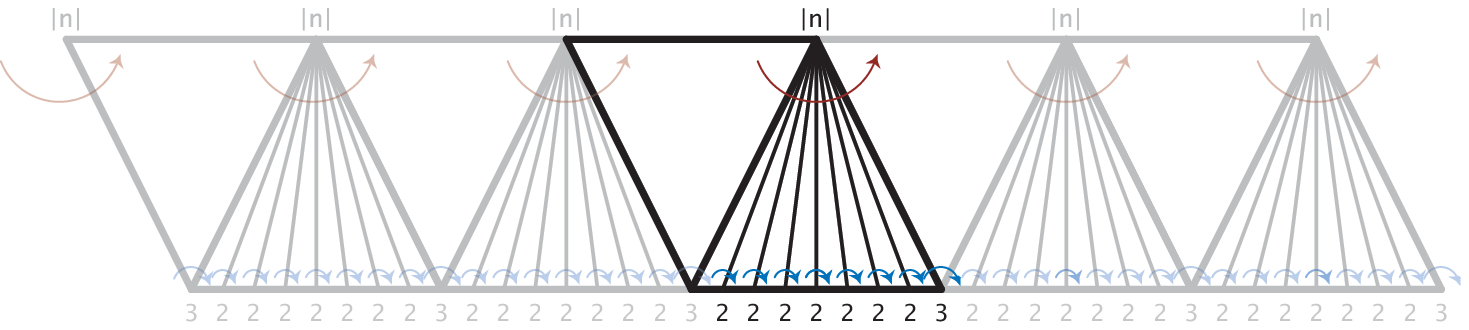}
\end{center}
\caption{}
\label{fig:strip}
\end{figure}

\subsection{Spin and Flip}
To describe these symmetries, we begin with a construction of our manifolds $M_n$ in terms of surgeries on two curves in the trivial once-punctured torus bundle.

Let $T$ be the once punctured torus, viewed as a square, minus a small open disk about the vertices, with opposite sides identified.  The horizontal and vertical midlines of this square give rise to curves $c$ and $b$ on $T$.  Starting at the basepoint $* = c \cap b$ and following $c$ to the right gives the fundamental representative of the homotopy class $\gamma \in \pi_1(T,*)$.   Starting at $*$ and following $b$ upwards gives $\beta$.   It will also be useful to instead have the basepoint $*$ of our homotopy classes on $\bdry T$; for this we will conjugate by the ``straight'' path from $c \cap b$ to the leftmost side of the bottom edge of the square.  We will use both of these positions of $*$.

Consider the trivial once-punctured torus bundle $T \times S^1$, and write $T_\theta$ for the fiber $T \times \{\theta\}$ and $c_\theta, b_\theta$ for the corresponding curves on it.  Frame these curves by the fiber on which they sit. 
The result of gluing of $T \times (\theta_1-\epsilon, \theta_1]$ to $T \times [\theta_0, \theta_0+\epsilon)$ by the positive Dehn twist map $(x,\theta_1) \sim (\tau_c(x),  \theta_0)$ may be also be obtained by gluing with the identity $(x, \theta_1) \sim (x, \theta_0)$ and then performing $-1$ Dehn surgery on $c_{\theta_0}=c_{\theta_1}$.
Thus our manifolds $M_n$ with monodromy $\phi = \tau_c \tau_b^{n+2}$ may be regarded as being obtained from $T \times S^1$ by $-1$ Dehn surgery on $c_{\frac{\pi}{4}}$ and $-1/(n+2)$ Dehn surgery on $b_{\frac{3\pi}{4}}$.   (Notice that the last surgery may be decomposed into a $-1/n$ Dehn surgery on $b_{\frac{3\pi}{4}}$ and $-1$ surgeries on the two push-offs $b_{\frac{3\pi}{4}-\epsilon}$ and $b_{\frac{3\pi}{4}+\epsilon}$.  The Whitehead link exterior is  then $(T \times S^1)-N(b_{\frac{3\pi}{4}})$ with $-1$ surgeries on $c_{\frac{\pi}{4}}$, $b_{\frac{3\pi}{4}-\epsilon}$, and $b_{\frac{3\pi}{4}+\epsilon}$.)

Each manifold $M_n$ admits two involutions which we will call $\spin$ and $\flip$ and are shown schematically in Figure~\ref{fig:spininvolution} and Figure~\ref{fig:flipinvolution}.   Clearly these are involutions of $T \times S^1$.  That these are involutions of $M_n$ may be observed from noting that $\spin$ and $\flip$ take $c_{\frac{\pi}{4}}$ and $b_{\frac{3\pi}{4}}$ back to themselves and induce orientation preserving homeomorphisms on small regular neighborhoods of each of these curves.

These involutions induce isomorphisms of $\Gamma_n \cong \pi_1(M_n,*)$. 

\begin{lemma}\label{lemma:spinflipgroup}
The involutions $\spin$ and $\flip$ induce the isomorphisms 
\[
\spin_* \colon  
\begin{cases}
 \alpha & \mapsto  \beta \alpha \bar{\beta}^{n+1}\\
 \beta & \mapsto  \bar{\beta}
\end{cases} 
\quad \quad \mbox{ and }\quad \quad 
\flip_* \colon  
\begin{cases}
 \alpha & \mapsto  (\beta \alpha) \beta \bar{\alpha} \bar{\beta} ( \bar{\alpha} \bar{\beta})\\
 \beta & \mapsto  (\beta \alpha) \bar{\beta} (\bar{\alpha} \bar{\beta}) .
\end{cases} 
\]

\end{lemma}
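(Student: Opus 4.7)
The plan is to first give explicit geometric realizations of $\spin$ and $\flip$ from the surgery description of $M_n$ developed above, and then compute their induced automorphisms by tracking the generators through the chain of identifications that produced the presentation of $\Gamma_n$ in Section~\ref{sec:fundgroup}. The key bridge is the generating set $\{\gamma,\beta,\mu\}$ together with the identifications $\mu=\beta\alpha$ and $\gamma=\alpha\beta\bar\alpha\bar\beta$ from Lemma~\ref{lemma:boundarypi1}, which invert to $\alpha=\bar\beta\mu$.

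For $\spin$, I will model the picture in Figure~\ref{fig:spininvolution} as the map $(x,\theta)\mapsto(\sigma(x),\theta)$, where $\sigma$ is rotation of the square model of $T$ by $\pi$ about its center (the hyperelliptic involution). Because $\sigma$ is central in the mapping class group of $T$, it commutes with the monodromy $\phi$, and it evidently preserves the surgery curves $c_{\pi/4}$ and $b_{3\pi/4}$, so the map descends to an involution of $M_n$. With the basepoint at the center of the fiber, $\sigma$ induces $\gamma\mapsto\bar\gamma$ and $\beta\mapsto\bar\beta$; since $\spin$ is the identity on $S^1$, it sends $\mu$ to a conjugate of $\mu$ coming from the straight path that translates the basepoint from the centre to $\bdry T$. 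Tracking this path and substituting into $\alpha=\bar\beta\mu$, one application of the defining relation $\bar\beta^n=\bar\alpha\beta\alpha^2\beta\bar\alpha$ converts the basepoint correction into the factor $\bar\beta^{n+1}$ and yields the stated formula $\spin_*(\alpha)=\beta\alpha\bar\beta^{n+1}$.

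For $\flip$, I will realize the picture in Figure~\ref{fig:flipinvolution} as a map of the form $(x,\theta)\mapsto(\tau(x),-\theta)$, where $\tau$ is an orientation-reversing involution of $T$ that conjugates the monodromy $\phi$ to $\phi^{-1}$ (the condition required for the composite with the $S^1$-reversal to descend to $M_n$) and that preserves the surgery curves $c_{\pi/4}$ and $b_{3\pi/4}$. The reversal of $\theta$ sends $\mu$ to a conjugate of $\bar\mu$, while the reflection $\tau$ sends $\beta$ to $\bar\beta$ and $\gamma$ to a word one reads off from the figure. Combining these ingredients via $\alpha=\bar\beta\mu$ and re-expressing the resulting basepoint conjugations in terms of $\mu=\beta\alpha$ produces the stated formulas
\[\flip_*(\alpha)=(\beta\alpha)\beta\bar\alpha\bar\beta(\bar\alpha\bar\beta),\qquad \flip_*(\beta)=(\beta\alpha)\bar\beta(\bar\alpha\bar\beta).\]
As a final algebraic sanity check, I will verify directly that both assignments preserve the relator $\bar\beta^n(\bar\alpha\beta\alpha^2\beta\bar\alpha)^{-1}$ and that each squares to the identity of $\mathrm{Aut}(\Gamma_n)$; both reductions are short word calculations that invoke a single application of the defining relation.

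The main obstacle will be the basepoint bookkeeping. Neither $\spin$ nor $\flip$ fixes the chosen basepoint on $\bdry M_n$, and the chosen connecting path contributes a conjugation to the induced automorphism. Isolating this conjugating word precisely---so that, after one use of the relation $\bar\beta^n=\bar\alpha\beta\alpha^2\beta\bar\alpha$, the factor $\bar\beta^{n+1}$ in $\spin_*(\alpha)$ and the flanking factors of $\beta\alpha=\mu$ in $\flip_*$ emerge in exactly the forms above---is the central computational care of the argument.
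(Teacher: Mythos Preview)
Your strategy is exactly the paper's: compute the action of $\spin$ and $\flip$ on the auxiliary generators $\gamma,\beta,\mu$, and then convert via $\alpha=\bar\beta\mu$ and $\gamma=\alpha\beta\bar\alpha\bar\beta$. The paper simply reads off
\[
\spin_*\colon\ \gamma\mapsto\bar\gamma,\ \beta\mapsto\bar\beta,\ \mu\mapsto\gamma\mu\bar\beta^{n+2},
\qquad
\flip_*\colon\ \gamma\mapsto\beta\gamma\bar\beta,\ \beta\mapsto\bar\gamma\bar\beta,\ \mu\mapsto\bar\mu
\]
from the figures and substitutes, so in outline you are doing the same thing.

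Two of your intermediate predictions, however, do not match what the figures actually give and would lead you astray if you trusted them. First, $\spin_*(\mu)$ is not merely a conjugate of $\mu$ arising from a basepoint path: the surgery (equivalently, the $n{+}2$ Dehn twists along $b$) contributes the factor $\bar\beta^{n+2}$, and one obtains $\spin_*(\mu)=\gamma\mu\bar\beta^{n+2}=\alpha\bar\beta^{n+1}$. Your claim ``$\spin$ is the identity on $S^1$, so $\mu$ goes to a conjugate of $\mu$'' suppresses this monodromy contribution. Second, your model of $\flip$ as $(x,\theta)\mapsto(\tau(x),-\theta)$ with $\tau(\beta)=\bar\beta$ is not what the picture shows: the paper's figure gives $\flip_*(\beta)=\bar\gamma\bar\beta$ and $\flip_*(\gamma)=\beta\gamma\bar\beta$, and no single basepoint conjugation turns $\bar\beta$ into $\bar\gamma\bar\beta$ while simultaneously accounting for the other two images. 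In other words, the fiberwise involution underlying $\flip$ does not fix the free homotopy class of $b$ in the simple way you posited.

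None of this breaks your plan: once you replace your guessed intermediate images with the correct ones above, the substitution $\alpha=\bar\beta\mu$ yields the stated formulas in one line (e.g.\ $\spin_*(\alpha)=\beta\cdot\gamma\mu\bar\beta^{n+2}=\beta\cdot\alpha\beta\cdot\bar\beta^{n+2}=\beta\alpha\bar\beta^{n+1}$). Your proposed algebraic sanity checks are a good addition and would have flagged the discrepancies.
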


\begin{proof}

Figures~\ref{fig:spininvolution} and \ref{fig:flipinvolution} show the actions of $\spin$ and $\flip$ respectively on the curves $\gamma$ colored green, $\beta$ blue, and $\mu$ orange. A sequence of isotopies (fixing the base points) then makes the homotopy classes of their images more easily discerned.   From these figures one may read off the following:
\[
\spin_* \colon  
\begin{cases} 
 \gamma & \mapsto  \bar{\gamma}\\
 \beta & \mapsto  \bar{\beta}\\
 \mu & \mapsto    \gamma \mu \bar{\beta}^{n+2}
\end{cases} 
\quad \quad \mbox{ and }\quad \quad 
\flip_* \colon  
\begin{cases}
 \gamma & \mapsto \beta\gamma\bar{\beta} \\
 \beta & \mapsto \bar{\gamma}\bar{\beta} \\
 \mu & \mapsto \bar{\mu}.
\end{cases} 
\]

Using the relations $\alpha = \bar{\beta} \mu$ and $\gamma = \bar{\beta} \mu \beta \bar{\mu}$ gives the result.
\end{proof}

\begin{figure}
\centering
\includegraphics[height=5in,keepaspectratio]{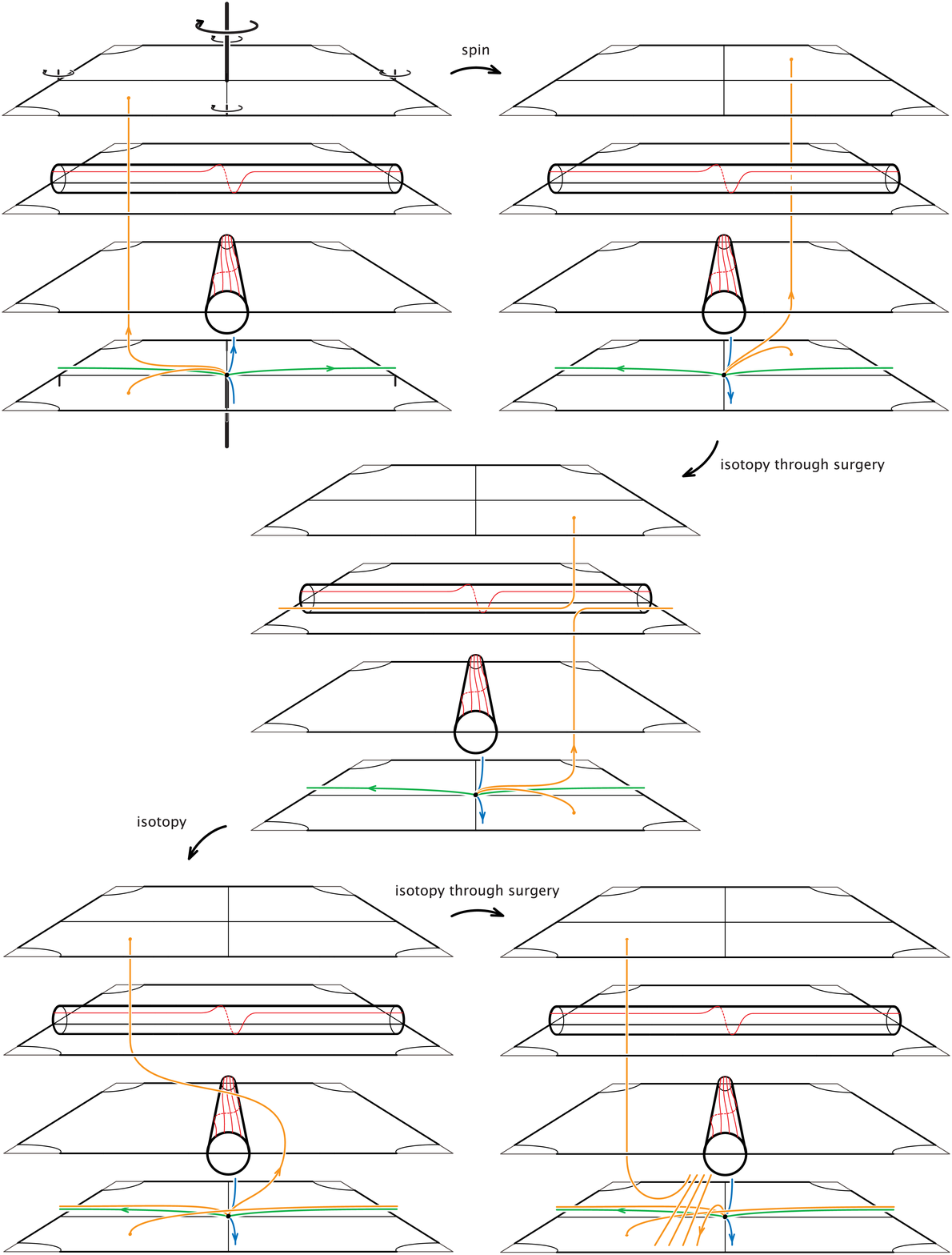}
\caption{The $\spin$ involution.}
\label{fig:spininvolution}
\end{figure}

\begin{figure}
\centering
\includegraphics[height=4.5in,keepaspectratio]{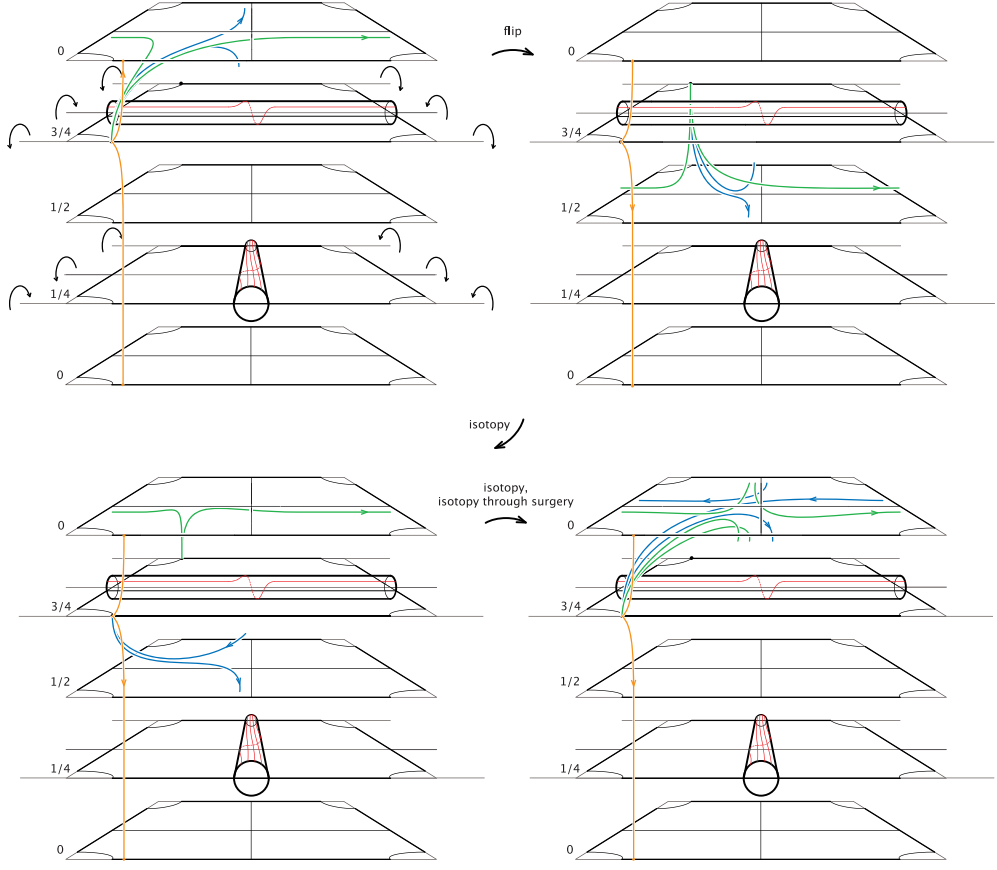}
\caption{The $\flip$ involution.}
\label{fig:flipinvolution}
\end{figure}

\subsection{Actions of  $\spin$ and $\flip$ on the character variety}

Lemma~\ref{lemma:spinflipgroup} demonstrates the action of $\spin$ and $\flip$ on the group $\Gamma_n=\pi_1(M_n)$.  In general, if $\varsigma$ acts on a group $\Gamma$ this induces an (possibly trivial) action on $\tilde{X}(\Gamma)$.  Let $\varsigma_*$ denote this action on $\tilde{X}(\Gamma)$.  This can be defined by 
\[ \varsigma_*( \chi_{\rho}(\gamma)) = \chi_{\rho}(\varsigma(\gamma))\]
for all $\gamma \in \Gamma$. (This can also be expressed as  $\chi_{\rho\circ \varsigma}(\gamma).$) If for all $\gamma \in \Gamma$,  $\varsigma(\gamma)$ is conjugate to $\gamma^{\pm}$ then $\rho(\varsigma(\gamma))$ is a matrix conjugate to $\rho(\gamma)^{\pm}$. It follows that the traces of these matrices are equal, and the action $\varsigma_*$ on $\tilde{X}(\Gamma)$ is trivial.  That is, if $\varsigma$ acts trivially on the unoriented free homotopy classes of loops in $M_n$ the action $\varsigma_*$ on $\tilde{X}(\Gamma_n)$ is trivial. We now explore the actions of $\spin$ and $\flip$ on $\tilde{X}(\Gamma)$.

Recall that $x=\tr(\rho(\alpha))$, $y=\tr(\rho(\beta))$ and $z=\tr(\rho(\alpha \beta))$.  If $\rho$ is a generic representation in the standard form of Definition~\ref{defn:genericexceptional} then, as in Remark~\ref{rem:genericxyz}, we have $x=a+a^{-1}$, $y=b+b^{-1}$, and $z=ab+a^{-1}b^{-1}+st$.  Also recall that by Proposition~\ref{prop:irreps}, the closure of the set of irreducible representations, $X(\Gamma)$, is naturally isomorphic to the  vanishing set of  $( \varphi_1, \varphi_2,\varphi_3)$.  Let $\flip_*$ and $\spin_*$  denote the induced actions of $\flip$ and $\spin$ on $\A^3(x,y,z)$, respectively.

\begin{prop}\label{prop:action}
The involution $\flip$ acts trivially on the unoriented free  homotopy classes of loops in $M_n$. 
It induces a trivial action on $\A^3(x,y,z)$.  

\smallskip
The involution $\spin$ acts non-trivially on the unoriented free  homotopy classes of loops in $M_n$.  It induces the action defined by 
\[ \spin_* \colon  
\begin{cases}
 x & \mapsto -z f_n(y)+x f_{n+1}(y) \\
 y &\mapsto   y\\
 z&  \mapsto  z.
\end{cases}\] 
This action is trivial on $\tilde{X}(\Gamma_n)$.
\end{prop}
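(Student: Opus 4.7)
The plan is to verify both parts by (i) computing the induced automorphism on the generators $\alpha, \beta$ via Lemma~\ref{lemma:spinflipgroup}, (ii) translating to the trace coordinates using Cayley-Hamilton identities in $\SL_2(\C)$, and (iii) checking triviality on the character variety via the defining equations from Proposition~\ref{prop:irreps}.

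For $\flip$: The formulas in Lemma~\ref{lemma:spinflipgroup} make an inner conjugation by $\beta\alpha$ explicit: $\flip_*(\beta) = (\beta\alpha)\bar\beta(\beta\alpha)^{-1}$ is conjugate in $\Gamma_n$ to $\bar\beta$, and $\flip_*(\alpha) = (\beta\alpha)(\beta\bar\alpha\bar\beta)(\beta\alpha)^{-1}$ is conjugate to $\bar\alpha$. Multiplying, $\flip_*(\alpha\beta) = (\beta\alpha)(\beta\bar\alpha\bar\beta^2)(\beta\alpha)^{-1}$, and conjugation of the central word $\beta\bar\alpha\bar\beta^2$ by $\beta^{-1}$ rewrites it as $\bar\alpha\bar\beta = (\alpha\beta)^{-1}$. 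Because $\tr(M)=\tr(M^{-1})$ in $\SL_2(\C)$ and the coordinate ring of $\tilde{X}(\Gamma_n)$ is generated by $t_\alpha, t_\beta, t_{\alpha\beta}$ (Section~\ref{sec:charvar}), the induced action on $\A^3(x,y,z)$ is trivial. For the topological claim, Figure~\ref{fig:flipinvolution} identifies $\flip$ (up to an inner automorphism) with the anti-automorphism reversing every loop, so each unoriented conjugacy class is preserved.

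For $\spin$: From Lemma~\ref{lemma:spinflipgroup}, $\spin_*(\alpha) = \beta\alpha\bar\beta^{n+1}$ and $\spin_*(\beta) = \bar\beta$. The Cayley-Hamilton theorem applied to $B = \rho(\beta) \in \SL_2(\C)$ gives $B^k = f_k(y) B - f_{k-1}(y) I$, and since $f_{-k}(y) = -f_k(y)$ we have $B^{-k} = -f_k(y)B + f_{k+1}(y)I$. Combined with $\tr(BA)=z$ and $\tr(B^2A) = yz - x$, a short expansion yields
\[ \tr(\rho(\spin_*(\alpha))) = -f_{n+1}(y)(yz-x) + f_{n+2}(y)z = x f_{n+1}(y) - z f_n(y) \]
after simplification by $f_{n+2}(y) = y f_{n+1}(y) - f_n(y)$. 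The computation $\tr(\rho(\spin_*(\beta))) = y$ is immediate, and the analogous calculation for $\spin_*(\alpha\beta) = \beta\alpha\bar\beta^{n+2}$ produces $x f_{n+2}(y) - z f_{n+1}(y)$. These formulas prescribe the induced action on $\A^3(x,y,z)$. Triviality on $X(\Gamma_n)$ reduces to two Fibonacci identities: the first, $x f_{n+1}(y) - z f_n(y) = x$, is immediate from $\varphi_3 = 0$ in Proposition~\ref{prop:irreps}; the second, $x f_{n+2}(y) - z f_{n+1}(y) = z$, follows by iteratively substituting $f_{n-1}(y) = 1 - x^2$, $f_n(y) = y - xz$, and $x f_{n+1}(y) = x + z f_n(y)$ from $\varphi_1, \varphi_2, \varphi_3$ together with the recursion $f_{k+1} = yf_k - f_{k-1}$. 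Non-triviality of $\spin_*$ on unoriented free homotopy classes follows by abelianization: in $\Gamma_n^{ab} \cong \Z \times \Z/(n+2)\Z$ from Proposition~\ref{prop:redreps}, $\spin_*(\alpha) = \beta\alpha\bar\beta^{n+1}$ has abelian image $(1, -n) \equiv (1, 2) \pmod{n+2}$, distinct from the images $(\pm 1, 0)$ of $\alpha^{\pm 1}$ for $|n|>2$.

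The main obstacle is the Fibonacci manipulation confirming $x f_{n+2}(y) - z f_{n+1}(y) = z$ on $X(\Gamma_n)$: it requires careful substitutions to convert every trace of $B^k$ back into the coordinates $(x,y,z)$, relying simultaneously on all three defining equations $\varphi_1, \varphi_2, \varphi_3$.
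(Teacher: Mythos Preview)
Your argument is correct and reaches the same conclusions, but you take a longer route for $\spin_*(z)$ than the paper does, and this affects how cleanly the triviality statement falls out.

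The paper observes directly that $\spin_*(\alpha\beta)=\beta\alpha\bar\beta^{n+2}$ is \emph{conjugate to $\alpha\beta$ in $\Gamma_n$}: using the group relation $\bar\beta^n=\bar\alpha\beta\alpha^2\beta\bar\alpha$ once, one gets $\beta\alpha\bar\beta^{n+2}=\beta^2\alpha^2\beta\bar\alpha\bar\beta^2=(\beta^2\alpha)(\alpha\beta)(\beta^2\alpha)^{-1}$. Hence $\spin_*(z)=z$ is immediate on $\tilde X(\Gamma_n)$, and the only nontrivial condition for triviality is $x=\spin_*(x)$, which the paper identifies exactly as $-\varphi_3=0$. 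This gives the clean statement that the fixed locus of the extended action on $\A^3$ is precisely $\{\varphi_3=0\}$, which contains $X(\Gamma_n)$.

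By contrast, you compute $\spin_*(z)=xf_{n+2}(y)-zf_{n+1}(y)$ via Cayley--Hamilton and then must run the ``second Fibonacci identity'' through all three of $\varphi_1,\varphi_2,\varphi_3$ to reduce it to $z$. This works (your sketch is right: it boils down to $x(y-f_n(y)-zx)=0$, which is $x\varphi_2=0$), but it obscures the structural point that a single group-theoretic conjugacy handles $z$ and isolates $\varphi_3$ as the sole obstruction.

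One small gap: you verify triviality only on $X(\Gamma_n)$, but the proposition asserts triviality on all of $\tilde X(\Gamma_n)$, including $\tilde X_{red}(\Gamma_n)$. The paper dispatches this by citing Proposition~\ref{prop:redreps} and an elementary check (on the reducible locus $y\in\Rb_{n+2}$, so $f_{n+2}(y)=0$ and the formulas collapse). You should add a sentence covering this case.
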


\begin{proof}
First, we consider the $\flip$ symmetry.  Notice that $\flip_*(\alpha)$ is conjugate to $\bar{\alpha}$, $\flip_*(\beta)$ is conjugate to $\bar{\beta}$, and $\flip_*(\alpha \beta)$ is conjugate to $\bar{\alpha} \bar{\beta}$, and therefore conjugate to $\overline{ \alpha \beta}$.   Hence, $\flip_*$ acts trivially on  unoriented homotopy classes of loops, and the action on $\A^3(x,y,z)$ is trivial, as it fixes $x,y$, and $z$.

Now we consider the $\spin$ symmetry.  As $\spin_*(\beta)=\bar{\beta}$, we see that $\spin_*(y)=y$, and as $\spin_*(\alpha \beta)$ is conjugate to $\alpha \beta$, it follows that $\spin_*(z)=z$.   We now devote our attention to $\spin_*(\alpha)$, which is conjugate to $\bar{\beta}^n\alpha$.  The group relation implies that  $\bar{\beta}^n\alpha = \bar{\alpha}\beta \alpha \alpha \beta$. 

Let $\theta\colon \Gamma_n \rightarrow  \langle a, b : b^{n-2}, ab=ba \rangle$ be the abelianization map where $\theta(\alpha)=a$ and $\theta(\beta)=b$.  As $\theta(\alpha)=a$ and $\theta (\bar{\alpha}\beta \alpha \alpha \beta)=ab^2$ where $a^{\pm 1}\neq ab^2$ if $n\neq 2$, we conclude that $\alpha^{\pm}$ is not homotopic to $\spin_*(\alpha)$. 

First, we consider the action on $X(\Gamma_n)$.
We compute 
\[ \tr(\rho(\spin_*(\alpha))= \tr(\rho(\bar{\beta}^{n+1}\alpha \beta)) = \tr (\rho(\bar{\beta}^n\alpha)) = ab^{-n}+a^{-1}b^n -stf_n(y)\] using the equations from Section~\ref{section:charactervarietycalculations}.  Upon substitution, we see that
\[ \tr(\rho(\spin_*(\alpha))=-z f_n(y)+x f_{n+1}(y).\]
Therefore, the action induced by $\spin$ on $X(\Gamma_n)$ is 
\[ \spin_*(x) =-z f_n(y)+x f_{n+1}(y), \quad
\spin_*(y)=y, \quad \text{ and } \quad \spin_*(z)=z. \]
The set of points in $\A^3(x,y,z)$ fixed under this action is the set determined by
\[ x-\spin_*(x), y-\spin_*(y), \text{ and } z-\spin_*(z)\] 
which is the set determined by $x-(-z f_n(y)+x f_{n+1}(y))=-\varphi_3$.  As the character variety has coordinate ring  $\A^3[x,y,z]/( \varphi_1, \varphi_2,\varphi_3 )$ this action is non-trivial on $\A^3(x,y,z)$ but is trivial on $X(\Gamma_n)$.  

Using Proposition~\ref{prop:redreps} it is elementary to verify that the action is also trivial on $\tilde{X}_{red}(\Gamma_n)$.
\end{proof}

The fixed point set  of $\spin_*$ is the (complex) surface defined by the vanishing set of $\varphi_3$.  The fixed point set of $\flip_*$ is all of $\A^3$.   The whole of $\tilde{X}(\Gamma_n)$ is contained in the intersection of these two sets.  Recall that  the action of $\flip_*$ is trivial on all unoriented free homotopy classes of simple closed curves, but the action of $\spin_*$ is not.  If we form the orbifold quotient of $M_n$  by $\flip_*$ this property ensures that all characters of $\pi_1(M_n)$ are characters of this  quotient.  Since the action of $\spin_*$ is non-trivial on some free homotopy classes of simple closed curves, the same does not immediately follow.  For example in \cite{MR2827003} it was shown that the $7_4$ knot (which is a two-bridge knot with an order eight symmetry group) has a symmetry  for which there are irreducible representations which are not characters of the  orbifold quotient.  In fact the set of such representations is a $\C$ curve.  It is shown in \cite{MR2827003} that all so-called $J(2m,2m)$ knots (all are two-bridge knots) share this property.  That is, this symmetry has the effect of factoring the character variety.  (This factorization is also shown in \cite{MR1248091} for general two bridge knots by looking at ideal points.)

In contrast, the calculation above  shows that for the manifolds $M_n$, all characters of $\pi_1(M_n)$ extend to characters of  $\pi_1^{orb}(M_n/ \langle \spin_* \rangle)$. 
\begin{thm}
For every $n$, every irreducible representation $\rho\colon \pi_1(M_n) \to \SL_2(\C)$ is the restriction of an irreducible representation of the orbifold fundamental group of $O_n$, the quotient of $M$ by the spin and flip symmetries.

\end{thm}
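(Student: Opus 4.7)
The plan is to promote each irreducible $\rho\colon \Gamma_n \to \SL_2(\C)$ to a representation of the orbifold group $\pi_1^{\mathrm{orb}}(O_n)$, which fits into a short exact sequence
\[ 1 \to \Gamma_n \to \pi_1^{\mathrm{orb}}(O_n) \to \Z/2\Z \oplus \Z/2\Z \to 1 \]
whose quotient is generated by the images of $\spin$ and $\flip$. The key input is Proposition~\ref{prop:action}: both $\spin_*$ and $\flip_*$ act trivially on $\tilde X(\Gamma_n)$, and because characters of $\SL_2(\C)$-representations of a two-generator group are determined by the traces of $\alpha$, $\beta$, and $\alpha\beta$, this triviality means $\chi_\rho \circ \spin_* = \chi_\rho$ and $\chi_\rho \circ \flip_* = \chi_\rho$ as functions on all of $\Gamma_n$.

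From this, for irreducible $\rho$, the standard rigidity (\cite{cs1983} Proposition 1.5.2) produces matrices $S, F \in \SL_2(\C)$, each unique up to a sign, with $\rho \circ \spin_* = \mathrm{Ad}(S) \circ \rho$ and $\rho \circ \flip_* = \mathrm{Ad}(F) \circ \rho$. I would fix lifts $\tilde s, \tilde f \in \pi_1^{\mathrm{orb}}(O_n)$ of $\spin, \flip$, so that $\tilde s^2$, $\tilde f^2$, $[\tilde s, \tilde f] \in \Gamma_n$ are elements that can be read off (up to inner automorphism) from the topological pictures in Figures~\ref{fig:spininvolution} and \ref{fig:flipinvolution} and the formulas of Lemma~\ref{lemma:spinflipgroup}. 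Iterating the conjugation relations and applying Schur's lemma to the irreducible $\rho$ forces
\[ S^2 = \pm \rho(\tilde s^2), \quad F^2 = \pm \rho(\tilde f^2), \quad [S, F] = \pm \rho([\tilde s, \tilde f]). \]
Defining $\tilde\rho$ to agree with $\rho$ on $\Gamma_n$ and sending $\tilde s \mapsto \epsilon_S S$, $\tilde f \mapsto \epsilon_F F$ for suitable signs $\epsilon_S, \epsilon_F \in \{\pm 1\}$, the conjugation relations are automatic, and irreducibility of $\tilde\rho$ follows from irreducibility of its restriction to the finite-index subgroup $\Gamma_n$.

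The main obstacle will be resolving the three sign ambiguities simultaneously: since negating $S$ or $F$ changes neither $S^2$, nor $F^2$, nor $[S, F]$, the signs constitute a genuine $H^2(\Z/2\Z \oplus \Z/2\Z; \{\pm I\})$-obstruction that cannot be absorbed by rescaling. I would address this by direct computation, writing $S$ and $F$ in closed form for a generic $\rho$ in the standard form of Definition~\ref{defn:genericexceptional}, computing $S^2$, $F^2$, and $[S, F]$ explicitly, and comparing against $\rho(\tilde s^2)$, $\rho(\tilde f^2)$, $\rho([\tilde s, \tilde f])$ as computed from Lemma~\ref{lemma:spinflipgroup}. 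The involutivity of $\spin$ and $\flip$ and their commutation as diffeomorphisms of $M_n$ should make the required sign identifications come out correctly, yielding an honest $\SL_2(\C)$-extension $\tilde\rho$. The exceptional cases $|n| \leq 2$ (non-hyperbolic) and $|n|=3$ (with larger symmetry groups per Lemma~\ref{lem:symmetries}) would require checking that the same argument, applied only to the $\spin$--$\flip$ subgroup, still closes up; this should be routine since the character calculations of Section~\ref{section:nonhyperbolic} make all irreducible characters completely explicit.
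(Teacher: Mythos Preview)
Your approach follows the same line as the paper's, but you go considerably further in rigor than the paper does. The paper offers no \texttt{proof} environment for this theorem at all: after Proposition~\ref{prop:action} establishes that $\spin_*$ and $\flip_*$ act trivially on $\tilde X(\Gamma_n)$, the paper simply remarks that ``all characters of $\pi_1(M_n)$ extend to characters of $\pi_1^{orb}(M_n/\langle\spin_*\rangle)$'' and then states the theorem. In other words, the paper treats the passage from ``invariant character'' to ``extendable representation'' as essentially automatic.

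You are right to be more careful. You correctly identify that the conjugating matrices $S,F$ are determined only up to sign and that the three constraints $S^2=\pm\rho(\tilde s^2)$, $F^2=\pm\rho(\tilde f^2)$, $[S,F]=\pm\rho([\tilde s,\tilde f])$ are invariant under $S\mapsto -S$ and $F\mapsto -F$, so the obstruction is a genuine class in (a quotient of) $H^2(\Z/2\Z\oplus\Z/2\Z;\{\pm I\})$ rather than something absorbable by rescaling. The paper does not address this point. Your proposed resolution---writing $S$ and $F$ explicitly for a generic $\rho$ in standard form and checking the signs against the words $\tilde s^2,\tilde f^2,[\tilde s,\tilde f]$ read off from Lemma~\ref{lemma:spinflipgroup} and the geometric pictures---is the natural way to close the gap, though of course you would actually have to carry out that computation (your phrase ``should make the required sign identifications come out correctly'' is exactly where the content lies). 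One simplification worth exploiting: since $\spin$ and $\flip$ are honest involutions of $M_n$ with nonempty fixed-point sets, the extension $1\to\Gamma_n\to\pi_1^{\mathrm{orb}}(O_n)\to\Z/2\Z\oplus\Z/2\Z\to1$ admits order-two lifts $\tilde s,\tilde f$, which reduces two of the three sign checks to verifying $S^2=\pm I$ and $F^2=\pm I$.
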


%
%
\section{The Twisted Alexander Polynomial}\label{section:alexanderpoly}
%
%

Let $\mathcal{T}_{M_n}^\rho(T) \in \C[T,T^{-1}]$ denote the symmetrized Alexander polynomial of $M_n$ twisted by the irreducible representation $\rho \colon \pi_1(M_n) \to \SL_2(\C)$ (taken with respect to the epimorphism $\phi \colon \pi_1(M_n) \to S^1=\langle T \rangle$ dual to the fiber in which $\phi(\mu)=T$).  Since $M_n$ is a once-punctured torus bundle $\mathcal{T}_{M_n}^\rho(T)$ is a monic symmetric degree $2$ polynomial \cite{friedlvidussi,dfj-twisted}.  Hence $\mathcal{T}_{M_n}^\rho(T) \equiv T^1 + Z_n T^0 + T^{-1}$ for some number $Z_n= \mathcal{T}_{M_n}^\rho(i) \in \C$.   
Here we calculate $\mathcal{T}_{M_n}^\rho(T)$ and, in particular, this number $Z_n$.

 \begin{thm}\label{thm:twistedalex}
The twisted Alexander polynomial of $M_n$, twisted by a representation $\rho$ corresponding to the point $(x,y,z)$ on the character variety is
\[\mathcal{T}_{M_n}^\rho(T)= T^{-1}+\frac{2(z-x)}{y-2}  + T\]
and $\displaystyle Z_n = \frac{2(z-x)}{y-2}$.
\end{thm}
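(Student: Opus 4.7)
The plan is to apply Fox calculus to the one-relator presentation $\Gamma_n = \langle \alpha,\beta \mid r\rangle$ with relator $r = \bar\beta^n \cdot \alpha\bar\beta\bar\alpha^2\bar\beta\alpha$. First I would identify the epimorphism $\phi$ dual to the fiber. Since $\phi(\mu) = T$ with $\mu = \beta\alpha$ and the fiber group is generated by $\gamma,\beta$ (both in the kernel of $\phi$), one reads off $\phi(\beta) = 1$ and $\phi(\alpha) = T$. Setting $\Phi = \rho \otimes \phi$ gives $\Phi(\alpha) = TA$, $\Phi(\beta) = B$ with $\tr A = x$, $\tr B = y$, $\tr(AB) = z$. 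The standard one-relator formula is
\[ \mathcal{T}_{M_n}^\rho(T) \;\doteq\; \frac{\det \Phi(\partial r/\partial \alpha)}{\det(\Phi(\beta)-I)}, \]
where $\doteq$ means equality up to a unit $\pm T^k$ chosen to symmetrize. The denominator equals $\det(B-I) = 2-y$, already matching the denominator in the claim.

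Next I would compute the Fox derivative, $\partial r/\partial \alpha = \bar\beta^n(1 - \alpha\bar\beta\bar\alpha - \alpha\bar\beta\bar\alpha^2 + \alpha\bar\beta\bar\alpha^2\bar\beta)$. Applying $\Phi$ and repeatedly exploiting the matrix identity $B^{-n} = A^{-1}BA^2BA^{-1}$ coming from the group relation to absorb the $B^{-n}$ prefix into each summand collapses this to
\[ \Phi(\partial r/\partial \alpha) = A^{-1}\bigl[\,BA^2BA^{-1} - BA \;+\; T^{-1}(I - B)\,\bigr]. \]
With $P = BA^2BA^{-1} - BA$ and $Q = I - B$, and using $\det A^{-1} = 1$, the $2\times 2$ identity $\det(P + T^{-1}Q) = \det P + T^{-2}\det Q + T^{-1}[\tr P\,\tr Q - \tr(PQ)]$ reduces everything to three traces. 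The outer coefficients are immediate: $\det P = \det(BA)\det(ABA^{-1}-I) = 2-y$ and $\det Q = 2-y$, already producing the symmetric leading and trailing terms.

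The crux is identifying the middle coefficient as $2(x-z)$. For this I would use two simplifications valid on $X(\Gamma_n)$: $\tr(BA^2BA^{-1}) = x$ and $\tr(BA^2BA^{-1}B) = xy - z$. Both follow by rewriting $BA^2BA^{-1} = AB^{-n}$ via the group relation, then applying Cayley--Hamilton to $B$, namely $B^{-k} = -f_k(y)B + f_{k+1}(y)I$, so $\tr(AB^{-k}) = -f_k(y)z + f_{k+1}(y)x$; and finally substituting the character-variety relations $f_{n-1}(y) = 1-x^2$ and $f_n(y) = y-zx$ (from $\varphi_1, \varphi_2$) together with $xf_{n+1}(y) = x + yz - xz^2$ (from $\varphi_3$ combined with $\varphi_2$). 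All high-degree terms cancel, yielding $\tr P = x-z$ and $\tr(PQ) = \tr P - \tr(PB) = (x-z) - [(xy-z) - (yz-x)] = y(z-x)$, whence
\[ \tr P \,\tr Q - \tr(PQ) = (x-z)(2-y) - y(z-x) = 2(x-z). \]
Assembling, $\det\Phi(\partial r/\partial\alpha) = (2-y)(1 + T^{-2}) + 2(x-z)T^{-1}$; multiplying by $T$ to symmetrize and dividing by $2-y$ yields exactly $T^{-1} + 2(z-x)/(y-2) + T$.

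The main obstacle is precisely the identity $\tr(BA^2BA^{-1}) = x$. Without simultaneously invoking all three character-variety equations, this trace is a genuine degree-four polynomial in $x,y,z$ (a direct expansion via Fricke gives $x^2yz - xz^2 - x^3 - xy^2 + 3x + yz$), and the miraculous collapse to a linear expression is what makes the claimed formula succeed; once that identity is in hand, everything else is bookkeeping.
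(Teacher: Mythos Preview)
Your argument is correct and follows the same overall strategy as the paper: compute the twisted Alexander polynomial via Fox calculus on the one-relator presentation and simplify the resulting trace expressions using the defining equations of $X(\Gamma_n)$. The difference is a technical choice that actually streamlines the computation. The paper takes the Fox derivative with respect to $\beta$ and divides by $\det(\Phi(\alpha)-I)$; this forces it to handle the $n$--dependent term $\partial_\beta(\bar\beta^n) = (I-B^{-n})(I-B)^{-1}$, and the final simplification appeals to the relation $F_{21}'=0$ to collapse a quartic in $x,y,z$ down to the linear middle coefficient. You instead differentiate with respect to $\alpha$, so the denominator $\det(B-I)=2-y$ falls out immediately, and the group relation $B^{-n}=A^{-1}BA^2BA^{-1}$ absorbs the $B^{-n}$ prefix term by term, leaving an expression with no $n$--dependence at the matrix level. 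Your use of the $2\times 2$ identity $\det(P+sQ)=\det P + s(\tr P\,\tr Q-\tr(PQ))+s^2\det Q$ is a clean organizing device; the key identity $\tr(BA^2BA^{-1})=\tr(AB^{-n})=-f_n(y)z+f_{n+1}(y)x=x$ is exactly $\varphi_3=0$, and the companion identity $\tr(BA^2BA^{-1}B)=xy-z$ uses $\varphi_1$ and $\varphi_2$, so you are invoking precisely the same relations the paper uses, just packaged differently. One small remark: your claim that $\phi(\alpha)=T$ is the correct reading of $\mu=\beta\alpha$ (the paper writes $\phi(\alpha)=T^{-1}$, apparently via a sign slip in $\alpha=\bar\beta\mu$), but this is immaterial since the polynomial is only defined up to units $\pm T^k$.
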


\begin{proof}
Recall our presentation $\pi_1(M) = \langle \alpha, \beta : \bar{\beta}^n = \omega \rangle$ 
where $\omega = \bar{\alpha} \beta \alpha \alpha \beta \bar{\alpha}$.
Since $\beta$ is homotopic to a loop on the fiber while $\alpha = \bar{\beta} \bar{\mu}$  is homotopic to a loop transversally intersecting the fiber once, $\phi(\alpha) = T^{-1}$ while $\phi(\beta) = T^0$.   
Therefore the twisted Alexander polynomial may be computed as
\[\mathcal{T}_{M_n}^\rho(T) \equiv \frac{\det ((\rho \otimes \phi)(\bdry_\beta(\omega-\bar{\beta}^n)))}{\det((\rho \otimes \phi)(\alpha-1))}.\]

Using the standard form of the irreducible representation $\rho \colon \pi_1(M_n) \to \SL_2(\C)$ (in which $s=t$),
\[\alpha \mapsto A = \mat{a}{0}{s}{a^{-1}} \mbox{ and } \beta \mapsto B = \mat{b}{s}{0}{b^{-1}},\ \]
we obtain

\begin{align*}
\mathcal{T}_{M_n}^\rho(T) 
&= \frac{\det ((\rho \otimes \phi)(\bdry_\beta(\omega-\bar{\beta}^n)))}{\det((\rho \otimes \phi)(\alpha-1))}  \\
&= \frac{\det (A^{-1}T+A^{-1}BA^2 T^{-1} - (I- A^{-1}BA^2BA^{-1})(I-B)^{-1})}{\det(A T^{-1} - I)}\\
&=1+\left(\frac{xz^2-x^2yz+x^3-yz-4x+xy^2+2z}{y-2}\right)T+T^2 \\
&\equiv T^{-1}+\frac{2(z-x)}{y-2}  + T. 
\end{align*}

In this calculation, Fox Calculus gives $\bdry_\beta(\omega-\bar{\beta}^n) = \bar{\alpha}+\bar{\alpha}\beta\alpha^2 - \bdry_\beta(\bar{\beta}^n)$ where $\bdry_\beta(\bar{\beta}^n) =\bdry_\beta(\beta^{-n}) =  
\frac{\beta^n-1}{\beta-1}$.
Then, since $I-B$ is non-singular ($b \neq 1$), we may write $(I+B+ \dots + B^{n-1}) = (I-B^n)(I-B)^{-1}$.  Thus the relation $\rho(\bar{\beta}^n) = \rho(\omega)$ gives
\[\rho(\bdry_\beta(\bar{\beta}^n))=(I-B^{-n})(I-B)^{-1} = (I- A^{-1}BA^2BA^{-1})(I-B)^{-1}.\]
The quotient of the determinants is simplified by the substitution $x=a+a^{-1}$, $y=b+b^{-1}$, and $z=ab+a^{-1}b^{-1} + s^2$ as usual, followed by an application of the relation $F_{21} =0$.  Multiplication by the unit $T^{-1}$ brings the polynomial into the symmetric form claimed.
\end{proof}

\begin{remark}
Using Proposition~\ref{prop:xyzcharvar} we express $Z_n =\frac{2(z-x)}{y-2}$ for points $(x,y,z)$ on the character variety in terms of a single coordinate. If $0 \neq 1-f_{n-1}(y) = -h_n(y)\ell_n(y)$ so that $x \neq 0$, then 
\[Z_n =-2\epsilon \frac{f_n(y) - f_{n-1}(y)-y+1}{(2-y)\sqrt{1-f_{n-1}(y)}}= -2\epsilon \sqrt{-\frac{h_n(y)}{\ell_n(y)}} \frac{ k_n(y)+\ell_n(y)}{y-2}. \]
Otherwise either $x=z=0$ so that $Z_n = 0$ or  $n \equiv 2 \pmod 4$ and $(x,y,z)=(0,0,z)$ is on the extra line so that $Z_n = -z$.
\end{remark}

\begin{cor}\label{cor:dfZ}
If $|n|>2$ and $(x,y,z)$ is a point in $X(\Gamma_n)$,  corresponding to a discrete faithful representation $\rho_0$, then
\[Z_n = \epsilon \frac{4-y+f_n(y)}{y-2}= \epsilon \frac{4-y\pm\sqrt{y^2-8}}{y-2}\]
 where $\epsilon = \pm1$ and $y$ is a root of the polynomial $\hat{p}_n(y)$ of Definition~\ref{defn:phat} (i.e.\ a root of $p_n(y)=f_{n+1}(y)-f_{n-1}(y)-y^2+6$ other than $-2$).
\end{cor}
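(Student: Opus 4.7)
The plan is to combine the formula of Theorem~\ref{thm:twistedalex} with the explicit description of discrete faithful characters given in Proposition~\ref{prop:disc2} and Proposition~\ref{prop:discfaithfulyeqn}. Since this is really a direct substitution, the proof should be short and the main work has already been done upstream.

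First, I would recall that Theorem~\ref{thm:twistedalex} gives $Z_n = \frac{2(z-x)}{y-2}$ for any $(x,y,z) \in X(\Gamma_n)$. Specializing to a discrete faithful representation, Proposition~\ref{prop:disc2} supplies the two crucial identities
\[ z = 2\epsilon, \qquad 2\epsilon x = y - f_n(y), \]
where $\epsilon = \pm 1$. I would simply substitute these into the numerator:
\[ 2(z-x) = 4\epsilon - 2x = 4\epsilon - \epsilon(y - f_n(y)) = \epsilon\bigl(4 - y + f_n(y)\bigr), \]
yielding the first equality $Z_n = \epsilon\,\dfrac{4-y+f_n(y)}{y-2}$.

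Next, to obtain the second equality and to pin down the admissible values of $y$, I would invoke Proposition~\ref{prop:discfaithfulyeqn}, which states that for a discrete faithful representation $y$ must satisfy $\hat{p}_n(y)=0$ and that $f_n(y) = \pm\sqrt{y^2-8}$. Substituting this into the expression just obtained gives
\[ Z_n = \epsilon\,\frac{4 - y \pm \sqrt{y^2-8}}{y-2}, \]
which is the second displayed formula. Finally, I would remark that $y \neq 2$ (so the denominator is nonzero): by Lemma~\ref{lemma:nonperipheralgenerators}, $\beta$ is hyperbolic for $|n|>2$, so $y = \chi_{\rho_0}(\beta) \neq \pm 2$, and this also explains why we discard the root $y=-2$ of $p_n$ when $n$ is odd, i.e., why $y$ is a root of $\hat{p}_n$ rather than of $p_n$.

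Since each ingredient has been established earlier, there is no real obstacle here; the entire proof is a one-line substitution plus an appeal to Proposition~\ref{prop:discfaithfulyeqn}. The only mild subtlety is book-keeping the sign $\epsilon$ consistently between the parabolic meridian trace $z=2\epsilon$ and the relation $2\epsilon x = y - f_n(y)$, but this is automatic from Proposition~\ref{prop:disc2}.
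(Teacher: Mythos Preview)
Your proof is correct and follows essentially the same approach as the paper: substitute the discrete faithful data $z=2\epsilon$ and $2\epsilon x = y - f_n(y)$ from Propositions~\ref{prop:disc2} and~\ref{prop:discfaithfulyeqn} into the formula $Z_n = \tfrac{2(z-x)}{y-2}$ of Theorem~\ref{thm:twistedalex}, then use $f_n(y)=\pm\sqrt{y^2-8}$ from Proposition~\ref{prop:discfaithfulyeqn}. Your added remark that $y\neq 2$ (via Lemma~\ref{lemma:nonperipheralgenerators}) is a nice touch the paper omits; note that $z=2\epsilon$ is most cleanly read off from Proposition~\ref{prop:discfaithfulyeqn}(3) rather than Proposition~\ref{prop:disc2}.
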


\begin{proof}
By Propositions~\ref{prop:disc2} and \ref{prop:discfaithfulyeqn}, $y$ is a root of the polynomial $\hat{p}_n(y)$, $x=\epsilon \tfrac{1}{2} (y-f_n(y))=\epsilon (y\mp \sqrt{y^2-8})/2$, and $z = 2\epsilon$.
\end{proof}

\begin{prop}
For each $n\to\infty$ and $n\to-\infty$, there is a sequence of discrete faithful representations of $\Gamma_n$ so that  both of the following occur:   $Z_n \to -\epsilon (\tfrac{3}{2}\pm\tfrac{1}{2}i)$ and $|Z_n| \to \infty$.
\end{prop}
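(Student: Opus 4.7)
The plan is to leverage the explicit formula $Z_n = \epsilon(4-y+f_n(y))/(y-2)$ from Corollary~\ref{cor:dfZ} together with $f_n(y) = \pm\sqrt{y^2-8}$ from Proposition~\ref{prop:discfaithfulyeqn}, and to exhibit sequences of discrete faithful representations whose trace parameters $y_n = \chi_{\rho_n}(\beta)$ approach the two values $\pm 2$ excluded from the roots of $\hat{p}_n$.  As $y \to 2$ the denominator vanishes while the numerator tends to $2 \pm 2i$, forcing $|Z_n| \to \infty$; as $y \to -2$ the numerator tends to $6 \pm 2i$ while the denominator tends to $-4$, yielding $Z_n \to -\epsilon(\tfrac{3}{2} \pm \tfrac{1}{2}i)$.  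It therefore suffices to produce sequences of discrete faithful $\rho_n$ with $y_n \to +2$ (for the divergent limit) and with $y_n \to -2$ (for the finite limits).

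To locate roots of $\hat{p}_n$ near $\pm 2$, substitute $y = s + s^{-1}$; by the proof of Theorem~\ref{thm:tracefield}, $\hat{p}_n(y) = 0$ is equivalent to $s^n + s^{-n} = s^2 + s^{-2} - 4$.  The ansatz $s_n = 1 + i\pi/n$ gives $s_n^n \to e^{i\pi} = -1$ and $y_n = s_n + s_n^{-1} = 2 + O(1/n^2)$, while the ansatz $s_n = -1 + ic_n/n$ with $c_n = \pi$ for even $n$ (respectively $c_n = 2\pi$ for odd $n$) gives $s_n^n \to -1$ and $y_n = -2 + O(1/n^2)$.  In each case the defining equation is satisfied with error tending to zero, and a standard application of the implicit function theorem (or Rouch\'e's theorem) provides genuine roots of $\hat{p}_n$ near these approximate ones with the same asymptotic behavior.

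To confirm that such roots correspond to discrete faithful characters, I would invoke Thurston's hyperbolic Dehn surgery theorem applied to the family $M_n = W(-(n+2),\,\cdot\,)$ of fillings of the Whitehead link exterior $W$: as $|n| \to \infty$ the $M_n$ converge geometrically to $W$, and the discrete faithful representations $\rho_n$ of $\Gamma_n$ converge algebraically to $\rho_W \colon \pi_1(W) \to \PSL_2(\C)$.  From the $(T \times S^1)$-surgery description in Section~\ref{section:symmetries}, $\beta$ is isotopic in $W$ to a push-off of the removed curve $b_{3\pi/4}$ and hence peripheral, so $\rho_W(\beta)$ is parabolic with $\tr(\rho_W(\beta)) = \pm 2$.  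Since $H^1(W;\Z/2\Z)$ is nontrivial, both signs are realized by lifts of $\rho_W$ to $\SL_2(\C)$, and compatible sequences of $\SL_2(\C)$-lifts $\rho_n$ can then be chosen so that $y_n \to +2$ or $y_n \to -2$; combining with the calculation of the first paragraph yields the two limit behaviors.

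The case $n \to -\infty$ reduces to the case $n \to \infty$: by the remark after Proposition~\ref{prop:tracefield} the roots of $\hat{p}_n$ depend only on $|n|$, and $f_{-n} = -f_n$, so these symmetries interchange the $\pm$ inside $-\epsilon(\tfrac{3}{2} \pm \tfrac{1}{2}i)$ but preserve both limit behaviors.  The main obstacle is not the asymptotic computation but identifying which Galois conjugate of a root of $\hat{p}_n$ is realized by the actual discrete faithful representation (rather than by an arbitrary irreducible one with the same trace field); this is precisely what the geometric convergence argument of the preceding paragraph supplies.
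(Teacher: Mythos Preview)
Your proof is essentially the paper's own: both invoke hyperbolic Dehn surgery on the Whitehead link exterior to get geometric convergence of discrete faithful representations, observe that $\beta$ limits to a peripheral element so $y_n \to \pm 2$, and read off the two behaviours of $Z_n$ from the formula of Corollary~\ref{cor:dfZ}.  The only substantive differences are cosmetic.  First, the paper obtains both signs of the limiting $y$ by acting on the lifts $\rho_n$ with $H^1(M_n;\Z/2\Z)$ directly, whereas you act with $H^1(W;\Z/2\Z)$ on the limit and then assert that ``compatible sequences of $\SL_2(\C)$-lifts $\rho_n$ can be chosen''; the paper's route is a bit more direct since it avoids checking that your chosen lift of the limiting $\pi_1(W)$-representation still factors through $\Gamma_n$.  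Second, your paragraph locating approximate roots of $\hat{p}_n$ near $\pm 2$ via the ansatz $s_n = \pm 1 + O(1/n)$ is a detour that the paper omits entirely: as you yourself note, it cannot distinguish the discrete faithful root from its Galois conjugates, and once the geometric convergence argument is in place it becomes redundant.
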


\begin{proof}
Since for $|n|$ large, $M_n$ is the result of hyperbolic Dehn filling on one boundary component of the Whitehead link exterior, there is a sequence $\rho_n$ of discrete faithful $\SL_2(\C)$ representations of $\Gamma_n$ that converge to a discrete faithful $\SL_2(\C)$ representation $\rho_{\pm\infty}$ of the fundamental group of the Whitehead link exterior as $n \to \pm \infty$.  Because $\beta \in \Gamma_n = \pi_1(M_n)$ is represented by the core of the filling of the Whitehead link producing $M_n$ (which is a curve in a fiber along which we perform Dehn twists to generate our family of manifolds), it follows that $\beta \in \Gamma_n$ converges to a peripheral element in the fundamental group of the Whitehead link exterior.  Therefore $|y| = |\tr(\rho_n(\beta))| \to 2$ as $n \to \pm\infty$.  By Corollary~\ref{cor:dfZ}, if $y \to -2$ then $Z_n \to -\epsilon (\tfrac{3}{2}\pm\tfrac{1}{2}i)$.  If $y\to +2$ then $|Z_n|\to \infty$.   By the discussion in the introduction to Section~\ref{section:tracefield}, for all $n$ there is an $\epsilon \in H^1(M_n,\Z/2\Z)$ such that $\epsilon \circ \chi_{\rho} (\beta)=-\chi_{\rho}(\beta)$.  That is, $\epsilon(y)=-y$.  Therefore we have discrete faithful representations with $y\rightarrow 2$ and with $y\rightarrow -2$.
\end{proof}

\begin{remark}
When $n=-3$, $M_{-3}$ is the figure eight knot exterior. If $(x,y,z)$ corresponds to a discrete, faithful representation then we may calculate $Z_{-3} = \pm4$.

When $n=3$, $M_3$ is the sister of the figure eight knot exterior.  If $(x,y,z)$ corresponds to a discrete, faithful representation, then we may calculate $Z_{3} =  \pm2\sqrt3 i$.
\end{remark}

\begin{remark}
For discrete faithful representations $\rho$ where $|n|>2$,  computer calculations suggest that  $Z_{n}$ and hence $\mathcal{T}_{M_n}^\rho(T)$ are real only when $n=-3$.  
\end{remark}

%
%
\section{Dilatation}\label{section:dilatation}
%
%

A homeomorphism $\psi \colon F \to F$ of a compact surface $F$ is pseudo-Anosov  if no power of it fixes the homotopy class of an essential simple closed curve.  
Within a mapping class of pseudo-Anosov homeomorphisms we may choose $\psi$ to be one so that there is a transverse pair of measured singular foliations $\F_s$ and $\F_u$ on $F$ where $\psi(\F_u) = \lambda \F_u$ and $\psi(\F_s) = \lambda^{-1} \F_s$ for a number $\lambda >1$.  This number $\lambda$ is the {\em dilatation} of $\psi$, \cite{flp}.   It is an invariant of the conjugacy class of $\psi$.  As such it is also an invariant of the mapping torus of $\psi$.   In this sense the dilatation of $\phi_n$ gives a measure of complexity of $M_n$ and $\Gamma_n$.

\begin{thm}\label{thm:dilatation}
 For $|n|>2$ the dilatation of $\phi_n$ is $\frac{1}{2} ( |n|+\sqrt{n^2-4})$.
\end{thm}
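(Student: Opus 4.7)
The plan is to leverage the well-known identification of the mapping class group of the once-punctured torus $T$ with $\SL_2(\Z)$ via the action on $H_1(T;\Z) \cong \Z^2$. Under this identification, a mapping class is pseudo-Anosov if and only if the associated matrix is hyperbolic (i.e.\ has $|\tr|>2$), and, crucially for us, the dilatation of the mapping class equals the spectral radius of the matrix.

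First I would invoke Lemma~\ref{lem:geometry}, which already computes
\[ [\phi_n] = \mat{-(n+1)}{1}{-(n+2)}{1}, \]
so that $\tr[\phi_n] = -n$ and $\det[\phi_n] = 1$. For $|n|>2$ this matrix is hyperbolic, confirming (as in Lemma~\ref{lem:geometry}) that $\phi_n$ is pseudo-Anosov. Its characteristic polynomial is $\lambda^2+n\lambda+1=0$, so the eigenvalues are
\[ \lambda_{\pm} = \tfrac{1}{2}\bigl(-n \pm \sqrt{n^2-4}\bigr).\]
Regardless of the sign of $n$, one checks that $\max(|\lambda_+|,|\lambda_-|) = \tfrac{1}{2}\bigl(|n|+\sqrt{n^2-4}\bigr)$, which is the claimed dilatation.

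The one substantive input is the classical fact that the dilatation of a pseudo-Anosov class on the once-punctured torus equals the spectral radius of its action on $H_1$. This holds because $T$ carries a natural flat structure (coming from the branched cover $\R^2 \to T^2$ with branch point the puncture); every pseudo-Anosov representative $\phi$ can be chosen affine with respect to this flat structure, its stable and unstable foliations are straight-line foliations whose slopes are the eigenvectors of $[\phi]$, and the stretch factor along $\F_u$ is precisely the larger eigenvalue of $[\phi]$ in absolute value. For this I would cite a standard reference such as \cite{flp} or the analogous discussion for the torus in Farb--Margalit.

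There is no genuine obstacle here; once the above identification is in hand, the theorem is immediate from the quadratic formula applied to the characteristic polynomial of $[\phi_n]$. The only part requiring any care is to distinguish the cases $n>2$ and $n<-2$ when extracting $|\lambda_\pm|$ from the eigenvalue expressions, but both cases collapse to the same formula $\tfrac12(|n|+\sqrt{n^2-4})$ since $\det[\phi_n]=1$ forces $\lambda_+\lambda_-=1$.
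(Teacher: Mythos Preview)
Your proposal is correct and takes essentially the same approach as the paper: invoke the matrix $[\phi_n]$ from Lemma~\ref{lem:geometry}, use the classical identification of dilatation with the spectral radius of the action on $H_1(T;\Z)$ (the paper phrases this as the largest eigenvalue of $\pm[\phi_n]\in\PSL_2(\Z)$), and solve the characteristic polynomial. The paper's version is terser, but the content is the same.
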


\begin{proof}
Since $\phi_n = \tau_c \tau_b^{n+2} \colon T \to T$ for $|n|>2$ is a homeomorphism of the once-punctured torus $T$ generated by Dehn twists along curves $c$ and $b$ which intersect once, its dilatation may be computed as the largest eigenvalue of $\pm[\phi_n] \in \PSL_2(\Z)$.  The matrix $[\phi_n]$ is given in the proof of Lemma~\ref{lem:geometry}.
\end{proof}

We observe that the dilatation of $\phi_n$ is approximately the genus of the canonical component of the $\SL_2(\C)$ character variety of $\Gamma_n$. 

\begin{thm}\label{thm:dilatationgenus}
 Assume $|n|>2$.  Let $g$ denote the genus of $X_0(\Gamma_n)$ and $d$ denote the floor of the dilatation of $\phi_n$.  Then $d=2g+\alpha$ where 
\[ \alpha = \left\{ \begin{aligned}  \quad 
4+\text{sgn}(n) \quad & \text{ if } n \equiv 2\pmod 4\\ 
2+\text{sgn}(n) \quad  & \text{ if } n \equiv 0 \pmod 4 \\ 
1+\text{sgn}(n) \quad & \text{ if } n\equiv 1,3 \pmod 4. 
\end{aligned} \right. \] 
\end{thm}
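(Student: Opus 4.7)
The plan is a direct comparison of the two explicit formulas already at our disposal: the dilatation formula $\lambda_n = \tfrac{1}{2}(|n| + \sqrt{n^2-4})$ from Theorem~\ref{thm:dilatation}, and the genus formula from Theorem~\ref{thm:mainsummary1}. Since both quantities are elementary functions of $n$, the theorem reduces to a careful bookkeeping exercise.

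First I would pin down the integer $d = \lfloor \lambda_n \rfloor$ for all $|n|>2$. The key estimate is that $(|n|-1)^2 = n^2 - 2|n|+1 < n^2-4$ whenever $2|n| > 5$, i.e.\ for $|n|\geq 3$, while $\sqrt{n^2-4} < |n|$ is automatic. Hence $|n|-1 < \sqrt{n^2-4} < |n|$, which yields
\[
|n| - \tfrac{1}{2} < \lambda_n < |n|,
\]
so $d = |n|-1$ for every $|n|>2$. This is the uniform statement that does the real work; after this the problem is purely arithmetic.

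Next I would compute $2g$ in each of the four residue classes modulo $4$, splitting further by the sign of $n$ since the genus formula involves $|n-1|$ rather than $|n|$. For $n$ odd, $|n-1|$ is even and the floor is vacuous, giving $g = \tfrac12(n-1)-1$ when $n>0$ and $g = \tfrac12(|n|+1)-1$ when $n<0$; the values $d-2g = 2$ and $0$ respectively match $1+\text{sgn}(n)$. For $n \equiv 0 \pmod 4$, $|n-1|$ is odd so the floor shaves off $\tfrac12$; this contributes an extra $-1$ to $2g$ and one gets $d-2g = 3$ or $1$ according to $\text{sgn}(n)$, matching $2+\text{sgn}(n)$. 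For $n\equiv 2 \pmod 4$, the genus formula has the extra $-1$ (because of the auxiliary line component), and the same parity analysis of $|n-1|$ as in the $n\equiv 0$ case produces $d-2g = 5$ or $3$, matching $4+\text{sgn}(n)$.

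There is no real obstacle here: every step is bounded and mechanical once the estimate $|n|-1 < \sqrt{n^2-4} < |n|$ is in hand. The only place where one might slip is the sign/parity bookkeeping for the floor in the definition of $g$, so I would organize the proof as a single table keyed on the pair $(n \bmod 4,\, \text{sgn}(n))$, verifying the claimed value of $\alpha = d-2g$ in each of the resulting cases. Because the genus formula is piecewise and the residue class $n\equiv 2\pmod 4$ is treated separately in Theorem~\ref{thm:mainsummary1}, it is worth emphasizing in the write-up that the discrepancy $\alpha$ genuinely depends on $n \bmod 4$ and on the sign of $n$, so that the apparent $\text{sgn}(n)$ correction reflects both the asymmetry in the genus formula (through the $|n-1|$) and is unavoidable.
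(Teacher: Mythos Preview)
Your proposal is correct and follows the same approach as the paper: establish $d=|n|-1$ from Theorem~\ref{thm:dilatation} and then compare with the genus formula of Theorem~\ref{thm:mainsummary1}. The paper's own proof is a two-line sketch that asserts $d=|n|-1$ and leaves the residue-class arithmetic implicit, whereas you have written out both the inequality $|n|-1<\sqrt{n^2-4}<|n|$ and the case-by-case verification of $\alpha$; this is exactly the intended argument, just more explicit.
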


\begin{proof} With $d$ being the floor of the dilatation of $\phi$, we obtain from Theorem~\ref{thm:dilatation} below that $d=|n|-1$ when $|n|>2$.
The formula now follows from  Theorem~\ref{thm:mainsummary1}. 
\end{proof}

\bibliographystyle{plain}
\bibliography{TNOOPTbib.bib}

\end{document}